\documentclass[12pt,twoside,A4]{report}

\textwidth6.5in \textheight8.5in \hoffset-0.2in \voffset-0.5in

\usepackage{amsmath}
\usepackage{amsthm}
\usepackage{amssymb}
\usepackage{graphicx}
\usepackage{epsfig}
\usepackage{fancyhdr}
\usepackage{thes}
\usepackage{times}
\usepackage[titles]{tocloft}
\newfont{\hge}{hge scaled 1800}

\newlength{\defbaselineskip}
\setlength{\defbaselineskip}{\baselineskip}
\newcommand{\setlinespacing}[1]%
           {\setlength{\baselineskip}{#1 \defbaselineskip}}

\newtheorem{thm}{\bf Theorem}[section]
\newtheorem{cor}[thm]{\bf Corollary}
\newtheorem{lem}[thm]{\bf Lemma}
\newtheorem{prop}[thm]{\bf Proposition}
\newtheorem{defn}[thm]{\bf Definition}
\newtheorem{rem}[thm]{\bf Remark}
\newtheorem{exa}[thm]{Example}

\numberwithin{equation}{section}

\parskip10pt
\parindent 0.4in

%---------------------

%\parskip 6.5pt
%\parindent 16pt
%\setlength{\topmargin}{-0.5in}
\setlength{\oddsidemargin}{.40in}
\setlength{\evensidemargin}{-.12in}
%\setlength{\textheight}{9.45in}
%\setlength{\textwidth}{6.6in}
%\hoffset=-0.05in
%-------------------------------------------------------------

%----------SHORTFORMS---------------------------------
\newcommand{\be}{\begin{equation}}
\newcommand{\ee}{\end{equation}}

\newcommand{\bee}{\begin{equation*}}
\newcommand{\eee}{\end{equation*}}

\newcommand{\bea}{\begin{eqnarray}}
\newcommand{\eea}{\end{eqnarray}}

\newcommand{\Bea}{\begin{eqnarray*}}
\newcommand{\Eea}{\end{eqnarray*}}

%----MATH Symbols Environments----------------------------
%%%%%%%%%%%%%%%%% Math definitions %%%%%%%%%%%%%%%%%%

\def\C{{\mathbb C}}

\def\N{{\mathbb N}}

\def\R{{\mathbb R}}

\def\Z{{\mathbb Z}}

\def\ol{\overline}
\def\ul{\underline}

\newcommand{\wt}{\widetilde}
\newcommand{\wh}{\widehat}

\def\CF{\mathcal {F}}

\def\CH{\mathcal {H}}

\def\CL{\mathcal {L}}
\def\CM{\mathcal {M}}

\def\CP{\mathcal {P}}
\def\CQ{\mathcal {Q}}
\def\CR{\mathcal {R}}
\def\CS{\mathcal {S}}

%-------------------------------------------------------
\begin{document}

\thispagestyle{empty}
\begin{center}
{\Large {\textbf{$L^p$-Asymptotics of Fourier transform of fractal measures}}}\\
\vspace{.8in}
{\large  A Dissertation \\
submitted in partial fulfilment \\
\vspace{0.1in}
of the requirements for the  }\\
\vspace{0.1in}
{\large{award of the degree of}} \\
\vspace{0.1in}
{\hge{Doctor of Philosophy}} \\
\vspace{0.7in}
{\large by}\\
\vspace{0.05in}
{\large \textbf{K. S. Senthil Raani}}\\
\vspace{1.2in}
\includegraphics[scale=0.3]{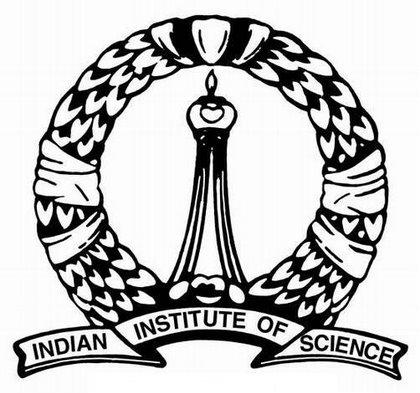} \\
\vspace{0.3in}
{\textbf{Department of Mathematics}}\\
{\textbf{Indian Institute of Science}} \\
{\textbf{Bangalore \,-\, 560 012}}\\
{\textbf{May 2015}}
\end{center}
\newpage
\thispagestyle{empty} \cleardoublepage

\pagenumbering{roman}
\addcontentsline{toc}{chapter}{Declaration}
\chapter*{Declaration}

\vspace*{1.0in}
I hereby declare that the work reported in this thesis is entirely original and has been carried out by me under the supervision of Professor E. K. Narayanan at the Department of Mathematics, Indian Institute of Science, Bangalore. I further declare that this work has not been the basis for the award of any degree, diploma, fellowship, associateship or similar title of any University or Institution.\\
\begin{flushright}
K. S. Senthil Raani\\
S. R. No. 6910-110-091-06655
\end{flushright}
Indian Institute of Science,\\
Bangalore - 560012,\\
May 2015.

\begin{flushright}
Prof. E. K. Narayanan\\
 (Research Advisor)
\end{flushright}

\thispagestyle{empty}
\thispagestyle{empty} \cleardoublepage
\addcontentsline{toc}{chapter}{Acknowledgement}
\chapter*{Acknowledgement}

Foremost, I would like to express my sincere gratitude to my
research supervisor, Prof. E. K. Narayanan for his guidance and
immense support. For his enormous contribution of his time and
ideas, I am at loss of words to express my gratefulness. He has
always been there to clarify my doubts and guide me. I am highly
indebted to him for his unflinching encouragement and infinite
patience throughout the course.\\

I have been immeasurably benefited from various courses that I
attended inside and outside the campus and I am grateful to all
the instructors. I specially thank Prof. S. Thangavelu, Prof. A.
Sitaram and Prof. Malabika Pramanik for their intense teaching and
useful discussions. The problems suggested by Prof. Robert
Strichartz kindled my interest more in this topic and I sincerely
thank him for his valuable remarks. I am also infinitely
grateful to Prof. Kaushal Verma and Prof. Gautam Bharali for their
valuable time, advices and encouragement on various issues. I owe
special acknowledgement to all my inspirational teachers.\\

My life in IISc is a dream come true. I would like to thank the
system administrators, librarians and all the staff members of the
Department of Mathematics for the comfortable learning experience
and wonderful study atmosphere. I gratefully acknowledge the
financial assistance from the Council of Scientific and Industrial
Research, Government of India, through its research fellowship for
the period of January 2010 - July 2014. I am obliged to Prof
Kaushal Verma for the financial assistance provided during the
period August 2014 - July 2015. I also thank the administration of
IISc for the travel grant provided to attend a conference in
Poland. This work is supported in part by
UGC Centre for Advanced Studies.\\

It would not been such a convivial place to do PhD if not for my
friends. Be it near or far, I have been immensely helped and
taught by many. I thank each and everyone for their emotional
support, helpful discussions and thought provoking interactions.
Special thanks to everyone in the campus who took care of me with
warm hospitality throughout this course of time. I am always
indebted to my family members who were supportive, understanding and not
just enduring this process with me but also helped me come out of
all kinds of frustrations.

\newpage
\thispagestyle{empty} \cleardoublepage
\addcontentsline{toc}{chapter}{Table of contents} \tableofcontents
\newpage
\thispagestyle{empty} \cleardoublepage
\addcontentsline{toc}{chapter}{Introduction}
\chapter*{Introduction}

One of the basic questions in harmonic analysis is to study the
decay properties of the Fourier transform of measures or
distributions supported on thin sets in $\R^n$. When the support
is a smooth enough manifold, an almost complete picture is
available. One of the early results in this direction is the
following: \emph{ Let $f\in C_c^{\infty}(\R^n)$ and $d\sigma$ be
the surface measure on the sphere $S^{n-1}\subset\R^n$. Then\emph}
$$|\wh{fd\sigma}(\xi)|\leq\ C\ (1+|\xi|)^{-\frac{n-1}{2}}.$$
It follows that $\wh{fd\sigma}\in L^p(\R^n)$ for all
$p>\frac{2n}{n-1}$. This result can be extended to compactly
supported measures on $(n-1)$-dimensional manifolds with
appropriate assumptions on the curvature. On the other hand, the
results in \cite{AgranovskyNaru} show that $\wh{fd\sigma}\notin
L^p(\R^n)$ for $1\leq p\leq \frac{2n}{n-1}$. Similar results are
known for measures supported in lower dimensional manifolds in
$\R^n$ under appropriate curvature conditions (See page 347-351 in \cite{Stein}). However, the
picture for fractal measures is far from complete. This thesis is
a contribution to the study of $L^p$-integrability and
$L^p$-asymptotic properties of the Fourier transform of measures
supported in fractals of dimension $0<\alpha<n$ for $1\leq p\leq
2n/\alpha$.\\

In the first chapter we recall several notions of dimensions
(Hausdorff dimension, Packing dimension, etc.) and various
geometric properties of fractal sets. Let $0<\alpha<n$ and
$\CH_{\alpha}$ denote the $\alpha$-dimensional Hausdorff measure.
Recall from \cite{Strichartz} that a set $E$ is said to be quasi
$\alpha$-regular if for all $0<r\leq1$, there exists a constant
$a$ such that $ar^{\alpha}\leq \CH_{\alpha}(E\cap B_r(x))$ for all
$x$. We discuss the relation between quasi $\alpha$-regular sets
and sets of finite $\alpha$-packing measure
($0<\alpha<n$) in Chapter 2.\\

In \cite{AgmonHormander} and \cite{AgranovskyNaru}, the authors
related the integrability of the functions and the integer
dimension of the support of its Fourier transform. In
\cite{AgranovskyNaru}, it was proved that, if $f\in L^p(\R^n)$
such that $supp$ $\wh{f}$ is carried by a $d$-dimensional
$C^1$-manifold, then $f\equiv0$, if $1\leq p\leq \frac{2n}{d}$. We
extend this result by relating the integrability of the function
and the fractal dimension of the support of its Fourier transform
by proving the following:\\

\noindent{\bf{Theorem A\cite{Raani}}}:\emph{ Let $f\in L^p(\R^n)$
be such that $\wh{f}$ is supported in a set $E\subset\R^n$.
Suppose $E$ is a set of finite $\alpha$-packing measure,
$0<\alpha<n$. Then $f$
is identically zero, provided $p\leq 2n/\alpha$.\emph}\\

Using the example constructed by Salem in $\R$ (See page 267 in
\cite{Donoghue}), we show that Theorem A is sharp.\\

Inspired by results in \cite{Strichartz}, we look for quantitative
estimates for Fourier transform of fractal measures. Let $E$ be a
compact set of finite $\alpha$-packing measure and
$\mu=\CP^{\alpha}|_E$. In Chapter 3, we obtain quantitative
versions of Theorem A by obtaining lower and upper bounds for the
following:
  \bee
    \underset{L\rightarrow\infty}{\limsup} \frac{1}{L^k}     \int_{|\xi|\leq    L}|\wh{fd\mu}(\xi)|^p
    d\xi,\label{limsupBall}
  \eee
where $k$ depends on $\alpha,\ p$ and $n$.\\

If $\mu$ is a compactly supported locally uniformly
$\alpha$-dimensional measure, that is, $\mu(B_r(x))\leq
ar^{\alpha}$ for all $0<r\leq 1$ and some non-zero finite
constants $a$, then in \cite{Strichartz}, Strichartz proved that
there exists constant $C_1$ independent of $f$ such that
   \be
    \|f\|_{L^2(d\mu)} \geq C_1\ \underset{L\rightarrow\infty}{\limsup} \frac{1}{L^{n-\alpha}}\int_{|\xi|\leq L} |\wh{fd\mu}(\xi)|^2    d\xi.
    \label{Str-loc}
   \ee
In addition, if $\mu$ is supported in a quasi $\alpha$-regular set,
then there exists a non-zero constant independent of $f$ such that
  \be
   \|f\|_{L^2(d\mu)} \leq C_1\ \underset{L\rightarrow\infty}{\liminf} \frac{1}{L^{n-\alpha}}\int_{|\xi|\leq L} |\wh{fd\mu}(\xi)|^2 d\xi.
   \label{Str-quasi}
   \ee
The authors in \cite{Lau} and \cite{LauWang} have generalized (\ref{Str-loc}) for a general class of measures. Using Holder's inequality, we note from (\ref{Str-quasi}) that if
$f\in L^2(d\mu)$, where $\mu$ is a locally uniformly
$\alpha$-dimensional measure, then for $1\leq p\leq 2$, \be
\|f\|_{L^2(d\mu)} \leq C\ \underset{L\rightarrow\infty}{\limsup}\frac{1}{L^{n-\alpha
p/2}}\int_{|\xi|\leq L} |\wh{fd\mu}(\xi)|^pd\xi.\label{str-holder} \ee

First we consider $2\leq p<\frac{2n}{\alpha}$. The above
results hold for locally uniformly $\alpha$-dimensional measures.
But if $E$ is a set of finite $\alpha$-packing measure, then
$\mu=\CP^{\alpha}|_E$ need not be locally uniformly
$\alpha$-dimensional measure. We first prove an analogue result of
(\ref{str-holder}) for $\mu=\CP^{\alpha}|_E$, where $E$ is of
finite $\alpha$-packing measure:\\
\noindent{\bf{Theorem B:}}\emph{ Let $f\in L^2(d\mu)$ be a
positive function where $\mu=\CP^{\alpha}|_E$ and $E$ is a compact
set of finite $\alpha$-packing measure. Then for $2\leq
p<2n/\alpha$,}
        \bee
            \int_{\R^n} {|f(x)|^{2}}d\mu(x)\leq\ C\ \underset{L\rightarrow\infty}{\liminf}\ \Big(\frac{1}{L^{n - \alpha p/2}} \int_{|\xi|\leq L}\
|\wh{fd\mu}(\xi)|^pd\xi\Big)^{2/p}.
        \eee

In \cite{AgmonHormander}, the authors proved the following:\\

\noindent{\bf{Theorem 1(Agmon $\&$ Hormander):}} \emph{Let $u$ be a
tempered distribution such that $\wh{u}\in L^2_{loc}(\R^n)$ and
$$\underset{L\rightarrow\infty}{\limsup}\frac{1}{L^{k}}\int_{|\xi|\leq L}|\wh{u}(\xi)|^2d\xi<\infty.$$
If the restriction of $u$ to an open subset $X$ of $\R^n$ is
supported by a $C^1$ submanifold $M$ of codimension $k$, then it
is an $L^2$-density $u_0dS$ on $M$ and}
$$\int_M|u_0|^2dS\leq C\underset{L\rightarrow\infty}{\limsup}\frac{1}{L^k}\int_{|\xi|\leq L} |\wh{u}(\xi)|^2d\xi,$$
\emph{where $C$ only depends on $n$.}\\

We prove an analogue of the above theorem for fractional
dimensional sets.

\noindent{\bf{Theorem C:}}\emph{ Let $u$ be a tempered
distribution supported in a set $E$ of finite $\alpha$-packing
measure such that for $2\leq p <2n/\alpha$,\emph}
    \bee
        \underset{L\rightarrow\infty}{\limsup}\ \frac{1}{L^{n-\frac{\alpha p}{2}}} \int_{|\xi|\leq L} |\wh{u}(\xi)|^p d\xi
        <\infty.
    \eee
\emph{Then $u$ is an $L^2$ density $\ u_0\ d\CP^{\alpha}$ on $E$
and\emph}
    \bee
        \Big(\int_{E}|u_0|^2d\CP^{\alpha}\Big)^{p/2} \leq\ C\ \underset{L\rightarrow\infty}{\limsup}\ \frac{1}{L^{n-\frac{\alpha p}{2}}}
        \int_{|\xi|\leq L} |\wh{u}(\xi)|^p d\xi <\infty.
    \eee

In a different direction, we consider the result of Hudson and
Leckband in \cite{Hudson}. For $0<\alpha<1$, the authors defined
$\alpha$-coherent set in $\R$. A set $E\subset\R^n$ of finite
$\alpha$-dimensional Hausdorff measure is called $\alpha$-coherent
if for all $x$,
 $$\underset{\epsilon\rightarrow0}{\limsup} |E_x^0(\epsilon)|\epsilon^{\alpha-n} \leq C_E\CH_{\alpha}(E_x^0),$$
where $E_x^0=\{y\in E: y\leq x $ and
$2^{-\alpha}\leq\underset{\delta\rightarrow0}{\limsup}
\frac{\CH_{\alpha}(E\cap(y-\delta,y+\delta))}{\delta^{\alpha}}\leq1\}$
and $E_x^0(\epsilon)$ denotes the $\epsilon$-distance set of
$E_x^0$.\\

\noindent{\bf{Theorem 2(Hudson $\&$ Leckband)}:}\emph{ Let
$E\subset\R$ be either an $\alpha$-coherent set or a quasi
$\alpha$-regular set of finite $\alpha$-dimensional Hausdorff
measure, for $0<\alpha<1$, and $f\in L^1(d\mu)$, where
$\mu=\CH_{\alpha}|_E$. Then there is a constant $C$ independent of
$f$ such that \emph}
$$\int_E \frac{|f(x)|}{\mu(E_x)}d\mu(x)\leq C\ \underset{L\rightarrow\infty}{\liminf} \frac{1}{L^{1-\alpha}}
\int_{-L}^L |\wh{fd\mu}(\xi)|d\xi.$$

The authors in \cite{Hudson} also proved a Hardy type inequality
for discrete measures which we state below. Let $\|u\|_{B^p.a.p}^p
= \lim\ L^{-1}\int_{-L}^L |u(x)|^p dx$. The authors in
\cite{Hudson} proved the following:\\

\noindent{\bf{Theorem 3(Hudson $\&$ Leckband)}:}\emph{ Let $c_k$
be a sequence of complex numbers and $a_k$ be a sequence of real
numbers not necessarily increasing. Let $fd\mu_0$ be the
zero-dimensional measure $f(x)=\sum_1^{\infty}\ c_k\delta(x-a_k)$
and let $1<p\leq 2$. Assume that $u(x)=\wh{fd\mu_0}(x)$. Then if
$c_k^*$ denote the nonincreasing rearrangement of the sequence
$|c_k|$,\emph}
$$\sum_1^{\infty} \frac{|c_k|^p}{k^{2-p}} \leq \sum_1^{\infty} \frac{|c_k^*|^p}{k^{2-p}} \leq C\
\|u\|^p_{B^p.a.p}.$$

Using the packing measure and finding a continuous analogue of the
arguments in \cite{Hudson}, we extend Theorem 2 to $\R^n$ and
generalize Theorem 3 to any $\alpha$, $0<\alpha<n$ and $n\geq 1$
with a slight modification in the hypothesis. Let $E_x=E\cap
(-\infty,x_1]\times...\times(-\infty,x_n]$ for
$x=(x_1,...x_n)\in\R^n$:\\

\noindent{\bf{Theorem D:}}\emph{ Let $0<\alpha< n$. Let $E$ be a
set of finite $\alpha$-packing measure. We denote
$\mu=\CP^{\alpha}|_E$, where $\CP^{\alpha}$ is the packing
measure. Let $f\in L^p(d\mu)\ (1\leq p\leq 2)$ be a positive
function. Then there exists a constant $C$ independent of $f$ such
that\emph}
     \bee
         \int\frac{|f(x)|^p}{(\mu(E_x))^{2-p}}d\mu(x)\leq C\ \underset{L\rightarrow\infty}{\liminf} \frac{1}{L^{n-\alpha}} \int_{B_L(0)}         |\wh{fd\mu}(\xi)|^pd\xi.
    \eee
The following is the key ingredient of the proof: Let $\phi$ be a
radial Schwartz function on $\R^n$ such that $\wh{\phi}$ is
supported in the unit ball and $\wh{\phi}(0)=1$. Let
$\phi_L(x)=\phi(Lx)$.  Then $ \widehat{\phi_L}(x) = L^{-n}
\widehat{\phi}(\frac{x}{L})$. We approximate $fd\mu$ using
$\phi_L$ on a finer decomposition $\{S_k\}_k$ of $1/L$-distance
set $E(1/L)$ of $E$ for large $L$. We then prove the result for
$p=1$. The result for $p=2$ follows from Plancherel theorem. Then
using interpolation we prove the
result for $1<p<2$.\\

Theorem C and Theorem D can also be proved if the assumptions on
$E$ in the hypothesis is changed to quasi $\alpha$-regular set of
finite $\alpha$-Hausdorff measure with $\mu=\CH_{\alpha}|_E$.\\

As an application, we use Theorem A to prove some
$L^p$-Wiener-Tauberian theorems. N. Wiener\cite{Wiener}
characterized the cyclic vectors (with respect to translations) in
$L^p(\R)$, for $p = 1, 2$, in terms of the zero set of the Fourier
transform. He conjectured that a similar characterization should
be true for $1 < p < 2$(See page 93 in \cite{Wiener}). Segal
\cite{Segal}, Edwards \cite{Edwards}, Rosenblatt and
Shuman\cite{RosenblattShuman} have disproved the conjecture. Lev
and Olevskii in \cite{LevOlev} recently proved that for any $1 < p
< 2$ one can find two functions in $L^1(\R)\cap C_0(\R)$, such
that one is cyclic in $L^p(\R)$ and the other is not, but their
Fourier transforms have the same (compact) set of zeros. This
disproves Wiener's conjecture. As is well known, there are no
complete answers to $L^p$-Weiner-Tauberian theorems when
$p\neq1,2$. See pages 234-236 in \cite{Donoghue} for initial
results. The problem has been studied by , Pollard \cite{Pollard},
Beurling \cite{Beurling}, Herz \cite{Herz}, Newman \cite{Newman},
Kinukawa \cite{Kinukawa}, Rawat and Sitaram \cite{RawatSitaram}.\\

In \cite{Beurling}, A. Beurling proved that if the Hausdorff
dimension of the closed set where the Fourier transform of $f$
vanishes is $\alpha$ for $0\leq\alpha\leq1$, then the space of
finite linear combinations of translates of $f$ is dense in
$L^p(\R)$ for $2/(2-\alpha)<p$. Now using our result we prove a
similar result (including the end points for the range) on $\R^n$
where sets of Hausdorff dimension is replaced with the sets of
finite packing $\alpha$-measure.\\

C. S Herz studied some versions of $L^p$- Wiener Tauberian
theorems and gave alternative sufficient conditions for the
translates of $f\in L^1\cap L^p(\R^n)$ to span $L^p(\R^n)$ (See
\cite{Herz}). With an additional hypothesis on the zero sets of
Fourier transform of $f$, we improve his result. \\

In \cite{RawatSitaram}, Rawat and Sitaram initiated the study of
$L^p$-versions of the Wiener Tauberian theorem under the action of
motion group $M(n)$ on $\R^n$. We shall show that some of the
results proved in \cite{RawatSitaram} can be improved using our
result. Finally we take up $L^p$-Wiener Tauberian theorem on the
Euclidean motion group $M(2)$.\\

The plan of the thesis is as follows. In the next chapter, we set
up notation and recall definitions and results that are needed for
our results. We start Chapter 2 by studying the relation between
quasi $\alpha$-regular sets and sets of finite $\alpha$-packing
measure and we prove Theorem A and its sharpness. In Chapter 3 we
prove quantitative statements of Theorem A. In Chapter 4 we apply
the Theorem A to prove Wiener-Tauberian type theorems on $\R^n$
and $M(2)$. We end the thesis with a few open problems to be
studied in the future.

\newpage
\thispagestyle{empty} \cleardoublepage
\renewcommand{\chaptermark}[1]{\markboth{Ch.\thechapter. \emph{#1}}{}}
\fancyhf{}
\fancyfoot[CE,CO]{\thepage}
\fancyhead[CO]{\nouppercase{\leftmark}} \fancyhead[CE]{\rightmark}
\renewcommand{\headrulewidth}{.1pt}
\pagenumbering{arabic}
\chapter{Preliminaries}

In this chapter, we recall some definitions and some results from \cite{Cutler}, \cite{Falconer} and \cite{Mattila} which
will be used throughout this thesis.

%=================================================================
%=================================================================

\section{Fractal Geometry}
Let $X$ be a metric space, $\CF$ a family of subsets of $X$ such that for every $\delta>0$ there are $E_1,E_2,...\in\CF$ such that diameter of $E_k$ is less than or equal to $\delta$ for all $k$ and $X=\cup_kE_k$. For $0<\delta\leq\infty$ and $A\subset X$, we
define \textbf{$s$-dimensional Hausdorff measure} as

    \begin{equation*}
       \CH_s(A)=\ \underset{\delta\rightarrow0}{\lim}\ H_{\delta}^s(A),
    \end{equation*}
where
    \begin{equation*}
      \CH^s_{\delta}(A)=\inf\ \big\{\sum_{i=1}^{\infty} d(E_i)^s:A\subset\underset{i}{\cup}E_i, d(E_i)\leq\delta, E_i\in\CF\big\}
    \end{equation*}
and $d(E)$ denotes the diameter of the set $E$. If the family $\CF$ of subsets of $X$ is replaced by the family of closed (or$\ $open) balls, then the resulting measure denoted by $\CS_s$ is called \textbf{$s$-dimensional spherical measure}, that is,
    \begin{equation*}
       \CS_s(A)=\ \underset{\delta\rightarrow0}{\lim}\ S_{\delta}^s(A),
    \end{equation*}
where
    \begin{equation*}
       \CS^s_{\delta}(A)=\inf\ \{\sum_{i=1}^{\infty}r_i^s:A\subset\underset{i\in \N}{\cup}B_{r_i}(x_i),
       r_i\leq\delta\}.
     \end{equation*}

\begin{rem}\label{Spherical}
   Spherical and Hausdorff measures are related by the inequalities
      \begin{equation*}
          \CH_t(A)\leq\CS_t(A)\leq2^t\CH_t(A).
      \end{equation*}
   Hence throughout this thesis, we use $\CF$ as the family of closed (or open) balls in the definition of Hausdorff
   measure.
\end{rem}

The \textbf{Hausdorff dimension} of a set $A$ is given by
    \Bea
       \dim_{H}(A)&=& \sup\ \{s:\CH_s(A)>0\}=\sup\ \{s:\CH_s(A)=\infty\}\\
          &=& \inf\ \{t:\CH_t(A)<\infty\}=\inf\ \{t:\CH_t(A)=0\}.
    \Eea

For a non-empty subset $A$ of $\R^n$, let $A(\epsilon)=\{x\in\R^n\ : \underset{y\in A}{\inf}\ |x-y|<\epsilon\}$ denote the closed $\epsilon$-neighborhood of $A$. Some authors call $A(\epsilon)$, the \textbf{$\epsilon$-parallel set of $A$} or \textbf{$\epsilon$-distance set of $A$}. Let $E$ be a non-empty bounded subset of $\R^n$. The \textbf{$\epsilon$-covering number} of $E$ denoted by $N(E,\epsilon)$, is the smallest number of open balls of radius $\epsilon$ needed to cover $E$. The \textbf{upper} and \textbf{lower Minkowski dimensions of $E$} are defined by
     \bee
         \ol{\dim}_M(E)=\inf\ \{s:\underset{\epsilon\downarrow0}{\limsup}\ N(E,\epsilon)\epsilon^s=0\}
     \eee
 and
     \bee
         \ul{\dim}_M(E)=\inf\ \{s:\underset{\epsilon\downarrow0}{\liminf}\ N(E,\epsilon)\epsilon^s=0\}
     \eee
respectively. Similar to the Hausdorff dimension, the upper and lower Minkowski dimensions are given by
     \Bea
         \ol{\dim}_{M}(E)&=& \sup\ \{s: \underset{\epsilon\downarrow0}{\limsup}\ N(E,\epsilon)\epsilon^s > 0 \} = \sup\ \{s:\underset{\epsilon \downarrow 0}{\limsup}\ N(E,\epsilon)\epsilon^s=\infty\}\\
                                  &=& \inf\ \{t:\underset{\epsilon\downarrow0}{\limsup}\ N(E,\epsilon)\epsilon^s<\infty\}=\inf\ \{t:\underset{\epsilon\downarrow0}{\limsup}\ N(E,\epsilon)\epsilon^s=0\}.\\
         \ul{\dim}_{M}(E)&=& \sup\ \{s:\underset{\epsilon\downarrow0}{\liminf}\ N(E,\epsilon)\epsilon^s>0\}= \sup\ \{s:\underset{\epsilon\downarrow0}{\liminf}\ N(E,\epsilon)\epsilon^s=\infty\}\\
                                  &=& \inf\ \{t:\underset{\epsilon\downarrow0}{\liminf}\ N(E,\epsilon)\epsilon^s<\infty\}=\inf\ \{t:\underset{\epsilon\downarrow0}{\liminf}\
                                  N(E,\epsilon)\epsilon^s=0\}.
      \Eea
 The \textbf{upper and lower Minkowski $\alpha$-contents} of set $E$ are defined by
      \bee
          \CM^{*\alpha}(E)=\underset{\delta\rightarrow0}{\limsup}\ (2\delta)^{\alpha-n} |E(\delta)|,
      \eee
      \bee
          \CM^{\alpha}_*(E)=\underset{\delta\rightarrow0}{\liminf}\ (2\delta)^{\alpha-n} |E(\delta)|,
      \eee
where $|E(\delta)|$ denotes the $n$-dimensional Lebesgue measure of the $\delta$-distance set of $E$. Then the upper and lower Minkowski dimensions of $E$ are given by
     \bee
           \ol{\dim}_M(E)=\inf\ \{s:\CM^{*s}(E)=0\}=\sup\ \{s:\CM^{*s}(E)>0\},
     \eee
     \bee
          \ul{\dim}_M(E)=\inf\ \{s:\CM^{s}_*(E)=0\}=\sup\ \{s:\CM^s_*(E)>0\}.
     \eee
 The \textbf{$\epsilon$-packing number} of $E$ denoted by $P(E,\epsilon)$ is the largest number of \textbf{disjoint} open balls of
radius $\epsilon$ with centres in $E$. The \textbf{$\epsilon$-packing} of $E$ is any collection of disjoint balls $\{B_{r_k}(x_k)\}_k$ with centres $x_k\in E$ and radii satisfying $0<r_k\leq\epsilon/2$. Let $0\leq s<\infty$. For $0<\epsilon<1$ and $A\subset\R^n$, put
     \bee
          P_{\epsilon}^s(A) = \sup\ \{\sum_k(2r_k)^s\},
     \eee
where the supremum is taken over all permissible $\epsilon$-packings, $\{B_{r_k}(x_k)\}_k$ of $A$. Then $P_{\epsilon}^s(A)$ is non-decreasing with respect to $\epsilon$ and we set the \textbf{packing pre measure}, $P^s_0$ as
     \bee
          P^s_0(A) = \underset{\epsilon\downarrow0}{\lim}\ P_{\epsilon}^s(A).
     \eee
We have $P^s_0(\emptyset)=0$, $P^s_0$ is monotonic and finitely subadditive, but not countably sub-additive. The \textbf{$s$-dimensional packing measure} of $A$ denoted by $\CP^s(A)$ is defined as
     \bee
           \CP^s(A)=\inf\ \Big\{\sum_{i=1}^{\infty}P_0^s(A_i)\ :A\subset\bigcup_{i=1}^{\infty}A_i\Big\}.
     \eee
where infimum is taken over all countable coverings $\{A_k\}_k$ of $A$.\\

Recall that $\mu$ is called a Borel regular measure on $X$, if all Borel sets are $\mu$-measurable and for every $A\subset X$, there is a Borel set $B\subset X$ such that $A\subset B$ and $\mu(A)=\mu(B)$. $\mu$ is a Radon measure if all Borel sets are $\mu$-measurable and
\begin{enumerate}
   \item $\mu(K)<\infty$ for compact sets $K\subset X$,
   \item $\mu(V)=\sup\ \{\mu(K)\ :K\subset V \text{is}\ \text{compact}\}$ for open sets $V\subset X$,
   \item $\mu(A)=\inf\ \{\mu(V)\ :A\subset V,\ V\ \text{is}\ \text{open}\}$ for $A\subset X$.
\end{enumerate}
\begin{rem}\label{PBorel}
  \begin{enumerate}
  \item Hausdorff measure is a Borel regular measure. Moreover, if $E$ is a set of finite $\alpha$-dimensional Hausdorff measure, then $\mu$, the restriction of $\alpha$-dimensional Hausdorff measure to $E$ is a Radon measure.
  \item $\CP^s$ is Borel regular. Similar to Hausdorff measure, if $\CP^s(A)<\infty$, then $\nu=\CP^{s}|_A$ is a Radon measure.
  \end{enumerate}
(See Theorem 3.11 in \cite{Cutler} for the proof.)
\end{rem}

\begin{lem}\label{lemPM1} Fix $\epsilon>0$. Let $A$ be a non-empty bounded subset of $\R^n$
and $|A(\epsilon)|$ denote the Lebesgue measure of $A(\epsilon)$,
where $A$ is a non-empty bounded subset of $\R^n$. Then,
   \begin{enumerate}
          \item $N(A,2\epsilon)\leq P(A,\epsilon) \leq N(A,\epsilon/2)$.
          \item $\Omega_nP(A,\epsilon)\epsilon^n\leq |A(\epsilon)|\leq \Omega_nN(A,\epsilon)(2\epsilon)^n$,\\
               where $\Omega_n$ denotes the volume of the unit ball in $\R^n$.
          \item For $0\leq s<\infty,\ P(A,\epsilon/2)\epsilon^{s}\leq P_{\epsilon}^{s}(A)$.
   \end{enumerate}
(See pages 78-79 in \cite{Mattila}.)
\end{lem}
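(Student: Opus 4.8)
The plan is to prove the three inequalities in order, each by a direct geometric packing/covering argument. For part (1), to get $N(A,2\epsilon)\leq P(A,\epsilon)$, I would take a maximal collection of disjoint open balls $B_\epsilon(x_1),\dots,B_\epsilon(x_m)$ with $x_i\in A$, so $m=P(A,\epsilon)$; by maximality every point of $A$ lies within $2\epsilon$ of some $x_i$, so the balls $B_{2\epsilon}(x_i)$ cover $A$, giving $N(A,2\epsilon)\leq m$. For $P(A,\epsilon)\leq N(A,\epsilon/2)$, I would cover $A$ by $N(A,\epsilon/2)$ balls of radius $\epsilon/2$; since the centres of a disjoint family $\{B_\epsilon(x_i)\}$ lie in $A$ and are mutually $2\epsilon$-separated, no two of them can lie in a common ball of radius $\epsilon/2$, so the number of packing balls is at most the number of covering balls. (A small point to watch: the covering balls in the definition of $N$ need not be centred in $A$, so the pigeonhole is on "which covering ball contains $x_i$" rather than on centres — this is the only place a little care is needed.)

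For part (2), the lower bound $\Omega_n P(A,\epsilon)\epsilon^n\leq |A(\epsilon)|$ follows because a maximal disjoint family $\{B_\epsilon(x_i)\}_{i=1}^{P(A,\epsilon)}$ with $x_i\in A$ consists of pairwise disjoint sets each contained in $A(\epsilon)$ (indeed $B_\epsilon(x_i)\subset A(\epsilon)$ since $x_i\in A$), so their total volume $\Omega_n P(A,\epsilon)\epsilon^n$ is at most $|A(\epsilon)|$. For the upper bound, cover $A$ by $N(A,\epsilon)$ balls $B_\epsilon(y_j)$; then $A(\epsilon)\subset\bigcup_j B_{2\epsilon}(y_j)$, whence $|A(\epsilon)|\leq N(A,\epsilon)\,\Omega_n(2\epsilon)^n$.

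For part (3), I would observe that a maximal family of disjoint balls of radius $\epsilon/2$ centred in $A$ — there are $P(A,\epsilon/2)$ of them — is a permissible $\epsilon$-packing in the sense defined just above the lemma (radii $r_k=\epsilon/2\leq\epsilon/2$, centres in $A$), so by definition of $P_\epsilon^s(A)$ as a supremum over such packings, $P_\epsilon^s(A)\geq\sum_k(2r_k)^s=P(A,\epsilon/2)\,\epsilon^s$. None of these steps presents a genuine obstacle; the main thing to get right is bookkeeping of the factors of $2$ and the radii, together with the remark above about covering balls not being centred in $A$. Since all three statements are standard (and the excerpt itself points to pages 78--79 of \cite{Mattila}), I would keep the write-up terse, emphasizing only the maximality arguments and the containments $B_\epsilon(x_i)\subset A(\epsilon)$ and $A(\epsilon)\subset\bigcup B_{2\epsilon}(y_j)$.
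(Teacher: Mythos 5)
Your proof is correct; the paper itself gives no argument for this lemma, deferring to pages 78--79 of Mattila, and your three maximality/volume-comparison arguments (including the careful pigeonhole on which covering ball contains each packing centre, and the containments $B_\epsilon(x_i)\subset A(\epsilon)$ and $A(\epsilon)\subset\bigcup_j B_{2\epsilon}(y_j)$) are exactly the standard ones found there. Nothing to add.
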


The \textbf{lower} and \textbf{upper packing dimension} of any subset $A$ of $\R^n$ are defined respectively as
   \Bea
       \ul{\dim}_P(A) &=& \inf\  \{\underset{i}{\sup}\ \ul{\dim}_M(A_i): A\subset\cup_{i=1}^{\infty}A_i,\ A_i\text{ is bounded}\ \forall i\},\\
       \ol{\dim}_P(A) &=& \inf\  \{\underset{i}{\sup}\ \ol{\dim}_M(A_i): A\subset\cup_{i=1}^{\infty}A_i,\ A_i\text{ is bounded}\ \forall
       i\}.
   \Eea
For any $A\subset\R^n$,
   \begin{eqnarray}
           \nonumber \ol{\dim}_{P}(A) &=& \sup\ \{s:\CP^s(A)>0\} = \sup\ \{s:\CP^s(A)=\infty\}\\
                &=& \inf\ \{t:\CP^t(A)<\infty\} = \inf\ \{t:\CP^t(A)=0\}. \label{EqPackMD}
    \end{eqnarray}
From the definitions, \textbf{the relation between all the three dimensions} is given by the following: For any set $A\subset\R^n$
   \begin{equation}
           \dim_H(A)\leq\ul{\dim}_P(A)\leq\ul{\dim}_M(A)\label{EQlPMHdim}
    \end{equation}
and
    \begin{equation}
          \ul{\dim}_P(A)\leq\ol{\dim}_{P}(A)\leq\ol{\dim}_M(A)\leq n.\label{EQuPMdim}
    \end{equation}
All these inequalities can be strict.

\begin{exa}\label{exPackHaus} Let $E$ be a symmetrical perfect set in $[0,1]$:
     \bee
          E=\underset{n}{\cap}E_n,
     \eee
where $E_n$ is the union of $2^n$ non-overlapping intervals of length $a_n$, each of them containing two intervals of $E_{n+1}$. The sequence $(a_n)$ satisfies $a_0=1$, $2a_{n+1}<a_n$. Then Tricot in \cite{Tricot} proved that
      \Bea
          \dim_H(E) &=& \underset{n}{\liminf}\ \frac{n\ln2}{-\ln a_n},\\
          \ol{\dim}_P(E) &=& \underset{n}{\limsup}\ \frac{n\ln2}{-\ln a_n}.
      \Eea
\end{exa}

See pages 72-73 in \cite{Tricot} for more examples that prove the inequalities in (\ref{EQlPMHdim}) and (\ref{EQuPMdim}) are strict.\\

Let $\alpha<n$. A set $E\subset\R^n$ is said to be \textbf{Ahlfors-David regular $\alpha$-set} or \textbf{$\alpha$-regular} if there exists non-zero positive finite real numbers $a,b$ such that
      \bee
            0<ar^{\alpha}\leq\CH_{\alpha}(E\cap B_r(x))\leq br^{\alpha}<\infty
      \eee
 for all $x\in E$ and $0<r\leq1$.\\

In 1991, A. Salli \cite{Salli} proved that upper Minkowski
dimension of a non-empty bounded $\alpha$-regular set is
$\alpha$.(See page 80 in \cite{Mattila} also for the proof of the
following theorem.)

\begin{thm}\label{thmPMH} Let $A$ be a non-empty bounded subset of $\R^n$. Suppose there exists a Borel measure $\mu$ on $\R^n$ and positive numbers $a,\ b,\ r_0$ and $s$ such that $0<\mu(A)\leq\mu(\R^n)<\infty$ and
      \bee
           0<ar^s\leq\mu(B_r(x))\leq br^s<\infty\ \text{for\ }x\in A,\ 0<r\leq r_0.
      \eee
Then $\dim_H(A)=\ul{\dim}_M(A)=\ol{\dim}_M(A)=s$. Hence $\dim_H(A) =
\ul{\dim}_P(A) = \ol{\dim}_P(A) = \ul{\dim}_M(A) = \ol{\dim}_M(A) =
s$.
\end{thm}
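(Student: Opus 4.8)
The plan is to sandwich $s$ between $\dim_H(A)$ from below and $\ol{\dim}_M(A)$ from above, using the two halves of the hypothesis $ar^s\le\mu(B_r(x))\le br^s$ separately, and then to extract all the remaining equalities from the general inequalities (\ref{EQlPMHdim}) and (\ref{EQuPMdim}). Concretely, once we know $s\le\dim_H(A)$ and $\ol{\dim}_M(A)\le s$, inequalities (\ref{EQlPMHdim}) and (\ref{EQuPMdim}) give the chain
$$s\le\dim_H(A)\le\ul{\dim}_P(A)\le\ol{\dim}_P(A)\le\ol{\dim}_M(A)\le s,$$
so $\dim_H(A)=\ul{\dim}_P(A)=\ol{\dim}_P(A)=\ol{\dim}_M(A)=s$; and then $s=\ul{\dim}_P(A)\le\ul{\dim}_M(A)\le\ol{\dim}_M(A)=s$ forces $\ul{\dim}_M(A)=s$ as well. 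Thus the whole statement reduces to the two one-sided bounds, each a version of the mass distribution principle.

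For the lower bound $s\le\dim_H(A)$ I would use only $\mu(B_r(x))\le br^s$. By Remark~\ref{Spherical} it suffices to work with covers of $A$ by balls. Let $\{B_{r_i}(y_i)\}_i$ be any such cover with $r_i\le\delta\le r_0$; discarding balls missing $A$ and choosing $x_i\in B_{r_i}(y_i)\cap A$ we have $B_{r_i}(y_i)\subset B_{2r_i}(x_i)$, so $\mu(B_{r_i}(y_i))\le b(2r_i)^s$. Countable subadditivity of $\mu$ then gives $0<\mu(A)\le\sum_i\mu(B_{r_i}(y_i))\le 2^sb\sum_i r_i^s$, whence $\CH^s_\delta(A)\ge 2^{-s}b^{-1}\mu(A)>0$ for every $\delta\le r_0$, and therefore $\CH_s(A)\ge 2^{-s}b^{-1}\mu(A)>0$. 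By the characterisation of Hausdorff dimension this yields $\dim_H(A)\ge s$.

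For the upper bound $\ol{\dim}_M(A)\le s$ I would use $\mu(B_r(x))\ge ar^s$ together with $\mu(\R^n)<\infty$. Fix $0<\epsilon\le r_0$ and take a maximal $\epsilon$-packing of $A$, i.e. disjoint balls $B_\epsilon(x_1),\dots,B_\epsilon(x_{P(A,\epsilon)})$ with $x_j\in A$. Since these balls are disjoint and each has $\mu$-measure at least $a\epsilon^s$,
$$P(A,\epsilon)\,a\epsilon^s\le\sum_j\mu(B_\epsilon(x_j))=\mu\Big(\bigcup_j B_\epsilon(x_j)\Big)\le\mu(\R^n)<\infty,$$
so $P(A,\epsilon)\le a^{-1}\mu(\R^n)\,\epsilon^{-s}$. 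By \lemref{lemPM1}(1), $N(A,2\epsilon)\le P(A,\epsilon)\le a^{-1}\mu(\R^n)\,\epsilon^{-s}$, hence for every $t>s$, $N(A,2\epsilon)(2\epsilon)^t\le 2^ta^{-1}\mu(\R^n)\,\epsilon^{t-s}\to0$ as $\epsilon\downarrow0$. Thus $\limsup_{\epsilon\downarrow0}N(A,\epsilon)\epsilon^t=0$ for all $t>s$, i.e. $\ol{\dim}_M(A)\le s$, which closes the argument. There is no deep obstacle here; the only point requiring care — rather than ingenuity — is that both halves of the ball estimate are assumed only for centres $x\in A$ and radii $r\le r_0$, which is exactly why covering balls must be replaced by doubled balls centred in $A$, why one controls $\CH^s_\delta$ only for $\delta\le r_0$ and $N(A,\epsilon)$ only for small $\epsilon$, and why the finiteness $\mu(\R^n)<\infty$ (which uses boundedness of $A$) is essential to make the packing count in the upper bound finite.
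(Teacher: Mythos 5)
Your proof is correct. The paper does not reproduce an argument for this theorem (it simply cites Salli and page 80 of \cite{Mattila}), and what you have written is precisely the standard proof found there: the mass distribution principle with the upper ball estimate for $\dim_H(A)\geq s$, a disjointness/counting argument with the lower ball estimate for $\ol{\dim}_M(A)\leq s$, and the general chain of inequalities (\ref{EQlPMHdim})--(\ref{EQuPMdim}) to force all the intermediate dimensions to equal $s$. The only cosmetic point is that in the lower bound you apply $\mu(B_{2r_i}(x_i))\leq b(2r_i)^s$, which requires $2r_i\leq r_0$, so the covering radii should be restricted to $\delta\leq r_0/2$ rather than $\delta\leq r_0$; this changes nothing.
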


\begin{defn}\label{defnSelfsimilar}
A \textbf{similitude} $S$ is a map $S:\R^n\rightarrow\R^n$ such that
       $$S(x)=sR(x)+b,\ x\in\R^n$$
for some isometry $R$, $b\in\R^n$ and $0<s<1$. The number $s$ is
called contraction ratio or dilation factor of $S$. Let
$\CS=\{S_1,...S_m\},\ m\geq2$ be a collection of finite set of
similitudes with dilation factors $s_1,...,s_m$ (so that
$S_j=s_jR_j+b_j$ where $R_j$ denotes an isometry and
$b_j\in\R^n$). We say that a non-empty compact set $K$ is
\textbf{invariant} under $\CS$ if
      \bee
            K=\cup_{j=1}^m\ S_jK.
      \eee
$\CS$ satisfies the \textbf{open set condition} if there is a
non-empty open set $O$ such that $\cup_{j=1}^m\ S_j(O)\subset O$
and $S_{j}(O)\cap S_{k}(O)=\emptyset$ for $j\neq k$. We call the
invariant set $K$ under $\CS$ to be \textbf{self-similar} if with
$\alpha=\dim_H(K)$,
      \bee
           \CH_{\alpha}(S_{j_1}(K)\cap S_{j_2}(K))=0\ \ \text{for}\ j_1\neq j_2.
      \eee
\end{defn}

\begin{thm}\label{self-similar--alpha-regular}
If $\CS$ satisfies the open set condition, then the invariant set
$K$ is self-similar and $0<\CH_{\alpha}(K)<\infty$, where
$\alpha=\dim_{H}(K)$. Moreover, $\alpha$ is the unique number for
which
      \bee
           \sum_{j=1}^m\ s_j^{\alpha}=1.
      \eee
Additionally, if $O$ is the open set asserted to exist by the open
set condition such that it contains a ball of radius $c_1$ and it
is contained in a ball of radius $c_2$,
      \bee
           \frac{s^{\alpha}}{diam(K)^{\alpha}}\leq \frac{\CH_{\alpha}(E\cap
           B_r(x))}{r^{\alpha}}\leq \frac{(1+2c_2)^n}{s^nc_1^n},\ \forall\ 0<r\leq 1,
      \eee
where $diam(K)$ denotes the diameter of $K$ and $s=\min_{j=1}^m
\{s_j\}$. That is, $K$ is an $\alpha$-regular set.
\end{thm}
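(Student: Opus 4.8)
The plan is to reduce everything to the classical Hutchinson theory and then extract the quantitative two-sided bound by a covering/packing argument. First I would fix the self-similarity: writing $\CS=\{S_1,\dots,S_m\}$ with dilation factors $s_1,\dots,s_m$, define $\Phi(t)=\sum_{j=1}^m s_j^t$. Since $\Phi$ is continuous, strictly decreasing on $[0,\infty)$, with $\Phi(0)=m\geq 2>1$ and $\Phi(t)\to 0$, there is a unique $\alpha$ with $\sum_{j=1}^m s_j^\alpha=1$. The core of the argument is the standard mass-distribution construction: index finite words $\omega=(j_1,\dots,j_k)\in\{1,\dots,m\}^k$, set $S_\omega=S_{j_1}\circ\cdots\circ S_{j_k}$ with ratio $s_\omega=s_{j_1}\cdots s_{j_k}$, and assign to the cylinder $K_\omega=S_\omega(K)$ the mass $\mu(K_\omega)=s_\omega^\alpha$; since $\sum_{j}s_j^\alpha=1$ this is consistent and defines a Borel probability measure $\mu$ carried by $K$. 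Using the open set condition one shows, by the usual Hutchinson--Falconer volume-packing lemma (disjoint copies $S_\omega(O)$ of a fixed open set $O$ containing a ball of radius $c_1 s_\omega$ and contained in a ball of radius $c_2 s_\omega$ all meeting $B_r(x)$ must be few in number), that $\mu(B_r(x))\asymp r^\alpha$ uniformly for $x\in K$, $0<r\le 1$. By \thmref{thmPMH} this already gives $0<\CH_\alpha(K)<\infty$ and that all the dimensions of $K$ equal $\alpha$, and it gives the overlap condition $\CH_\alpha(S_{j_1}(K)\cap S_{j_2}(K))=0$ for $j_1\neq j_2$ because those two sets have disjoint interiors relative to $O$ and $\CH_\alpha$ has no atoms in the needed sense; hence $K$ is self-similar in the sense of \thmref{thmPMH}'s preceding definition.

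For the explicit constants I would argue as follows. For the lower bound: fix $x\in K$ and $0<r\le 1$, and choose $r$ with $s\cdot r<\mathrm{diam}(K)\le$ (after rescaling) so that there is a word $\omega$ with $x\in K_\omega$ and $s_\omega\,\mathrm{diam}(K)\le r$, with $s_\omega$ as large as possible; the stopping rule gives $s_\omega\ge s\, r/\mathrm{diam}(K)$ where $s=\min_j s_j$. Then $K_\omega\subset B_r(x)$, so $\CH_\alpha(K\cap B_r(x))\ge \CH_\alpha(K_\omega)=s_\omega^\alpha\,\CH_\alpha(K)$, and after normalizing $\CH_\alpha(K)=1$ (or tracking it through) one obtains the stated lower bound $s^\alpha/\mathrm{diam}(K)^\alpha$. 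For the upper bound: the cylinders $K_{\omega}$ with $s_\omega\le r< s_{\omega^-}$ (parent word) that meet $B_r(x)$ have the property that the corresponding open sets $S_\omega(O)$ are pairwise disjoint, each contains a ball of radius $c_1 s_\omega\ge c_1 s r$, and each is contained in $B_{(1+2c_2)r}(x)$; comparing Lebesgue measures forces the number of such $\omega$ to be at most $(1+2c_2)^n/(s^n c_1^n)$, and since $K\cap B_r(x)$ is covered by these cylinders, $\CH_\alpha(K\cap B_r(x))\le \sum_\omega s_\omega^\alpha \le \#\{\omega\}\cdot r^\alpha \le \frac{(1+2c_2)^n}{s^n c_1^n}\,r^\alpha$.

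The main obstacle — and the step that needs genuine care rather than bookkeeping — is the upper bound's packing estimate: one must verify that the ``first entry at scale $r$'' words $\omega$ give pairwise disjoint open sets $S_\omega(O)$ and, crucially, that these open sets are not too small, i.e. that the contraction ratios $s_\omega$ do not drop far below $r$. This is exactly where the hypothesis $\mathrm{ratio}\le s_{\omega^-}<$ (something comparable to $r$) together with $s_\omega = s_{\omega^-} s_{j_k}\ge s\, s_{\omega^-}$ is used, so that $c_1 s_\omega \ge c_1 s r$ and the volume count closes. A secondary technical point is handling the diameter normalization consistently in the lower bound so that the constant comes out as $s^\alpha/\mathrm{diam}(K)^\alpha$ rather than with spurious extra factors; this is routine once one agrees to measure $B_r(x)$ against cylinders $K_\omega$ of comparable diameter $s_\omega\,\mathrm{diam}(K)$. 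Everything else — existence and uniqueness of $\alpha$, the self-similarity, $0<\CH_\alpha(K)<\infty$ — is the Hutchinson/Falconer package invoked essentially verbatim, and the conclusion ``$K$ is $\alpha$-regular'' is then immediate from the displayed two-sided inequality.
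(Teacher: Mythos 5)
The thesis does not actually prove this theorem; it is quoted from the literature with the pointer ``see page 67 in \cite{Mattila} and \cite{Hutchinson}'', and your sketch is precisely the standard Hutchinson--Falconer argument that those references give (unique $\alpha$ from monotonicity of $t\mapsto\sum s_j^t$, the self-similar mass distribution $\mu(K_\omega)=s_\omega^\alpha$, the volume-packing count of stopping-time words $\omega$ with $s_\omega\le r<s_{\omega^-}$ for the upper bound, and a single cylinder $K_\omega\subset B_r(x)$ for the lower bound). So in route and structure there is nothing to compare: you are reconstructing the cited proof, and the covering/packing bookkeeping is right.

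One step as you justify it does not go through: the null-overlap claim $\CH_{\alpha}(S_{j_1}(K)\cap S_{j_2}(K))=0$. Disjointness of the open sets $S_{j_1}(O)$ and $S_{j_2}(O)$ does not imply that the closed sets $S_{j_1}(K)$ and $S_{j_2}(K)$ meet in an $\CH_{\alpha}$-null set (two closed cubes with disjoint interiors can share a face of full lower-dimensional measure), and ``$\CH_{\alpha}$ has no atoms'' is not the relevant property. The correct argument is the equality-in-subadditivity one: once $0<\CH_{\alpha}(K)<\infty$ is known, the scaling identity gives
\begin{equation*}
\CH_{\alpha}(K)=\CH_{\alpha}\Big(\bigcup_{j}S_j(K)\Big)\le\sum_{j}\CH_{\alpha}(S_j(K))=\sum_{j}s_j^{\alpha}\,\CH_{\alpha}(K)=\CH_{\alpha}(K),
\end{equation*}
and equality throughout forces every pairwise intersection to be $\CH_{\alpha}$-null. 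A smaller point: your two-sided bound carries a factor $\CH_{\alpha}(K)$ that you wave away by ``normalizing $\CH_{\alpha}(K)=1$''; since $\CH_{\alpha}(K)$ is a fixed number determined by $K$ (one only knows $0<\CH_{\alpha}(K)\le\mathrm{diam}(K)^{\alpha}$ from the cylinder covering), you should either keep that factor in the constants or say explicitly how the stated constants absorb it, rather than renormalize a quantity you cannot choose.
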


(See page 67 in \cite{Mattila} and \cite{Hutchinson} for proof.)\\

\begin{rem}\label{exCantor}
If $m=2$, $S_1(x)=x/3$, $S_2(x)=x/3+2/3$ for $x\in[0,1]$ in
Theorem \ref{self-similar--alpha-regular}, then the Cantor set $K$
is invariant under $\CS=\{S_1,S_2\}$. The Hausdorff dimension of
$K$ is $\ln2/\ln3$ and it is self-similar. Hence it is
$\ln2/\ln3$-regular set.
\end{rem}

If $\nu$ is a measure, the \textbf{$\alpha$-upper density of} $\nu$ at $x$, $\ol{D^{\alpha}}(\nu,x)$ is defined as
        $$\ol{D^{\alpha}}(\nu,x)\ =\ \underset{r\rightarrow0}{\limsup}\ (2r)^{-\alpha}\nu(B_r(x)),$$
where $B_r(x)$ is the ball of radius $r$ with centre $x$. Similarly \textbf{$\alpha$-lower density} of $\nu$ at $x$, $\ul{D^{\alpha}}(\nu,x)$ is defined using $\liminf$.\\

In \cite{Strichartz}, Strichartz defined the following:
\begin{itemize}
  \item A set $E\subset\R^n$ is said to be \textbf{regular}, if $\ul{D^{\alpha}} (\mu_{\alpha},x) = \ol{D^{\alpha}} (\mu_{\alpha},x)=1$ for $\CH_{\alpha}$-almost all $x\in E$ where $\mu_{\alpha}=\CH_{\alpha}|_E$.
  \item A set $E\subset\R^n$ is called \textbf{quasi $\alpha$-regular} if there exists a non-zero finite constant $a$ such that $a\leq\ul{D^{\alpha}}(\mu_{\alpha},x)$ for $\CH_{\alpha}$-almost all $x\in E$.
  \item A set $E\subset\R^n$ is said to be \textbf{locally uniformly $\alpha$-dimensional} if there exists a non-zero finite constant $b$ such that $\CH_{\alpha}(E\cap B_r(x))\leq br^{\alpha}$ for all $x\in E$ and for all $0<r\leq 1$.
  \item A measure $\mu$ is called locally uniformly $\alpha$ dimensional if there exists a non-zero finite constant $\lambda$ such that for all $\delta\leq1$,
      \be
      \mu(B_{\delta}(x)) \leq \lambda\delta^{\alpha}.\label{eq-mu-loc-uni-alpha}
      \ee
      for $\mu$-almost every $x\in\R^n$. Note that $E$ is locally uniformly $\alpha$-dimensional if and only if $\CH_{\alpha}|_E$ is locally uniformly $\alpha$-dimensional.
\end{itemize}

A powerful theorem of Besicovitch \cite{Besicovitch} shows that
every Borel set of infinite $\CH_s$ measure contains subsets of
arbitrary finite $\CH_s$ measure that are locally uniformly
$s$-dimensional (See page 163 in \cite{Strichartz} and page 67 in
\cite{Falconer}).

 \begin{rem}\label{regulardefns}
Clearly, Ahlfors-David $\alpha$-regular sets are quasi $\alpha$-regular sets. Also bounded self-similar sets $K$
with self-similar dimension $\alpha$, are locally uniformly
$\alpha$-dimensional and quasi $\alpha$-regular. (See Theorem 5.8
in page 179 in \cite{Strichartz} for proof.)
 \end{rem}

 The following lemma gives the relation between the Hausdorff measure and the packing measure of a set:
\begin{lem}\label{lemPHmsr}
   Let $A\subset\R^n$ be any set.
     \begin{enumerate}
       \item $\CH_s(A)\leq\CP^s(A).$
       \item Let $\CP^s(A)<\infty$. $\CP^s(A)=\CH_s(A)$ if and only if $\ul{D^s}(\nu,x)=\ol{D^{s}}(\nu,x)=1$ for $\CP^s$-almost all $x\in A$, where $\nu$ denotes the Hausdorff measure $\CH_s$ restricted to
       $A$.
       \item Let $\CH_s(A)<\infty$ and $\nu$ denote the Hausdorff measure $\CH_s$ restricted to $A$. If $\ul{D^s}(\nu,x) > 0$ for $\CP^s$-almost all $x\in A$, then
       $\dim_H(A)=\ol{\dim}_P(A)$.
     \end{enumerate}
(See pages 84, 96 and 98 in \cite{Mattila} for the proof.)
\end{lem}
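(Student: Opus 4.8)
The plan is to deduce all three parts from the classical density comparison theorems for Hausdorff and packing measures, with Part (1) serving as the bridge between the two. The tools I would invoke---each proved by a Vitali/Besicovitch covering argument (see \cite{Mattila})---are, for a finite Radon measure $\mu$ on $\R^n$, a set $A\subset\R^n$ and $0<\lambda<\infty$: (i) if $\ol{D^s}(\mu,x)\geq\lambda$ on $A$ then $\CH_s(A)\leq\lambda^{-1}\mu(A)$; (ii) if $\ul{D^s}(\mu,x)\leq\lambda$ on $A$ then $\mu(A)\leq\lambda\,\CP^s(A)$; (iii) if $\ul{D^s}(\mu,x)\geq\lambda$ on $A$ then $\CP^s(A)\leq\lambda^{-1}\mu(A)$; and $\ol{D^s}(\CH_s|_A,x)\leq1$ for $\CH_s$-a.e.\ $x$ whenever $\CH_s(A)<\infty$. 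In the normalisation used here (densities and both measures built from $(2r)^s$) these hold with the sharp constant $1$, which will matter. For Part (1), I would first reduce, via countable subadditivity of $\CH_s$ and the definition $\CP^s(A)=\inf\sum_iP_0^s(A_i)$ over countable covers, to proving $\CH_s(E)\leq P_0^s(E)$ for an arbitrary $E$; this comes from a Vitali $5r$-covering of $E$ by small balls centred in $E$, the only subtle point being the standard refinement that upgrades the resulting constant $5^s$ to the sharp $1$. Part (1) then feeds into the other two.

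For Part (2): since $\CP^s(A)<\infty$ forces $\CH_s(A)\leq\CP^s(A)<\infty$ by Part (1), the measure $\nu=\CH_s|_A$ is a finite Radon measure (Remark~\ref{PBorel}), and every $\CP^s$-null set is $\CH_s$-null, hence $\nu$-null, again by Part (1). For the ``if'' direction I would put $N=\{x\in A:\ul{D^s}(\nu,x)<1\}$, which the hypothesis makes $\CP^s$-null; then (iii) on $A\setminus N$ with $\lambda=1$ gives $\CP^s(A\setminus N)\leq\nu(A\setminus N)=\nu(A)=\CH_s(A)$, while $\CP^s(N)=0$, so Borel additivity of $\CP^s$ yields $\CP^s(A)\leq\CH_s(A)$, hence equality with Part (1). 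It is for this step that phrasing the hypothesis ``$\CP^s$-a.e.''\ is exactly right, since it is $\CP^s(N)$ that must vanish. For the ``only if'' direction, starting from $\CP^s(A)=\CH_s(A)<\infty$, the key mechanism is that \emph{any Borel $B\subset A$ with $\CH_s(B)=0$ satisfies $\CP^s(B)=0$}: indeed $\CP^s(A\setminus B)\geq\CH_s(A\setminus B)=\CH_s(A)=\CP^s(A)$ by Part (1), so $\CP^s(A\setminus B)=\CP^s(A)$ and hence $\CP^s(A\cap B)=0$. Then $\{\ol{D^s}(\nu,\cdot)>1\}$ is $\CH_s$-null by the upper density bound, hence $\CP^s$-null; and for each fixed $t<1$, Borel additivity of $\CP^s$ and $\CH_s$ along $A=C_t\cup(A\setminus C_t)$, where $C_t=\{x\in A:\ul{D^s}(\nu,x)<t\}$, combined with Part (1) and $\CP^s(A)=\CH_s(A)$, forces $\CP^s(C_t)=\CH_s(C_t)$; then (ii) with $\lambda=t$ gives $\CH_s(C_t)=\nu(C_t)\leq t\,\CP^s(C_t)=t\,\CH_s(C_t)$, so $\CH_s(C_t)=0$ and hence $\CP^s(C_t)=0$. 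Letting $t\uparrow1$ gives $\ul{D^s}(\nu,\cdot)\geq1$ $\CP^s$-a.e., and combining with $\ul{D^s}\leq\ol{D^s}$ finishes. The sharp constant $1$ in (ii) is genuinely used here: a worse constant would only yield $\ul{D^s}\geq c$ with $c<1$.

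For Part (3) I would reduce to $0<\CH_s(A)<\infty$ (if $\CH_s(A)=0$ then $\nu\equiv0$ and the hypothesis just says $\CP^s(A)=0$, a degenerate case), so that $\dim_H(A)=s$; by (\ref{EQlPMHdim})--(\ref{EQuPMdim}) it then remains to show $\ol{\dim}_P(A)\leq s$. Decompose $A=A_0\cup\bigcup_{k\geq1}A_k$ with $A_k=\{x\in A:\ul{D^s}(\nu,x)\geq1/k\}$ and $A_0=\{x\in A:\ul{D^s}(\nu,x)=0\}$. The hypothesis says exactly that $\CP^s(A_0)=0$, so $\ol{\dim}_P(A_0)\leq s$ by (\ref{EqPackMD}); and (iii) with $\lambda=1/k$ gives $\CP^s(A_k)\leq k\,\nu(A_k)\leq k\,\CH_s(A)<\infty$, so $\ol{\dim}_P(A_k)\leq s$ by (\ref{EqPackMD}). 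Countable stability of packing dimension then gives $\ol{\dim}_P(A)\leq s$, hence $\ol{\dim}_P(A)=s=\dim_H(A)$.

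The hard part is not the bookkeeping above but the density comparison theorems (i)--(iii) with their sharp constants---especially (ii) and (iii) for the packing measure, and the sharp constant $1$ in Part (1)---whose proofs are the delicate Vitali/Besicovitch covering arguments of \cite{Mattila}; once those are granted, Parts (2) and (3) follow as sketched.
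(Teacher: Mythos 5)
The paper does not actually prove this lemma --- it is stated as imported background with a bare citation to Mattila (pp.~84, 96, 98) --- so there is no in-paper argument to compare against; what you have done is reconstruct the proof from the density comparison results that the paper itself records (Lemmas~\ref{lemPM2} and~\ref{lemPM3} and Remark~\ref{PBorel}), which is exactly the route taken in the cited source. Your assembly is correct: the ``if'' direction of Part~(2) via Lemma~\ref{lemPM3}(4) with $\lambda=1$ on the complement of the $\CP^s$-null bad set, the ``only if'' direction via the observation that $\CP^s(A)=\CH_s(A)<\infty$ forces $\CH_s$-null Borel subsets of $A$ to be $\CP^s$-null together with Lemma~\ref{lemPM3}(3) applied to $C_t$, and Part~(3) via the exhaustion $A_k=\{\ul{D^s}(\nu,\cdot)\geq 1/k\}$, (\ref{EqPackMD}) and countable stability of $\ol{\dim}_P$, all go through, and you correctly identify that the sharp constant $1$ in Lemma~\ref{lemPM3}(3) is what makes the $C_t$ step close.

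Two small caveats. First, in Part~(3) the degenerate case you set aside is not merely degenerate: if $\CH_s(A)=0$ then $\nu\equiv 0$, the hypothesis reduces to $\CP^s(A)=0$, and the conclusion $\dim_H(A)=\ol{\dim}_P(A)$ can genuinely fail (both dimensions are then only bounded above by $s$ and need not coincide, cf.\ Example~\ref{exPackHaus}); the statement is really intended with $0<\CH_s(A)<\infty$, so you should say explicitly that you are proving it under that reading rather than ``reducing'' to it. Second, in Part~(1) the phrase ``standard refinement that upgrades $5^s$ to $1$'' undersells the issue: the $5r$-covering argument alone only yields $\CH_s\leq 5^s\CP^s$, and the constant~$1$ requires a further input (e.g.\ combining the crude bound, used only to kill null sets, with the density statement $\ul{D^s}(\CP^s|_B,x)=1$ $\CP^s$-a.e.\ from Lemma~\ref{lemPM2}); since you defer this to Mattila anyway, as the paper does, this is a presentational rather than a mathematical gap.
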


The local properties of sets with finite Hausdorff and packing measures can be studied with the help of the following lemma:
\begin{lem}\label{lemPM2}
Suppose $\CH_{\alpha}(B)<\infty$, for $0\leq\alpha<n$. Let $\mu=\CH_{\alpha}|_B$. Then
      \begin{enumerate}
                \item  $2^{-\alpha}\leq\overline{D^{\alpha}}(\mu,x)\leq1$ for $\CH_{\alpha}$ almost all $x\in B$.
                \item $\ol{D^{\alpha}}(\mu,x)=0$ for $\CH_{\alpha}$-almost all $x\notin B$.
      \end{enumerate}
Suppose $\CP^{\alpha}(B)<\infty$, for $0\leq\alpha<n$. Then
$\ul{D^{\alpha}}(\CP^{\alpha}|_B,x)=1$ for $\CP^{\alpha}$ almost
all $x\in B$. (See pages 89-95 in \cite{Mattila} for the proof.)
\end{lem}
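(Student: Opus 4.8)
The plan is to apply the standard density-theorem machinery. The main ingredients are: the Vitali covering theorem for Radon measures (from a fine cover of a set by closed balls one extracts a countable disjoint subfamily covering the set up to a null set), the elementary $5r$-covering lemma, the outer and inner regularity of the finite Radon measures $\mu=\CH_\alpha|_B$ and $\nu=\CP^\alpha|_B$ supplied by Remark~\ref{PBorel}, and, for the packing part, the definition of $\CP^\alpha$ through the premeasure $P^\alpha_0=\lim_{\epsilon\downarrow0}P^\alpha_\epsilon$ together with $\CH_s\le\CP^s$ from Lemma~\ref{lemPHmsr}. All the super- and sublevel sets of $\ol{D^\alpha}(\mu,\cdot)$ and $\ul{D^\alpha}(\nu,\cdot)$ occurring below are Borel, hence measurable for the relevant measure.

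For the upper bound in (1), fix $t>1$ and put $A_t=\{x\in B:\ol{D^\alpha}(\mu,x)>t\}$. By outer regularity pick an open $U\supset A_t$ with $\mu(U)\le\CH_\alpha(A_t)+\epsilon$; the balls $B_r(x)\subset U$ with $x\in A_t$, $r$ arbitrarily small and $\mu(B_r(x))>t(2r)^\alpha$ form a fine cover of $A_t$, so a disjoint Vitali subfamily $\{B_{r_i}(x_i)\}$ covers $\mu$-almost all of $A_t$; the leftover lies in $B$ and so is $\CH_\alpha$-null, whence $\CH_\alpha(A_t)\le\sum_i(2r_i)^\alpha<t^{-1}\mu(U)\le t^{-1}(\CH_\alpha(A_t)+\epsilon)$, and letting $\epsilon\to0$ and $t\downarrow1$ gives $\ol{D^\alpha}(\mu,x)\le1$ a.e.\ on $B$. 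For the lower bound, fix $s<2^{-\alpha}$, write $\{x\in B:\ol{D^\alpha}(\mu,x)<s\}=\bigcup_kA^k$ where on $A^k$ one has $\mu(B_r(x))<s(2r)^\alpha$ for $0<r<1/k$, and cover $A^k$ by balls $V_i$ of diameter $<1/k$ with $\sum_id(V_i)^\alpha\le\CH_\alpha(A^k)+\epsilon$; choosing $x_i\in V_i\cap A^k$ gives $V_i\subset B_{d(V_i)}(x_i)$, hence $\mu(A^k)\le\sum_i\mu(B_{d(V_i)}(x_i))<s\,2^\alpha\sum_id(V_i)^\alpha\le s\,2^\alpha(\CH_\alpha(A^k)+\epsilon)$, and since $s2^\alpha<1$ and $\mu(A^k)=\CH_\alpha(A^k)<\infty$ we get $\mu(A^k)=0$. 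For (2), for $s>0$ set $C_s=\{x\notin B:\ol{D^\alpha}(\mu,x)>s\}$; then $\mu(C_s)=0$ because $C_s\cap B=\emptyset$, so outer regularity gives an open $U\supset C_s$ with $\mu(U)<\epsilon$, the balls $B_r(x)\subset U$ with $x\in C_s$ and $\mu(B_r(x))>s(2r)^\alpha$ cover $C_s$, the $5r$-covering lemma yields disjoint $\{B_{r_i}(x_i)\}$ with $C_s\subset\bigcup_iB_{5r_i}(x_i)$, and therefore $\CH_\alpha(C_s)\le5^\alpha\sum_i(2r_i)^\alpha<5^\alpha s^{-1}\mu(U)<5^\alpha s^{-1}\epsilon\to0$.

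For the packing assertion one shows $\ul{D^\alpha}(\nu,x)=1$ for $\nu$-a.e.\ $x\in B$ (equivalently, for $\CP^\alpha$-a.e.\ $x\in B$) by proving both inequalities. ``$\le1$'': if $t>1$ and $W=\{x\in B:\nu(B_r(x))>t(2r)^\alpha\text{ for }0<r<1/k\}$ had $\nu(W)>0$, pick by inner regularity a compact $K\subset W$ with $\nu(K)>0$; every $\rho$-packing of $K$ with $\rho<1/k$ consists of disjoint balls contained in the neighbourhood $K(\rho)$, so $t\sum_i(2r_i)^\alpha<\nu(K(\rho))$, hence $P^\alpha_\rho(K)\le t^{-1}\nu(K(\rho))$; letting $\rho\downarrow0$, so $\nu(K(\rho))\to\nu(K)$, gives $P^\alpha_0(K)\le t^{-1}\nu(K)$, and since $\CP^\alpha(K)\le P^\alpha_0(K)$ this forces $\nu(K)\le t^{-1}\nu(K)$, a contradiction. ``$\ge1$'': fix $s'<1$ and suppose $Z=\{x\in B:\ul{D^\alpha}(\nu,x)<s'\}$ has $\nu(Z)>0$; since $\nu(Z)=\CP^\alpha(Z)$, choose a cover $Z\subset\bigcup_lG_l$ with $\sum_lP^\alpha_0(G_l)<\nu(Z)+\epsilon$ and disjointify inside $Z$ to get $Z=\bigsqcup_lG_l'$, $G_l'\subset G_l$; for each $m$ the balls $B_r(x)$ with $x\in G_l'$, $r<1/m$ and $\nu(B_r(x))<s'(2r)^\alpha$ form a fine cover of $G_l'$, so a disjoint Vitali subfamily covers $\nu$-almost all of $G_l'$ and is a $(2/m)$-packing of $G_l$, giving $\nu(G_l')\le\sum_i\nu(B_{r_i}(x_i))<s'\sum_i(2r_i)^\alpha\le s'P^\alpha_{2/m}(G_l)$; letting $m\to\infty$ and using that $\epsilon\mapsto P^\alpha_\epsilon$ is non-decreasing gives $\nu(G_l')\le s'P^\alpha_0(G_l)$, and summing over $l$ yields $\nu(Z)\le s'\sum_lP^\alpha_0(G_l)<s'(\nu(Z)+\epsilon)$, which forces $\nu(Z)=0$. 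Letting $t\downarrow1$ and $s'\uparrow1$ along sequences finishes the proof.

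The Vitali and $5r$ extractions and the $\delta$-cover bookkeeping are routine; the genuinely delicate step is the ``$\ge1$'' half of the packing statement. There it is essential (i) to cover the bad set $Z$ itself, not $B$, by a $\CP^\alpha$-near-optimal family, so that the premeasure sum is comparable to $\nu(Z)$ and the estimate becomes self-improving ($\nu(Z)\le s'\CP^\alpha(Z)$); (ii) to compare the packings produced by Vitali to the premeasure $P^\alpha_0$ rather than to $P^\alpha_\epsilon$, which is why the packing radii must be driven to zero (recall $\epsilon\mapsto P^\alpha_\epsilon$ is non-decreasing); and (iii) to invoke $\CP^\alpha\le P^\alpha_0$ to close the loop. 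The ``$\le1$'' half, by contrast, rests on the compact exhaustion of $W$, which is what makes $\nu(K(\rho))\to\nu(K)$ available.
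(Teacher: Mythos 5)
Your proof is correct. The paper itself gives no argument for this lemma --- it simply cites pages 89--95 of Mattila's book --- and what you have written is a faithful reconstruction of exactly those standard density theorems (Vitali/$5r$ covering extractions for the Hausdorff bounds, and the premeasure--packing comparison with $\CP^{\alpha}\leq P^{\alpha}_0$ for the packing density), so there is nothing to contrast.
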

Information on upper densities of a Radon measure $\mu$ can be used to compare $\mu$ with Hausdorff measures and similarly
information on lower densities of $\mu$ can be used to compare $\mu$ with packing measure:

\begin{lem}\label{lemPM3}
Let $\mu$ be a Radon measure, $B\subset\R^n$ and $0<\lambda<\infty$.
        \begin{enumerate}
                   \item If $\ol{D^{\alpha}}(\mu,x)\leq\lambda$ for $x\in B$, then $\mu(B)\leq 2^{\alpha}\lambda\CH_{\alpha}(B)$.
                   \item If $\ol{D^{\alpha}}(\mu,x)\geq\lambda$ for $x\in B$, then $\mu(B)\geq \lambda\CH_{\alpha}(B)$.
                   \item If $\ul{D^{\alpha}}(\mu,x)\leq\lambda$ for $x\in B$, then $\mu(B)\leq \lambda\CP^{\alpha}(B)$.
                   \item If $\ul{D^{\alpha}}(\mu,x)\geq\lambda$ for $x\in B$, then $\mu(B)\geq \lambda\CP^{\alpha}(B)$.
         \end{enumerate}
(See pages 95-97 in \cite{Mattila} for the proof.)
\end{lem}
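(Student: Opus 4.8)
The four assertions split into two dual pairs. In parts (1) and (4) the hypothesis controls the density at \emph{every} sufficiently small scale (an upper density $\le\lambda$, resp.\ a lower density $\ge\lambda$), whereas in (2) and (3) it does so only along \emph{some} sequence of scales shrinking to $0$. Accordingly the plan is to prove (1) and (4) by a direct ball-covering, resp.\ ball-packing, estimate after stratifying $B$ by scale, and to prove (2) and (3) by the Vitali covering theorem. I would first reduce to $\CH_\alpha(B)<\infty$ in (1),(2) and $\CP^\alpha(B)<\infty$ in (3),(4) --- the complementary cases being trivial in (1),(3) and following by exhaustion in (2),(4) --- so that $\mu|_B$, $\CH_\alpha|_B$ and $\CP^\alpha|_B$ are Radon (Remark~\ref{PBorel}); and I would work throughout with outer measures, using only the Borel regularity of $\CH_\alpha,\CP^\alpha$ and the inner/outer regularity of $\mu$, so that no measurability of $B$ itself is needed.

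For (1), fix $\eps>0$ and set $B_m=\{x\in B:\mu(B_r(x))\le(\lambda+\eps)(2r)^\alpha\text{ for all }0<r<1/m\}$; since $\ol{D^\alpha}(\mu,x)\le\lambda$ these increase to $B$, so $\mu(B)=\lim_m\mu(B_m)$ by continuity from below. Cover $B_m$ by balls $B_{\rho_j}(y_j)$ with $2\rho_j<1/m$ and $\sum_j(2\rho_j)^\alpha<\CH_\alpha(B_m)+\eps$; replacing each ball that meets $B_m$ by the concentric ball of radius $2\rho_j$ about a point of $B_m$ gives $\mu(B_{\rho_j}(y_j))\le(\lambda+\eps)2^\alpha(2\rho_j)^\alpha$, hence $\mu(B_m)\le 2^\alpha(\lambda+\eps)\bigl(\CH_\alpha(B)+\eps\bigr)$. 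Letting $m\to\infty$ and $\eps\downarrow0$ gives $\mu(B)\le 2^\alpha\lambda\,\CH_\alpha(B)$; the $2^\alpha$ is exactly the cost of re-centring a covering ball at a point of $B$. Part (4) is the mirror argument with coverings replaced by packings: with $B_m=\{x\in B:\mu(B_r(x))\ge(\lambda-\eps)(2r)^\alpha\text{ for all }0<r<1/m\}\uparrow B$, every packing of $B_m$ by disjoint balls $B_{r_k}(x_k)$ of radii $<1/(2m)$ satisfies $\sum_k(2r_k)^\alpha\le(\lambda-\eps)^{-1}\sum_k\mu(B_{r_k}(x_k))\le(\lambda-\eps)^{-1}\mu\bigl(B_m(\tfrac1{2m})\bigr)$; here no re-centring is needed, because packing balls are centred in the set by definition, which is why (4) carries no constant.

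The step I expect to require the most care is the conclusion of (4): a disjoint packing controls only the packing \emph{pre}-measure $P_0^\alpha$, which in general strictly exceeds $\CP^\alpha$, and one must also control the discrepancy between $\mu$ of a neighbourhood of the set and $\mu$ of the set itself. I would fix both issues at once by localising to compact sets. Since $\CP^\alpha|_B$ is Radon it is inner regular by compacta, so it suffices to bound $\CP^\alpha(K)$ for compact $K\subseteq B$. For such a $K$, put $K_m=\{x\in K:\mu(B_r(x))\ge(\lambda-\eps)(2r)^\alpha\text{ for all }0<r<1/m\}$; these increase to $K$, and for $\delta<1/m$ any $\delta$-fine packing of $K_m$ has all its balls inside $K(\delta)\subseteq K(1/m)$, so (by the displayed estimate) $P_\delta^\alpha(K_m)\le(\lambda-\eps)^{-1}\mu(K(\delta))$ and hence $P_0^\alpha(K_m)\le(\lambda-\eps)^{-1}\mu(K)$ by continuity from above. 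Thus $\CP^\alpha(K_m)\le P_0^\alpha(K_m)\le(\lambda-\eps)^{-1}\mu(K)$; letting $m\to\infty$ gives $\CP^\alpha(K)\le(\lambda-\eps)^{-1}\mu(K)\le(\lambda-\eps)^{-1}\mu(B)$, and taking the supremum over $K$ and then $\eps\downarrow0$ yields $\mu(B)\ge\lambda\CP^\alpha(B)$. This is the one place where the Radon property of $\mu|_B$ and $\CP^\alpha|_B$ --- hence the finiteness reductions --- is used essentially, and where the pre-measure/measure distinction must be kept track of at every stage.

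Parts (2) and (3) use the Vitali covering theorem. For (2): since $\ol{D^\alpha}(\mu,x)\ge\lambda$ on $B$, for each open $U\supseteq B$ and each $\delta>0$ the closed balls $B_r(x)$ with $x\in B$, $B_r(x)\subseteq U$, $2r<\delta$ and $\mu(B_r(x))\ge(\lambda-\eps)(2r)^\alpha$ form a Vitali class for $B$; extract a disjoint subfamily $\{B_{r_i}(x_i)\}$ covering $B$ off a $\CH_\alpha$-null set. Then $\sum_i(2r_i)^\alpha\ge\CH^\alpha_\delta(B)$ (these balls together with an arbitrarily $\CH_\alpha$-small ball cover of the residual set cover $B$), while disjointness and $B_{r_i}(x_i)\subseteq U$ give $\mu(U)\ge\sum_i\mu(B_{r_i}(x_i))\ge(\lambda-\eps)\sum_i(2r_i)^\alpha\ge(\lambda-\eps)\CH^\alpha_\delta(B)$. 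Taking the infimum over $U$ (outer regularity of $\mu$), then $\delta\downarrow0$ and $\eps\downarrow0$, gives $\mu(B)\ge\lambda\CH_\alpha(B)$. Part (3) runs the same extraction but with the Vitali theorem for $\mu$, reading the disjoint balls --- which are centred in $B$ --- as a packing: for every $B'\subseteq B$ this yields $\mu(B')\le(\lambda+\eps)\sum_i(2r_i)^\alpha\le(\lambda+\eps)P_0^\alpha(B')$, and applying it to the pieces of an arbitrary countable partition of $B$ and summing --- only upper bounds are added, so no closure is lost --- gives $\mu(B)\le(\lambda+\eps)\CP^\alpha(B)$ through the identity $\CP^\alpha=\inf\sum_iP_0^\alpha(\cdot)$; $\eps\downarrow0$ finishes. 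The covering lemmas invoked are those of \cite{Mattila}, pp.~78--79 and 95--97.
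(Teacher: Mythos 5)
The paper offers no argument of its own for Lemma~\ref{lemPM3} --- it simply defers to Mattila, pp.~95--97 --- so your proposal has to stand on its own. Parts (1) and (3) are essentially correct: (1) is the usual re-centring of an efficient ball cover of the stratified sets $B_m$ at points of $B_m$ (which is indeed where the $2^\alpha$ comes from), and (3) is the Vitali extraction for $\mu$ read as a packing and applied piecewise across a cover realising $\CP^\alpha=\inf\sum_iP_0^\alpha(\cdot)$. One repair in (3): a single extraction at scale $\delta$ gives $\sum_i(2r_i)^\alpha\le P^\alpha_\delta(B')$, and $P^\alpha_\delta(B')\ge P_0^\alpha(B')$, which is the wrong direction; since the Vitali class contains balls of arbitrarily small radius you must run the extraction for every $\delta$ and let $\delta\downarrow0$ to reach $\mu(B')\le(\lambda+\eps)P_0^\alpha(B')$.

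Two steps have genuine gaps. In (2), the Vitali covering theorem for the Radon measure $\mu$ yields a disjoint subfamily covering $B$ up to a \emph{$\mu$-null} set, not the $\CH_{\alpha}$-null set you assert; to discard the residual set $R$ you need the separate, standard observation that $\mu(R)=0$ together with $\ol{D^{\alpha}}(\mu,x)\geq\lambda>0$ on $R$ forces $\CH_{\alpha}(R)=0$ (proved by a $5r$-covering argument inside an open set of small $\mu$-measure). Without it, the inequality $\sum_i(2r_i)^\alpha\geq\CH^{\alpha}_{\delta}(B)$ is unjustified. In (4), the reduction to compact $K\subseteq B$ is circular: Remark~\ref{PBorel} gives that $\CP^{\alpha}|_B$ is Radon, hence inner regular by compacta, only when $\CP^{\alpha}(B)<\infty$, which is precisely the finiteness you are trying to extract from $\mu(B)<\infty$; and in the case $\CP^{\alpha}(B)=\infty$ your ``exhaustion'' would need every set of infinite packing measure to contain compact subsets of large finite packing measure --- a deep theorem of Joyce and Preiss, nowhere available in this thesis. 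The standard repair dispenses with compacta: fix an open $U\supseteq B$ with $\mu(U)<\mu(B)+\eps$, set $B_{m,j}=\{x\in B:\ \mu(B_r(x))\geq(\lambda-\eps)(2r)^{\alpha}\ \text{for all}\ 0<r<1/m\}\cap\{x:\ d(x,\R^n\setminus U)>1/j\}$, note that every $\delta$-packing of $B_{m,j}$ with $\delta<\min\{1/m,1/j\}$ consists of disjoint balls contained in $U$, so that $P_0^{\alpha}(B_{m,j})\leq(\lambda-\eps)^{-1}\mu(U)$ and hence $\CP^{\alpha}(B_{m,j})\leq(\lambda-\eps)^{-1}\mu(U)$; the increasing-sets lemma for the Borel regular measure $\CP^{\alpha}$ then gives $\CP^{\alpha}(B)\leq(\lambda-\eps)^{-1}\mu(U)$, and outer regularity of $\mu$ finishes. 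With these two corrections the argument goes through and coincides with the proof the paper is citing.
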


A set $E\subset\R^n$ is called \textbf{$m$-rectifiable} if there
exist Lipschitz maps $f_i:\R^m\rightarrow\R^n$, $i=1,2..$, such
that $\CH_m(E\setminus\cup f_i(\R^m))=0$. A set $F\subset\R^n$ is
called \textbf{purely $m$-rectifiable} if $\CH_m(E\cap F)=0$
for every $m$-rectifiable set $E$. A Radon measure $\mu$ on $\R^n$
is said to be $m$-rectifiable if $\mu\ll\CH_m$, that is, $\mu$ is
absolutely continuous with respect to the $m$-dimensional
Hausdorff measure $\CH_m$, and there exists an $m$-rectifiable
Borel set $E$ such that $\mu(\R^n\setminus E)=0$.

\begin{rem}\label{PHLrelation}
       \begin{enumerate}
              \item For $E\subset\R^n$, $\CL_n(E)=c_n\CH_n(E)$, where $\CL_n$ denotes the $n$-dimensional Lebesgue measure and
$c_n=\frac{\pi^{\frac{n}{2}}2^{-n}}{(\frac{n}{2})!}.$ [Refer \cite{Falconer} for the proof]
              \item $\CH_m$ is a constant multiple of the $m$-dimensional Lebesgue measure on sets which are $m$-rectifiable in $\R^n$ for all integers $1\leq m<n$. [Refer \cite{Federer} for the proof.]
              \item Let $E\subset\R^n(0<s<n)$ be a non-zero finite $s$-packing measurable set. Then $\CP^s(E)=\CH_s(E)$ if and only if $s$ is an integer and $\CP^s|_{E}$ is $s$-rectifiable.[Refer \cite{Mattila} for the proof.]
       \end{enumerate}
\end{rem}

The \textbf{Fourier dimension} of a set $A\subset\R^n$, $\dim_F(A)$ is the unique number in $[0,n]$ such that for any $0<\beta<\dim_F(A)$ there exists a non-zero Radon measure $\mu$ with support of $\mu$ in $A$ and $|\hat{\mu}(x)|\leq|x|^{-\beta/2}$ for $x\in\R^n$ and that for $\dim_F(A)<\beta<n$, no such measure exists.

\begin{rem}\label{remFourierdim}
We have for any Borel set $A\subset\R^n$, $\dim_F(A)\leq \dim_H(A)$.
The inequality is often strict. The sets with $\dim_F(K)=\dim_H(K)$ are called \textbf{Salem sets}.
\end{rem}

\begin{exa}\label{exa-FourierSalem} We recollect examples of Salem sets and sets with different Hausdorff dimension and Fourier dimension:
  \begin{enumerate}
    \item The ternary Cantor set $C$ has Fourier dimension $0$ but Hausdorff dimension $\ln2/\ln3$ (See \cite{KahaneSalem} for the proof).
    \item If $\psi:[0,\infty]\rightarrow\R^n$ denotes the $n$-dimensional Brownian motion, then for any compact set $F\subset[0,\infty]$, the image $\psi(F)$ is almost surely a Salem set. (See pages 136-137, 180 in \cite{Mattila} for proof.)
  \end{enumerate}
\end{exa}

%=====================================================================================
%=====================================================================================

\newpage
\chapter{$L^p$-Integrability of the Fourier transform of fractal measures}

In this chapter, we study the $L^p$-integrability of the Fourier
transform of measures supported on sets of dimension
$0\leq\alpha<n$. We start by discussing the relation between $\alpha$-regular sets and sets of finite $\alpha$-packing
measure. In the subsequent section, we prove that there does not
exist any non zero function in $L^p(\R^n)$ with $1\leq p\leq
2n/\alpha$ if its Fourier transform is supported by a set of
finite packing $\alpha$-measure where $0<\alpha<n$. It is shown
that this assertion fails for $p>2n/\alpha$.

\section{$\alpha$-regular sets and sets of finite $\alpha$-packing measure}

The following lemma is crucial for us.

\begin{lem}\label{lemMY1}\cite{Raani}
Let $0\leq\alpha<n$. Suppose $E\subset \R^n$ is such that
$\CP^{\alpha}(E)<\infty$ and $S\subset E$ is a bounded set. Then
     \bee
            \underset{\epsilon\rightarrow0}{\limsup}\ |S(\epsilon)|\epsilon^{\alpha-n}\leq\ C_n\CP^{\alpha}(S)<\infty,
    \eee
where $|S(\epsilon)|$ denotes the Lebesgue measure of
$\epsilon$-distance set of $S,\ S(\epsilon)$ and $C_n$ is a constant
which depends only on $n$.
\end{lem}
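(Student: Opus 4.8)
The plan is to establish the bound locally, by covering the bounded set $S$ with finitely many small balls, estimating the Lebesgue measure of the $\epsilon$-neighbourhood of each piece by its packing premeasure, and then summing. The starting point is Lemma~\ref{lemPM1}(2), which gives $|A(\epsilon)|\leq\Omega_n N(A,\epsilon)(2\epsilon)^n$ for any bounded $A$, and Lemma~\ref{lemPM1}(1)(3), which give $N(A,2\epsilon)\leq P(A,\epsilon)$ and $P(A,\epsilon/2)\epsilon^s\leq P^s_\epsilon(A)$. Combining these, for $A\subset S$ one gets a chain of the form $|A(\epsilon)|\epsilon^{\alpha-n}\leq C_n\, N(A,\epsilon)\epsilon^\alpha \leq C_n\, P(A,c\epsilon)\epsilon^\alpha \leq C_n\, P^\alpha_{c'\epsilon}(A) \to C_n\, P^\alpha_0(A)$ as $\epsilon\to0$, for suitable absolute constants. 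Thus $\limsup_{\epsilon\to0}|A(\epsilon)|\epsilon^{\alpha-n}\leq C_n P^\alpha_0(A)$ for every bounded $A$.

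The issue is that $P^\alpha_0$ is only the packing premeasure, which is finitely subadditive but not countably subadditive, whereas $\CP^\alpha$ is the measure obtained from it by the countable-infimum construction; a priori $P^\alpha_0(S)$ could be much larger than $\CP^\alpha(S)$, or even infinite. To get around this I would use the definition $\CP^\alpha(S)=\inf\{\sum_i P^\alpha_0(S_i): S\subset\bigcup_i S_i\}$: given $\eta>0$, choose a countable cover $S\subset\bigcup_i S_i$ with $\sum_i P^\alpha_0(S_i)\leq\CP^\alpha(S)+\eta$. Since $S$ is bounded, one must argue that only finitely many of the pieces matter for the $\limsup$; here the natural move is to replace $S_i$ by $S\cap S_i$ (which only decreases premeasures) and note $S(\epsilon)\subset\bigcup_i (S\cap S_i)(\epsilon)$, so $|S(\epsilon)|\leq\sum_i |(S\cap S_i)(\epsilon)|$. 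Applying the local bound from the first paragraph termwise and then invoking Fatou/superadditivity of $\liminf$ against the finite total $\sum_i P^\alpha_0(S\cap S_i)$ yields $\limsup_{\epsilon\to0}|S(\epsilon)|\epsilon^{\alpha-n}\leq C_n\sum_i P^\alpha_0(S\cap S_i)\leq C_n(\CP^\alpha(S)+\eta)$, and letting $\eta\to0$ finishes it. The finiteness $\CP^\alpha(S)\leq\CP^\alpha(E)<\infty$ comes from monotonicity of $\CP^\alpha$.

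The main obstacle is precisely the passage from the premeasure bound to the measure bound: one has to interchange the $\limsup$ over $\epsilon$ with the (possibly infinite) sum over the cover, and control the tail. The cleanest route is probably to first handle a bounded set $A$ with $P^\alpha_0(A)<\infty$ as above, then for general bounded $S\subset E$ write $|S(\epsilon)|\epsilon^{\alpha-n}\leq\sum_{i=1}^{N}|(S\cap S_i)(\epsilon)|\epsilon^{\alpha-n}+\epsilon^{\alpha-n}\big|\big(\bigcup_{i>N}(S\cap S_i)\big)(\epsilon)\big|$ and bound the tail term by $C_n\sum_{i>N}P^\alpha_0(S\cap S_i)$ uniformly in small $\epsilon$ using the same local estimate applied to the bounded set $\bigcup_{i>N}(S\cap S_i)\subset S$ — which is legitimate since that union is still a bounded subset of $E$ and its premeasure is at most $\sum_{i>N}P^\alpha_0(S\cap S_i)$ by finite subadditivity in the limit; taking $N$ large makes the tail as small as desired. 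Taking $\limsup$ in $\epsilon$ and then $N\to\infty$, $\eta\to0$ gives the claim with $C_n$ depending only on $n$.
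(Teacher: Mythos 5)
Your first paragraph reproduces exactly the chain of inequalities in the paper's own proof: via Lemma~\ref{lemPM1} one gets $|A(\epsilon)|\epsilon^{\alpha-n}\leq C_nP^{\alpha}_{\epsilon}(A)$ and hence $\limsup_{\epsilon\rightarrow0}|A(\epsilon)|\epsilon^{\alpha-n}\leq C_nP^{\alpha}_{0}(A)$ for bounded $A$, and you have correctly isolated the real difficulty, namely passing from the premeasure $P^{\alpha}_{0}$ to the measure $\CP^{\alpha}$, i.e.\ interchanging the $\limsup$ in $\epsilon$ with the infinite sum over the cover. (The paper performs this interchange silently, going from $\epsilon^{\alpha-n}|S(\epsilon)|\leq C_n\sum_iP^{\alpha}_{\epsilon}(A_i)$ directly to $\limsup_{\epsilon\rightarrow0}\epsilon^{\alpha-n}|S(\epsilon)|\leq C_n\sum_iP^{\alpha}_{0}(A_i)$, even though $P^{\alpha}_{\epsilon}(A_i)\geq P^{\alpha}_{0}(A_i)$ and the sums $\sum_iP^{\alpha}_{\epsilon}(A_i)$ may all be infinite.)

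However, your proposed repair of that step does not work. To control the tail you bound $P^{\alpha}_{0}\big(\bigcup_{i>N}(S\cap S_i)\big)$ by $\sum_{i>N}P^{\alpha}_{0}(S\cap S_i)$ ``by finite subadditivity in the limit''; this is precisely countable subadditivity of the packing premeasure, which fails --- if $P^{\alpha}_{0}$ were countably subadditive it would already coincide with $\CP^{\alpha}$ and the second infimum in the definition of $\CP^{\alpha}$ would be superfluous. Moreover the obstruction is not merely technical: $\limsup_{\epsilon\rightarrow0}|S(\epsilon)|\epsilon^{\alpha-n}$ is (up to a constant) the upper Minkowski $\alpha$-content of $S$, which is controlled by $P^{\alpha}_{0}(S)$ but not by $\CP^{\alpha}(S)$. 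For instance $S=\{0\}\cup\{1/k:k\in\N\}\subset\R$ is compact and countable, so $\CP^{\alpha}(S)=0$ for every $\alpha>0$, yet $|S(\epsilon)|\asymp\epsilon^{1/2}$, so that $|S(\epsilon)|\epsilon^{\alpha-1}\rightarrow\infty$ for $0<\alpha<1/2$; the tail terms genuinely cannot be made small uniformly in $\epsilon$. So no rearrangement of the cover will yield the stated bound with $\CP^{\alpha}(S)$ on the right: a correct statement must either keep $P^{\alpha}_{0}(S)$ there or impose additional regularity on $S$. In short, you located the genuine weak point of the argument (which the paper glosses over), but the patch you propose assumes exactly the property whose failure creates the problem.
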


\begin{proof}
Since $\CP^{\alpha}(S)<\infty$, for a given $\delta>0$, there
exists a countable cover $\{\wt{A_i}\}$ of $S$ such that
$\CP^{\alpha}(S)\ +\ \delta = \sum P_0^{\alpha}(\wt{A_i})<\infty$.
Let $R>0$ be such that $S\subset B_R(0)$. Then $\{A_i\}$ also
covers $S$, where $A_i=\wt{A_i}\cap B_R(0)$ is bounded and $\sum
P_0^{\alpha}(A_i)\leq\sum P_0^{\alpha}(\wt{A_i})<\infty$. By Lemma
\ref{lemPM1},
     \Bea
          |A_i(\epsilon)| &\leq& \Omega_n(2\epsilon)^{n}N(A_i,\epsilon)\\
                   &\leq& \Omega_n(2\epsilon)^nP(A_i,\epsilon/2)\\
                   &\leq& \Omega_n2^n\epsilon^{n-\alpha}P_{\epsilon}^{\alpha}(A_i).
     \Eea
Hence $\epsilon^{\alpha-n}|A_i(\epsilon)| \leq
C_{n}P^{\alpha}_{\epsilon}(A_i)$ for some fixed constant $C_n$. We
also have $|S(\epsilon)|\leq\sum|A_i(\epsilon)|$. Hence,
$\epsilon^{\alpha-n}|S(\epsilon)|\leq C_n\sum
P_{\epsilon}^{\alpha}(A_i).$ So,
    \bee
         \underset{\epsilon\rightarrow0}{\limsup}\ \epsilon^{\alpha-n}|S(\epsilon)| \leq C_n\sum P_0^{\alpha}(A_i) = C_n(\CP^{\alpha}(S)+\delta)< \infty.
    \eee
Hence letting $\delta$ to zero,
    \bee
         \underset{\epsilon\rightarrow0}{\limsup}\ |S(\epsilon)|\epsilon^{\alpha-n}\leq C_n\CP^{\alpha}(S)<\infty.
    \eee
\end{proof}

\begin{rem}\label{remquasi-pack}
 By Lemma \ref{lemPM3}, if $\mu$ is a Radon measure and $E$ is
quasi $\alpha$-regular with respect to $\mu$, that is, if there
exists a non-zero constant $\lambda$ such that
$\lambda\leq\ul{D^{\alpha}}(\mu,x)$ for $\mu$-almost all $x\in E$, then
$\lambda\CP^{\alpha}(A)\leq\mu(A)$ for all $A\subset E$.
\end{rem}

\begin{lem}\label{lemMY}
Let $0\leq\alpha<n$ and $\mu$ be a Radon measure. If $E$ is
quasi $\alpha$-regular with respect to $\mu$, that is, if there
exists a non-zero constant $\lambda$ such that
$\lambda\leq\ul{D^{\alpha}}(\mu,x)$ for $\mu$-almost all $x\in E$,
then for all bounded subsets $S$ of $E$, we have
    \bee
          \underset{\delta\rightarrow0}{\limsup}\ |S(\delta)|\delta^{\alpha-n}\leq C_n\lambda^{-1}\mu(S),
    \eee
where $|S(\delta)|$ denotes the $n$-dimensional Lebesgue measure
of $\delta$-distance set, $S(\delta)$ of $S$ and $C_n$ depends only
on $n$.
\end{lem}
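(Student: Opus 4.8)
The plan is to derive the estimate by chaining two facts already in hand: the comparison $\lambda\,\CP^{\alpha}\le\mu$ on subsets of $E$ recorded in Remark~\ref{remquasi-pack}, and the Minkowski-content bound of Lemma~\ref{lemMY1}. In effect, Lemma~\ref{lemMY} is a transcription of Lemma~\ref{lemMY1} from the packing measure $\CP^{\alpha}$ to the measure $\mu$, with quasi $\alpha$-regularity serving as the bridge between them.

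First I would verify that $\mu(S)<\infty$. Since $S$ is bounded, its closure $\overline{S}$ is compact, and because $\mu$ is a Radon measure, $\mu(S)\le\mu(\overline{S})<\infty$. This is the one place where ``Radon'' (rather than merely ``Borel'') is actually used.

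Next, since $S\subset E$, the hypothesis $\lambda\le\ul{D^{\alpha}}(\mu,x)$ for $\mu$-almost every $x\in E$ holds, in particular, for $\mu$-almost every $x\in S$. Hence Remark~\ref{remquasi-pack} (equivalently Lemma~\ref{lemPM3}(4)), applied to the set $S$, gives $\lambda\,\CP^{\alpha}(S)\le\mu(S)$, so that $\CP^{\alpha}(S)\le\lambda^{-1}\mu(S)<\infty$.

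Finally, I would invoke Lemma~\ref{lemMY1} with $S$ itself in the role of the ambient set there — this is legitimate because $\CP^{\alpha}(S)<\infty$ and $S\subset S$ is bounded — to obtain $\underset{\delta\rightarrow0}{\limsup}\,|S(\delta)|\delta^{\alpha-n}\le C_n\,\CP^{\alpha}(S)$. Combining this with the bound $\CP^{\alpha}(S)\le\lambda^{-1}\mu(S)$ from the previous step yields $\underset{\delta\rightarrow0}{\limsup}\,|S(\delta)|\delta^{\alpha-n}\le C_n\lambda^{-1}\mu(S)$, which is the assertion, with the same dimensional constant $C_n$ as in Lemma~\ref{lemMY1}. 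There is no substantive obstacle: the only point requiring attention is the finiteness of $\mu(S)$ noted above, which is precisely what makes $\CP^{\alpha}(S)$ finite and hence Lemma~\ref{lemMY1} applicable.
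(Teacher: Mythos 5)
Your proposal is correct and follows exactly the paper's own (one-line) proof: the paper likewise derives Lemma~\ref{lemMY} by combining Remark~\ref{remquasi-pack} (i.e.\ $\lambda\,\CP^{\alpha}(S)\le\mu(S)$) with Lemma~\ref{lemMY1}. Your write-up simply makes explicit the finiteness check $\mu(S)<\infty$ that the paper leaves implicit.
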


\begin{proof}
The proof follows from the Remark \ref{remquasi-pack} and Lemma \ref{lemMY1}.
\end{proof}
We give an example of a set of finite $\alpha$-packing measure and
finite $\alpha$-dimensional Hausdorff measure but not quasi
regular. Before we explain the construction, let us recall the
following:\\

\emph{Suppose $E\times F$ denotes the cartesian product of two
non-empty Borel sets $E$ and $F$ in $\R^n$, then we have\emph}
     \bea
      \nonumber  \dim_H(E)\ +\ \dim_H(F)\
      &\leq&\ \dim_H(E\times F)\ \leq\ \dim_H(E)\ +\ \ol{\dim}_P(F)\\
      &\leq&\ \ol{\dim}_P(E\times F)\ \leq\ \ol{\dim}_P(E)\ +\ \ol{\dim}_P(F),\
      \label{eqrem-cart-HPdim}\\
      \dim_H(E\times F)&=&\dim_H(E)+\dim_H(F)\ \text{if}\ \dim_H(F)=\ol{\dim}_P(F).\label{eqrem-cart}
    \eea
\emph{(See page 115 in \cite{Mattila} page 72 in \cite{Tricot} for proof
and examples that prove that the inequalities can be strict.)\emph}\\

\emph{Also if $K$ is $\alpha$-regular, then}
    \be
      \dim_H(K\times...\times K)=n\dim_H(K).\label{eqrem-cart}
    \ee

From Example \ref{exPackHaus}, we can construct sets of finite $\alpha$-packing measure but not quasi-regular with zero $\alpha$-dimensional Hausdorff measure. The authors in \cite{Hudson} (Proposition 1 in Section 4) constructed a set $\widetilde{E}$ which is not $\beta$-quasi regular ($0<\beta=\ln 2/\ln 3<1$). Similar to that we construct a set of finite packing measure and finite Hausdorff measure but not quasi regular.\\

Given a positive integer $k$, remove $2^k-1$ intervals of equal
length from $[0,1]$ leaving $2^k$ subintervals of length $3^{-k}$.
Note that the length of each of the removed intervals is
$\frac{3^{k}-2^{k}}{3^k(2^k-1)}$. Repeat the excision on each of
the $2^k$ subintervals leaving $2^{2k}$ subintervals of length
$3^{-2k}$. Then at the $l^{th}$ stage we obtain a set $K_l$ with
$2^{kl}$ subintervals, each of length $3^{-kl}$. Let
$C(2^k,3^k)=\cap_lK_l$. Note that $C(2,3)$ is the Cantor set. For
every $k$, this set $C(2^k,3^k)$ has Hausdorff dimension
$\beta=\ln 2/\ln 3$. In fact $C(2^k,3^k)$ are $\beta$-regular sets
with $\CH_{\beta}(C(2^k,3^k))=1$. By Theorem
\ref{self-similar--alpha-regular}, $3^{-k\beta}\leq
\ul{D^{\alpha}}(\mu,x)$ for all $x\in C(2^k3^k)$. By Remark
\ref{remquasi-pack}, $\CP^{\beta}(C(2^k,3^k))\leq 3^{k\beta}<\infty$.
Now, let
 \bee
  \widetilde{E}_j=\big[3^{-(j(j-1)/2)}C(2^j,3^j) + 1 - 3^{-(j(j-1)/2)}\big]\backslash [1-3^{-(j(j+1)/2)},1],
 \eee
where
$3^{-(j(j-1)/2)}C(2^j,3^j) + 1 - 3^{-(j(j-1)/2)}$ is obtained by dilating $C(2^j,3^j)$ by $3^{-(j(j-1)/2)}$ and then translating by $1 - 3^{-(j(j-1)/2)}$. Note that $\widetilde{E}_j$'s are disjoint. Let
$\widetilde{E}$ be the limit set $\cup_k \widetilde{E}_k$. Then for $\beta=\ln 2/ \ln 3$, $0<\CH_{\beta}(\widetilde{E})<\infty$. But $\ul{D^{\beta}}(\mu,x)$ goes to zero as $x$ approaches $1$ for $\mu=\CH_{\beta}|_{\widetilde{E}}$. However we note the following:
  \Bea
   \CP^{\beta}(\widetilde{E}) &\leq& \sum_{j=1}^{\infty} 3^{j\beta}\CH_{\beta}(\widetilde{E}_j)\\
   &\leq& \sum_{j=1}^{\infty} 3^{j\beta}3^{-\beta(j(j-1)/2)}(1 - 2^{-j})\\
   &\leq& 3^{\beta}1/2+3^{\beta}\sum_{j=1}^{\infty} 3^{-\beta(j-1))}\\
   &=& 3^{\beta}(\frac{1}{2} + \frac{1}{3^{\beta}-1})<\infty.
  \Eea
It can be proved that the cartesian product
$E=\widetilde{E}\times...\times\widetilde{E}(n\ times)$ has
non-zero finite $\alpha$-dimensional Hausdorff measure,
$\CP^{\alpha}(E)<\infty$ but not quasi $\alpha$-regular, for
$\alpha=n\beta$.\\

In general, for given $0<\alpha<n$, fix a large positive integer
$N$ and small number $\eta<1$ such that $N\eta^{\beta}=1$, where
$n\beta=\alpha$. Then we can construct $C(N^k,\eta^{-k})$ as above
and prove that for given $\alpha$ there exists a set $E$ of
Hausdorff and packing dimension $\alpha$, such that $E$ is of finite $\alpha$-packing measure and $\alpha$-dimensional Hausdorff measure but not quasi $\alpha$-regular.

\section{$L^p$-Integrability of the Fourier transform of fractal measures}
In this section, we relate the fractal dimension of the support of the Fourier transform of a function on $\R^n$ with its membership in $L^p(\R^n)$ by proving that the Fourier transform of a tempered distribution supported in a fractal of dimension $\alpha$ ($0\leq\alpha<n$) does not belong to $L^p(\R^n)$ for $1\leq p\leq 2n/\alpha$. With an example of Salem set, we prove that the assertion fails for $p>2n/\alpha$.\\

In \cite{AgranovskyNaru}, M. L. Agranovsky and E. K. Narayanan have related the integer dimension of the support of the Fourier transform of a function with its membership in $L^p$:
\begin{thm}\label{thmNaru}\cite{AgranovskyNaru}
If $f\in L^p(\R^n)$ and supp $\hat{f}$ is carried by a $C^1$-manifold $M$ of dimension $d<n$ then $f=0$ provided $1\leq p\leq 2n/d$. If $d=0$ then $f=0$ for $1\leq p < \infty$
\end{thm}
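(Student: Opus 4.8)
The plan is to split the range $1\le p\le 2n/d$ at $p=2$. For $1\le p\le 2$ the Hausdorff--Young inequality already settles the claim: $\wh f\in L^{p'}(\R^n)$ with $p'=p/(p-1)\ge 2$, so $\wh f$ is an honest function in $L^{p'}$; since $\dim M=d<n$ the set $M$ has $n$-dimensional Lebesgue measure zero, so $\wh f$, vanishing almost everywhere off the null set $M$, vanishes almost everywhere, i.e.\ $\wh f=0$ in $\CS'(\R^n)$ and $f\equiv 0$. (For $p=1$ one may instead note that $\wh f$ is continuous while $M$ has empty interior.) Because $d<n$ we have $2n/d>2$, so the substantive case is $2<p\le 2n/d$, which for $d=0$ reads $2<p<\infty$; here I will feed $u=\wh f$ into the Agmon--Hormander theorem (Theorem~1 above) with codimension $k=n-d$.

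To apply Theorem~1 I must check its two hypotheses for $u=\wh f$. First, $\wh u$ equals (up to a reflection and a constant) the function $f\in L^p(\R^n)$ with $p>2$, and Holder on balls gives $L^p(\R^n)\subset L^2_{\mathrm{loc}}(\R^n)$, so $\wh u\in L^2_{\mathrm{loc}}(\R^n)$. Second --- the only genuine computation --- I claim
\[
 \underset{R\to\infty}{\limsup}\ \frac{1}{R^{n-d}}\int_{B_R(0)}|\wh u(\xi)|^2\,d\xi\;=\;0 .
\]
Fix $R_0>0$ and split $B_R(0)$ into $B_{R_0}(0)$ and the annulus $B_R(0)\setminus B_{R_0}(0)$. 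On the first, Holder gives $\int_{B_{R_0}(0)}|f|^2\le\|f\|_p^2\,|B_{R_0}(0)|^{1-2/p}$, constant in $R$. On the annulus,
\[
 \int_{B_R(0)\setminus B_{R_0}(0)}|f|^2\ \le\ \Big(\int_{|x|>R_0}|f|^p\Big)^{2/p}\big(\Omega_n R^n\big)^{1-2/p}\ =\ \delta(R_0)^{2/p}\,\Omega_n^{1-2/p}\,R^{n(1-2/p)},
\]
where $\delta(R_0)=\int_{|x|>R_0}|f|^p\to 0$ as $R_0\to\infty$ since $f\in L^p(\R^n)$. Because $p\le 2n/d$, the exponent $n(1-2/p)-(n-d)=d-2n/p$ is $\le 0$, so $R^{-(n-d)}R^{n(1-2/p)}\le 1$ for $R\ge 1$; taking $\limsup_{R\to\infty}$ leaves $\le\delta(R_0)^{2/p}\Omega_n^{1-2/p}$, and letting $R_0\to\infty$ gives the claim. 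Note this handles the endpoint $p=2n/d$ no differently from $p<2n/d$: for $p<2n/d$ the annulus term already decays in $R$, while at $p=2n/d$ it is the $L^p$-tail $\delta(R_0)$ that does the work.

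Theorem~1 then gives that $u=\wh f$ is an $L^2$-density $u_0\,dS$ on $M$ with $\int_M|u_0|^2\,dS\le C\,\limsup_R R^{-(n-d)}\int_{B_R(0)}|\wh u|^2=0$; hence $u_0=0$, so $\wh f=0$ and $f\equiv 0$. If $M$ is embedded but not closed in $\R^n$, I would first cover $M$ by relatively open pieces closed in suitable open sets $X_i$, apply Theorem~1 on each $X_i$ to get $\wh f|_{X_i}=0$, and conclude from $\operatorname{supp}\wh f\subset M\subset\bigcup_i X_i$. The case $d=0$ is included: $M$ is then discrete, $k=n$, and $d-2n/p=-2n/p<0$, giving $f\equiv 0$ for every $1\le p<\infty$.

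Granting Theorem~1, the only real step is the Holder/tail-splitting estimate, which is routine --- so the difficulty is packaged inside Theorem~1 itself, namely the assertion that quadratic-mean growth $\limsup_R R^{-(n-d)}\int_{B_R(0)}|\wh u|^2<\infty$ forces a distribution supported on a $d$-dimensional $C^1$ submanifold to be an $L^2$-density on it with the stated norm bound. Should one want a proof not quoting Theorem~1, the substantive work is to reprove this special case: mollify $\wh f$ by a dilate of a Schwartz function whose transform is a bump supported near a point of $M$ (the device used below for Theorem~D), reduce to a compact graph patch $M_0$ of $M$, and control the mollified mass using the Minkowski-content bound $|M_0(\epsilon)|\le C_n\,\epsilon^{n-d}$ --- the $C^1$-manifold analogue of Lemma~\ref{lemMY1} with $\alpha=d$. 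The bookkeeping of how the bump-width, the radius $R$, and the codimension $n-d$ interact in that reduction is where the hard part lies.
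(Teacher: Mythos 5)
Your proof is correct, but it reaches the theorem by a route different from the thesis's. The thesis does not prove Theorem~\ref{thmNaru} directly: it quotes it from \cite{AgranovskyNaru} and then recovers it as a special case of Theorem~\ref{corthmMYP}, after observing that a $d$-dimensional $C^1$ manifold has (locally) finite $d$-packing measure and is quasi $d$-regular. The proof of Theorem~\ref{corthmMYP} deliberately does \emph{not} invoke the Agmon--Hormander theorem as a black box; it reproves the relevant mollification estimate by hand (Lemma~\ref{lemMYhorm}: $\|u_\epsilon\|_2^2\leq C\epsilon^{\alpha-n}\rho_\epsilon$ with $\rho_\epsilon\to0$), pairs $u_\epsilon$ against a test function $\psi$, and controls $\int_{S(\epsilon)}|\psi|^2$ via the Minkowski-content bound of Lemma~\ref{lemMY1} --- necessary because the quoted Agmon--Hormander result applies only to $C^1$ submanifolds, not to the fractal supports the thesis is actually after. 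You instead outsource exactly that step to Theorem~\ref{thmagmonhormdensity} and merely verify its hypotheses, which is legitimate in the manifold case and closer in spirit to the original argument of \cite{AgranovskyNaru}. The substantive computation is the same in both: your Holder/tail-splitting estimate showing $\limsup_{R\to\infty}R^{-(n-d)}\int_{B_R}|f|^2=0$ for $p\leq 2n/d$ is precisely the content of the $\sum_j a_jb_j^{\epsilon}$ decomposition and dominated-convergence step in Lemma~\ref{lemMYhorm}, including the endpoint $p=2n/d$, where in both arguments it is the vanishing $L^p$-tail of $f$ (your $\delta(R_0)$, the paper's $b_j^{\epsilon}\to0$) rather than any decay in $R$ that forces the limit to be zero. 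What the by-hand route buys is the generalization to sets of finite $\alpha$-packing measure; what yours buys is brevity in the manifold case.
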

Also, the older result of Beurling in \cite{Beurling} gives an analogue statement of the above theorem for fractional dimensional sets in $\R$:
\begin{thm}\label{thmBeurling}\cite{Beurling}
If $f\in L^p(\R)$, $p>2$ and the Fourier transform of $f$ is supported by a set of Hausdorff dimension $< 2/p$, then the function is identically zero.
\end{thm}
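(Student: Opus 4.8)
The plan is to read the hypothesis $f\in L^p(\R)$, $p>2$, $\mathrm{supp}\,\wh f\subset E$ with $\dim_H E<2/p$, as the statement that $\wh f$ is a tempered distribution lying in a negative--order Sobolev space $H^{-\gamma}(\R)$ and supported on a set so thin---capacitarily---that it can only be the zero distribution. First I would reduce to $\wh f$ compactly supported: for $\theta\in C_c^\infty(\R)$ set $\wt f=(\theta\wh f)^{\vee}=\check\theta*f$; since $\check\theta\in L^1\cap L^{p'}$, Young's and H\"older's inequalities give $\wt f\in L^p\cap L^\infty$, and $\wh{\wt f}=\theta\wh f$ is compactly supported and carried by $E\cap\mathrm{supp}\,\theta\subset E$. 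As $\theta$ is arbitrary, showing $\wt f\equiv0$ for every $\theta$ forces $\wh f=0$, hence $f=0$; so from now on assume $E$ compact and $f\in L^p\cap L^\infty\cap C^\infty$ with $\wh f$ compactly supported.

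Because $\dim_H E<2/p$ I may fix $\gamma$ with $\tfrac12\bigl(1-\tfrac2p\bigr)<\gamma<\tfrac12(1-\dim_H E)$, so $0<\gamma<\tfrac12$. I claim $T:=\wh{\wt f}\in H^{-\gamma}(\R)$, i.e. $\int_{\R}|f(\xi)|^2(1+\xi^2)^{-\gamma}\,d\xi<\infty$. On $|\xi|\le1$ this is at most $2\|f\|_\infty^2$. On $|\xi|\ge1$ the weight is comparable to $|\xi|^{-2\gamma}$, and since $|f|^2\in L^{p/2}(\R)$ with $\||f|^2\|_{p/2}=\|f\|_p^2$, H\"older's inequality with conjugate exponent $(p/2)'=p/(p-2)$ applies because $|\xi|^{-2\gamma}\in L^{p/(p-2)}(\{|\xi|\ge1\})$, which is equivalent to $2\gamma>1-\tfrac2p$---precisely our choice of $\gamma$. (This is the one place where $p>2$ is used.) Thus $T\in H^{-\gamma}(\R)$ with $\mathrm{supp}\,T\subset E$.

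The heart of the matter is that $E$, having $\dim_H E<1-2\gamma$, has vanishing Bessel capacity $\mathrm{cap}_{\gamma,2}$, and a set of zero $\mathrm{cap}_{\gamma,2}$ carries no nonzero element of $H^{-\gamma}(\R)$; this gives $T=0$, hence $\wt f=0$, hence $f=0$. I would prove the capacity statement by hand: fix $\sigma\in(\dim_H E,1-2\gamma)$; since $\CH_\sigma(E)=0$, for each $\eps>0$ cover $E$ by open intervals $I_i$ with $\sum_i|I_i|^\sigma<\eps$ and, by a Vitali/bounded--overlap argument in $\R$, arrange that the doubles $2I_i$ overlap boundedly. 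With a fixed bump $\varphi$ and $\varphi_i$ its copy adapted to $I_i$ ($\varphi_i\equiv1$ on $I_i$, $\mathrm{supp}\,\varphi_i\subset2I_i$, $0\le\varphi_i\le1$), the function $\phi_\eps=\sum_i\varphi_i$ is $\ge1$ on the open neighbourhood $\bigcup_iI_i$ of $E$, while bounded overlap together with the scaling $\|\varphi_i\|_{\dot H^\gamma}^2\asymp|I_i|^{1-2\gamma}$ and $\|\varphi_i\|_{L^2}^2\asymp|I_i|\le|I_i|^{1-2\gamma}$ gives $\|\phi_\eps\|_{H^\gamma}^2\lesssim\sum_i|I_i|^{1-2\gamma}\le\sum_i|I_i|^\sigma<\eps$. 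Composing with a fixed Lipschitz $\eta$ ($\eta\equiv1$ on $[1,\infty)$, $\eta(0)=0$, $0\le\eta\le1$) yields $\chi_\eps=\eta\circ\phi_\eps\in H^\gamma(\R)$, compactly supported, $\chi_\eps\equiv1$ on $\bigcup_iI_i\supset E$, with $\|\chi_\eps\|_{H^\gamma}\to0$. For any $\psi\in C_c^\infty(\R)$ the function $\psi(1-\chi_\eps)$ vanishes on a neighbourhood of $\mathrm{supp}\,T$, so $\langle T,\psi\rangle=\langle T,\psi\chi_\eps\rangle$ and $|\langle T,\psi\rangle|\le\|T\|_{H^{-\gamma}}\|\psi\chi_\eps\|_{H^\gamma}\le C_\psi\|T\|_{H^{-\gamma}}\|\chi_\eps\|_{H^\gamma}\to0$; hence $T=0$.

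The main obstacle is exactly this last step; everything else is routine. One may simply quote the classical removability theorem (Adams--Hedberg, Maz'ya): a set of Hausdorff dimension $<n-2\gamma$ supports no nonzero distribution in $H^{-\gamma}(\R^n)$. Carrying it out as above, the two delicate points are (i) replacing an arbitrary Hausdorff cover by a bounded--overlap one, so that the $\dot H^\gamma$--seminorm of $\sum_i\varphi_i$ (computed via the Gagliardo double integral) is dominated by $\sum_i\|\varphi_i\|_{\dot H^\gamma}^2$, and (ii) the boundedness on $H^\gamma(\R)$, $0<\gamma<1$, of multiplication by $\psi\in C_c^\infty$ and of composition $u\mapsto\eta\circ u$ with Lipschitz $\eta$, $\eta(0)=0$. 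An alternative, more hands--on route interprets $\int_{\R}|f(\xi)|^2|\xi|^{-2\gamma}\,d\xi<\infty$ as the finiteness of the Riesz $(1-2\gamma)$--energy of $\wh f$, mollifies $\wh f$ to a genuine measure, and contradicts its uniformly bounded energy against the lower bound $M^2/\sum_i|I_i|^{1-2\gamma}$ forced by a Hausdorff cover and Cauchy--Schwarz; this is elementary but must be argued carefully because $\wh f$ is only a signed distribution, which is precisely the difficulty the capacity formulation sidesteps.
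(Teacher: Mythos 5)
The thesis does not actually prove this statement: Theorem~\ref{thmBeurling} is quoted from \cite{Beurling} without proof, so there is no internal argument to measure yours against. Judged on its own, your capacity-theoretic proof is the standard modern route and is essentially correct: the reduction to compact spectrum is fine, the H\"older step placing $\wh{\wt f}$ in $H^{-\gamma}$ is right, and the numerology is exactly tight (the window $\tfrac12(1-\tfrac2p)<\gamma<\tfrac12(1-\dim_H E)$ is nonempty precisely because $\dim_H E<2/p$). The one genuine soft spot is the one you flag yourself: in the hands-on construction of the cutoffs, ``arrange that the doubles $2I_i$ overlap boundedly'' does not follow from a Vitali-type selection in $\R$ --- doubles of pairwise disjoint intervals, one per dyadic scale, accumulating at a point from one side can all contain that point --- so this needs a more careful dyadic/stopping-time selection that simultaneously preserves the bound on $\sum_i|I_i|^{\sigma}$, or else one should do as you suggest and quote the comparison between Hausdorff content and Bessel capacity together with the fact that a compact set of zero $(\gamma,2)$-capacity carries no nonzero element of $H^{-\gamma}$ (Adams--Hedberg, Maz'ya). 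With that citation the argument closes.

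It is worth contrasting your route with what the thesis does for the analogue it \emph{does} prove, Theorem~\ref{corthmMYP}. There the hypothesis is finite $\alpha$-packing measure rather than a Hausdorff-dimension bound, and capacity theory is avoided entirely: one mollifies $u=\wh f$ at scale $\eps$, proves $\|u_{\eps}\|_2^2\leq C\eps^{\alpha-n}\rho_{\eps}$ with $\rho_{\eps}\to0$ (Lemma~\ref{lemMYhorm}), and pairs against a test function using the Minkowski-content bound $\limsup_{\eps\to0}|S(\eps)|\eps^{\alpha-n}\leq C_n\CP^{\alpha}(S)$ of Lemma~\ref{lemMY1}. That Lebesgue-measure control of $\eps$-neighbourhoods is exactly what a Hausdorff-dimension hypothesis fails to provide, which is why the packing-measure assumption buys the closed endpoint $p\leq 2n/\alpha$ while Beurling's statement is open ($\dim_H E<2/p$) and requires the capacitary (or, equivalently, Riesz-energy) argument you use. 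So your proof is a genuinely different mechanism from the one the thesis deploys for its own theorem, and the two are complementary: yours handles thin sets measured by Hausdorff dimension at the cost of an open range of exponents; the thesis's handles the endpoint at the cost of the stronger packing hypothesis.
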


Let $A\subseteq\R^n$. $A$ is called a \textbf{sparse set} or \textbf{thin set} if the $n$-dimensional Lebesgue measure of $A$, $|A|$ is zero. For $1\leq p\leq2$, if the Fourier transform of $f\in L^p(\R^n)$ is supported in a set of Lebesgue measure zero, then it is trivial that $f\equiv0$. So we concentrate on the case $p>2$. If $p>2$, then $\widehat{f}$ is a tempered distribution and the support of $\widehat{f}$ is a closed set which may be thin. We closely follow the arguments in \cite{AgmonHormander} (also see page 174 of \cite{Hormander}) and prove the following Lemma. \begin{lem}\label{lemMYhorm}
Let $f\in L^{p}(\R^n)$ with $2\leq p\leq 2n/\alpha$, for some $0<\alpha<n$. Let $\chi\in C_{c}^{\infty} (R^{n}) $ be supported in unit ball and $\int_{\mathbb{R}^{n}}\chi(x) dx = 1$. Denote $\chi_{\epsilon}(x) = \epsilon^{-n}\chi(x/\epsilon)$ and $u_{\epsilon}=u\ast\chi_{\epsilon}$ where $u=\wh{f}$. Then
\bee
\|u_{\epsilon}\|_2^2\leq C\epsilon^{\alpha-n}\rho_{\epsilon}
\eee
where $\rho_{\epsilon}$ approaches $0$ as $\epsilon$ tends to zero.
\end{lem}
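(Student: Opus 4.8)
Note first that the statement is purely about $f\in L^p(\R^n)$ with $2\le p\le 2n/\alpha$; the number $\alpha$ enters only through the exponents, and the eventual requirement that $\wh f$ be supported in a set of finite $\alpha$-packing measure is not needed \emph{here} (it enters afterwards, through the estimate $|E(\epsilon)|\lesssim\epsilon^{n-\alpha}$ of Lemma \ref{lemMY1}). The plan is to move to the Fourier side and read $\|u_\epsilon\|_2^2$ as a weighted $L^2$-norm of $f$, then split that integral at a well-chosen scale.

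I would begin by checking that $u_\epsilon=u\ast\chi_\epsilon$ is, as a tempered distribution, given by an $L^2$ function, and by computing its norm. Since $u=\wh f$, one has $\wh{u_\epsilon}=\wh u\,\wh{\chi_\epsilon}=c_n\,f(-\cdot)\,\wh\chi(\epsilon\,\cdot)$, using $\wh u=\wh{\wh f}=c_nf(-\cdot)$ and $\wh{\chi_\epsilon}(\xi)=\wh\chi(\epsilon\xi)$. As $\chi\in C_c^\infty$, $\wh\chi$ is Schwartz, so $|\wh\chi(\epsilon\,\cdot)|^2\in L^{(p/2)'}(\R^n)$, and Hölder's inequality (with $|f|^2\in L^{p/2}$) shows $\wh{u_\epsilon}\in L^2$. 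Hence $u_\epsilon\in L^2$ and, after the change of variables $\xi\mapsto-\xi$,
\bee
\|u_\epsilon\|_2^2 = c_n\int_{\R^n}|f(\xi)|^2\,|\wh\chi(\epsilon\xi)|^2\,d\xi .
\eee
(If $p=2$ this already finishes the proof: the right-hand side is $\le c_n\|\wh\chi\|_\infty^2\|f\|_2^2$, so one may take $\rho_\epsilon=c_n\|\wh\chi\|_\infty^2\|f\|_2^2\,\epsilon^{\,n-\alpha}\to0$ since $\alpha<n$. So assume $2<p\le 2n/\alpha$ from now on.)

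The main step is to split the last integral at an intermediate scale $R=R(\epsilon):=\epsilon^{-1/2}$, chosen so that $R\to\infty$ while $\epsilon R\to0$. On $\{|\xi|\le R\}$ I would bound $|\wh\chi(\epsilon\xi)|$ by $\|\wh\chi\|_\infty$ and apply Hölder with exponents $p/2$ and $(p/2)'$ over the ball $B_R$, obtaining
\bee
\|\wh\chi\|_\infty^2\,\|f\|_p^2\,|B_R|^{1-2/p} = C\,\|f\|_p^2\,\epsilon^{\,n/p-n/2} = C\,\|f\|_p^2\,\epsilon^{\,\alpha-n}\cdot\epsilon^{\,n/p+n/2-\alpha};
\eee
since $p\le 2n/\alpha$ gives $n/p\ge\alpha/2$, the spare exponent satisfies $n/p+n/2-\alpha\ge(n-\alpha)/2>0$, so this term is $\epsilon^{\alpha-n}$ times a vanishing power of $\epsilon$. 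On $\{|\xi|>R\}$, Hölder with the same exponents and $\wh\chi\in L^{q}$, $q:=2p/(p-2)$, gives
\bee
\int_{|\xi|>R}|f(\xi)|^2\,|\wh\chi(\epsilon\xi)|^2\,d\xi \le \Big(\int_{|\xi|>R}|f|^p\Big)^{2/p}\big(\epsilon^{-n}\|\wh\chi\|_q^q\big)^{1-2/p} = C\,\epsilon^{\,2n/p-n}\Big(\int_{|\xi|>R}|f|^p\Big)^{2/p},
\eee
and since $2n/p-n\ge\alpha-n$ (again because $p\le 2n/\alpha$), this is at most $\epsilon^{\alpha-n}$ times $\big(\int_{|\xi|>R}|f|^p\big)^{2/p}$, which tends to $0$ as $R=\epsilon^{-1/2}\to\infty$ because it is the tail of the convergent integral $\int|f|^p$.

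Adding the two pieces gives $\|u_\epsilon\|_2^2\le C\,\epsilon^{\alpha-n}\rho_\epsilon$ with $\rho_\epsilon:=\|f\|_p^2\,\epsilon^{\,n/p+n/2-\alpha}+\big(\int_{|\xi|>\epsilon^{-1/2}}|f|^p\big)^{2/p}\to0$, as required. The one point that needs genuine care is the endpoint $p=2n/\alpha$: there the crude estimate (no splitting) only gives $\|u_\epsilon\|_2^2=O(\epsilon^{\alpha-n})$ with no decaying factor, and it is exactly the splitting at the scale $R(\epsilon)$, combined with the vanishing of the $L^p$-tail of $f$, that supplies the factor $\rho_\epsilon\to0$; for $2<p<2n/\alpha$ the low-frequency term alone already carries a spare power of $\epsilon$, so the argument is softer. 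This is also the right normalisation for the intended use: when $u=\wh f$ is supported in a set $E$ with $\CP^\alpha(E)<\infty$, Cauchy--Schwarz on $E(\epsilon)$ together with Lemma \ref{lemMY1} gives, for every test function $\varphi$, $|\langle u_\epsilon,\varphi\rangle|\le\|u_\epsilon\|_2\,\|\varphi\|_\infty\,|E(\epsilon)|^{1/2}\le C\,\rho_\epsilon^{1/2}\,\|\varphi\|_\infty\to0$, whence $u=0$ and $f\equiv0$.
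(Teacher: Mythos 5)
Your proof is correct, and it rests on the same two pillars as the paper's — Plancherel to rewrite $\|u_\epsilon\|_2^2$ as $c\int|f(\xi)|^2|\wh\chi(\epsilon\xi)|^2d\xi$, then H\"older against $|f|^2\in L^{p/2}$ — but the frequency decomposition is genuinely different. The paper partitions $\R^n$ into the dyadic annuli $\{2^j\le|\epsilon\xi|\le 2^{j+1}\}$, $j\in\Z$, extracts the factor $\epsilon^{\alpha-n}$ by inserting $2^{j(n-\alpha)}(2^{-j}\epsilon)^{n-\alpha}=\epsilon^{n-\alpha}$ on each annulus, and obtains $\rho_\epsilon=\sum_j a_jb_j^\epsilon\to0$ by dominated convergence for the series ($\sum_j|a_j|<\infty$ from the rapid decay of $\wh\chi$ and $\alpha<n$; each $b_j^\epsilon\to0$ since the $j$-th annulus in $\xi$ escapes to infinity). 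You instead make a single cut at $R=\epsilon^{-1/2}$: the low-frequency ball contributes $\epsilon^{\alpha-n}$ times an explicit spare power $\epsilon^{n/p+n/2-\alpha}\ge\epsilon^{(n-\alpha)/2}$, and the high-frequency piece contributes $\epsilon^{\alpha-n}$ times the $L^p$-tail of $f$ beyond $R$. Your exponent arithmetic checks out at the endpoint $p=2n/\alpha$ (where, as you note, the splitting is indispensable), and your version has the advantage of producing an explicit, quantitative $\rho_\epsilon$ and of avoiding the interchange-of-limits step hidden in the dominated-convergence argument; the paper's dyadic version is the one that generalizes directly to the spherical-average variant used later in Lemma \ref{lemMYhorm1}. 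Your closing remark about how the lemma is then combined with Lemma \ref{lemMY1} and Cauchy--Schwarz on $E(\epsilon)$ to force $u=0$ matches the paper's use of the lemma in Theorem \ref{corthmMYP}.
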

\begin{proof}
By the Plancherel theorem,
  \Bea
  % \nonumber to remove numbering (before each equation)
    \|u_{\epsilon}\|^{2} &=& \int_{\mathbb{R}^{n}}|f(x)|^{2}|\wh{\chi}(\epsilon x)|^{2}\ dx =\sum_{j=-\infty}^{\infty}\underset{2^{j}\leqslant|\epsilon x|\leqslant2^{j+1}}{\int}|f(x)|^{2}|\wh{\chi}(\epsilon x)|^{2}\ dx \\
     &\leqslant&  C\epsilon^{\alpha-n} \sum_{j=-\infty}^{\infty} 2^{j(n-\alpha)} \sup_{2^{j}\leqslant|\epsilon x|\leqslant2^{j+1}}|\wh{\chi}(\epsilon x)|^2(2^{-j}\epsilon)^{n-\alpha}\underset{2^{j}\leqslant|\epsilon x|\leqslant2^{j+1}}{\int}|f(x)|^{2}dx \\
      &=&C\epsilon^{\alpha-n}\sum_{j=-\infty}^{\infty}a_{j}b_{j}^{\epsilon},
  \Eea
  where
  \bee
    a_{j}=2^{j(n-\alpha)} \sup_{2^{j}\leqslant|x|\leqslant2^{j+1}}
    |\wh{\chi}(x)|^{2},
  \eee
  and
  \bee
    b_{j}^{\epsilon}=(2^{-j}\epsilon)^{n-\alpha}\int_{2^{j}\leqslant|\epsilon x|\leqslant2^{j+1}}|f(x)|^{2}dx.
  \eee
  Since $0<\epsilon<1$ and $2\leq p\leq 2n/\alpha$, applying Holder's inequality,
  \begin{equation*}
    |b_{j}^{\epsilon}|\leqslant
    C\left( \int_{2^{j}\epsilon^{-1}\leqslant|x|\leqslant2^{j+1}\epsilon^{-1}} |f(x)|^{p}dx\right)^{2/p},
  \end{equation*}
  which goes to zero as $\epsilon \rightarrow 0$, for any fixed j. Also we have $|b^{\epsilon}_{j}|\leqslant C\|f\|^{2}_{p}<\infty$ for some constant $C$ independent of $\epsilon$ and $j$. Since $ \sum_j|a_{j}| $ is finite, by the dominated convergence theorem, we have $\rho_{\epsilon}=\sum_ja_{j}b_{j}^{\epsilon} \rightarrow 0$ as $ \epsilon \rightarrow 0$.\\
\end{proof}

\begin{thm}\label{corthmMYP}\cite{Raani}
Let $f\in L^{p}(\mathbb{R}^n)$ be such that $supp\ \wh{f}$ is
contained in a set $E$ of finite $\alpha$-dimensional packing measure. Then
$f\equiv0$, provided $p\leq\frac{2n}{\alpha}$.
\end{thm}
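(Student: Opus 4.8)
The plan is to split the range of $p$ and, on the essential part $2\le p\le 2n/\alpha$, to play the local size of the $\epsilon$-neighbourhoods of $E$ (controlled by \lemref{lemMY1}) against the $L^2$-growth of the mollifications of $\wh f$ (controlled by \lemref{lemMYhorm}).

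I would first dispose of $1\le p\le 2$. By Hausdorff--Young, $\wh f\in L^{p'}(\R^n)$ with $p'\ge 2$, so $\wh f$ is an ordinary locally integrable function; and since $\CP^{\alpha}(E)<\infty$ with $\alpha<n$, \lemref{lemPHmsr} gives $\CH_{\alpha}(E)\le\CP^{\alpha}(E)<\infty$, hence $\dim_H E\le\alpha<n$ and $\CL_n(E)=0$ (Remark~\ref{PHLrelation}). A function supported on a Lebesgue-null set vanishes a.e., so $\wh f\equiv 0$ and $f\equiv 0$. Since $2n/\alpha>2$, it remains to treat $2<p\le 2n/\alpha$, and I would in fact run a single argument valid for all $2\le p\le 2n/\alpha$.

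So let $2\le p\le 2n/\alpha$, put $u=\wh f\in\mathcal{S}'(\R^n)$, fix $\chi\in C_c^{\infty}(\R^n)$ supported in the closed unit ball with $\int\chi=1$, and set $\chi_\epsilon(x)=\epsilon^{-n}\chi(x/\epsilon)$, $u_\epsilon=u\ast\chi_\epsilon$. The ingredients are: (i) $u_\epsilon$ is a smooth tempered function with $u_\epsilon\to u$ in $\mathcal{S}'$ as $\epsilon\to 0$; (ii) $\operatorname{supp} u_\epsilon\subseteq(\operatorname{supp} u)+\ol{B_\epsilon(0)}\subseteq E(2\epsilon)$; and (iii) $\|u_\epsilon\|_2^2\le C\epsilon^{\alpha-n}\rho_\epsilon$ with $\rho_\epsilon\to 0$, which is exactly \lemref{lemMYhorm}. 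Now fix $R>0$ and $\psi\in C_c^{\infty}(B_R(0))$ and put $S=E\cap\ol{B_{R+2}(0)}$, a bounded subset of $E$; for $0<\epsilon\le 1$ one has $\operatorname{supp}(u_\epsilon)\cap B_R(0)\subseteq S(2\epsilon)$, so Cauchy--Schwarz gives $|\langle u_\epsilon,\psi\rangle|\le\|u_\epsilon\|_2\,\|\psi\|_\infty\,|S(2\epsilon)|^{1/2}$. Since \lemref{lemMY1} yields $\limsup_{\epsilon\to 0}|S(2\epsilon)|\epsilon^{\alpha-n}<\infty$, i.e.\ $|S(2\epsilon)|\le C'\epsilon^{n-\alpha}$ for all small $\epsilon$, this combines with (iii) to give $|\langle u_\epsilon,\psi\rangle|\le C''\|\psi\|_\infty\,\rho_\epsilon^{1/2}\to 0$. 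Comparing with $\langle u_\epsilon,\psi\rangle\to\langle u,\psi\rangle$ from (i), we obtain $\langle u,\psi\rangle=0$ for all $\psi\in C_c^{\infty}(B_R(0))$; as $R$ is arbitrary, $u\equiv 0$, hence $f\equiv 0$.

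The step I expect to be the crux --- although it has been arranged in advance by the two preparatory lemmas --- is the exact cancellation of the exponents $\epsilon^{\alpha-n}$ (the blow-up of $\|u_\epsilon\|_2^2$) and $\epsilon^{n-\alpha}$ (the decay of $|S(2\epsilon)|$), which leaves only the genuinely vanishing factor $\rho_\epsilon\to 0$. This balance is precisely why the threshold is $p=2n/\alpha$: in \lemref{lemMYhorm} the dyadic decomposition and H\"older's inequality are matched only when $\alpha p\le 2n$. Some care is also needed in (i)--(ii) because $E$ may be unbounded, which is why the argument is localized to $B_R(0)$ and $E$ is replaced by the bounded set $S$; the sharpness of the range $p\le 2n/\alpha$ is a separate matter, treated via the Salem set construction.
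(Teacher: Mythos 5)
Your proposal is correct and follows essentially the same route as the paper: the trivial range $1\le p\le 2$ is disposed of via Hausdorff--Young and the Lebesgue-nullity of $E$ (as the paper remarks before Lemma~\ref{lemMYhorm}), and the main case pairs the mollified distribution $u_\epsilon$ with a test function, using Lemma~\ref{lemMYhorm} for $\|u_\epsilon\|_2^2\le C\epsilon^{\alpha-n}\rho_\epsilon$ and Lemma~\ref{lemMY1} for $|S(\epsilon)|\lesssim\epsilon^{n-\alpha}$, so that the exponents cancel and only $\rho_\epsilon\to0$ survives. The only cosmetic differences are your choice of the bounded set $S=E\cap\ol{B_{R+2}(0)}$ in place of the paper's $S=\operatorname{supp}\wh f\cap\operatorname{supp}\psi$, and running the argument for all $2\le p\le 2n/\alpha$ rather than first reducing to $p=2n/\alpha$ by convolution.
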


\begin{proof} By convolving $f$ with a
compactly supported smooth function we can assume that $f\in
L^{p}(\R^{n})$ where $p=2n/\alpha$. Choose an even function $ \chi
\in C_{c}^{\infty} (R^{n}) $ with support in unit ball and
$\int_{\mathbb{R}^{n}}\chi(x) dx = 1$. Let $\chi_{\epsilon}(x) =
\epsilon^{-n}\chi(x/\epsilon)$ and
$u_{\epsilon}=u\ast\chi_{\epsilon}$ where $u=\wh{f}$. Then by
Lemma \ref{lemMYhorm}, \bee \|u_{\epsilon}\|_2^2\leq
C\epsilon^{\alpha-n}\rho_{\epsilon} \eee where $\rho_{\epsilon}$
approaches $0$ as $\epsilon$ tends to zero. Let $\psi\in
C_c^{\infty}(\mathbb{R}^n)$. Let $S=supp\ \wh{f}\cap supp\ \psi$.
Then $S$ is a closed and bounded subset of $E$ and hence
$\mu(S)<\infty$ ($\mu$ is a Radon measure). By the Lemma
\ref{lemMY1}, \bee \underset{\epsilon\rightarrow0}{\limsup}\
\epsilon^{\alpha-n}|S_{\epsilon}|<\mu(S)<\infty. \eee So,
\begin{eqnarray*}
  % \nonumber to remove numbering (before each equation)
    |<u,\psi>|^2 &=& \underset{\epsilon\rightarrow0}{\lim}\ |<u_{\epsilon},\psi>|^2 \\
      &\leq& \underset{\epsilon\rightarrow0}{\lim}\ \|u_{\epsilon}\|_2^2\int_{S_{\epsilon}}|\psi|^2 \\
      &\leq& \underset{\epsilon\rightarrow0}{\lim}\ C\epsilon^{\alpha-n}\sum_{j=-\infty}^{\infty}a_{j}b_{j}^{\epsilon}\int_{S_{\epsilon}}|\psi|^2 \\
      &\leq& C'\|\psi\|_{\infty}^2\ \underset{\epsilon\rightarrow0}{\lim}\ \epsilon^{\alpha-n}|S_{\epsilon}|\rho_{\epsilon} \\
      &=&0
\end{eqnarray*}
 Hence $f=0$.\\
\end{proof}

From Lemma \ref{lemMY} and Theorem \ref{corthmMYP}, we have,
\begin{cor}\label{corthmMY}
Let $f\in L^{p}(\mathbb{R}^n)$ be such that $supp\ \wh{f}$ is
contained in a quasi $\alpha$-regular set $E$ that has non-zero
finite $\alpha$-dimensional Hausdorff measure. Then $f\equiv0$,
provided $p\leq\frac{2n}{\alpha}$.
\end{cor}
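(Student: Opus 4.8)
The plan is to deduce Corollary~\ref{corthmMY} from Theorem~\ref{corthmMYP} by showing that the hypothesis on $E$ in the corollary is in fact stronger than (or at least sufficient to recover) the hypothesis in the theorem; concretely, I would show that a quasi $\alpha$-regular set $E$ with $0<\CH_\alpha(E)<\infty$ has finite $\alpha$-packing measure, so the theorem applies verbatim. First I would record the setup: let $\mu_\alpha=\CH_\alpha|_E$, which is a Radon measure by Remark~\ref{PBorel} since $\CH_\alpha(E)<\infty$. Quasi $\alpha$-regularity gives a non-zero finite constant $a$ with $a\leq\ul{D^\alpha}(\mu_\alpha,x)$ for $\CH_\alpha$-almost all $x\in E$; discarding the null set where this fails (which changes neither $\CH_\alpha$ nor $\CP^\alpha$ of $E$, by Borel regularity of $\CP^\alpha$ in Remark~\ref{PBorel} together with Lemma~\ref{lemPHmsr}(1)), I may assume the inequality holds at every $x\in E$.

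Next I would invoke Lemma~\ref{lemPM3}(3) with $\mu=\mu_\alpha$, $B=E$, $\lambda=a$: since $\ul{D^\alpha}(\mu_\alpha,x)\geq a$ on $E$, it yields $\mu_\alpha(E)\leq a^{-1}\CP^\alpha(E)$, which unfortunately bounds $\CH_\alpha$ from above by $\CP^\alpha$ and not the other way around. So instead the useful direction is the one recorded in Remark~\ref{remquasi-pack}: quasi $\alpha$-regularity of $E$ with respect to the Radon measure $\mu_\alpha$ gives $a\,\CP^\alpha(A)\leq\mu_\alpha(A)=\CH_\alpha(A)$ for every $A\subset E$. Taking $A=E$ gives $\CP^\alpha(E)\leq a^{-1}\CH_\alpha(E)<\infty$. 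Thus $E$ is a set of finite $\alpha$-packing measure, and $0<\alpha<n$, so Theorem~\ref{corthmMYP} applies directly: if $f\in L^p(\R^n)$ with $\supp\wh f\subset E$ and $p\leq 2n/\alpha$, then $f\equiv0$. This completes the proof.

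Alternatively, and this is presumably the phrasing the author intends given the sentence ``From Lemma~\ref{lemMY} and Theorem~\ref{corthmMYP}'', one can bypass the packing-measure reduction and re-run the proof of Theorem~\ref{corthmMYP} line by line, replacing the single use of Lemma~\ref{lemMY1} (which estimated $\limsup_\epsilon \epsilon^{\alpha-n}|S(\epsilon)|\leq C_n\CP^\alpha(S)$) by Lemma~\ref{lemMY}, which gives $\limsup_{\delta\to0}|S(\delta)|\delta^{\alpha-n}\leq C_n a^{-1}\mu_\alpha(S)<\infty$ for every bounded $S\subset E$. With $S=\supp\wh f\cap\supp\psi$ a closed bounded subset of $E$, one has $\mu_\alpha(S)<\infty$, and the chain of inequalities $|\langle u,\psi\rangle|^2\leq C'\|\psi\|_\infty^2\lim_{\epsilon\to0}\epsilon^{\alpha-n}|S(\epsilon)|\,\rho_\epsilon=0$ goes through unchanged (Lemma~\ref{lemMYhorm} is unaffected, as it uses only $f\in L^p$ with $2\leq p\leq 2n/\alpha$). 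Hence $\langle u,\psi\rangle=0$ for all $\psi\in C_c^\infty(\R^n)$, so $f\equiv0$.

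I do not expect a genuine obstacle here: both routes are short, and the only point requiring a word of care is the almost-everywhere-versus-everywhere issue for the density bound — one must make sure the exceptional $\CH_\alpha$-null set can be removed without affecting the conclusion, which is fine because $\CH_\alpha$-null sets are $\CP^\alpha$-null on sets of finite packing measure (or, in the second route, because such a set contributes nothing to $\mu_\alpha(S)$ and can be absorbed into $\supp\wh f$ harmlessly after noting $\wh f$ assigns it no mass). The cleanest writeup is the first one: state that quasi $\alpha$-regular sets of finite $\CH_\alpha$ measure have finite $\CP^\alpha$ measure via Remark~\ref{remquasi-pack}, then cite Theorem~\ref{corthmMYP}.
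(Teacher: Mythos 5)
Your proposal is correct and matches the paper, which gives no separate argument beyond the phrase ``From Lemma~\ref{lemMY} and Theorem~\ref{corthmMYP}'': the intended derivation is exactly your reduction, namely that quasi $\alpha$-regularity plus $\CH_{\alpha}(E)<\infty$ yields $\CP^{\alpha}(E)\leq a^{-1}\CH_{\alpha}(E)<\infty$ via Remark~\ref{remquasi-pack} (equivalently, one substitutes Lemma~\ref{lemMY} for Lemma~\ref{lemMY1} in the proof of Theorem~\ref{corthmMYP}). Your remarks on the almost-everywhere density hypothesis are if anything more careful than the paper, which applies Lemma~\ref{lemPM3}(4) under the a.e.\ hypothesis without comment.
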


Any $d$-dimensional $C^1$-smooth manifold $M$ is $d$-rectifiable
and $\CP^d|_{M}$ is $d$-rectifiable. By Remark \ref{PHLrelation},
$\CP^d(M)=\CH_d(M)$ and thus $\CP^d$ is a constant multiple of
$d$-dimensional Lebesgue measure. Since Lebesgue measure is
locally finite, we can assume $M$ to have finite $d$-packing
measure. Also by Lemma \ref{lemPHmsr}, the $d$-density
$D^d(\mu,x)=\ul{D^d}(\mu,x)=\ol{D^{d}}(\mu,x)=1$, where $\mu$
denotes the $d$-dimensional Lebesgue measure restricted to $M$.
Hence $M$ is quasi $d$-regular. Hence Theorem \ref{corthmMYP} extends
Theorem \ref{thmNaru}.

\begin{rem}\label{RemPM}
For a set $E$ such that $\CP^{\alpha}(E)<\infty$, we have
$\ol{\dim}_P(E)\leq\alpha$. Also, if
$\ol{\dim}_P(E)<\ol{\dim}_M(E)=\alpha$, then
$\CP^{\alpha}(E)<\infty$. Thus if $E$ has upper Minkowski
dimension $\alpha$ and $f\in L^p(\R^n)$ with support of its
Fourier transform supported in $E$ ,then $f\equiv0$ provided
$p<2n/\alpha$.
\end{rem}

\begin{cor}\label{corthmMY1}
Let $f\in L^{p}(\mathbb{R}^n)$ be such that $supp\ \wh{f}$ is
contained in a set $E$ where $\ol{\dim}_P(E)=\alpha$. Then
$f\equiv0$, provided $p<\frac{2n}{\alpha}$.
\end{cor}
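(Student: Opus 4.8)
The plan is to deduce the corollary from Theorem~\ref{corthmMYP} by using the \emph{strict} inequality $p<2n/\alpha$ to buy a little room in the packing exponent. Rewrite the hypothesis as $\alpha<2n/p$, and recall from \eqref{EQuPMdim} that $\alpha=\ol{\dim}_P(E)\le n$. First I would fix a real number $\beta$ with
\[
   \alpha<\beta<\min\Bigl\{\,n,\ \tfrac{2n}{p}\,\Bigr\};
\]
such a $\beta$ exists because $\alpha<n$ (the standing assumption of this chapter, so there is room below $n$) and $\alpha<2n/p$, and if $\alpha=0$ one simply takes any $\beta\in\bigl(0,\min\{n,2n/p\}\bigr)$. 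By construction $0<\beta<n$ and $p<2n/\beta$.

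The second step is to check that $\beta$ is an admissible packing exponent for $E$. Since $\ol{\dim}_P(E)=\alpha<\beta$, the characterization \eqref{EqPackMD} of the upper packing dimension gives $\CP^{\beta}(E)=0$; in particular $\CP^{\beta}(E)<\infty$, so $E$ is a set of finite $\beta$-packing measure which still contains $\mathrm{supp}\,\wh f$.

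It then only remains to apply Theorem~\ref{corthmMYP} with $\beta$ in the role of $\alpha$: one has $f\in L^p(\R^n)$, the support of $\wh f$ contained in the set $E$ of finite $\beta$-packing measure, and $p\le 2n/\beta$ (in fact $p<2n/\beta$), hence $f\equiv 0$. That is the whole argument.

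There is no real obstacle beyond Theorem~\ref{corthmMYP} itself (and, behind it, Lemma~\ref{lemMYhorm} and Lemma~\ref{lemMY1}); the only point deserving a moment's thought is why one cannot take $\beta=\alpha$ directly. The reason is that \eqref{EqPackMD} only identifies $\alpha$ as the infimum of those exponents $t$ for which $\CP^t(E)<\infty$, so $\CP^{\alpha}(E)$ may itself be infinite --- which is exactly why the endpoint $p=2n/\alpha$ must be excluded here, in contrast with the situation in Remark~\ref{RemPM}. The only other bookkeeping point, that $\beta$ can simultaneously be kept strictly below $n$ so that Theorem~\ref{corthmMYP} applies, is automatic because $\alpha<n$.
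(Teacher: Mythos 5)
Your argument is correct and is essentially the paper's own proof: both use \eqref{EqPackMD} to conclude $\CP^{\beta}(E)=0<\infty$ for any $\beta>\alpha$ and then invoke Theorem~\ref{corthmMYP} with exponent $\beta$, letting the strict inequality $p<2n/\alpha$ supply the needed room. Your version is in fact slightly more careful than the paper's, since you explicitly pin down a single admissible $\beta<\min\{n,2n/p\}$ and note why the endpoint $p=2n/\alpha$ cannot be recovered this way.
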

\begin{proof}
From (\ref{EqPackMD}), $\CP^{\beta}(E)=0$ for all $\beta>\alpha$.
Hence by the Corollary \ref{corthmMY}, $f\equiv0$, provided
$p\leq\frac{2n}{\beta}$ for all $\beta>\alpha$. Thus $f\equiv0$,
provided $p<\frac{2n}{\alpha}$.
\end{proof}

If $\dim_H(E)\leq\alpha=\ol{\dim}_P(E)$, then Beurling's Theorem
\ref{thmBeurling} implies Corollary \ref{corthmMY1}. However, with
a weaker hypothesis, Corollary \ref{corthmMY} strengthens Beurling
Theorem \ref{thmBeurling}.\\

Next we show that Corollary \ref{corthmMY} is sharp and hence the
sharpness of the Theorem \ref{corthmMYP}. First, let us recall a well known example due to
Salem which shows that there exists a measure $\nu$ supported on a
Cantor type set $K\subseteq\R$, of Hausdorff dimension $\beta,\
0<\beta<1$ with Fourier tranform $\wh{\nu}$ belonging to $L^q(\R)$
for all $q>2/\beta$ (See page 263-271 in \cite{Donoghue}). Let
$M=K\times K\times...\times K$ ($n$ times) and
$\mu=\nu\times\nu\times...\times\nu$ ($n$ times). Then $\mu$ is
supported in $M$ and $\wh{\mu}\in L^q(\R^n)$ for
$q>\frac{2}{\beta}=\frac{2n}{\alpha}$ where $\alpha=n\beta$.
Closely following the proof in page 33 in \cite{Donoghue} we show
that not only the Hausdorff dimension of $M$ is $\alpha$, but $M$
is also Ahlfors-David regular (hence quasi $\alpha$-regular) set
of finite Hausdorff measure, $\CH_{\alpha}$. Then by Lemma
\ref{lemPM3}, $M$ is of finite $\alpha$-packing measure. Thus the range in Theorem \ref{corthmMYP} is the best possible.\\

First, we briefly recall how the above set $K\subseteq\R$ is
constructed. Choose a positive number $\eta$ and an integer $N$ so
that $N\eta<1$ and
   \begin{equation}
     N\eta^{\beta}=1.\label{Eqn25LB2}
   \end{equation} Choose $N$
independent points $a_i$ in the unit interval $[0,1]$ in such a
way that $0\leq a_1<a_2<...<a_N\leq1-\eta$ and widely enough
spaced so that the distance between two $a_i$ is larger than
$\eta$. The set $K$ is constructed as the intersection of
decreasing sequence of compact
sets $\mathcal{K}_j$, where $\mathcal{K}_j$ are defined as follows:\\

 Choose an increasing sequence of non-zero positive
numbers $\eta_j$ converging to $\eta$ where
\begin{equation}
\eta(1-\frac{1}{(j+1)^2})\leq\eta_j\leq\eta \label{Eqn25LB1}
\end{equation} for all $j$. The first set, $K_1$,
is the union of $N$ intervals of length $\eta_1$ of the form
$[a_k,a_k+\eta_1]$. The second set $K_2$, has $N^2$ intervals of
length $\eta_1\eta_2$ of the form
$[a_i+a_j\eta_1,a_i+a_j\eta_1+\eta_1\eta_2]$ and so on.
Inductively, we obtain a sequence $K_j$ of decreasing sets of
length $\eta_1\eta_2...\eta_j$. Then $K=\cap_{j}K_j$. It
is known that the Hausdorff dimension of $K$ is $\beta$. (see \cite{Salem} and page 268 in \cite{Donoghue})\\

\begin{lem}\label{lemSalem}\cite{Raani}
Hausdorff dimension of $M=K\times K\times..\times K$ ($n$ times)
equals $\alpha=n\beta$ and $M$ is an Ahlfors-David $\alpha$-regular set.
\end{lem}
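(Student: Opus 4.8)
The plan is to produce the natural probability measure $\mu$ on $M$ and show directly that $\mu(B_r(z))$ is comparable to $r^{\alpha}$ for every $z\in M$ and every $0<r\le 1$; Ahlfors--David $\alpha$-regularity of $M$ (and hence $\dim_H M=\alpha$) will then follow by comparing $\mu$ with $\CH_{\alpha}$ through the density estimates of Lemma \ref{lemPM3}. Everything reduces to the factor $K\subseteq\R$, so I would treat $K$ first and then tensor. On $K$ let $\nu$ be the Borel probability measure with $\nu(I)=N^{-j}$ for each of the $N^{j}$ intervals $I$ making up $\mathcal{K}_{j}$; this is well defined since the stage-$j$ intervals are nested and generate the Borel $\sigma$-algebra of $K$, and $\nu$ has no atoms. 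Writing $\ell_{j}=\eta_{1}\cdots\eta_{j}$ for the common length of a stage-$j$ interval, the hypothesis $\eta\big(1-\tfrac1{(k+1)^{2}}\big)\le\eta_{k}\le\eta$ telescopes:
\bee
\prod_{k=1}^{j}\Big(1-\tfrac1{(k+1)^{2}}\Big)=\prod_{k=1}^{j}\frac{k(k+2)}{(k+1)^{2}}=\frac{j+2}{2(j+1)}\ \in\ \big[\tfrac12,1\big],
\eee
so $\tfrac12\eta^{j}\le\ell_{j}\le\eta^{j}$, and since $N\eta^{\beta}=1$ we get $\nu(I)=N^{-j}=\eta^{j\beta}$ for every stage-$j$ interval $I$.

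Next I would prove that $c\,r^{\beta}\le\nu(B_{r}(x))\le C\,r^{\beta}$ for all $x\in K$ and $0<r\le 1$. Given such $x$ and $r$, choose $j$ with $\ell_{j}\le r<\ell_{j-1}$ (the case $r\ge\ell_{1}$ is immediate from $\nu(K)=1$ and $\ell_{1}\ge\tfrac12\eta$). For the upper bound, every stage-$(j-1)$ interval meeting $B_{r}(x)$ lies inside the window $(x-r-\ell_{j-1},\,x+r+\ell_{j-1})$, which has length $<4\ell_{j-1}$; since the stage-$(j-1)$ intervals are pairwise non-overlapping and each has length exactly $\ell_{j-1}$, at most three of them can meet $B_{r}(x)$, whence
\bee
\nu(B_{r}(x))\ \le\ 3\,N^{-(j-1)}\ =\ 3N\,\eta^{j\beta}\ \le\ 3N\,2^{\beta}\,r^{\beta}.
\eee
For the lower bound, $x$ lies in a stage-$j$ interval $I_{x}$ with $|I_{x}|=\ell_{j}\le r$, so $I_{x}\subseteq\ol{B_{r}(x)}$ and, $\nu$ having no atoms, $\nu(B_{r}(x))\ge\nu(I_{x})=\eta^{j\beta}\ge\eta^{\beta}r^{\beta}$ because $r<\ell_{j-1}\le\eta^{j-1}$.

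From the two-sided bound, $2^{-\beta}c\le\ol{D^{\beta}}(\nu,x)\le 2^{-\beta}C$ for every $x\in K$. Lemma \ref{lemPM3}(2) with $B=K$ then gives $\CH_{\beta}(K)\le 2^{\beta}c^{-1}\nu(K)<\infty$, Lemma \ref{lemPM3}(1) gives $\CH_{\beta}(K)\ge C^{-1}\nu(K)>0$, and applying (1)--(2) to an arbitrary Borel $B\subseteq K$ yields $2^{-\beta}c\,\CH_{\beta}(B)\le\nu(B)\le C\,\CH_{\beta}(B)$, so that $\CH_{\beta}(K\cap B_{r}(x))\asymp\nu(B_{r}(x))\asymp r^{\beta}$; that is, $K$ is Ahlfors--David $\beta$-regular. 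To pass to $M=K\times\cdots\times K$, put $\mu=\nu\times\cdots\times\nu$, a compactly supported Radon probability measure on $\R^{n}$. For $z=(z_{1},\dots,z_{n})\in M$ and $0<r\le 1$, the inclusions
\bee
\prod_{i=1}^{n}\big(z_{i}-\tfrac{r}{\sqrt n},\,z_{i}+\tfrac{r}{\sqrt n}\big)\ \subseteq\ B_{r}(z)\ \subseteq\ \prod_{i=1}^{n}\big(z_{i}-r,\,z_{i}+r\big)
\eee
together with the one-dimensional estimate give $(c\,n^{-\beta/2})^{n}\,r^{\,n\beta}\le\mu(B_{r}(z))\le C^{n}\,r^{\,n\beta}$, i.e. $\mu(B_{r}(z))\asymp r^{\alpha}$ with $\alpha=n\beta$. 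One more appeal to Lemma \ref{lemPM3} then gives $0<\CH_{\alpha}(M)<\infty$ and $\CH_{\alpha}(M\cap B_{r}(z))\asymp\mu(B_{r}(z))\asymp r^{\alpha}$, so $M$ is Ahlfors--David $\alpha$-regular and, in particular, $\dim_{H}M=\alpha=n\beta$.

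The single point needing genuine care is the upper ball estimate for $\nu$: it relies on the stage-$j$ intervals being pairwise disjoint and of equal length, so that a ball only slightly smaller than a stage-$(j-1)$ interval cannot meet more than a bounded number of stage-$(j-1)$ pieces, no matter how large $N$ (equivalently, how small $\eta$) is. Once the constants produced by $\ell_{j}\asymp\eta^{j}$ are tracked, the passage from $K$ to the product $M$ and the comparison of $\mu$ with $\CH_{\alpha}$ via Lemma \ref{lemPM3} are routine.
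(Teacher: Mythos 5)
Your proof is correct, but it follows a genuinely different route from the one in the thesis. The thesis works directly with the definition of $\CH_{\alpha}$: the upper bound on $\CH_{\alpha}(M\cap B_r(x))$ comes from the canonical coverings $M_j$ by construction cubes, and the lower bound from a combinatorial counting argument over arbitrary finite coverings by cubes whose faces project into the complement of $K$ (counting how many stage-$j$ cubes each covering set can absorb, and using $N\eta^{\beta}=1$ to telescope the count back to $r^{\alpha}$). You instead build the natural self-similar probability measure $\nu$ on $K$ with $\nu(I)=N^{-j}=\eta^{j\beta}$ on stage-$j$ intervals, prove the two-sided ball estimate $\nu(B_r(x))\asymp r^{\beta}$ (the telescoping bound $\tfrac12\eta^{j}\le\ell_{j}\le\eta^{j}$ and the ``at most three disjoint stage-$(j-1)$ intervals meet $B_r(x)$'' count are both right, and the disjointness needed there is guaranteed by the spacing condition on the $a_i$), tensor to get $\mu(B_r(z))\asymp r^{\alpha}$ on $M$, and then transfer to $\CH_{\alpha}$ via the density comparison in Lemma \ref{lemPM3}. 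This is the mass distribution principle, and it buys a shorter argument with explicit constants, while sidestepping both the covering combinatorics and any worry about dimension additivity for products (your ball estimate for $\mu$ handles the product directly). What the thesis's approach buys is independence from the auxiliary measure: it manipulates coverings of $M\cap B_r(x)$ itself, which is closer in spirit to Donoghue's original treatment of $K$ that the thesis is explicitly extending. Both proofs are complete; yours relies on Lemma \ref{lemPM3} being applicable to the Radon measures $\nu$ and $\mu$, which it is since both are compactly supported Borel probability measures.
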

\begin{proof}
Let $0<r<1$ and $x\in M$, that is let $x=(x_1,x_2,...,x_n)$ where
$x_m\in K$ for all $m$. For every $m$, by construction of $K$,
there exists a smallest integer $t_m$ such that $K\cap (x_m-r,
x_m+r)$ contains at least one interval $I_{t_m}$ of length
$\eta_1..\eta_m$. Thus
\begin{equation} K\cap (x_m-r, x_m+r) \supseteq K\cap
I_{t_1}\times...\times K\cap I_{t_n}. \label{Eqn25L6}
\end{equation}

Since Hausdorff measure is translation invariant, we can assume
$2r\leq\eta_1...\eta_{t_m-1}$. Since $\alpha=n\beta,$
\begin{equation}
(2r)^{\alpha} \leq
\Pi_{m=1}^n(\eta_1^{\beta}...\eta_{t_m-1}^{\beta}).\label{Eqn25L1}
\end{equation}

Among the coverings of $M\cap B_r(x)$ which compete in the
definition of $\CH_{\alpha}(M\cap B_r(x))$, are the coverings
$M_j$(where $j=(j_1,...,j_n)$ and large $j_m>t_m-1$) themselves,
consisting of $\Pi_m^nN^{j_m-(t_m-1)}$ cubes of volume
$\Pi_{m=1}^n(\eta_1\eta_2...\eta_{j_m})$. Hence
\begin{eqnarray*}
\CH_{\alpha}(M\cap B_r(x)) &\leq&
\Pi_{m=1}^nN^{j_m-(t_m-1)}(\eta_1\eta_2...\eta_{j_m})^{\beta}\\
&\leq& (2r)^{\alpha}\Pi_{m=1}^nN^{j_m-(t_m-1)}(\eta_{t_m+1}\eta_2...\eta_{j_m})^{\beta}\ \ \text{from}\ \text{(\ref{Eqn25L6})}\\
&\leq&
(2r)^{\alpha}N^{-n}\Pi_{m=1}^nN^{j_m-t_m}\eta^{(j_m-t_m)\beta}\ \
\text{from}\ \text{(\ref{Eqn25LB1})}
\end{eqnarray*}
Thus from (\ref{Eqn25LB2}) we have
\begin{equation}
\CH_{\alpha}(M\cap
B_r(x))\leq\frac{2^{\alpha}}{N^n}r^{\alpha}\label{Eqn25LUp}
\end{equation}
Similarly we prove that $\CH_{\alpha}(M)\leq1$ which implies that
the Hausdorff dimension of $M$ is at most $\alpha$. To show that
the dimension of $M$ is exactly $\alpha$, we show that
$\CH_{\alpha}(M)$ is not $0$.

 In computing the Hausdorff measure, it is
enough to take the infimum of $\Sigma d_i^{\alpha}$ over all
coverings of $M\cap B_r(x)$ by countable families of (sufficiently
small) open balls $A_i$, where the end points of the projection of
$A_i$ to $m^{th}$ axis is in the complement of $K\cap
(x_m-r,x_m+r)$. From the compactness, it is also clear that these
coverings consist of only a finite number of disjoint, open cubes.
Let $\{U_i\}$ be one such family of sufficiently small cubes that
cover $M\cap B_r(x)$, where the end points of the projection of
$U_i$ to $m^{th}$ axis is in the complement of $K\cap
(x_m-r,x_m+r)$.\\

 Let $p_{i_m}$ be the smallest integer $p$ such that $m^{th}$
projection of $U_i$ contains at least one interval of $K_p$ and
$P_i=(p_{i_1},p_{i_2},...p_{i_n})$. Then, from (\ref{Eqn25L6}),
$t_m\leq p_{i_m}$. Let
\begin{equation}
p_{i_m}=t_m+s_{i_m}\label{Eqn25L2}
\end{equation}
 Let $m^{th}$ projection of $U_i$
contain $k_i^{(m)}$ number of constituent intervals of $K_{p_m}$.
Then $U_i$ contain $k_i=\Pi_{m=1}^nk_i^{(m)}$ number of cubes of
$M_{P_i}=K_{p_{i_1}}\times...\times K_{p_{i_n}}$. Let $d_i$ denote
the diameter of $U_i$. Then
\begin{equation}
d_i^n\geq
k_i\Pi_{m=1}^n(\eta_1\eta_2...\eta_{p_{i_m}}).\label{Eqn25L5}
\end{equation}
 Let $j_m$'s be large such
that $\cup U_i$ contains $M_j\cap M\cap B_r(x)$ where
$M_j=K_{j_1}\times...\times K_{j_n}$ and $M_j\subset M_{P_i}$, for
all $i$. Then $U_i$ contains
$k_iN^{(j_1-p_{i_1}+...+j_n-p_{i_n})}$ cubes of $M_j$. By
(\ref{Eqn25L2}), $U_i$ contains
$k_iN^{(j_1-t_1-s_{i_1}+...+j_n-t_n-s_{i_n})}$ cubes of $M_j$.
However by (\ref{Eqn25L6}), $$M\cap B_r(x)\cap M_j\subseteq
M_t\subset M\cap B_r(x),$$ where $M_t=(K\cap
I_{t_1})\times...\times(K\cap I_{t_n})$. So the number of cubes of
$M_j$ covered by $\cup U_i$ is at least $N^{j_1-t_1+...+j_n-t_n}$.
Since $\sum_ik_iN^{(j_1-t_1-s_{i_1}+...+j_n-t_n-s_{i_n})}$ is the
total number of cubes of $M_j$ covered by $\cup U_i$,
\begin{equation}
\sum_ik_iN^{(j_1-t_1-s_{i_1}+...+j_n-t_n-s_{i_n})}\geq
N^{j_1-t_1+...+j_n-t_n}.\label{Eqn25L7}
\end{equation}
The equation (\ref{Eqn25L5}) implies that
\begin{eqnarray*}
d_i^{\alpha} &\geq&(k_i\Pi_{m=1}^n(\eta_1\eta_2...\eta_{p_{i_m}}))^{\beta}\\
                       &\geq& (2r)^{\alpha}(k_i\Pi_{m=1}^n(\eta_{t_m}\eta_{t_m+1}...\eta_{p_{i_m}}))^{\beta}(\text{from}\
                       (\text{\ref{Eqn25L1}}))\\
                       &\geq&(2r)^{\alpha}(k_i\Pi_{m=1}^n\eta_{t_m}\eta^{p_{i_m}-t_m}[(1-\frac{1}{(t_m+1)^2})...(1-\frac{1}{p_{i_m}^2})])^{\beta}\
                       \text{from (\ref{Eqn25LB1})}
\end{eqnarray*}
Since $\eta_m$ is an increasing sequence and by (\ref{Eqn25LB1}),
$\eta_{t_1}\eta_{t_2}...\eta_{t_n}\geq(\frac{3}{4}\eta)^n$. Fix
$C=(\frac{3}{4}\eta)^n$. Thus
\begin{eqnarray*}
d_i^{\alpha}        &\geq& C(2r)^{\alpha}\big(k_i\eta^{(p_{i_1}+...+p_{i_n}-(t_1+...t_n))}\Pi_{m=1}^n\big[(1-\frac{1}{t_m+1})(1+\frac{1}{p_{i_m}})\big]\big)^{\beta}\\
                       &\geq& C(2r)^{\alpha}\big(k_i\eta^{(p_{i_1}+...+p_{i_n}-(t_1+...t_n))}\Pi_{m=1}^n\big[\frac{1}{2}(1+\frac{1}{p_{i_m}})\big]\big)^{\beta}\\
                        &>& Cr^{\alpha}k_i^{\beta}\eta^{(p_{i_1}+...+p_{i_n}-(t_1+...t_n))\beta},
\end{eqnarray*}
From (\ref{Eqn25LB2}), we have
$$N^{(j_1+...j_n)-(p_{i_1}+...p_{i_n})}\eta^{(j_1+...j_n-t_1-...t_n)\beta}=\eta^{(p_{i_1}+...+p_{i_n}-(t_1+...t_n))}.$$
Thus
\begin{equation}
  d_i^{\alpha}\geq
                Cr^{\alpha}k_i^{\beta}N^{(j_1+...j_n)-(p_{i_1}+...p_{i_n})}\eta^{(j_1+...j_n-t_1-...t_n)\beta}.\label{Eqn25L3}
\end{equation}
Also, there exists a constant $C_{N,n}$ ($=2^n(N-1)^n$), such that
$1\leq k_i\leq C_{N,n}$ because of the choice of $p_{i_k}$. Let
$L=(C_{N,n})^{\beta-1}$. Since $0<\beta<1$,
\begin{equation}
k_i^{\beta}>Lk_i\label{Eqn25L4}
\end{equation}

 From (\ref{Eqn25L2}) and (\ref{Eqn25L4}),
summing over $i$ in (\ref{Eqn25L3}), we have
\begin{eqnarray*}
  % \nonumber to remove numbering (before each equation)
  \Sigma_i d_i^{\alpha} &\geq& CLr^{\alpha}\eta^{(j_1+...j_n-t_1-...-t_n)\beta}\Sigma_ik_iN^{(j_1+...j_n)-(t_{1}+...t_{n}+s_{i_1}+...s_{i_n})}\\
                   &\geq& CLr^{\alpha}\eta^{(j_1+...j_n-t_1-...-t_n)\beta}N^{j_1+...j_n-t_1-...-t_n}\ (\text{from}\ (\ref{Eqn25L7})) \\
                       &=& CLr^{\alpha}\ \ (\text{from}\ (\ref{Eqn25LB2})) \\
                       &>& 0
  \end{eqnarray*}
  Thus
  \begin{equation}
  \CH_{\alpha}(M\cap B_r(x))\geq CLr^{\alpha}\label{Eqn25LLb}
  \end{equation}
   for all $x\in M$ and $0<r<1$. Similarly we prove that $\CH_{\alpha}(M)>0$. From (\ref{Eqn25LUp}) and (\ref{Eqn25LLb}) we have proved that there exists non-zero finite constants $a$ and $b$ such that
   $$0<ar^{\alpha}\leq\CH_{\alpha}(M\cap B_r(x))\leq br^{\alpha}<\infty.$$
   for all $x\in M$ and $0<r<1$. Hence the proof.
\end{proof}

\begin{rem}\label{rem-by-Str}
 The set constructed in the above Lemma \ref{lemSalem} is a fractal even if $\alpha$ is an integer. In \cite{AgranovskyNaru}, the authors proved the sharpness of Theorem \ref{thmNaru} for any integer $\alpha\geq n/2$ by constructing a smooth manifold $M\subset\R^n$ and $\mu$ supported on $M$ such that the Fourier transform $f=\hat{\mu}\in L^p(\mathbb{R}^n)$ for all $p > 2n/\alpha$. The case $0<\alpha<n/2$, $\alpha$ integer seems to be still open.
\end{rem}

\newpage
\chapter{$L^p$-Asymptotics of Fractal measures}

In this chapter, we give quantitative versions of the Theorem
\ref{corthmMYP} proved in the previous chapter by estimating the
$L^p$ norm of the Fourier transform of fractal measures $\mu$ over
a ball centered at origin with large radius for $1\leq p \leq
2n/\alpha$ under various fractal geometric assumption on the
support of $\mu$. We study the $L^p$-asymptotics of the Fourier
transform of the fractal measures for $2 \leq p \leq 2n/\alpha$ in
the next section and for $1\leq p\leq2$ in the subsequent section.

\section{$L^p$-Fourier asymptotic properties of fractal measures for $2\leq p\leq 2n/\alpha$}

Let $\mu$ denote a fractal measure supported in an $\alpha$-dimensional set $E\subset\R^n$ and $f\in L^q(d\mu)$ ($1\leq q\leq\infty$). Suppose $2< p\leq 2n/\alpha$. In this section, we obtain the upper and lower bounds for
  \be
    \frac{1}{L^{n-\frac{\alpha p}{2}}}\int_{|\xi|\leq L}|\wh{fd\mu}(\xi)|^pd\xi.\label{eq-asymp-2p2nalpha}
  \ee
Strichartz proved in \cite{Strichartz} an analogue of Radon-Nikodym theorem for  positive measure with no infinite atoms:

\begin{thm}\label{thmStrRN}\cite{Strichartz}
Let $\mu$ be a measure with no infinite atoms, and let $\nu$ be $\sigma$-finite and absolutely continuous with respect to $\mu$. Then there exists a unique decomposition $\nu=\nu_1+\nu_2$ such that $d\nu_1=\phi d\mu$ for a non-negative measurable function $\phi$ and $\nu_2$ is null with respect to $\mu$, that is, $\nu_2(A)=0$ whenever $\mu(A)<\infty$.
\end{thm}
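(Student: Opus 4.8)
The plan is to reduce the statement to the classical Radon--Nikodym theorem by isolating the largest part of the space on which $\mu$ is $\sigma$-finite. As a preliminary normalization I would replace $\nu$ by an equivalent \emph{finite} measure: since $\nu$ is $\sigma$-finite, fix a countable measurable partition $\{X_j\}$ of the space with $\nu(X_j)<\infty$ and set $g=\sum_j 2^{-j}(1+\nu(X_j))^{-1}\mathbf{1}_{X_j}$, so that $\wt\nu$ defined by $d\wt\nu=g\,d\nu$ is finite, equivalent to $\nu$, and still absolutely continuous with respect to $\mu$. Because $g$ is a strictly positive measurable function, a decomposition $\wt\nu=\wt\nu_1+\wt\nu_2$ of the desired shape transforms, upon multiplying measures and densities by $1/g$, into one for $\nu$, and conversely; so it suffices to prove the theorem when $\nu$ is finite.

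Assume then $\nu$ finite, and let $M=\sup\{\,\nu(S):S\text{ measurable and }\mu|_S\text{ is }\sigma\text{-finite}\,\}$, which is finite since $\nu$ is. Choose $S_k$ with $\mu|_{S_k}$ $\sigma$-finite and $\nu(S_k)>M-1/k$, and put $S=\bigcup_k S_k$. Then $\mu|_S$ is $\sigma$-finite, being a countable union of $\sigma$-finite pieces, so $\nu(S)\le M$; and $\nu(S)\ge\nu(S_k)>M-1/k$ for every $k$, so $\nu(S)=M$. Define $\nu_1$ to be the restriction of $\nu$ to $S$ and $\nu_2$ its restriction to the complement of $S$. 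Since $\nu\ll\mu$ we have $\nu_1\ll\mu|_S$, and as $\mu|_S$ is $\sigma$-finite the classical Radon--Nikodym theorem supplies a non-negative measurable $\phi$, taken to vanish off $S$, with $d\nu_1=\phi\,d\mu$.

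It remains to check that $\nu_2$ is null with respect to $\mu$ and that the decomposition is unique. For the first: if $\mu(A)<\infty$ but $\nu_2(A)=\nu(A\setminus S)>0$, then $S\cup A$ has $\sigma$-finite $\mu$-restriction while $\nu(S\cup A)=M+\nu(A\setminus S)>M$, contradicting maximality; hence $\nu_2(A)=0$ whenever $\mu(A)<\infty$. For uniqueness, let $\nu=\nu_1'+\nu_2'$ be another such decomposition with $d\nu_1'=\phi'\,d\mu$. The key point is that $\nu_1'$ is automatically concentrated on a set of $\sigma$-finite $\mu$-measure: $n^{-1}\mu(\{\phi'>n^{-1}\})\le\int_{\{\phi'>n^{-1}\}}\phi'\,d\mu=\nu_1'(\{\phi'>n^{-1}\})<\infty$ since $\nu_1'$ is finite, so $\{\phi'>0\}=\bigcup_n\{\phi'>n^{-1}\}$ is $\sigma$-finite for $\mu$ and carries $\nu_1'$; likewise $\nu_1$ is carried by $\{\phi>0\}$. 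Setting $U=\{\phi>0\}\cup\{\phi'>0\}$, which is $\sigma$-finite for $\mu$, the nullity of $\nu_2$ and $\nu_2'$ forces $\nu_2(U)=\nu_2'(U)=0$, so for every measurable $A$ one gets $\nu_1(A)=\nu_1(A\cap U)=\nu(A\cap U)=\nu_1'(A\cap U)=\nu_1'(A)$; hence $\nu_1=\nu_1'$, $\nu_2=\nu_2'$, and $\phi=\phi'$ $\mu$-almost everywhere by the $\sigma$-finite uniqueness on $U$. (The hypothesis that $\mu$ has no infinite atoms is what makes ``null with respect to $\mu$'' the appropriate complement here, an infinite atom being a set of infinite $\mu$-measure whose only subsets of finite measure are the $\mu$-null ones.)

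I expect the main obstacle to be not a single computation but the bookkeeping forced by the possible non-$\sigma$-finiteness of $\mu$: arranging that a \emph{maximal} $\sigma$-finite portion $S$ actually exists (this is exactly why the reduction to finite $\nu$ is worthwhile, since it replaces an abstract exhaustion by a plain attained supremum), and, in the uniqueness step, showing that \emph{every} admissible absolutely continuous piece must be supported on a $\sigma$-finite set, which is what lets the two decompositions be compared at all.
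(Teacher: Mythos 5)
Your proof is correct, but note that the thesis itself offers no proof of this statement: Theorem \ref{thmStrRN} is quoted from Strichartz \cite{Strichartz} as a known lemma, so there is nothing internal to compare against. Your argument is a clean, self-contained derivation: the reduction to finite $\nu$ via a strictly positive density $g$ is legitimate (both the shape of the decomposition and the nullity condition transfer back and forth because $g>0$), the maximal $\sigma$-finite set $S$ is correctly produced as a countable union realizing the supremum $M$, the nullity of $\nu_2$ follows from maximality exactly as you say, and the uniqueness step correctly exploits the fact that any finite measure of the form $\phi'\,d\mu$ is automatically carried by the $\mu$-$\sigma$-finite set $\{\phi'>0\}$. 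One observation worth making: as written, your argument never invokes the hypothesis that $\mu$ has no infinite atoms --- every step goes through for an arbitrary measure $\mu$ --- so your parenthetical about that hypothesis is more of a gloss than a load-bearing step. That is not a defect (you have simply proved a slightly more general statement than the one quoted), but if you want to match Strichartz's formulation you should either identify where he uses the atom condition or note explicitly that your route dispenses with it.
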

\begin{rem}\label{locuniabshausdorff} As observed in \cite{Strichartz}, any locally uniformly $\alpha$-dimensional measure $\mu$ can be written as $d\mu=\phi d\CH_{\alpha}+d\nu$ where $\nu$ is null with respect to $\CH_{\alpha}$ and $\phi$ is a non negative measurable function belonging to $L^1(\mathbb{R}^n)$.
\end{rem}

In \cite{Strichartz}, Strichartz studied the asymptotic properties
of locally uniformly $\alpha$-dimensional measures and proved a
Plancherel type theorem:

\begin{thm}\label{thmStrMain}\cite{Strichartz}
   \begin{enumerate}
      \item Let $d\mu=\phi d\CH_{\alpha}+d\nu$ be a locally uniformly $\alpha$-dimensional measure on $\R^n$ (as in Remark \ref{locuniabshausdorff}). For any $f\in L^2(d\mu)$ we have, for a fixed $y$ and a constant $c$ independent of $y$,
          \bee
            \underset{L\rightarrow\infty}{\limsup}\ \frac{1}{L^{n-\alpha}}\int_{B_L(y)}|\wh{(fd\mu)}|^2\leq c\int|f(x)|^2\phi(x)d\CH_{\alpha}(x).
          \eee
      \item Let $\mu'=\mu+\nu$ be a locally uniformly $\alpha$-dimensional measure on $\R^n$ where $\mu=\CH_{\alpha}|_{E}$ and $\nu$ is null with respect to $\CH_{\alpha}$. If $E$ is quasi regular, then for fixed $y$ and constant $c$ independent of $y$,
          \bee
             c\int_{E}|f|^2d\CH_{\alpha}\leq\underset{L\rightarrow\infty}{\liminf}\ \frac{1}{L^{n-\alpha}}\int_{B_L(y)}|\wh{fd\mu'}|^2.
          \eee
   \end{enumerate}
\end{thm}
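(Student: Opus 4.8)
\noindent\emph{Strategy of proof.} The plan is to treat both inequalities with one mollification identity and then specialise. Since $\mu$ (resp.\ $\mu'$) is Radon, Cauchy--Schwarz gives $\int_{B_R}|f|\,d\mu\le\mu(B_R)^{1/2}\|f\|_{L^2(d\mu)}<\infty$, so $fd\mu$ is locally finite and $\widehat{fd\mu}$ is a well-defined tempered distribution, locally in $L^2$ as the statement presupposes. For $\Psi\in\mathcal{S}(\R^n)$, substituting $\xi=y+L\zeta$ and applying Fubini yields the exact identity
\[
\int_{\R^n}\Psi\!\Big(\frac{\xi-y}{L}\Big)\,|\widehat{fd\mu}(\xi)|^2\,d\xi=L^{n}\iint f(t)\overline{f(s)}\,e^{-iy\cdot(t-s)}\,\widehat\Psi\big(L(t-s)\big)\,d\mu(t)\,d\mu(s).
\]
Dividing by $L^{n-\alpha}$, the kernel $L^{\alpha}\widehat\Psi(L\,\cdot\,)$ behaves like an $L^1$-normalised approximate identity of width $1/L$ acting against $d\mu$: choosing $\Psi\ge\chi_{B_1(0)}$ (e.g.\ a Gaussian) turns the left side into an upper bound for the average in part~(1), while choosing $\Psi$ to be a Fourier square supported appropriately turns it into a lower bound in part~(2).

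For part~(1) I would dominate the right side by $\int|f(t)|^2\Phi_L(t)\,d\mu(t)$, using $2|f(t)||f(s)|\le|f(t)|^2+|f(s)|^2$ and symmetry of $|\widehat\Psi|$, with $\Phi_L(t)=L^{\alpha}\int|\widehat\Psi(L(t-s))|\,d\mu(s)$. Splitting $\R^n$ into the dyadic shells $\{2^{k-1}\le L|t-s|<2^k\}$ and combining the rapid decay of $\widehat\Psi$ with $\mu(B_r(t))\le\lambda r^{\alpha}$ for $r\le1$ (and $\mu(B_r(t))\le C\lambda r^{n}$ for $r>1$, to kill the far shells) gives at once a uniform bound $\Phi_L(t)\le C\lambda$ and the pointwise estimate $\limsup_{L\to\infty}\Phi_L(t)\le C\,\overline{D^{\alpha}}(\mu,t)$. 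Reverse Fatou (with dominating function $C\lambda|f|^2\in L^1(d\mu)$) then yields $\limsup_L L^{\alpha-n}\int_{B_L(y)}|\widehat{fd\mu}|^2\le C\int|f|^2\,\overline{D^{\alpha}}(\mu,t)\,d\mu(t)$. To pass to $\int|f|^2\phi\,d\CH_{\alpha}$, write $d\mu=\phi\,d\CH_{\alpha}+d\nu$ as in Remark~\ref{locuniabshausdorff} ($\phi\,d\CH_{\alpha}$ finite, $\nu$ $\CH_{\alpha}$-null) and split the integral: on $\{\phi>0\}$ one has $\overline{D^{\alpha}}(\mu,t)\le C\lambda$ for $\CH_{\alpha}$-a.e.\ $t$, so the $\phi\,d\CH_{\alpha}$-part is $\le C\lambda\int|f|^2\phi\,d\CH_{\alpha}$; and by Lemma~\ref{lemPM3} the level sets $\{\overline{D^{\alpha}}(\phi\,d\CH_{\alpha},\cdot)\ge1/m\}$ and $\{\overline{D^{\alpha}}(\nu,\cdot)\ge1/m\}\cap B_R$ have finite $\CH_{\alpha}$-measure, hence $\nu$-measure $0$, so $\overline{D^{\alpha}}(\mu,\cdot)=0$ $\nu$-a.e.\ and the $\nu$-part drops out, giving the bound with $c=c(n,\lambda)$.

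For part~(2) I would first reduce, by density, to $f$ continuous with compact support: if $f_k\to f$ in $L^2(d\mu')$ with $f_k$ continuous, part~(1) applied to $\mu'$ and $f-f_k$ gives $\limsup_L L^{\alpha-n}\int_{B_L(y)}|\widehat{(f-f_k)d\mu'}|^2\le c'\|f-f_k\|_{L^2(d\mu')}^2\to0$, and since $\|g\|_{L^2(E,\CH_{\alpha})}\le\|g\|_{L^2(d\mu')}$ a triangle inequality in $L^2(B_L(y))$ transfers the lower bound from $f_k$ to $f$. For such $f$ pick $\psi\ge0$ so that $m_L(\xi):=|\widehat\psi((\xi-y)/L)|^2$ satisfies $0\le m_L\le\chi_{B_L(y)}$ ($\widehat\psi$ supported in $B_{1/2}(0)$ with $\|\widehat\psi\|_\infty\le1$, e.g.\ $\psi=|\eta|^2$, $\widehat\eta$ supported in $B_{1/4}(0)$); then $\int_{B_L(y)}|\widehat{fd\mu'}|^2\ge\int m_L|\widehat{fd\mu'}|^2$, and Plancherel rewrites the right side as $c_n\iint f(t)\overline{f(s)}\,e^{-iy(t-s)}\,\Theta_L(t-s)\,d\mu'(t)\,d\mu'(s)$ with $\Theta_L(x)=L^n\Theta(Lx)$, $\Theta=\psi\ast\widetilde\psi\ge0$, $\Theta>0$ near $0$. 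Using uniform continuity of $f$ — split off $\{|t-s|<r_\delta\}$ (where $|f(t)\overline{f(s)}e^{-iy(t-s)}-|f(t)|^2|<\delta$) from its complement (where $\Theta_L=O(L^{n-N})$), and use the uniform bound $(\Theta_L\ast\mu')(t)\le C\lambda L^{n-\alpha}$ on the first piece — reduces the problem, up to an $O(\delta)$ error, to
\[
\liminf_{L\to\infty}\frac{1}{L^{n-\alpha}}\int|f(t)|^2(\Theta_L\ast\mu')(t)\,d\mu'(t)\ \ge\ \int|f(t)|^2\Big(\liminf_{L\to\infty}\frac{1}{L^{n-\alpha}}(\Theta_L\ast\mu)(t)\Big)d\mu'(t)
\]
by Fatou and $\mu'\ge\mu$. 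Finally, since $\Theta\ge c_0>0$ on a ball $B_{r_0}(0)$, quasi $\alpha$-regularity of $E$, i.e.\ $\underline{D^{\alpha}}(\CH_{\alpha}|_E,t)\ge a>0$ for $\CH_{\alpha}$-a.e.\ $t\in E$, forces $\liminf_L L^{\alpha-n}(\Theta_L\ast\mu)(t)\ge c_0 a(2r_0)^{\alpha}=:c_1>0$ for $\CH_{\alpha}$-a.e.\ $t\in E$; letting $\delta\to0$ and then removing the continuity assumption gives $\liminf_L L^{\alpha-n}\int_{B_L(y)}|\widehat{fd\mu'}|^2\ge c\int_E|f|^2\,d\CH_{\alpha}$.

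The main obstacle is part~(2): one cannot simply read off a diagonal contribution from a double integral against the non-atomic measure $\mu'$, so the reduction to continuous $f$ and the two-scale splitting of $\Theta_L$ are both essential; and — unlike part~(1), where only an upper bound on $\overline{D^{\alpha}}(\mu,\cdot)$ is needed — one must produce a \emph{strictly positive} lower bound $\liminf_L L^{\alpha-n}(\Theta_L\ast\mu)(t)\ge c_1>0$ holding $\CH_{\alpha}$-a.e.\ and integrable against $\mu'$, which is exactly the point where the quasi-regularity hypothesis is used and where the argument would fail for a merely locally uniformly $\alpha$-dimensional $\mu$. Independence of $c$ from $y$ is automatic: the mollifier does not depend on $y$, the phase $e^{-iy\cdot(t-s)}$ is discarded by absolute values in part~(1), and in part~(2) it enters only through $|f(t)\overline{f(s)}e^{-iy(t-s)}-|f(t)|^2|\to0$ as $s\to t$, which affects the threshold $r_\delta$ but not the limiting constant $c_1$.
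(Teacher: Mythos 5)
This theorem is stated in the thesis as a quoted result of Strichartz, with a citation to \cite{Strichartz} and no proof supplied, so there is no in-paper argument to measure your proposal against; what you have written is a reconstruction of Strichartz's original proof, and as far as I can check it is correct. Your part (1) is essentially Strichartz's argument: the mollification identity, the dyadic-shell estimate of $\Phi_L$ using $\mu(B_r(x))\leq\lambda r^{\alpha}$, reverse Fatou against the dominating function $C\lambda|f|^2$, and the observation (via Lemma~\ref{lemPM3}) that $\ol{D^{\alpha}}(\mu,\cdot)=0$ $\nu$-a.e., which is exactly how the singular part of the decomposition in Remark~\ref{locuniabshausdorff} is eliminated so that only $\int|f|^2\phi\,d\CH_{\alpha}$ survives on the right. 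Your part (2) reaches the same destination by a slightly different packaging: Strichartz derives the lower bound from the pointwise approximate-identity estimate $\liminf_{t\to0}t^{n-\alpha}|u_t(x)|\geq c|f(x)|$ (quoted in this thesis as Theorem~\ref{thmStrApproximation}(2), and itself a consequence of quasi $\alpha$-regularity) together with a Fatou argument, whereas you extract the diagonal contribution by hand from the double integral via the reduction to continuous $f$, the two-scale splitting of $\Theta_L$, and the pointwise bound $\liminf_L L^{\alpha-n}(\Theta_L\ast\mu)(t)\geq c_0a(2r_0)^{\alpha}$. The two routes are the same in substance --- both hinge on a strictly positive lower density, which is where quasi-regularity enters and where the argument genuinely fails for a merely locally uniformly $\alpha$-dimensional measure, as you correctly point out. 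Your choices of test kernel (a nonnegative $\Psi\geq\chi_{B_1}$ for the upper bound, $\Theta=\psi\ast\wt{\psi}\geq0$ with $\wh{\psi}$ compactly supported for the lower bound) are the right ones, and the only points that deserve a word more of care in a full write-up are the initial justification of Fubini for the possibly infinite measure $f\,d\mu$ (run the argument first for compactly supported $f$, or invoke Tonelli using the very bound being proved) and the fact that the density bound $\mu(B_{\delta}(x))\leq\lambda\delta^{\alpha}$ is only assumed for $\mu$-a.e.\ $x$, which suffices since all integrations are against $\mu$. This mollification-plus-density scheme is also the template the thesis itself adapts in Lemma~\ref{lemMYhorm} and Theorem~\ref{thm-cohe-str}.
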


These results are analogous to the results proved by Agmon and Hormander in \cite{AgmonHormander} when $\alpha$ is an integer.

%%%%%%%%%%%%%%%%%%%%%% Asymptotic properties of Fractal measure%%%%%%%%%%%%%%%%

Also, with the use of mean quadratic variation, Lau in \cite{Lau}
investigated the fractal measures by defining a class of complex
valued $\sigma$-finite Borel measures $\mu$ on $\R^n$,
$\mathcal{M}_{\alpha}^{p}$, for $1\leq p<\infty$ with
         \bee
               \|\mu\|_{\mathcal{M}_{\alpha}^{p}} = \underset{0<\delta\leq1}{\sup} \Big(\frac{1}{\delta^{n+\alpha(p-1)}}\int_{\R^n} |\mu(Q_{\delta}(x))|^p\Big)^{1/p}<\infty
          \eee
and
          \bee
                \|\mu\|_{\mathcal{M}_{\alpha}^{\infty}} = \underset{u\in\R^n}{sup} \underset{0<\delta<1}{sup}
                \frac{1}{(2\delta)^{\alpha}}|\mu|(Q_{\delta}(u))<\infty,
           \eee
          where $Q_{\delta}(u)$ denotes the half open cube $\prod_{j=1}^n(x_j-\delta, x_j+\delta]$.
For $1\leq p<\infty,\ 0\leq\alpha<n$, $\mathcal{B}_{\alpha}^{p}$
denotes the set of all locally $p$-th integrable function $f$ in
$\R^n$ such that
          \bee
                \|f\|_{\mathcal{B}_{\alpha}^{p}} = \underset{L\geq1}{sup}\Big(\frac{1}{L^{n-\alpha}} \int_{B_L}|f|^p\Big)^{1/p}<\infty.
           \eee
For $0 \leq \alpha\leq\beta<n$, we have from \cite{LauWang}, $\mathcal{B}_{\beta}^{p} \subseteq \mathcal{B}_{\alpha}^{p} \subseteq \mathcal{B}_{0}^{p} \subseteq L^p(dx/(1+|x|^{n+1}))$. For $\delta>0$, we define the transformation $W_{\delta}$ as
           \bee
                (W_{\delta}f)(x)=\int_{\R^n}f(y)E_{\delta}(y)e^{2\pi ix.y}dy,
           \eee
where $E_{\delta}(y)=\int_{|\xi|\leq \delta}e^{2\pi iy\xi}d\xi = 2\pi (\delta|y|^{-1})^{n/2}J_{n/2}(2\pi\delta|y|)$ and $J_{n/2}$ is the Bessel function of order $n/2$. If $\mu$ is a bounded Borel measure on $\R^n$ and $f=\wh{\mu}$, then for $\delta>0$ and for any ball $B_{\delta}(x)$, $\mu(B_{\delta}(x))=(W_{\delta}f)(x)$ for Lebesgue almost all $x\in\R^n$. Lau studied the asymptotic properties of measures in $\mathcal{M}_{\alpha}^{p}$ in \cite{Lau}:

\begin{thm}\label{LauTheorem}\cite{Lau}
     \begin{enumerate}
            \item Let $1\leq p\leq2$, $1/p+1/p'=1$ and $0\leq\alpha<n$. Suppose $\mu\in\mathcal{M}_{\alpha}^{p}$ then $\hat{\mu}\in\mathcal{B}_{\alpha}^{p'}$ with
                  \bee
                         \underset{L\geq1}{sup}\Big(\frac{1}{L^{n-\alpha}} \int_{B_L}|\hat{\mu}|^{p'}\Big)^{1/p'} = \|\hat{\mu}\|_{\mathcal{B}_{\alpha}^{p'}}\leq C\|\mu\|_{\mathcal{M}_{\alpha}^{p}},
                  \eee
                  for some constant $C$ depending on $\mu$.
            \item Let $\mu$ be a positive $\sigma$-finite Borel measure on $\R^n$ and $f$ be any Borel $\mu$-measurable function on $\R^n$. Let $d\mu_f=fd\mu$. $\mu$ is locally uniformly $\alpha$-dimensional if and only if $\|\mu_f\|_{\CM_{\alpha}^p}\leq C\|f\|_{L^p(d\mu)}$ for all $f\in L^p(d\mu)$, $p>1$ and $C$ is a non-zero constant dependent on $p$.
     \end{enumerate}
\end{thm}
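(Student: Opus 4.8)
The plan is to prove the two parts separately; part (1) is an $L^{p'}$ duality estimate resting on the Hausdorff--Young and Plancherel--Polya inequalities, while part (2) is elementary once the right test function is inserted.

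\emph{Part (1).} Fix $L\geq 1$. Since $L^{p'}(B_L)$ is the dual of $L^p(B_L)$, it suffices to bound $\big|\int_{\R^n}\wh\mu\,\overline g\big|$ by $C\,L^{(n-\alpha)/p'}\|g\|_{L^p}\|\mu\|_{\CM_\alpha^p}$ for $g$ supported in $B_L$, the pairing being read distributionally and the passage to general $\mu,g$ being the usual limiting argument. By Parseval this equals $\big|\int \widehat{\overline g}\,d\mu\big|$, and $F:=\widehat{\overline g}$ is entire of exponential type $\lesssim L$ with $\|F\|_{L^{p'}(\R^n)}\leq\|g\|_{L^p}$ by Hausdorff--Young (here $1\leq p\leq 2\leq p'$). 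Tiling $\R^n$ by cubes $\{Q_k\}$ of side $2/L$, using that a band-limited function of type $L$ obeys a local sup-versus-$L^{p'}$-average estimate $\|F\|_{L^\infty(Q_k)}\leq C_nL^{n/p'}\big(\sum_j 2^{-jN}\|F\|_{L^{p'}(2^jQ_k)}^{p'}\big)^{1/p'}$, and applying Holder's inequality, one obtains
\bee
  \Big|\int \widehat{\overline g}\,d\mu\Big|\ \leq\ \sum_k|\mu(Q_k)|\,\|F\|_{L^\infty(Q_k)}\ \leq\ C_n\,L^{n/p'}\Big(\sum_k|\mu(Q_k)|^p\Big)^{1/p}\Big(\sum_{k,j}2^{-jN}\|F\|_{L^{p'}(2^jQ_k)}^{p'}\Big)^{1/p'}.
\eee
The last factor is $\lesssim\|F\|_{L^{p'}(\R^n)}\leq\|g\|_{L^p}$. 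For the middle factor, since $Q_k\subseteq Q_{2/L}(x)$ whenever $x\in Q_k$, a cube comparison (for positive $\mu$ directly, the complex case requiring a routine extra step) gives $|\mu(Q_k)|^p\leq |Q_k|^{-1}\int_{Q_k}|\mu(Q_{2/L}(x))|^p\,dx$, hence $\sum_k|\mu(Q_k)|^p\leq (L/2)^n\int_{\R^n}|\mu(Q_{2/L}(x))|^p\,dx\leq (L/2)^n(2/L)^{n+\alpha(p-1)}\|\mu\|_{\CM_\alpha^p}^p\lesssim L^{-\alpha(p-1)}\|\mu\|_{\CM_\alpha^p}^p$ (valid for $L\geq 2$, the range $1\leq L<2$ being trivial), so the middle factor is $\lesssim L^{-\alpha/p'}\|\mu\|_{\CM_\alpha^p}$ (using $(p-1)/p=1/p'$). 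Multiplying gives the desired $L^{(n-\alpha)/p'}\|g\|_{L^p}\|\mu\|_{\CM_\alpha^p}$, whence $\frac{1}{L^{n-\alpha}}\int_{B_L}|\wh\mu|^{p'}\leq C\|\mu\|_{\CM_\alpha^p}^{p'}$, i.e. $\wh\mu\in\CB_\alpha^{p'}$ with the asserted bound. The main obstacle here is the local band-limited sup/average inequality together with the complex-measure bookkeeping in the cube estimate; both are classical (Nikolskii, Plancherel--Polya) and form the technical core of Lau's argument in \cite{Lau}, to which I would ultimately appeal.

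\emph{Part (2), the implication $\Rightarrow$.} Assume $\mu$ is locally uniformly $\alpha$-dimensional; covering a cube $Q_\delta(x)$ that meets $\mathrm{supp}\,\mu$ by boundedly many balls $B_\delta(\cdot)$ centred in $\mathrm{supp}\,\mu$ gives $\mu(Q_\delta(x))\leq\lambda'\delta^\alpha$ for all $x$ and $0<\delta\leq1$. For $0\leq f\in L^p(d\mu)$, Holder's inequality in $d\mu$ yields $\big(\int_{Q_\delta(x)}f\,d\mu\big)^p\leq\mu(Q_\delta(x))^{p-1}\int_{Q_\delta(x)}f^p\,d\mu\leq(\lambda'\delta^\alpha)^{p-1}\int_{Q_\delta(x)}f^p\,d\mu$; integrating in $x$ and using $\int_{\R^n}\chi_{Q_\delta(x)}(y)\,dx=(2\delta)^n$ (Fubini) gives $\int_{\R^n}|\mu_f(Q_\delta(x))|^p\,dx\leq(\lambda')^{p-1}2^n\,\delta^{n+\alpha(p-1)}\|f\|_{L^p(d\mu)}^p$, and dividing by $\delta^{n+\alpha(p-1)}$ and taking the supremum over $\delta\leq1$ gives $\|\mu_f\|_{\CM_\alpha^p}\leq C\|f\|_{L^p(d\mu)}$.

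\emph{Part (2), the implication $\Leftarrow$.} Conversely, suppose the bound holds for every $f\in L^p(d\mu)$. Fix $x_0$ with $\mu(B_1(x_0))<\infty$ --- the case of $\mu$-a.e. $x_0$, by $\sigma$-finiteness --- and, for $0<\delta\leq1$, test the hypothesis with $f=\chi_{B_{\delta/2}(x_0)}$ at cube-scale $\delta$, so that $\|f\|_{L^p(d\mu)}^p=\mu(B_{\delta/2}(x_0))$ and $\int_{\R^n}|\mu_f(Q_\delta(x))|^p\,dx\leq C^p\delta^{n+\alpha(p-1)}\mu(B_{\delta/2}(x_0))$. The key point is that $B_{\delta/2}(x_0)\subseteq Q_\delta(x)$ whenever $\|x-x_0\|_\infty\leq\delta/2$, so that $\mu_f(Q_\delta(x))=\mu(B_{\delta/2}(x_0))$ on a set of Lebesgue measure $\delta^n$; keeping only these $x$ in the integral gives $\delta^n\mu(B_{\delta/2}(x_0))^p\leq C^p\delta^{n+\alpha(p-1)}\mu(B_{\delta/2}(x_0))$, i.e. (when $\mu(B_{\delta/2}(x_0))>0$, the other case being trivial) $\mu(B_{\delta/2}(x_0))^{p-1}\leq C^p\delta^{\alpha(p-1)}$, hence, since $p>1$, $\mu(B_r(x_0))\leq C'r^\alpha$ for all $0<r\leq\tfrac12$, with $C'$ depending only on $C,p,\alpha$. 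The range $\tfrac12<r\leq1$ follows by covering $B_r(x_0)$ with boundedly many balls of radius $\tfrac12$ centred at points of full $\mu$-measure where the previous bound applies. Thus $\mu$ is locally uniformly $\alpha$-dimensional with a uniform constant. The one delicate point is the reduction to base points with $\mu(B_1(x_0))<\infty$, which is exactly where $\sigma$-finiteness (not merely local finiteness) of $\mu$ is used.
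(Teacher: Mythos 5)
The thesis does not prove this statement: Theorem \ref{LauTheorem} is quoted verbatim from \cite{Lau} as background for Chapter 3, so there is no in-paper proof to compare against, and your argument has to be judged on its own. Your Part (2) is correct and essentially self-contained: the forward direction (H\"older in $d\mu$ plus Fubini over the cube centres) is the standard argument, and the converse via testing with an indicator is right. The one inaccuracy there is the claim that $\sigma$-finiteness gives $\mu(B_1(x_0))<\infty$ for $\mu$-a.e.\ $x_0$; this is false (take a countable sum of point masses with locally divergent weights), but it is harmless --- test instead with $f=\chi_{A_j\cap B_{\delta/2}(x_0)}$ where $\mu(A_j)<\infty$ and $A_j\uparrow\R^n$, and let $j\to\infty$ by monotone convergence, after which your cube-containment computation gives the bound at every $x_0$.

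Part (1) is where the genuine gap sits. Your chain is valid for positive $\mu$, but $\CM_{\alpha}^{p}$ consists of complex measures, and both places where you use monotonicity of $|\mu(Q)|$ fail for signed ones: the inequality $|\int_{Q_k}F\,d\mu|\le|\mu(Q_k)|\,\|F\|_{L^\infty(Q_k)}$ is false (take $\mu=\delta_0-\delta_{x_1}$ with both atoms in $Q_k$, so that $\mu(Q_k)=0$ while $\int F\,d\mu=F(0)-F(x_1)\ne0$), and likewise $|\mu(Q_k)|\le|\mu(Q_{2/L}(x))|$ for $x\in Q_k$ fails. Replacing $|\mu(Q_k)|$ by the total variation $|\mu|(Q_k)$ repairs both steps but severs the link to $\|\mu\|_{\CM_{\alpha}^{p}}$, which is built from $|\mu(Q_\delta(x))|$ rather than from $|\mu|$. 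Closing this requires an actual additional idea --- for instance expanding $F$ around a point of $Q_k$ and absorbing the Bernstein-type error term, or Lau's own route through the identity $\mu(B_\delta(x))=(W_\delta\hat\mu)(x)$ --- and that is exactly the ``routine extra step'' you wave at and ultimately defer to \cite{Lau}. So Part (1) as written is a correct outline for positive measures together with a citation for the hard case, not a complete proof; the Nikolskii/Plancherel--Polya local sup estimate and the duality reduction themselves are fine.
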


Applying Holder's inequality to part (2) of the Theorem \ref{thmStrMain}, we obtain:

\begin{cor}\label{corMYstr}
Let $f\in L^2(\mu)$ be supported in a quasi $\alpha$-regular set $E$ of non-zero finite $\alpha$-dimensional Hausdorff
measure ($0<\alpha<n$), where $\mu$ is a locally uniformly $\alpha$-dimensional measure. Then for $p\geq 2$,
        \bee
              \|f\|_{L^2(\mu)}^p\leq c\ \underset{L\rightarrow\infty}{\limsup}\frac{1}{L^{n-\frac{\alpha p}{2}}}\int_{|\xi|\leq L}|\wh{fd\mu}(\xi)|^pd\xi
        \eee
where $c$ is a non zero finite constant depending on $n,\ \alpha$ and $p$.
\end{cor}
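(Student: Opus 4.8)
The plan is to deduce the corollary from part (2) of Theorem~\ref{thmStrMain} by interpolating, via Holder's inequality, between the $L^2$ and $L^p$ averages of $\wh{fd\mu}$ over the balls $B_L=B_L(0)$. Since $f\in L^2(\mu)$ and $fd\mu$ is a finite measure (Cauchy--Schwarz together with $\CH_\alpha(E)<\infty$), $\wh{fd\mu}$ is bounded and $\int_{B_L}|\wh{fd\mu}|^p<\infty$ for every $L$ and every $p\ge 2$; if the $\limsup$ on the right-hand side of the claim is $+\infty$ there is nothing to prove, so we may assume it is finite. Because $f$ is supported in the quasi $\alpha$-regular set $E$, and the part of $\mu$ that is null with respect to $\CH_\alpha$ does not charge $E$ (as $\CH_\alpha(E)<\infty$), we are in the situation of Theorem~\ref{thmStrMain}(2) with $\|f\|_{L^2(\mu)}^2=\int_E|f|^2\,d\CH_\alpha$.

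The interpolation step is elementary. For $p\ge 2$, Holder's (equivalently Jensen's) inequality applied to the normalized restriction of Lebesgue measure to $B_L$ gives
\[
  \frac{1}{|B_L|}\int_{B_L}|\wh{fd\mu}|^2\ \le\ \Big(\frac{1}{|B_L|}\int_{B_L}|\wh{fd\mu}|^p\Big)^{2/p}.
\]
Since $|B_L|=\Omega_n L^n$, multiplying through by $L^\alpha=\big(L^{\alpha p/2}\big)^{2/p}$ rearranges this to
\[
  \frac{1}{L^{\,n-\alpha}}\int_{B_L}|\wh{fd\mu}|^2\ \le\ \Omega_n^{1-2/p}\Big(\frac{1}{L^{\,n-\frac{\alpha p}{2}}}\int_{B_L}|\wh{fd\mu}|^p\Big)^{2/p}.
\]
Now I take $\liminf_{L\to\infty}$ of both sides. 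On the left, Theorem~\ref{thmStrMain}(2) (with $y=0$) bounds the $\liminf$ below by $c\int_E|f|^2\,d\CH_\alpha=c\|f\|_{L^2(\mu)}^2$. On the right, since $\liminf_L a_L\le\limsup_L b_L$ whenever $a_L\le b_L$, and since $\limsup$ commutes with the continuous increasing map $t\mapsto t^{2/p}$ on $[0,\infty]$, the $\liminf$ of the majorant is at most $\Omega_n^{1-2/p}\big(\limsup_{L\to\infty}\frac{1}{L^{n-\frac{\alpha p}{2}}}\int_{B_L}|\wh{fd\mu}|^p\big)^{2/p}$. Raising the resulting inequality to the power $p/2$ yields
\[
  \|f\|_{L^2(\mu)}^p\ \le\ \big(\Omega_n^{1-2/p}/c\big)^{p/2}\ \limsup_{L\to\infty}\ \frac{1}{L^{\,n-\frac{\alpha p}{2}}}\int_{|\xi|\le L}|\wh{fd\mu}(\xi)|^p\,d\xi,
\]
which is the assertion, the new constant depending only on $n$, $\alpha$ and $p$.

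I do not expect a genuine obstacle here: the analytic content is carried entirely by Theorem~\ref{thmStrMain}(2), and what remains is the Holder interpolation between $L^2$ and $L^p$ ball-averages plus constant bookkeeping. The two points that need a little care are (i) the identification $\|f\|_{L^2(\mu)}^2=\int_E|f|^2\,d\CH_\alpha$, i.e.\ checking that the hypotheses ``$f$ supported in $E$'' and ``$\CH_\alpha(E)$ finite'' really do place us in the exact setting of Theorem~\ref{thmStrMain}(2); and (ii) the one-directional passage from a $\liminf$ on the left to a $\limsup$ on the right, which must be carried out in the order above and cannot be reversed.
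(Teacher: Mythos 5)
Your proposal is correct and is exactly the paper's route: the thesis states Corollary \ref{corMYstr} with only the remark ``Applying Holder's inequality to part (2) of the Theorem \ref{thmStrMain}, we obtain,'' and your write-up supplies precisely that computation (the Jensen/Holder comparison of normalized ball averages, the passage from the $\liminf$ bound of Theorem \ref{thmStrMain}(2) to the $\limsup$ on the right, and the observation that the singular part $\nu$ does not charge $E$ since $\CH_{\alpha}(E)<\infty$). The constant bookkeeping and the one-directional limit argument are both handled correctly, so nothing further is needed.
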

The above results are proved for locally uniformly $\alpha$-dimensional measure. But if a set $E$ is of finite $\alpha$-packing measure, then $\mu=\CP^{\alpha}|_E$ need not be locally uniformly $\alpha$-dimensional measure. We prove an analogue result to the above corollary for the range $2\leq p<2n/\alpha$ with $\mu=\CP^{\alpha}|_E$, where $\CP^{\alpha}(E)<\infty$.

\begin{thm}\label{thm-cohe-str} Let $f\in L^2(d\mu)$ be a positive function where $\mu=\CP^{\alpha}|_E$ and $E$ is a compact set of finite $\alpha$-packing measure. Then for $2\leq p<2n/\alpha$,
        \bee
            \int_{\R^n} {|f(x)|^{2}}d\mu(x)\leq\ C\ \underset{L\rightarrow\infty}{\liminf}\ \Big(\frac{1}{L^{n - \alpha p/2}} \int_{B_L(0)}\
|\wh{fd\mu}(\xi)|^pd\xi\Big)^{2/p}
        \eee
and
       \bee
            \int_{\R^n} {|f(x)|^{2}}d\mu(x)\leq\ C'\ \underset{L\rightarrow\infty}{\liminf}\ \Big(\frac{1}{L^{n - \alpha p/2}}\int_{\R^n} e^{-\frac{|\xi|^2}{2L^2}} |\wh{fd\mu}(\xi)|^pd\xi\Big)^{2/p},
       \eee
where the constants $C$ and $C'$ are independent of $f$.
\end{thm}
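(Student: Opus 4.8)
The plan is to adapt the Agmon--H\"ormander/Strichartz lower bound scheme, smoothing $fd\mu$ at scale $1/L$, but to replace the quasi-regularity hypothesis used there by the fact from Lemma~\ref{lemPM2} that $\ul{D^{\alpha}}(\mu,x)=1$ for $\mu$-almost every $x$ when $\mu=\CP^{\alpha}|_E$; this is precisely the feature of the restricted packing measure that survives even though $\mu$ need not be locally uniformly $\alpha$-dimensional. First I would fix a radial Schwartz function $\Phi$ with $\Phi\geq0$, with $\wh{\Phi}\geq0$ supported in the closed unit ball, and with $\wh{\Phi}(0)=1$ (for instance $\Phi=c^{-1}\eta^{2}$ for a real even radial $\eta$ with $\wh{\eta}\geq0$ supported in $B_{1/2}(0)$, where $c=\int\wh{\eta}^{2}$; Cauchy--Schwarz then gives $\|\wh{\Phi}\|_{\infty}=\wh{\Phi}(0)=1$), and put $\Psi=\Phi\ast\Phi$. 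Then $\Psi\geq0$ is radial Schwartz, $\wh{\Psi}=\wh{\Phi}^{2}$ is supported in the unit ball, $\Psi(0)=\int\Phi^{2}>0$, and $\Psi\geq\tfrac12\Psi(0)$ on some ball $B_{r_{0}}(0)$. Writing $\Phi_{L}(x)=L^{n}\Phi(Lx)$ and $g_{L}=(fd\mu)\ast\Phi_{L}$, I note that $fd\mu$ is a finite measure (since $f\in L^{2}(d\mu)\subset L^{1}(d\mu)$ and $\mu$ is a finite Radon measure), so $\wh{fd\mu}$ is bounded and continuous, $g_{L}\in L^{2}(\R^{n})$, and $\wh{g_{L}}(\xi)=\wh{fd\mu}(\xi)\,\wh{\Phi}(\xi/L)$ is supported in $B_{L}(0)$.

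The easy half is an upper bound for $\|g_{L}\|_{2}^{2}$. Since $\wh{g_{L}}$ is supported in $B_{L}(0)$ and $\|\wh{\Phi}\|_{\infty}=1$, Plancherel together with H\"older's inequality on $B_{L}(0)$ with exponents $p/2$ and $p/(p-2)$ (this is where $p\geq2$ is used) gives
\[
\|g_{L}\|_{2}^{2}\ \leq\ (\Omega_{n}L^{n})^{(p-2)/p}\Big(\int_{B_{L}(0)}|\wh{fd\mu}(\xi)|^{p}\,d\xi\Big)^{2/p}.
\]
Independently, expanding $\|g_{L}\|_{2}^{2}=\int g_{L}^{2}$, carrying out the $x$-integration, and using $\int\Phi(u-a)\Phi(u-b)\,du=(\Phi\ast\Phi)(a-b)=\Psi(a-b)$ (valid as $\Phi$ is real and even) yields the exact identity $\|g_{L}\|_{2}^{2}=L^{n}\int\int f(y)f(z)\,\Psi\big(L(y-z)\big)\,d\mu(y)\,d\mu(z)$. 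Dividing by $L^{n}$, multiplying by $L^{\alpha}$, and using $\alpha-2n/p=-\tfrac2p\,(n-\alpha p/2)$, these combine to
\[
L^{\alpha}\int\int f(y)f(z)\,\Psi\big(L(y-z)\big)\,d\mu(y)\,d\mu(z)\ \leq\ C_{1}\Big(L^{-(n-\alpha p/2)}\int_{B_{L}(0)}|\wh{fd\mu}(\xi)|^{p}\,d\xi\Big)^{2/p}
\]
for every $L$, with $C_{1}=C_{1}(n,\alpha,p)$, after which I take $\liminf_{L\to\infty}$ on both sides.

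The crux is the matching lower bound $\liminf_{L\to\infty}L^{\alpha}\int\int f(y)f(z)\Psi(L(y-z))\,d\mu(y)d\mu(z)\geq c_{0}\int f^{2}\,d\mu$. Setting $G_{L}(y)=\int f(z)\Psi(L(y-z))\,d\mu(z)\geq0$ (here $f\geq0$ and $\Psi\geq0$ are both used), the bound $\Psi\geq\tfrac12\Psi(0)$ on $B_{r_{0}}(0)$ gives
\[
L^{\alpha}G_{L}(y)\ \geq\ \tfrac12\Psi(0)\,\big(L^{\alpha}\mu(B_{r_{0}/L}(y))\big)\cdot\frac{1}{\mu(B_{r_{0}/L}(y))}\int_{B_{r_{0}/L}(y)}f\,d\mu .
\]
By the Besicovitch differentiation theorem for Radon measures on $\R^{n}$ the average converges to $f(y)$ for $\mu$-a.e.\ $y$, while by Lemma~\ref{lemPM2} one has $\ul{D^{\alpha}}(\mu,y)=1$ for $\mu$-a.e.\ $y$, hence $\liminf_{L\to\infty}L^{\alpha}\mu(B_{r_{0}/L}(y))=r_{0}^{\alpha}\liminf_{r\to0}r^{-\alpha}\mu(B_{r}(y))=(2r_{0})^{\alpha}$. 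Therefore $\liminf_{L\to\infty}L^{\alpha}G_{L}(y)\geq c_{0}f(y)$ for $\mu$-a.e.\ $y$, with $c_{0}=\tfrac12\Psi(0)(2r_{0})^{\alpha}$, and Fatou's lemma (again using $f,G_{L}\geq0$) gives $\liminf_{L\to\infty}L^{\alpha}\int fG_{L}\,d\mu\geq\int f\,\liminf_{L}(L^{\alpha}G_{L})\,d\mu\geq c_{0}\int f^{2}\,d\mu$. Combined with the displayed inequality of the previous paragraph this yields the first assertion with $C=C_{1}/c_{0}$. For the Gaussian-weighted statement I would just use $e^{-|\xi|^{2}/(2L^{2})}\geq e^{-1/2}$ on $B_{L}(0)$, which turns the first inequality into the second with $C'=e^{1/p}C$; alternatively one reruns the whole argument with $\Phi$ a Gaussian, applying H\"older against the weight $e^{-|\xi|^{2}/(2L^{2})}$, whose integral over $\R^{n}$ is a constant multiple of $L^{n}$.

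The step I expect to be the real obstacle is this last lower bound: one must interlace two a priori unrelated limiting facts --- the differentiation theorem, which needs nothing beyond $\mu$ being Radon, and the identity $\ul{D^{\alpha}}(\mu,\cdot)=1$, which is exactly what replaces quasi-regularity for $\mu=\CP^{\alpha}|_E$ --- and then push the resulting pointwise estimate through Fatou's lemma. The positivity of $f$ is indispensable here: it keeps $G_{L}\geq0$ so that dropping the tail of $\Psi$ is legitimate, and it is what makes the Fatou step valid, so that hypothesis is not cosmetic.
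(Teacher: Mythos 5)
Your proof is correct, but it takes a genuinely different route from the thesis. The thesis discretizes: it covers $E$ by cubes $Q_k$ of side $\epsilon$, compares $f$ with its $\mu$-averages over each cube (the oscillation term $e_\epsilon$ is killed by approximating $f$ by a continuous function), and bounds each average $\frac{1}{\mu(Q_k)}|\int_{Q_k}f\,d\mu|^2$ by $L^{n+\alpha}\int_{Q_k^L}|\phi_L*fd\mu|^2$ using the key geometric input Lemma~\ref{lemMY1}, i.e.\ the upper Minkowski-content bound $|(Q_k\cap E)(\delta)|\delta^{\alpha-n}\leq 2C_n\mu(Q_k)$ coming from finiteness of the packing measure; the final step (Plancherel, support of $\wh\phi(\cdot/L)$ in $B_L$, H\"older) is the same as yours. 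You avoid the decomposition entirely: you exploit the exact identity $\|(fd\mu)*\Phi_L\|_2^2=L^n\iint f(y)f(z)\Psi(L(y-z))\,d\mu(y)d\mu(z)$ and bound the bilinear form from below pointwise via the \emph{lower} density fact $\ul{D^{\alpha}}(\CP^{\alpha}|_E,\cdot)=1$ $\mu$-a.e.\ (Lemma~\ref{lemPM2}) together with the Besicovitch differentiation theorem and Fatou. In effect you observe that $\CP^{\alpha}|_E$ is automatically ``quasi $\alpha$-regular'' in the lower-density sense, so Strichartz's argument for part (2) of Theorem~\ref{thmStrMain} runs verbatim; this is cleaner, makes the role of $f\geq0$ transparent, and dispenses with the error term $e_\epsilon$, at the cost of importing the differentiation theorem for Radon measures (standard, but not among the stated preliminaries). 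The thesis's cube machinery is not wasted, however: it is reused essentially unchanged in Theorem~\ref{thm-alpha-density} and Theorem~\ref{ThmHud1p2}, where a pointwise density argument of your type would not suffice. Your exponent bookkeeping ($L^{\alpha-n}\cdot L^{n(p-2)/p}=L^{-\frac{2}{p}(n-\alpha p/2)}$) and the reduction of the Gaussian-weighted inequality to the ball inequality via $e^{-|\xi|^2/2L^2}\geq e^{-1/2}$ on $B_L(0)$ are both fine.
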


\begin{proof}
Since $E$ is compact, without loss of generality we assume that
$E$ is contained in a large cube in the positive quadrant, that
is, there exists smallest positive integer $m$ such that for all
$x=(x_1,...x_n)\in E$, $0<x_j<m$. Let $M=\{x=(x_1,...x_n)\in\R^n:
0\leq x_j\leq m,\ \forall j\}$.

Fix $0<\epsilon<1$. For $k=(k_1,...k_n)$, ($0< k_j\in\Z$), \bee
Q_k = \{x=(x_1,...x_n)\in M:\ (k_j-1)\epsilon<x_j\leq
k_j\epsilon,\ j=1,...n\}. \eee Let $\CQ_0$ be the collection of
all such $Q_k$'s whose intersection with $E$ has non-zero
$\mu$-measure, that is, $\mu(Q_k)\neq 0$. Since $E$ is compact,
there exists finite number of $Q_k$'s in $\CQ_0$. Let
$\tilde{\delta_0}=\min_{Q_k\in\CQ_0}\{\mu(Q_k)\}$. Then $E=\cup
(Q_k\cap E)\cup E'$ where the union is finite and $\mu(E')=0$.

       \bea
          \nonumber \int_E|f(x)|^2d\mu(x) &=& \sum_{Q_k\in\CQ_0}\int_{Q_k}
          |f(x)|^2 d\mu(x)\\
          \nonumber &\leq& 2\sum_{Q_k\in\CQ_0} \int_{Q_k}
          \bigg|f(x)-\frac{1}{\mu(Q_k)}\int_{Q_k}f(y)d\mu(y)\bigg|^2 d\mu(x)\\
          && + 2\sum_{Q_k\in\CQ_0} \frac{1}{\mu(Q_k)} \bigg|\int_{Q_k}
          f(y) d\mu(y)\bigg|^2.\label{thmp2packRed1}
          \eea

Now by Lemma \ref{lemMY1}, for each $k$, there exists $\delta_k$ such that
\bea
\nonumber |(Q_k\cap
E)(\delta)|\delta^{\alpha-n} &\leq& C_n\CP^{\alpha}(Q_k\cap E) + C_n\tilde{\delta_0}\epsilon\\
&\leq& 2C_n\CP^{\alpha}(Q_k\cap E)= 2C_n\mu(Q_k),
\label{thmp2pack1}
\eea
for all $\delta \leq\delta_k$. Fix $\delta_0= \min\{\epsilon,\ \tilde{\delta_0},\ \delta_1,\ \delta_2,..\}$. Since there are finite $Q_k$'s, $\delta_0>0$. Let $\phi$
be a positive Schwartz function such that $\wh{\phi}(0)=1$,
support of $\wh{\phi}$ is supported in the unit ball and there
exists $r_1>0$ such that
\be
\int_{A_{r_1}(0)} \phi(x) dx =
\frac{1}{2^{n+1}},\label{thmp2pack2}
\ee
where $A_{r_1}(0)=\{x=(x_1,...x_n): -r_1<x_j\leq 0,\ \forall\ j \}$. Denote $\phi_L(x)=\phi(Lx)$ for all
$L>0$. Let $r=n^{\frac{1}{2}}r_1$. Fix $L$ large such that $r/L\leq \delta_0$. Then we have,
      \bea
      \nonumber \bigg|\int_{Q_k} f(y) d\mu(y)\bigg|^2 &=& 2^{2(n+1)} \bigg|\int_{Q_k} \int_{A_{r_1}(0)} \phi(x) dx f(y) d\mu(y)\bigg|^2\\
\nonumber      &=& 2^{2(n+1)}L^{2n}\bigg| \int_{Q_k} \int_{A_{r_1/L}(y)} \phi_L(x-y) dx f(y) d\mu(y)\bigg|^2\\
      &=& 2^{2(n+1)}L^{2n} \bigg|\int_{Q_k^L} \int_{Q_k} \phi_L(x-y)f(y) d\mu(y)dx\bigg|^2,\label{eqthmp2pack0}
      \eea
where $$Q_k^{L}=\{x=(x_1,..x_n)\in\R^n: \exists\ y=(y_1,...y_n)\in
E\ \text{such}\ \text{that}\ y_j-r_1/L<x_j\leq y_j,\ \forall\
j\}.$$ Then $|Q_k^{L}|\leq |(Q_k\cap E)(r/L)|$, where $(Q_k\cap
E)(r/L)$ denotes the $r/L$-distance set of $Q_k\cap E$ (since
$r=\surd{n}r_1$). Also since $\phi$ and $f$ are positive,
$$\int_{Q_k^L} \int_{Q_k} \phi_L(x-y)f(y) d\mu(y)dx\leq
\int_{Q_k^L}\phi_L*fd\mu(x)dx.$$ Thus from (\ref{eqthmp2pack0}),
     \Bea
     \frac{1}{2^{2(n+1)}}\bigg|\int_{Q_k} f(y) d\mu(y)\bigg|^2 &\leq&
      L^{2n}\ \bigg|\int_{Q_k^L}\phi_L*fd\mu(x)dx\bigg|^2.\\
      &\leq& L^{2n} |Q_k^L|\int_{Q_k^L} |\phi_L*fd\mu(x)|^2dx\\
      &\leq& L^{2n}|(Q_k\cap E)(r/L)|\ \int_{Q_k^L} |\phi_L*fd\mu(x)|^2dx\\
      &\leq& 2C_nr^{n-\alpha}L^{n+\alpha}\mu(Q_k)\int_{Q_k^L} |\phi_L*fd\mu(x)|^2dx\ \text{by}\
      (\ref{thmp2pack1}).
      \Eea
Thus there exists a constant $C_1$ independent of $\epsilon$, $L$ and $f$ such that
      \bee
       \frac{1}{\mu(Q_k)}\bigg|\int_{Q_k} f(y) d\mu(y)\bigg|^2 \leq C_1 L^{n+\alpha} \int_{Q_k^L}
       |\phi_L*fd\mu(x)|^2 dx.
      \eee
Hence, from (\ref{thmp2packRed1})
    \Bea
     \int_E|f(x)|^2d\mu(x)&\leq& 2\sum_k\int_{Q_k} \bigg|f(x)-\frac{1}{\mu(Q_k)}\int_{Q_k}f(y)d\mu(y)\bigg|^2 d\mu(x)\\
     && + 2C_1L^{n+\alpha} \sum_{Q_k\in\CQ} \int_{Q_k^L}
       |\phi_L*fd\mu(x)|^2 dx.
     \Eea
By the choice of $r/L<\delta_0<\epsilon$, any $x\in Q_k^L$ is intersected at most $2^n$ number of other $Q_k^L$'s in $\CQ_0$.
Hence there exists a constant $C=2C_12^n$ independent of $f,\epsilon$ and
$L$ such that for all $r/L\leq \delta_0$
 \be
 \int_E|f(x)|^2d\mu(x) \leq 2e_{\epsilon} + CL^{n+\alpha}\int_{E(r/L)}
 |\phi_L*fd\mu(x)|^2dx,\label{eqfin1}
 \ee
 where $$e_{\epsilon}=\sum_{k}\int_{Q_k} |f(x)-\frac{1}{\mu(Q_k)}\int_{Q_k} f(y)d\mu(y)|^2d\mu(x).$$
For given $\epsilon$, let $g\in C_c^{\infty}(d\mu)$ be such that $\|f-g\|^2_{L^2(d\mu)}<\epsilon$. Then,
\Bea
 e_{\epsilon}&=&\sum_{k}\int_{Q_k}\bigg|f(x)-\frac{1}{\mu(Q_k)}\int_{Q_k} f(y)d\mu(y)\bigg|^2d\mu(x)\\
&\leq& 2\sum_{k}\int_{Q_k}\bigg|f(x)-g(x)-\frac{1}{\mu(Q_k)}\int_{Q_k} f(y)-g(y)d\mu(y)\bigg|^2d\mu(x)\\
&& +2\sum_{k}\int_{Q_k}\bigg|g(x)-\frac{1}{\mu(Q_k)}\int_{Q_k} g(y)d\mu(y)\bigg|^2d\mu(x)\\
&\leq& 4\sum_{k}\int_{Q_k}|f(x)-g(x)|^2+\bigg|\frac{1}{\mu(Q_k)}\int_{Q_k} f(y)-g(y)d\mu(y)\bigg|^2d\mu(x)\\
&&+2\sum_{k}\int_{Q_k}\bigg|g(x)-\frac{1}{\mu(Q_k)}\int_{Q_k} g(y)d\mu(y)\bigg|^2d\mu(x)\\
&\leq&8\sum_{k}\int_{Q_k}|f(x)-g(x)|^2d\mu(x)\\
&&+2\sum_{k}\int_{Q_k}\bigg|g(x)-\frac{1}{\mu(Q_k)}\int_{Q_k} g(y)d\mu(y)\bigg|^2d\mu(x).
\Eea
Since $E=\cup_k(Q_k\cap E)\cup E'$ and $\mu=\CP^{\alpha}|_E$,
\bea
\nonumber e_{\epsilon}&\leq& 8\|f-g\|^2_{L^2(d\mu)} +2\sum_{k}\int_{Q_k}\bigg|g(x)-\frac{1}{\mu(Q_k)}\int_{Q_k} g(y)d\mu(y)\bigg|^2d\mu(x)\\
&\leq& \epsilon +2\sum_{k}\int_{Q_k}\bigg|g(x)-\frac{1}{\mu(Q_k)}\int_{Q_k} g(y)d\mu(y)\bigg|^2d\mu(x).\label{Maxop}
\eea
Since $g$ is compactly supported continuous function, $g$ is uniformly continuous and $$|g(x)-\frac{1}{\mu(Q_k)}\int_{Q_k} g(y)d\mu(y)|\rightarrow
0$$ uniformly in $x$ and $Q_k$ as $\mu(Q_k)\rightarrow 0$. As $\epsilon\rightarrow0$, we have $\mu(Q_k)\rightarrow 0$. Hence
\Bea
&&\sum_{k}\int_{Q_k}\bigg|g(x)-\frac{1}{\mu(Q_k)}\int_{Q_k} g(y)d\mu(y)\bigg|^2d\mu(x)\\
&\leq& \sum_{k}\mu(Q_k)\underset{Q_k\in\CQ_0}{\sup}\underset{x\in Q_k}{\sup}\bigg|g(x)-\frac{1}{\mu(Q_k)}\int_{Q_k} g(y)d\mu(y)\bigg|^2\\
&=& \mu(E)\underset{Q_k\in\CQ_0}{\sup}\underset{x\in Q_k}{\sup}\bigg|g(x)-\frac{1}{\mu(Q_k)}\int_{Q_k} g(y)d\mu(y)\bigg|^2,
\Eea
which goes to zero as $\epsilon$ goes to zero. Therefore,  from (\ref{Maxop}), $e_{\epsilon}$ goes to zero as $\epsilon$ goes to zero. Letting $\epsilon$ to $0$, we have $r_1/L\leq\delta_0\rightarrow 0$. Thus (\ref{eqfin1}) becomes
        \be
            \int_E{|f(x)|^2}d\mu(x)=\int_E(f(x))^2d\mu(x) \leq\ C\ \underset{L\rightarrow\infty}{\liminf}\ L^{n+\alpha}\int_{E(r/L)}\ {|\phi_L*fd\mu(x)|^{2}}dx,\label{eqredpositive}
\ee
       \Bea
                \int_E|f(x)|^2d\mu(x) &\leq& \ C\ \underset{L\rightarrow\infty}{\liminf}\ L^{n+\alpha}\int_{E(r/L)}\ {|\phi_L*fd\mu(x)|^{2}}dx\\
&\leq&\ C\ \underset{L\rightarrow\infty}{\liminf}\ L^{n+\alpha}\int_{\R^n}\ {|\wh{\phi_L*fd\mu}(\xi)|^{2}}d\xi\\
                        &\leq&\ C\ \underset{L\rightarrow\infty}{\liminf}\ L^{-n+\alpha}\int_{\R^n}\ {|\wh{\phi}(\xi/L)|^{2}|\wh{fd\mu}(\xi)|^{2}}d\xi.
        \Eea
Since the support of $\wh{\phi}$ is in the unit ball, we have
        \bee
              \int_E {|f(x)|^{2}}d\mu(x)\leq\ C\|\phi\|_{L^1(\R^n)}^{2} \underset{L\rightarrow\infty}{\liminf}\ \frac{1}{L^{n - \alpha}} \int_{B_L(0)}\ |\wh{fd\mu}(\xi)|^2d\xi.
         \eee
Applying Holder's inequality,
\bee
 \int_E {|f(x)|^{2}}d\mu(x)\leq\ C\ \underset{L\rightarrow\infty}{\liminf}\ \bigg(\frac{1}{L^{n - \alpha p/2}} \int_{B_L(0)}\ |\wh{fd\mu}(\xi)|^pd\xi\bigg)^{2/p}.
\eee
The assumption on the support of $\widehat{\phi}$ to be in the unit ball is used only in the last step. Consider $\phi(x)=e^{-\frac{|x|^2}{2}}$. Proceeding in a similar way, we have
         \Bea
                \int_E {|f(x)|^{2}}d\mu(x)&\leq&\ C\ \underset{L\rightarrow\infty}{\liminf}\ \frac{1}{L^{n - \alpha}}\int_{\R^n} e^{-\frac{|\xi|^2}{2L^2}} |\wh{fd\mu}(\xi)|^2d\xi\\
                &\leq&\ \tilde{C}\ \underset{L\rightarrow\infty}{\liminf}\ \Big(\frac{1}{L^{n - \alpha p/2}}\int_{\R^n} e^{-\frac{|\xi|^2}{2L^2}} |\wh{fd\mu}(\xi)|^pd\xi\Big)^{2/p}.
         \Eea
Hence the proof.
\end{proof}

Now, we give an analogue result of the Corollary \ref{corMYstr} to
any tempered distribution supported in a set $E$ of finite
$\alpha$-dimensional packing measure. We closely follow the
arguments in \cite{AgmonHormander} (also see page 174 of
\cite{Hormander}). We start with the following lemma.

\begin{lem}\label{lemMYhorm1}
Let $u$ be a tempered distribution supported in a compact set $E$. Let $\chi$ be a radial $C_c^{\infty}$ function supported in the unit ball and
$\int_{\mathbb{R}^{n}}\chi(x) dx = 1$. Denote $\chi_{\epsilon}(x)
= \epsilon^{-n}\chi(x/\epsilon)$ and
$u_{\epsilon}=u\ast\chi_{\epsilon}$. Let $\sigma_u(r)=\int_{S^{n-1}} |\widehat{u}(r\omega)|^2d\omega$. Then,
  $$\|u_{\epsilon}\|^{2}\leq C\ \epsilon^{(\alpha-n)(1-\frac{1}{q})} \bigg(\sup_{\epsilon L>1}\frac{1}{L^k} \int_0^{L}(\sigma_u(r))^{\frac{p}{2}}r^{n-1} dr\bigg)^{\frac{2}{p}},$$
for some non-zero finite constants $C$ independent of $\epsilon$ and $k=n-\frac{\alpha p}{2}-(n-\alpha)\frac{p}{2q}$ with $1<q\leq\infty$ and $2\leq p<2n/\alpha$.
\end{lem}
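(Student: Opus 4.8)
The plan is to adapt the argument of \lemref{lemMYhorm} to the distributional setting; the one new ingredient is a dyadic application of H\"older's inequality in the radial variable that trades a first power of $\sigma_u$ for a $(p/2)$-th power, after which the exponent $q$ plays a purely bookkeeping role.

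First I would set things up. Since $u$ has compact support and $\chi_\epsilon\in C_c^\infty$, the convolution $u_\epsilon=u\ast\chi_\epsilon$ is smooth with compact support, hence lies in $L^2$, while $\wh{u}$ is a continuous function of polynomial growth by Paley--Wiener--Schwartz, so $\sigma_u(r)<\infty$ for every $r$. By Plancherel, the radiality of $\chi$ (write $\wh{\chi}(\xi)=\psi(|\xi|)$ with $\psi$ Schwartz on $[0,\infty)$), and polar coordinates,
\[
  \|u_\epsilon\|_2^2=\int_{\R^n}|\wh{u}(\xi)|^2\,|\psi(\epsilon|\xi|)|^2\,d\xi=\int_0^\infty|\psi(\epsilon r)|^2\,\sigma_u(r)\,r^{n-1}\,dr.
\]
If the supremum on the right-hand side of the lemma is infinite there is nothing to prove, so assume it equals some finite number $A$.

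Next I would split the radial integral into the low-frequency piece $\int_0^{1/\epsilon}$ and the dyadic annuli $I_j=[2^j/\epsilon,\,2^{j+1}/\epsilon]$ for $j\ge 0$. On each annulus apply H\"older's inequality with exponents $p/2$ and $p/(p-2)$ against the measure $r^{n-1}\,dr$,
\[
  \int_{I_j}\sigma_u(r)\,r^{n-1}\,dr\le\Big(\int_{I_j}\sigma_u(r)^{p/2}\,r^{n-1}\,dr\Big)^{2/p}\Big(\int_{I_j}r^{n-1}\,dr\Big)^{1-2/p},
\]
bound the first factor by $\big(\int_0^{L_j}\sigma_u^{p/2}r^{n-1}\,dr\big)^{2/p}\le(L_j^{\,k}A)^{2/p}$ with $L_j=2^{j+1}/\epsilon$ (which satisfies $\epsilon L_j>1$), bound the second by $C\,(2^j/\epsilon)^{n(1-2/p)}$, and estimate $|\psi(\epsilon r)|^2\le\sup_{2^j\le s\le 2^{j+1}}|\psi(s)|^2$ on $I_j$. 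The piece $\int_0^{1/\epsilon}$ is handled the same way, using $\|\psi\|_\infty^2$ for the multiplier and $L=2/\epsilon$ in the supremum so as to respect the constraint $\epsilon L>1$.

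It then remains to collect exponents. Using $k=n-\tfrac{\alpha p}{2}-(n-\alpha)\tfrac{p}{2q}$, a direct computation shows that the powers of $\epsilon$ in $(L_j^{\,k})^{2/p}(2^j/\epsilon)^{n(1-2/p)}$ combine, independently of $j$, to exactly $\epsilon^{(\alpha-n)(1-1/q)}$, while the powers of $2^j$ combine to $2^{\,j(n-\alpha)(1-1/q)}$, an exponent which is $\ge 0$ since $q\ge 1$. Summing over $j\ge 0$, the weights $\sup_{2^j\le s\le 2^{j+1}}|\psi(s)|^2$ decay faster than any power of $2^{-j}$ because $\psi$ is Schwartz, so the series converges to a finite constant depending only on $n,\alpha,p,q$ and $\chi$; adding the low-frequency contribution yields $\|u_\epsilon\|_2^2\le C\,\epsilon^{(\alpha-n)(1-1/q)}A^{2/p}$, which is the assertion. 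The only genuinely delicate point is the low-frequency region: the hypothesis controls $L^{-k}\int_0^L\sigma_u^{p/2}r^{n-1}\,dr$ only for $\epsilon L>1$, so one must apply it at scales $L\gtrsim 1/\epsilon$ and absorb the mass on $[0,1/\epsilon]$ into a single H\"older estimate rather than summing annuli with $j<0$; everything else parallels \lemref{lemMYhorm}.
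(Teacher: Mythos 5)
Your proposal follows essentially the same route as the paper's proof: Plancherel plus polar coordinates, a dyadic decomposition of the radial variable at scale $\epsilon^{-1}$, H\"older with exponents $p/2$ and $p/(p-2)$ on each piece, and the same exponent bookkeeping showing the powers of $\epsilon$ collapse to $(\alpha-n)(1-\tfrac1q)$ while the Schwartz decay of $\wh{\chi}$ absorbs the growth in $2^j$ and makes the series converge. The only cosmetic difference is that you pull $|\wh{\chi}(\epsilon r)|^2$ out as a supremum over each annulus, whereas the paper keeps it inside the second H\"older factor; both are correct.
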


\begin{proof}
By the Plancherel theorem,
  \Bea
    \|u_{\epsilon}\|^{2}
    &=& \int_{\mathbb{R}^{n}} |\widehat{u}(\xi)|^{2}|\wh{\chi}(\epsilon\xi)|^{2} d\xi \\
    &=&\int_0^{\epsilon^{-1}} (\sigma_u(r)) |\wh{\chi}(\epsilon r)|^2 r^{n-1}dr\ +\sum_{j=1}^\infty \int_{2^{j-1}\epsilon^{-1}}^{2^{j}\epsilon^{-1}} (\sigma_u(r)) |\wh{\chi}(\epsilon r)|^2 r^{n-1}dr.\\
    &\leq& \bigg(\int_{0}^{\epsilon^{-1}} (\sigma_u(r))^{p/2}r^{n-1}dr\bigg)^{\frac{2}{p}} \bigg(\epsilon^{-n}\int_{0}^{1} |\hat{\chi}(r)|^{\frac{2}{1-\frac{2}{p}}} r^{n-1} dr \bigg)^{1-\frac{2}{p}}\\
    && + \sum_{j=1}^{\infty} \bigg(\int_{\frac{2^{j-1}}{\epsilon}}^{\frac{2^j}{\epsilon}} \sigma_u(r)^{p/2}r^{n-1}dr\bigg)^{\frac{2}{p}} \bigg(\epsilon^{-n}\int_{2^{j-1}}^{2^j} |\hat{\chi}(r)|^{\frac{2}{1-\frac{2}{p}}} r^{n-1} dr \bigg)^{1-\frac{2}{p}}\\
    &\leq& \epsilon^{(\alpha-n)(1-\frac{1}{q})}\bigg( \sum_{j=0}^{\infty}a_j \bigg(\underset{\epsilon L>1}{\sup} \frac{1}{L^k}\int_0^{L}\sigma_u(r)^{\frac{p}{2}} r^{n-1}dr\bigg)^{\frac{2}{p}}\bigg),
    \Eea
where, for all $j>0$
    \bee
         a_j = \bigg(2^{\frac{2kj}{p-2}} \int_{2^{j-1}}^{2^j}|\wh{\chi}(r)|^{\frac{2p}{p-2}} r^{n-1}dr\bigg)^{1-\frac{2}{p}}
    \eee
and $a_0 = \bigg(\int_{0}^{1}|\wh{\chi}(r)|^{\frac{2p}{p-2}} r^{n-1}dr\bigg)^{1-\frac{2}{p}}.$ We have $\sum_ja_{j}$ is finite. Thus
     \bee
           \|u_{\epsilon}\|^{2} \leq \epsilon^{(\alpha-n)(1-\frac{1}{q})}C \bigg(\underset{\epsilon L>1}{\sup}\frac{1}{L^k} \int_L^{2L}\sigma_u(r)^{\frac{p}{2}} r^{n-1}dr\bigg)^{\frac{2}{p}}
     \eee
  since $k=n-\frac{\alpha p}{2}-(n-\alpha)\frac{p}{2q}$.
\end{proof}

\begin{thm}\label{LBTheorem}
Fix $0<\alpha<n$. Let $M$ be a compact set such that
$\CP^{\alpha}(M)<\infty$. Let $u$ be a tempered distribution such
that support of $u$ is contained in $M$ and
$\sigma_u(r)=\int_{S^{n-1}}|\wh{u}(r\omega)|^2 d\omega$. Let
$2\leq p<\frac{2n}{\alpha}$. Then
         \bee
              \|u\|_1^p\leq C\ \underset{L\rightarrow\infty}{\limsup}\ \frac{1}{L^{n-\frac{\alpha
p}{2}}}\int_0^{L}(\sigma_u(r))^{\frac{p}{2}} r^{n-1}dr \leq C'\ \underset{L\rightarrow\infty}{\limsup}\ \frac{1}{L^{n-\frac{\alpha
p}{2}}}\int_{|\xi|<L}|\widehat{u}(\xi)|^pd\xi,
         \eee
where $\|u\|_1=sup\{<u,\psi>:\psi\in C_c^{\infty}(\R^n),\ \|\psi\|_{L^{\infty}(\R^n)}\leq1\}$, $C$ and $C'$ are non zero finite constants depending only on $n,\ \alpha$ and $p$.\\

In general, for $2\leq p<\frac{2n}{\alpha+\frac{n-\alpha}{q}}$, where $1<q\leq\infty$
         \Bea
              \|u\|_r^p &\leq& C\ \underset{L\rightarrow\infty}{\limsup}\ \frac{1}{L^{n-\frac{\alpha p}{2}-(n-\alpha)\frac{p}{2q}}} \int_0^{L}(\sigma_u(r))^{p}{2} r^{n-1}dr\\
              &\leq& C'\ \underset{L\rightarrow\infty}{\limsup}\ \frac{1}{L^{n-\frac{\alpha
p}{2}-(n-\alpha)\frac{p}{2q}}}\int_{|\xi|<L}|\widehat{u}(\xi)|^pd\xi,
         \Eea
where $\frac{1}{r}+\frac{1}{2q}=1$, $\|u\|_r=sup\{<u,\psi>: \|\psi\|_{L^{2q}(\R^n)}\leq1\}$, $C$ and $C'$ are non zero finite constants depending on $n,\ \alpha$, $p$ and $q$.
\end{thm}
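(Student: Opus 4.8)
The plan is to run the Agmon--Hormander mollification argument, exactly as in the proof of Theorem~\ref{corthmMYP}, but with the crude estimate of Lemma~\ref{lemMYhorm} replaced by the sharper Lemma~\ref{lemMYhorm1}. I would establish the general statement directly, the first assertion being the case $q=\infty$, $r=1$, $k=n-\alpha p/2$. We may assume the right-hand side is finite (otherwise there is nothing to prove), and, as is standard and as in Theorem~1 of \cite{AgmonHormander}, that $\wh u\in L^2_{loc}(\R^n)$, so that $\sigma_u(r)$ is meaningful. Fix a radial $\chi\in C_c^\infty(\R^n)$ supported in the unit ball with $\int\chi=1$, set $\chi_\epsilon(x)=\epsilon^{-n}\chi(x/\epsilon)$ and $u_\epsilon=u\ast\chi_\epsilon$. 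Since the support of $u$ is contained in the compact set $M$, the function $u_\epsilon\in C^\infty(\R^n)$ is supported in the $\epsilon$-neighbourhood $M(\epsilon)$, it lies in $L^2(\R^n)$ by Lemma~\ref{lemMYhorm1}, and $\langle u,\psi\rangle=\lim_{\epsilon\to0}\langle u_\epsilon,\psi\rangle$ for every $\psi\in C_c^\infty(\R^n)$.

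The main step is to estimate $\langle u_\epsilon,\psi\rangle$ for a test function $\psi$ with $\|\psi\|_{L^{2q}(\R^n)}\le1$ (this reads $\|\psi\|_{L^\infty}\le1$ when $q=\infty$). Writing $(2q)'=2q/(2q-1)\in[1,2)$ for the conjugate exponent of $2q$ and applying Holder's inequality twice,
\[
|\langle u_\epsilon,\psi\rangle|=\Big|\int_{M(\epsilon)}u_\epsilon(x)\psi(x)\,dx\Big|\le\|u_\epsilon\|_{L^{(2q)'}(M(\epsilon))}\,\|\psi\|_{L^{2q}(\R^n)}\le\|u_\epsilon\|_{L^2(\R^n)}\,|M(\epsilon)|^{(1-1/q)/2},
\]
where I used $1/(2q)'-1/2=(1-1/q)/2$. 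Into this I would insert the two key inputs. Lemma~\ref{lemMYhorm1} gives
\[
\|u_\epsilon\|_2^2\le C\,\epsilon^{(\alpha-n)(1-1/q)}\Big(\sup_{\epsilon L>1}\frac{1}{L^k}\int_0^L(\sigma_u(r))^{p/2}r^{n-1}\,dr\Big)^{2/p},
\]
and Lemma~\ref{lemMY1}, applied to the bounded set $S=M$, gives $|M(\epsilon)|\le C_M\,\epsilon^{n-\alpha}$ for all sufficiently small $\epsilon>0$. Since $(\alpha-n)(1-1/q)$ is precisely the negative of $(n-\alpha)(1-1/q)$, multiplying these two bounds makes the powers of $\epsilon$ cancel, leaving
\[
|\langle u_\epsilon,\psi\rangle|^2\le C'\Big(\sup_{\epsilon L>1}\frac{1}{L^k}\int_0^L(\sigma_u(r))^{p/2}r^{n-1}\,dr\Big)^{2/p}
\]
with $C'$ independent of $\epsilon$ and $\psi$.

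Letting $\epsilon\to0$, the supremum over $\epsilon L>1$ decreases to $\limsup_{L\to\infty}$; then raising to the power $p/2\ge1$ and taking the supremum over all admissible $\psi$ gives the first inequality, $\|u\|_r^p\le C''\,\limsup_{L\to\infty}\,L^{-k}\int_0^L(\sigma_u(r))^{p/2}r^{n-1}\,dr$. The second inequality follows from Holder's inequality on the sphere: since $p/2\ge1$,
\[
(\sigma_u(r))^{p/2}=\Big(\int_{S^{n-1}}|\wh u(r\omega)|^2\,d\omega\Big)^{p/2}\le|S^{n-1}|^{(p-2)/2}\int_{S^{n-1}}|\wh u(r\omega)|^p\,d\omega,
\]
so integrating against $r^{n-1}\,dr$ over $[0,L]$ and writing the right-hand side in polar coordinates gives $\int_0^L(\sigma_u(r))^{p/2}r^{n-1}\,dr\le|S^{n-1}|^{(p-2)/2}\int_{|\xi|<L}|\wh u(\xi)|^p\,d\xi$, and combining with the first inequality completes the proof.

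The one substantive ingredient, Lemma~\ref{lemMYhorm1}, is already proved: its proof is a dyadic (Littlewood--Paley) splitting of the Plancherel integral $\int_{\R^n}|\wh u(\xi)|^2|\wh\chi(\epsilon\xi)|^2\,d\xi$, and it is there that the range restriction $2\le p<2n/(\alpha+(n-\alpha)/q)$ is used, precisely so that $k=n-\tfrac{\alpha p}{2}-(n-\alpha)\tfrac{p}{2q}>0$ and the auxiliary constant $\sum_j a_j$ is finite. Hence the only point in the present deduction requiring genuine care is the bookkeeping of the $\epsilon$-powers: one must check that the exponent $(n-\alpha)(1-1/q)$ of $\epsilon$ obtained by raising the volume bound $|M(\epsilon)|\le C_M\epsilon^{n-\alpha}$ to the power $1-1/q$ exactly offsets the negative exponent $(\alpha-n)(1-1/q)$ coming from the $L^2$ bound for $u_\epsilon$, so that the resulting estimate is uniform in $\epsilon$ and survives the passage to the limit. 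Everything else is routine use of Holder's inequality and of the standard mollifier limit.
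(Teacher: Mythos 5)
Your proposal is correct and follows essentially the same route as the paper: mollify $u$ by $\chi_\epsilon$, apply Lemma~\ref{lemMYhorm1} for the $L^2$ bound on $u_\epsilon$, use Lemma~\ref{lemMY1} to bound the Lebesgue measure of the $\epsilon$-neighbourhood of the support by $C\epsilon^{n-\alpha}$, and observe that the two powers of $\epsilon$ cancel before passing to the limit; the paper's version of the H\"older step ($|\langle u_\epsilon,\psi\rangle|^2\le\|u_\epsilon\|_2^2\,\|\psi\|_{2q}^2\,|S(\epsilon)|^{1-1/q}$ with $S=\mathrm{supp}\,u\cap\mathrm{supp}\,\psi$) is equivalent to yours. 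Your explicit justification of the second inequality via H\"older on the sphere is a detail the paper leaves implicit, and is correct.
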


\begin{proof}
Choose an even function $ \chi \in C_{c}^{\infty} (R^{n}) $ with support in unit ball and $\int_{\mathbb{R}^{n}}\chi(x) dx = 1$.
Let $\chi_{\epsilon}(x) = \epsilon^{-n}\chi(x/\epsilon)$ and $u_{\epsilon}=u\ast\chi_{\epsilon}$. Then by Lemma \ref{lemMYhorm1},
      \bee
           \|u_{\epsilon}\|^{2} \leqslant C\ \epsilon^{(\alpha-n)(1-\frac{1}{q})} \bigg(\sup_{\epsilon L>1}\frac{1}{L^k} \int_L^{2L}(\sigma_u(r))^{\frac{p}{2}} r^{n-1} dr\bigg)^{\frac{2}{p}}.
      \eee
Let $\psi\in C_c^{\infty}(\mathbb{R}^n)$. Let $S=supp\ u\cap supp\ \psi$ where $supp\ \psi$ is contained in a ball $B_{R_{\psi}}(0)$ of radius $R_{\psi}$. Since $supp\ u\subset M$, where $M$ is of finite $\alpha$-packing measure, $S$ is supported in a set of finite $\alpha$-pakcing measure. Since $S$ is a bounded subset of $M$, by Lemma \ref{lemMY1}, we have
      \bee
           \underset{\epsilon\rightarrow0}{\limsup} |S_{\epsilon}| \epsilon^{\alpha-n} \leq c\CP^{\alpha}(S) <\infty.
      \eee
For given $0<\delta<1$, there exists $\epsilon_0$ such that for all $\epsilon<\epsilon_0$, $|S(\epsilon)|\epsilon^{\alpha-n}\leq C(\CP^{\alpha}(M)+\delta)\leq C_M$. So, for $k=n-\frac{\alpha p}{2}-(n-\alpha)\frac{p}{2q}$,
  \begin{eqnarray*}
      |<u_{\epsilon},\psi>|^2 &\leq& \|u_{\epsilon}\|_2^2 \int_{S_{\epsilon}}|\psi|^2 \\
      &\leq& \|u_{\epsilon}\|_2^2 \bigg(\int_{\R^n}|\psi|^{2q}\bigg)^{\frac{1}{q}} |S_{\epsilon}|^{1-\frac{1}{q}} \\
      &\leq& C_M\|\psi\|_{2q}^2\ \epsilon^{(n-\alpha)(1-\frac{1}{q})}\|u_{\epsilon}\|_2^2\\
      &\leq& C\|\psi\|_{2q}^2 \big(\sup_{\epsilon L>1}\frac{1}{L^k}\int_0^{L}(\sigma_u(r))^{\frac{p}{2}} r^{n-1}dr\big)^{\frac{2}{p}}.
      \end{eqnarray*}
 Thus
       \Bea
              \|u\|_r^p &\leq& C\ \underset{L\rightarrow\infty}{\limsup}\ \frac{1}{L^{n-\frac{\alpha p}{2}-(n-\alpha)\frac{p}{2q}}} \int_0^{L}(\sigma_u(r))^{p}{2} r^{n-1}dr\\
              &\leq& C'\ \underset{L\rightarrow\infty}{\limsup}\ \frac{1}{L^{n-\frac{\alpha p}{2}-(n-\alpha)\frac{p}{2q}}} \int_{|\xi|<L}|\widehat{u}(\xi)|^pd\xi.
       \Eea
\end{proof}

In \cite{AgmonHormander}, the authors proved the following:
\begin{thm}\label{thmagmonhormdensity}
Let $u$ be a tempered distribution such that $\wh{u}\in L^2_{loc}$ and
$$\underset{L\rightarrow\infty}{\limsup}\frac{1}{L^{k}}\int_{|\xi|\leq L}|\wh{u}(\xi)|^2d\xi<\infty.$$
If the restriction of $u$ to an open subset $X$ of $\R^n$ is supported by a $C^1$-submanifold $M$ of codimension $k$, then it is an $L^2$-density $u_0dS$ on $M$ and
$$\int_M|u_0|^2dS\leq C\ \underset{L\rightarrow\infty}{\limsup}\frac{1}{L^k}\int_{|\xi|\leq R} |\wh{u}(\xi)|^2d\xi,$$
where $C$ only depends on $n$.
\end{thm}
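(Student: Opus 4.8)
This is the density form of the Agmon--Hörmander asymptotic estimate, and the plan is to prove it exactly as Theorem~\ref{corthmMYP} and Theorem~\ref{LBTheorem} were proved above: by mollification together with the Plancherel theorem, only now one keeps the quantitative constant instead of sending it to zero. Write
\[
A=\underset{L\rightarrow\infty}{\limsup}\ \frac{1}{L^{k}}\int_{|\xi|\leq L}|\wh{u}(\xi)|^{2}\,d\xi,
\]
which is finite by hypothesis. First I would reduce to a local statement: since $u|_X$ is carried by $M$ and $M\cap X$ is a countable union of compact pieces, it suffices to show that for every $\psi\in C_c^{\infty}(X)$ one has
\[
|\langle u,\psi\rangle|\ \leq\ C\,A^{1/2}\,\Big(\int_{M}|\psi|^{2}\,dS\Big)^{1/2},
\]
with $C$ depending only on $n$. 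Indeed, this inequality says that $\langle u,\psi\rangle$ depends only on the restriction $\psi|_M$ and is a bounded linear functional of it in $L^{2}(M\cap X,dS)$; as $\{\psi|_M:\psi\in C_c^{\infty}(X)\}$ is dense in $L^{2}(M\cap X,dS)$, the Riesz representation theorem produces $u_0\in L^{2}(M\cap X,dS)$ with $u|_X=u_0\,dS$ and $\int_M|u_0|^{2}\,dS\leq C^{2}A$; the local pieces agree on overlaps because both represent $u$ there.

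Next I would mollify. Fix a radial $\chi\in C_c^{\infty}(\R^{n})$ supported in the unit ball with $\int_{\R^{n}}\chi=1$, set $\chi_{\epsilon}(x)=\epsilon^{-n}\chi(x/\epsilon)$ and $u_{\epsilon}=u\ast\chi_{\epsilon}$, so $\wh{u_{\epsilon}}=\wh{u}\,\wh{\chi}(\epsilon\,\cdot\,)$. The dyadic decomposition used in Lemma~\ref{lemMYhorm} (with the exponent $k=n-\dim M$ here playing the role of $n-\alpha$, and $p=2$), combined with the rapid decay of $\wh{\chi}$ and the bound $\int_{|\xi|\leq L}|\wh u|^{2}\leq (A+o(1))L^{k}$, yields $\|u_{\epsilon}\|_{2}^{2}\leq C\epsilon^{-k}A$ for all small $\epsilon$, with $C$ absolute once $\chi$ is fixed. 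On the other hand, for $\epsilon$ small the restriction of $u_{\epsilon}$ to $\operatorname{supp}\psi$ is supported in $S(\epsilon)$, the closed $\epsilon$-neighbourhood of the compact set $S=\operatorname{supp}\psi\cap\operatorname{supp}(u|_X)\subset M$; and since $S$ is a compact piece of a $C^{1}$ submanifold of codimension $k$, it satisfies the smooth analogue of Lemma~\ref{lemMY1}: $\epsilon^{-k}\int_{S(\epsilon)}|\psi(x)|^{2}\,dx\to\Omega_k\int_S|\psi|^{2}\,dS$ as $\epsilon\to0$ for $\psi$ continuous, where $\Omega_k$ is the volume of the unit ball in $\R^{k}$.

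Putting these together, since $u_{\epsilon}\to u$ in $\mathcal{D}'$ and $u_{\epsilon}$ is supported in $S(\epsilon)$ on $\operatorname{supp}\psi$,
\[
|\langle u,\psi\rangle|=\underset{\epsilon\to0}{\lim}\,|\langle u_{\epsilon},\psi\rangle|\leq\underset{\epsilon\to0}{\limsup}\ \|u_{\epsilon}\|_{2}\Big(\int_{S(\epsilon)}|\psi|^{2}\Big)^{1/2}\leq (CA)^{1/2}\Big(\Omega_k\int_S|\psi|^{2}\,dS\Big)^{1/2},
\]
which is the inequality required in the first step, with a constant depending only on $n$ (through $\Omega_k$ and the fixed mollifier). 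Running the Riesz argument then completes the proof.

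The step I expect to be the main obstacle is the tube estimate $\epsilon^{-k}\int_{S(\epsilon)}|\psi|^{2}\to\Omega_k\int_S|\psi|^{2}\,dS$: this is where the $C^{1}$ structure of $M$ is genuinely used, since locally $M$ is a graph that is $C^{1}$-close to its tangent plane, so the $\epsilon$-tube around $S$ is, up to a lower-order error, a slab of thickness comparable to $\epsilon$ in the $k$ normal directions; it is exactly the smooth counterpart of Lemma~\ref{lemMY1}. The only other point needing care is tracking constants so that the final $C$ depends on $n$ alone --- which forces one to fix a standard mollifier once and for all and to choose the partition of unity with bounded multiplicity when gluing the local densities. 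The Fourier-side input, i.e.\ the $\epsilon^{-k}$ growth of $\|u_{\epsilon}\|_{2}^{2}$, is routine given Lemma~\ref{lemMYhorm}; the one change is that the ``$\rho_{\epsilon}\to0$'' there is replaced by the constant $A$, because we no longer assume $u=\wh f$ with $f\in L^{p}$, only the weaker $L^{2}$-growth bound.
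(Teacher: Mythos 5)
The paper does not actually prove this theorem---it is quoted from Agmon and H\"ormander \cite{AgmonHormander}---but your proposal follows essentially the same mollification-plus-Plancherel scheme used both in the original source and in the paper's own fractal analogue, Theorem \ref{thm-alpha-density}, where Lemma \ref{lemMY1} plays the role of your tube estimate, Lemma \ref{lemMYhorm1} supplies the $\epsilon^{-k}$ bound on $\|u_\epsilon\|_2^2$, and the proof ends with the same Riesz-representation step. Your outline is correct, and the two inputs you flag---the dyadic estimate $\|u_\epsilon\|_2^2\leq C\epsilon^{-k}(A+o(1))$ (with the low-frequency block handled by $\wh{u}\in L^2_{loc}$) and the Minkowski-content fact $\epsilon^{-k}\int_{S(\epsilon)}|\psi|^2\to\Omega_k\int_S|\psi|^2\,dS$ for compact pieces of a $C^1$ submanifold of codimension $k$---are indeed the only substantive ingredients, both standard.
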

We prove an analogue of the above theorem for fractional
dimensional sets.
\begin{thm}\label{thm-alpha-density}
Let $u$ be a tempered distribution supported in a set $E$ of finite $\alpha$-packing measure such that for some $2\leq p <2n/\alpha$,
    \bee
        \underset{L\rightarrow\infty}{\limsup}\ \frac{1}{L^{n-\frac{\alpha p}{2}}} \int_{|\xi|\leq L} |\wh{u}(\xi)|^p d\xi <\infty.
    \eee
Then $u$ is an $L^2$ density $\ u_0\ d\CP^{\alpha}$ on $E$ and
    \bee
        \Big(\int_{E}|u_0|^2d\CP^{\alpha}\Big)^{p/2} \leq\ C\ \underset{L\rightarrow\infty}{\limsup}\ \frac{1}{L^{n-\frac{\alpha p}{2}}} \int_{|\xi|\leq L} |\wh{u}(\xi)|^p d\xi <\infty.
    \eee
\end{thm}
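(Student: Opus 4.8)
The plan is to transplant the Agmon--Hörmander proof of \thmref{thmagmonhormdensity} to the fractal setting, using \lemref{lemMYhorm1} in place of the integer-codimension $L^2$-estimate on mollifications and \lemref{lemMY1} in place of the tube formula for smooth submanifolds; the argument parallels (and refines) that of \thmref{LBTheorem}. We may assume $E$ is compact. Choose an even $\chi\in C_c^\infty(\R^n)$ supported in the unit ball with $\int\chi=1$, set $\chi_\epsilon(x)=\epsilon^{-n}\chi(x/\epsilon)$ and $u_\epsilon=u\ast\chi_\epsilon$; then $u_\epsilon\in L^2(\R^n)$, $\operatorname{supp}u_\epsilon\subseteq E(\epsilon)$, and $\langle u_\epsilon,\psi\rangle\to\langle u,\psi\rangle$ for every $\psi\in C_c^\infty(\R^n)$. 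Hölder's inequality on $S^{n-1}$ (legitimate since $p/2\ge1$) gives $\sigma_u(r)^{p/2}\le C\int_{S^{n-1}}|\wh u(r\omega)|^p\,d\omega$, so with $B:=\limsup_{L\to\infty}L^{-(n-\alpha p/2)}\int_{|\xi|\le L}|\wh u|^p\,d\xi<\infty$ we obtain $\limsup_{L\to\infty}L^{-(n-\alpha p/2)}\int_0^L\sigma_u(r)^{p/2}r^{n-1}\,dr\le C\,B$. Applying \lemref{lemMYhorm1} with $q=\infty$ (so its exponent is $k=n-\alpha p/2$) and letting $\epsilon\to0$ gives the uniform bound
\[
\limsup_{\epsilon\to0}\ \epsilon^{\,n-\alpha}\,\|u_\epsilon\|_2^2\ \le\ C\,B^{2/p}.
\]

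Next I would localise and use Cauchy--Schwarz. Fix $\psi\in C_c^\infty(\R^n)$, put $K=\operatorname{supp}\psi$, $K'=K(1)$ and $S=E\cap K'$; for $0<\epsilon<1$ one checks $E(\epsilon)\cap K\subseteq S(\epsilon)$. Since $\operatorname{supp}u_\epsilon\subseteq E(\epsilon)$,
\[
|\langle u_\epsilon,\psi\rangle|^2\ \le\ \|u_\epsilon\|_2^2\int_{E(\epsilon)\cap K}|\psi|^2\,dx\ \le\ \|u_\epsilon\|_2^2\int_{S(\epsilon)}|\psi|^2\,dx.
\]
To control the last integral I would fix $\eta>0$, partition $S$ into finitely many bounded Borel sets $S_1,\dots,S_N$ of diameter $<\eta$, use $\int_{S(\epsilon)}|\psi|^2\le\sum_i\big(\sup_{S_i(\eta)}|\psi|^2\big)|S_i(\epsilon)|$ for $\epsilon<\eta$, and apply \lemref{lemMY1} to each $S_i\subseteq E$ to get $\limsup_{\epsilon\to0}\epsilon^{\alpha-n}|S_i(\epsilon)|\le C_n\CP^\alpha(S_i)$; letting $\eta\to0$, the uniform continuity of $\psi$ and countable additivity of $\mu:=\CP^\alpha|_E$ (note $S\cap K=E\cap K$ since $K\subseteq K'$) yield
\[
\limsup_{\epsilon\to0}\ \epsilon^{\,\alpha-n}\int_{S(\epsilon)}|\psi|^2\,dx\ \le\ C_n\int_E|\psi|^2\,d\mu\ =\ C_n\,\|\psi\|_{L^2(\mu)}^2.
\]
Combining the three displays via $\limsup(a_\epsilon b_\epsilon)\le(\limsup a_\epsilon)(\limsup b_\epsilon)$ for nonnegative sequences, and recalling $|\langle u,\psi\rangle|=\lim_\epsilon|\langle u_\epsilon,\psi\rangle|$, we arrive at
\[
|\langle u,\psi\rangle|\ \le\ C\,B^{1/p}\,\|\psi\|_{L^2(\mu)}\qquad\text{for all }\psi\in C_c^\infty(\R^n).
\]

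Finally, $\mu=\CP^\alpha|_E$ is a finite, compactly supported Radon measure (Remark~\ref{PBorel}), so $C_c^\infty(\R^n)$ is dense in $L^2(\mu)$ and $\psi\mapsto\langle u,\psi\rangle$ extends to a bounded linear functional on $L^2(\mu)$ of norm $\le C\,B^{1/p}$. By the Riesz representation theorem there is $u_0\in L^2(\mu)$ with $\langle u,\psi\rangle=\int\psi\,u_0\,d\mu$ for every $\psi$, i.e. $u=u_0\,d\CP^\alpha$ on $E$, and $\|u_0\|_{L^2(\mu)}\le C\,B^{1/p}$; raising this to the power $p$ gives exactly the asserted estimate. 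I expect the main obstacle to be the weighted Minkowski-content step, that is, upgrading the unweighted bound of \lemref{lemMY1} to one carrying the continuous weight $|\psi|^2$ while correctly matching the $\epsilon$-neighbourhood of $\operatorname{supp}u$ with $\operatorname{supp}\psi$ (the reason for passing to $K'$); a secondary point requiring care is the reduction to $E$ compact, needed both for \lemref{lemMYhorm1} and for the density of $C_c^\infty(\R^n)$ in $L^2(\mu)$.
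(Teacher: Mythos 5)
Your proposal is correct and follows essentially the same route as the paper: mollify $u$, bound $\|u_\epsilon\|_2^2$ by $\epsilon^{\alpha-n}$ times the $L^p$ average via Lemma~\ref{lemMYhorm1} (with $q=\infty$), prove the weighted estimate $\limsup_\epsilon \epsilon^{\alpha-n}\int_{S(\epsilon)}|\psi|^2\leq C\|\psi\|^2_{L^2(\CP^{\alpha}|_E)}$ by applying Lemma~\ref{lemMY1} piecewise on a fine decomposition of $S$ and exploiting the uniform continuity of $\psi$, and conclude by Riesz representation on $L^2(d\CP^{\alpha}|_E)$. The only cosmetic difference is that the paper realises the weighted step with half-open cubes $Q_k$ and $\mu$-averages of $\psi$ over each cube (an error term $e_\delta\to0$), whereas you use an arbitrary small-diameter partition and suprema of $|\psi|^2$ on enlarged pieces; both hinge on the same two lemmas.
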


\begin{proof}
Let $\psi\in C_c^{\infty}(\mathbb{R}^n)$. Let $S=supp\ u\cap supp\ \psi$. Then $S$ is bounded and let $M$ be the smallest closed cube that contains $S$. As in Theorem \ref{thm-cohe-str}, for $0<\delta<1$, let $\tilde{\CQ_0}$ be the collection of all half open cubes $Q_k=\{x=(x_1,...x_n)\in M: (k_j-1)\delta<x_j \leq k_j\delta\}$, ($k=(k_1,...k_n),\ k_j\in\Z$) and $\CQ_0$ be the collection of all $Q_k\in\tilde{\CQ_0}$ such that $\CP^{\alpha}(Q_k\cap E)\neq 0$. Denote $\mu=\CP^{\alpha}|_S$. $\CP^{\alpha}(S)\leq\CP^{\alpha}(E)<\infty$ implies $\mu$ is Radon. Since $S$ is bounded, there are finite $Q_k$'s in $\CQ_0$. Let $\delta_0=\min_{Q_k\in\CQ_0}\{\mu(Q_k)\}$. By Lemma  \ref{lemMY1}, for each $k$, there exists $\delta_k$ such that
\bea
\nonumber |(Q_k\cap
S)(\epsilon)|\epsilon^{\alpha-n} &\leq& C_n\CP^{\alpha}(Q_k\cap S) + C_n\tilde{\delta_0}\delta\\
&\leq& 2C_n\CP^{\alpha}(Q_k\cap S)= 2C_n\mu(Q_k),
\label{thmdensity1}\\
|S(\epsilon)|\epsilon^{\alpha-n}&\leq& \mu(S) + \delta\label{thmdensity3}
\eea
for all $\epsilon \leq\delta_k$. Fix $\epsilon_0= \min\{\delta,\ \delta_0,\ \delta_1,\ \delta_2,..\}$. For every $\epsilon<\epsilon_0$, let $\CQ^{\epsilon}_0$ denote the collection of all $Q_k$ in $\tilde{\CQ_0}$ such that $|Q_k\cap S(\epsilon)|\neq 0$.
\Bea
 \epsilon^{\alpha-n}\int_{S(\epsilon)}|\psi(x)|^2dx &=& \epsilon^{\alpha-n}\sum_{Q_k\in\CQ^{\epsilon}_0}\int_{Q_k\cap S(\epsilon)} |\psi(x)|^2 dx\\
&\leq& \epsilon^{\alpha-n}\sum_{Q_k\in\CQ^{\epsilon}_0\backslash\CQ_0} \int_{Q_k\cap S(\epsilon)} |\psi(x)|^2 dx\\
&& +2\epsilon^{\alpha-n}\sum_{Q_k\in\CQ_0}\int_{Q_k\cap S(\epsilon)} |\psi(x)-\frac{1}{\mu(Q_k)}\int_{Q_k}\psi(y)d\mu(y)|^2 dx\\
&& + 2\epsilon^{\alpha-n}\sum_{Q_k\in\CQ_0}\int_{Q_k\cap S(\epsilon)} |\frac{1}{\mu(Q_k)}\int_{Q_k}\psi(y)d\mu(y)|^2 dx.
\Eea
Since, for $Q_k\in\CQ^{\epsilon}_0\backslash\CQ_0$, $\mu(Q_k)=0$, from (\ref{thmdensity1}),
\Bea
&&\epsilon^{\alpha-n}\sum_{Q_k\in\CQ^{\epsilon}_0\backslash\CQ_0} \int_{Q_k\cap S(\epsilon)} |\psi(x)|^2 dx\\
&&\ \ \leq 2C_n \|\psi\|_{\infty}^2 \sum_{Q_k\in\CQ^{\epsilon}_0\backslash\CQ} \mu(Q_k) = 0.
\Eea
Hence,
\bea
 \nonumber\epsilon^{\alpha-n}\int_{S(\epsilon)}|\psi(x)|^2dx &\leq& 2\epsilon^{\alpha-n}\sum_{Q_k\in\CQ_0}\int_{Q_k\cap S(\epsilon)} |\psi(x)-\frac{1}{\mu(Q_k)}\int_{Q_k}\psi(y)d\mu(y)|^2 dx\\
\nonumber&& + 2\epsilon^{\alpha-n}\sum_{Q_k\in\CQ_0}\int_{Q_k\cap S(\epsilon)} |\frac{1}{\mu(Q_k)}\int_{Q_k}\psi(y)d\mu(y)|^2 dx\\
\nonumber &\leq& e_{\delta} +2\sum_{Q_k\in\CQ_0} \epsilon^{\alpha-n}|Q_k\cap S(\epsilon)|\frac{1}{\mu(Q_k)}\int_{Q_k}|\psi(y)|^2d\mu(y),
\eea
where $$e_{\delta}=2\sum_{Q_k\in\CQ_0}\epsilon^{\alpha-n}\int_{Q_k\cap S(\epsilon)} |\psi(x)-\frac{1}{\mu(Q_k)}\int_{Q_k}\psi(y)d\mu(y)|^2 dx.$$ By (\ref{thmdensity1}), $$\epsilon^{\alpha-n}|Q_k\cap S(\epsilon)|\leq \epsilon^{\alpha-n} |(Q_k\cap S)(\epsilon)|\leq 2C_n\mu(Q_k).$$ Hence
\bea
\nonumber \epsilon^{\alpha-n}\int_{S(\epsilon)}|\psi(x)|^2dx &\leq& e_{\delta} +4C_n\sum_{Q_k\in\CQ_0} \int_{Q_k}|\psi(y)|^2d\mu(y)\\
&=& e_{\delta} +4C_n\int_{E}|\psi(y)|^2d\mu(y).\label{thmdensity2}
\eea
Since $\psi$ is compactly supported continuous function, $|\psi(x)-\frac{1}{\mu(Q_k)}\int_{Q_k}\psi(y)d\mu(y)|\rightarrow 0$ uniformly in $x$ and $Q_k$ as $\delta$ goes to zero. $\underset{x\in S(\epsilon)}{\sup}|\psi(x)-\frac{1}{\mu(Q_k)}\int_{Q_k}\psi(y)d\mu(y)|\rightarrow 0$ as $\delta$ goes to zero.
\Bea
e_{\delta} &=& \epsilon^{\alpha-n}\sum_{Q_k\in\CQ_0} \int_{Q_k\cap S(\epsilon)} |\psi(x)-\frac{1}{\mu(Q_k)}\int_{Q_k}\psi(y)d\mu(y)|^2 dx\\
&\leq& \epsilon^{\alpha-n}\sum_{Q_k\in\CQ_0}|Q_k\cap S(\epsilon)|\sup_{x\in S(\epsilon)}  |\psi(x)-\frac{1}{\mu(Q_k)}\int_{Q_k}\psi(y)d\mu(y)|^2 \\
&\leq& \epsilon^{\alpha-n}|S(\epsilon)| \sup_{x\in S(\epsilon)}  |\psi(x)-\frac{1}{\mu(Q_k)}\int_{Q_k}\psi(y)d\mu(y)|^2.
\Eea
Then together with (\ref{thmdensity3}), $e_{\delta}$ goes to zero as $\delta$ goes to zero. Thus from (\ref{thmdensity2}), for given $0<\delta<1$, there exists small $\epsilon_0$ such that for all $\epsilon<\epsilon_0$,
\bea
\nonumber\epsilon^{\alpha-n}\int_{S(\epsilon)}|\psi(x)|^2dx &\leq& e_{\delta} + 4C_n \int_{E}|\psi(y)|^2d\CP^{\alpha}(y)\\
&=& e_{\delta} + 4C_n \|\psi\|^2_{L^2(d\CP^{\alpha}|_E)}.\label{thmdensityfin1}
\eea
where $e_{\delta}$ tends to zero as $\delta$ tends to zero.\\

Now we proceed as in the Theorem \ref{LBTheorem}. Choose an even function $ \chi \in C_{c}^{\infty} (R^{n}) $ with support in unit ball and $\int_{\mathbb{R}^{n}}\chi(x) dx = 1$. Let $\chi_{\epsilon}(x) = \epsilon^{-n}\chi(x/\epsilon)$ and $u_{\epsilon}=u\ast\chi_{\epsilon}$. Then by Lemma \ref{lemMYhorm1},
       \bea
          \nonumber   \|u_{\epsilon}\|^{2} &\leq& C\ \epsilon^{\alpha-n}\bigg(\sup_{\epsilon L>1}\frac{1}{L^{n-\alpha p/2}} \int_0^{L} (\sigma_u(r))^{\frac{p}{2}} r^{n-1}dr\bigg)^{\frac{2}{p}}\\
             &\leq& C\ \epsilon^{\alpha-n}\bigg(\sup_{\epsilon L>1}\frac{1}{L^{n-\alpha p/2}} \int_{|\xi|<L} |\hat{u}(\xi)|^pd\xi\bigg)^{\frac{2}{p}}.\label{thmdensityfin2}
      \eea
We have $\epsilon\rightarrow 0$ as $\delta\rightarrow 0$. Thus
     \begin{eqnarray*}
         |<u,\psi>|^2 &=& \underset{\epsilon\rightarrow0}{\lim}\ |<u_{\epsilon},\psi>|^2 \\
             &\leq& \underset{\epsilon\rightarrow0}{\lim}\ \|u_{\epsilon}\|_2^2\int_{S_{\epsilon}}|\psi|^2 \\
             &\leq& \underset{\epsilon\rightarrow0}{\lim}\ \|u_{\epsilon}\|_2^2\epsilon^{n-\alpha}(e_\delta + C\|\psi\|_{L^2(d\CP^{\alpha}|_E)}^2)\ \text{from}\ (\ref{thmdensityfin1}).
\end{eqnarray*}
Thus letting $\delta$ go to zero, together with (\ref{thmdensityfin2}),
\bee
            |<u,\psi>|^2 \leq C\|\psi\|_{L^2(d\CP^{\alpha}|_E)}^2\ \big(\underset{L\rightarrow\infty}{\limsup} \frac{1}{L^{n-\alpha p/2}} \int_{|\xi|\leq L}|\hat{u}(\xi)|^pd\xi\big)^{\frac{2}{p}}
     \eee
 Thus $u$ is an $L^2$ density $\ u_0\ d\CP^{\alpha}$ on $E$ and
      \bee
          \Big(\int_{E}|u_0|^2d\CP^{\alpha}\Big)^{p/2} \leq\ C\ \underset{L\rightarrow\infty}{\limsup}\ \frac{1}{L^{n-\frac{\alpha p}{2}}} \int_{|\xi|\leq L} |\wh{u}(\xi)|^p d\xi <\infty.
      \eee
\end{proof}

\section{$L^p$-Fourier asymptotic properties of fractal measures for $1\leq p< 2$}
Let $\mu$ denote a fractal measure supported in an
$\alpha$-dimensional set $E\subset\R^n$ and $f\in L^q(d\mu)$
($1\leq q\leq\infty$). Suppose $1\leq p\leq 2$ dependent on $q$.
In this section, we obtain upper and lower bounds for
  \bee
    L^{-k}\int_{|\xi|\leq L}|\wh{fd\mu}(\xi)|^pd\xi,
  \eee
for very large $L$ and positive $k$ dependent on $\alpha,\ p$ and $n$.\\

%%%%%%%%%%%%%%%%%%%%%%%%%%%%%%%%%%%%%%%%%% Remarks on Str's Approximation

Let $\tilde{\psi_t}(x)=t^{-n}\tilde{\psi}(t^{-1}x)$, where $|\tilde{\psi}(x)|\leq\psi(|x|)$, $\psi$ is decreasing, bounded and $\int_0^{\infty}\psi(r)r^{n-1}dr<\infty$. Let $u_t(x)=\tilde{\psi_t}*(fd\mu)(x)=\int\tilde{\psi_t}(x-y)f(y)d\mu(y).$ Then Strichartz in \cite{Strichartz} proved the following:

\begin{thm}\label{thmStrApproximation}\cite{Strichartz}: Let $\mu=\CH_{\alpha}|_E$. If $f\in L^p(d\mu)$, ($1\leq p\leq\infty$), then
      \begin{enumerate}
             \item If $E$ is locally uniformly $\alpha$-dimensional, for $0<t\leq1$,
                     \bee
                           \Big(\int |u_t(x)|^p\ dx \Big)^{1/p}\leq\ ct^{(\alpha-n)/p'}\|f\|_{L^p(d\mu)}.
                      \eee
            \item If $E$ is only quasi $\alpha$-regular, then
                     \bee
                           \underset{t\rightarrow0}{\liminf}\ t^{n-\alpha}|u_t(x)| \geq c|f(x)|
                      \eee
                      for $\CH_{\alpha}$-almost every $x$ in $E$.
       \end{enumerate}
\end{thm}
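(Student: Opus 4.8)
The plan is to deduce both parts from a single estimate on $\mu$: if $\mu$ is locally uniformly $\alpha$-dimensional with constant $\lambda$ (as in \eqref{eq-mu-loc-uni-alpha}), then
\[
\int_{\R^n}|\tilde\psi_t(x-y)|\,d\mu(y)\ \leq\ C\,t^{\alpha-n}\qquad\text{for all }x\in\R^n,\ 0<t\leq1 .
\]
To prove this I would split the integral over the dyadic annuli $\{2^{j}t\leq|x-y|<2^{j+1}t\}$ ($j\in\Z$), estimate $|\tilde\psi_t(x-y)|\leq t^{-n}\psi(2^{j})$ on each, insert $\mu\big(B_{2^{j+1}t}(x)\big)\leq\lambda(2^{j+1}t)^{\alpha}$, and sum; the resulting series is a constant multiple of $\sum_{j}2^{j\alpha}\psi(2^{j})$, which converges because $\psi$ is bounded and decreasing and $\alpha<n$, so that $\int_{0}^{\infty}\psi(r)r^{n-1}\,dr<\infty$ forces $\int_{0}^{\infty}\psi(r)r^{\alpha-1}\,dr<\infty$.

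For part (1), I would regard $f\mapsto u_{t}$ as a linear map $T_{t}\colon L^{p}(d\mu)\to L^{p}(dx)$ and interpolate between its endpoint behaviour. At $p=\infty$ the displayed bound gives $\|u_{t}\|_{\infty}\leq Ct^{\alpha-n}\|f\|_{L^{\infty}(d\mu)}$. At $p=1$, Tonelli's theorem together with $\int_{\R^n}|\tilde\psi_t(x-y)|\,dx=\|\tilde\psi\|_{L^{1}(\R^n)}\leq C\int_{0}^{\infty}\psi(r)r^{n-1}\,dr<\infty$ gives $\|u_{t}\|_{1}\leq C\|f\|_{L^{1}(d\mu)}$. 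Both underlying measures being $\sigma$-finite, Riesz--Thorin interpolation applies with $\theta=1/p'$ and yields $\|T_{t}\|_{L^{p}(d\mu)\to L^{p}(dx)}\leq C\,(t^{\alpha-n})^{1/p'}$, which is exactly the asserted inequality.

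For part (2), I would fix $x\in E$ that is a Lebesgue point of $f$ with respect to $\mu=\CH_{\alpha}|_{E}$ and satisfies $\ul{D^{\alpha}}(\mu,x)\geq a>0$ and $\ol{D^{\alpha}}(\mu,x)\leq1$; by the Besicovitch covering theorem (Lebesgue differentiation for Radon measures), by Lemma~\ref{lemPM2}, and by the definition of quasi $\alpha$-regularity, $\CH_{\alpha}$-almost every $x\in E$ has all three properties. Splitting $f(y)=f(x)+\big(f(y)-f(x)\big)$ decomposes $u_{t}(x)$ into a main term $f(x)\int\tilde\psi_t(x-y)\,d\mu(y)$ and an error term. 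Assuming, as in the relevant case of \cite{Strichartz}, that $\tilde\psi\geq0$ and $\tilde\psi\geq c_{0}$ on a ball $B_{\delta_{0}}(0)$, the lower density estimate gives $t^{n-\alpha}\int\tilde\psi_t(x-y)\,d\mu(y)\geq c_{0}\,t^{-\alpha}\mu\big(B_{\delta_{0}t}(x)\big)\geq c_{1}>0$ for all small $t$, so the main term has modulus at least $c_{1}|f(x)|$. For the error term I would again decompose dyadically in $|x-y|$: with $h_{x}(r):=(2r)^{-\alpha}\int_{B_{r}(x)}|f(y)-f(x)|\,d\mu(y)$ one obtains
\[
t^{n-\alpha}\Big|\int\tilde\psi_t(x-y)\,\big(f(y)-f(x)\big)\,d\mu(y)\Big|\ \leq\ 2^{2\alpha}\sum_{j\in\Z}2^{j\alpha}\psi(2^{j})\,h_{x}(2^{j+1}t).
\]
Here $h_{x}$ is bounded on $(0,\infty)$, with $h_{x}(r)\to0$ both as $r\to0$ (Lebesgue-point property combined with $\ol{D^{\alpha}}(\mu,x)\leq1$) and as $r\to\infty$ (finiteness of $\mu$ and $f\in L^{1}(d\mu)$). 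Given $\varepsilon>0$, choosing $\delta$ so that $h_{x}(r)<\varepsilon$ for $r\leq\delta$, the terms with $2^{j+1}t\leq\delta$ contribute at most $\varepsilon\sum_{j}2^{j\alpha}\psi(2^{j})$, while the rest is a tail $\sum_{j>J(t)}2^{j\alpha}\psi(2^{j})h_{x}(2^{j+1}t)$ with $J(t)\to\infty$ as $t\to0$, which vanishes in the limit since $\sum_{j}2^{j\alpha}\psi(2^{j})<\infty$ and $h_{x}$ is bounded. Hence the error term tends to $0$, and $\liminf_{t\to0}t^{n-\alpha}|u_{t}(x)|\geq c\,|f(x)|$.

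The dyadic summations are routine once $\sum_{j}2^{j\alpha}\psi(2^{j})<\infty$ is in hand. I expect the main obstacle to be the error estimate in part (2): one must control the local part of the kernel through the Lebesgue-point property while controlling the non-local tail uniformly in $t$ through the summability of $\sum_{j}2^{j\alpha}\psi(2^{j})$, and one must verify that $\CH_{\alpha}$-almost every point of $E$ is ``good'' in the above sense---this is precisely where quasi $\alpha$-regularity, Lemma~\ref{lemPM2}, and the Besicovitch differentiation theorem are used.
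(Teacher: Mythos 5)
The thesis itself gives no proof of this theorem: it is quoted from Strichartz, so there is no in-paper argument to compare against. Your reconstruction follows what is essentially Strichartz's own route --- the dyadic kernel estimate $\int|\tilde{\psi_t}(x-y)|\,d\mu(y)\leq Ct^{\alpha-n}$ combined with the trivial $L^1\to L^1$ bound and Riesz--Thorin for part (1), and for part (2) the split of $u_t(x)$ into a main term controlled from below by $\ul{D^{\alpha}}(\mu,x)\geq a$ and an error term killed at Lebesgue points via $\ol{D^{\alpha}}(\mu,x)\leq1$ (Lemma \ref{lemPM2}) and Besicovitch differentiation --- and the argument is correct.

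Two remarks. First, you are right to flag the positivity hypothesis: as literally transcribed the statement of part (2) is false (take $\tilde\psi\equiv0$), and Strichartz's additional assumption that $\tilde\psi\geq0$ with $\tilde\psi\geq c_0$ on a neighbourhood of the origin has been dropped in the thesis's formulation. Your proof correctly reinstates it, and it is satisfied by the Gaussian kernel used where the theorem is applied (Theorem \ref{thmLinftyLEQL1}). Second, two small points worth tidying in the kernel estimate: the bound $\mu(B_\delta(x))\leq\lambda\delta^{\alpha}$ in (\ref{eq-mu-loc-uni-alpha}) is assumed only for $\delta\leq1$ and for $\mu$-a.e.\ centre, so for the annuli with $2^{j+1}t>1$ you should instead cover $B_{2^{j+1}t}(x)$ by unit balls, giving $\mu(B_r(x))\leq Cr^n$, which still sums against $\psi$ precisely because $\int_0^\infty\psi(r)r^{n-1}\,dr<\infty$; and passing from $\mu$-a.e.\ centres to arbitrary centres costs only a factor $2^{\alpha}$. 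Likewise the boundedness of $h_x$ as $r\to\infty$ uses that $\mu$ is finite and $f\in L^1(d\mu)$, which holds in the compact, finite-measure setting in which the theorem is invoked here; in general one can absorb the growth of $h_x(r)\leq Cr^{n-\alpha}$ into the convergent series $\sum_j 2^{jn}\psi(2^j)$ together with the factor $t^{n-\alpha}\to0$. None of these affects the validity of your argument.
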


\begin{thm}\label{thmLinftyLEQL1}
Let $f\in L^{\infty}(d\CP^{\alpha})$ be supported in an quasi $\alpha$-regular set $E\subset\R^n$ for some $0\leq\alpha\leq n$. Then
      \be
             \|f\|_{L^{\infty}(d\mu)}\leq c\ \underset{L\rightarrow\infty}{\liminf}\frac{1}{L^{n-\alpha}} \int_{\R^n} e^{-\frac{|\xi|^2}{2L^2}} |\wh{fd\mu}(\xi)|d\xi,\label{eqthmLinftyL1}
       \ee
where $c$ is a constant independent of $f$ and $d\mu=d\CH_{\alpha}|_E$.
\end{thm}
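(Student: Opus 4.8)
The plan is to deduce (\ref{eqthmLinftyL1}) from Strichartz's pointwise approximation theorem (Theorem \ref{thmStrApproximation}(2)), taking the approximating kernel $\tilde\psi$ in that theorem to be a Gaussian: its Fourier transform is again a Gaussian, so the convolution $\tilde\psi_t*(fd\mu)$ will be dominated pointwise by precisely the Gaussian-weighted integral of $|\wh{fd\mu}|$ occurring on the right-hand side of (\ref{eqthmLinftyL1}).

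I would begin with two elementary reductions. Since $\CH_\alpha(A)\le\CP^\alpha(A)$ for every set $A$ (Lemma \ref{lemPHmsr}), every $\CP^\alpha$-null set is $\CH_\alpha$-null, so $f\in L^\infty(d\CP^\alpha)$ forces $f\in L^\infty(d\mu)$ with $\|f\|_{L^\infty(d\mu)}\le\|f\|_{L^\infty(d\CP^\alpha)}$; in particular the left-hand side of (\ref{eqthmLinftyL1}) is finite and meaningful. Also, as is implicit in the hypothesis (compare the remark following Theorem D), we may assume $E$ has finite $\CH_\alpha$-measure, so that $\mu=\CH_\alpha|_E$ is a finite Radon measure and $\wh{fd\mu}$ is a bounded continuous function.

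Next I fix the kernel. Let $\tilde\psi$ be the Gaussian normalised so that, in the Fourier convention used throughout the thesis, $\wh{\tilde\psi}(\xi)=e^{-|\xi|^2/2}$; then $\tilde\psi$ is positive, radial, bounded, rapidly decreasing, and $\int_0^\infty\psi(r)r^{n-1}\,dr<\infty$, so it is admissible in Theorem \ref{thmStrApproximation}. With $\tilde\psi_t(x)=t^{-n}\tilde\psi(t^{-1}x)$ and $u_t=\tilde\psi_t*(fd\mu)$ one has $\wh{u_t}(\xi)=\wh{\tilde\psi}(t\xi)\,\wh{fd\mu}(\xi)=e^{-t^2|\xi|^2/2}\wh{fd\mu}(\xi)$, a product of a Schwartz function and a bounded function, hence in $L^1(\R^n)$; Fourier inversion then gives, for every $x\in\R^n$,
\[
  |u_t(x)|=\Big|\int_{\R^n}e^{-t^2|\xi|^2/2}\,\wh{fd\mu}(\xi)\,e^{2\pi ix\cdot\xi}\,d\xi\Big|\le\int_{\R^n}e^{-t^2|\xi|^2/2}\,|\wh{fd\mu}(\xi)|\,d\xi .
\]

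Now apply the lower bound: by Theorem \ref{thmStrApproximation}(2), since $E$ is quasi $\alpha$-regular and $f\in L^\infty(d\mu)$,
\[
  c\,|f(x)|\le\underset{t\rightarrow0}{\liminf}\ t^{n-\alpha}|u_t(x)|\le\underset{t\rightarrow0}{\liminf}\ t^{n-\alpha}\int_{\R^n}e^{-t^2|\xi|^2/2}\,|\wh{fd\mu}(\xi)|\,d\xi
\]
for $\CH_\alpha$-almost every $x\in E$. Putting $t=1/L$, the right-hand side equals $\underset{L\rightarrow\infty}{\liminf}\ L^{\alpha-n}\int_{\R^n}e^{-|\xi|^2/(2L^2)}|\wh{fd\mu}(\xi)|\,d\xi$, which is independent of $x$, so taking the essential supremum over $x\in E$ yields (\ref{eqthmLinftyL1}), with $c$ replaced by its reciprocal. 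The only genuinely delicate step is this invocation of Theorem \ref{thmStrApproximation}(2): one must verify that the chosen Gaussian meets the admissibility conditions on $\tilde\psi$ imposed there and that the pointwise domination of $|u_t(x)|$ uses that same kernel. Everything else — the two reductions, tracking the normalising constants, and the substitution $t=1/L$ — is routine bookkeeping, so I anticipate no essential obstruction.
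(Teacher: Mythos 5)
Your proposal is correct and follows essentially the same route as the paper: both invoke Theorem \ref{thmStrApproximation}(2) with a Gaussian kernel, use Fourier inversion to dominate $|u_t(x)|$ by $\int e^{-t^2|\xi|^2/2}|\wh{fd\mu}(\xi)|\,d\xi$, and substitute $t=1/L$ before taking the supremum over $x$. Your preliminary reductions (passing from $L^\infty(d\CP^\alpha)$ to $L^\infty(d\mu)$ via $\CH_\alpha\le\CP^\alpha$, and checking admissibility of the Gaussian) are sensible housekeeping that the paper leaves implicit.
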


\begin{proof}
By Theorem \ref{thmStrApproximation}, we have
       \be
            \underset{t\rightarrow0}{\liminf}\ t^{n-\alpha}|u_t(x)| \geq c|f(x)|\ \text{a.e.}\ x\in E\label{eq-Linfty}
      \ee
where $u_t(x) =\tilde{\psi_t}*(fd\mu)(x) =\int\tilde{\psi_t}(x-y)f(y)d\mu(y),$ with $\tilde{\psi}(x)=e^{-\frac{|x|^2}{t}}$.
      \Bea
           |f(x)| &\leq& c\ \underset{t\rightarrow0}{\liminf}\  t^{n-\alpha}|u_t(x)|\\
                    &=& c\ \underset{t\rightarrow0}{\liminf}\ t^{-\alpha} \big{|}\int_E e^{-\frac{|x-y|^2}{2t^2}}f(y)d\mu(y)\big{|}\\
                   &=& c\ \underset{t\rightarrow0}{\liminf}\  t^{-\alpha} \big{|}\int_E \int_{\R^n} e^{-\frac{|\xi|^2}{2}}e^{i(x-y).\xi/t}d\xi f(y)d\mu(y)\big{|}\\
                   &=& c\ \underset{t\rightarrow0}{\liminf}\  t^{n-\alpha} \big|\int_{\R^n} e^{-\frac{(t|\xi|)^2}{2}} \wh{fd\mu}(\xi) e^{ix.\xi}d\xi\big| \\
                   &\leq& c\ \underset{t\rightarrow0}{\liminf}\ t^{n-\alpha} \int_{\R^n} e^{-\frac{(t|\xi|)^2}{2}} |\wh{fd\mu}(\xi)| d\xi.
      \Eea
Hence from (\ref{eq-Linfty}), substituting $L=t^{-1}$ in the above equation, we get (\ref{eqthmLinftyL1}). Hence the proof.
\end{proof}

Strichartz proved the following analogue of the Hausdorff-Young inequality in \cite{Strichartz}.

\begin{thm}\label{corStr}\cite{Strichartz}
If $f\in L^{p'}(d\mu)$ for $2\leq p\leq\infty$ and $\mu$ is locally uniformly $\alpha$-dimensional then
      \bee
            \underset{x}{sup}\ \underset{L\geq1}{sup}\ \frac{1}{L^{n-\alpha}} \int_{B_L(x)}|\wh{fd\mu}(\xi)|^{p}d\xi\leq c\|f\|_{p'}^{p},
       \eee
where $1/p+1/p'=1$ for $2\leq p<\infty$ and for $p=\infty$,
      \bee
           \|\wh{fd\mu}\|_{L^{\infty}(\R^n)}\leq c\ \|f\|_{L^1(d\mu)}.
      \eee
\end{thm}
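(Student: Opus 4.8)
The plan is to read the asserted inequality as an interpolation estimate for the linear map $f\mapsto\wh{fd\mu}$ between the endpoint $p=\infty$, where it is trivial, and $p=2$, where it carries the real content. The endpoint $p=\infty$ is simply $|\wh{fd\mu}(\xi)|\leq\int|f|\,d\mu=\|f\|_{L^1(d\mu)}$ for every $\xi$, which is already the last displayed inequality of the statement. The endpoint $p=2$ is the estimate
\bee
 \sup_{x_0\in\R^n}\ \sup_{L\geq 1}\ \frac{1}{L^{n-\alpha}}\int_{B_L(x_0)}|\wh{fd\mu}(\xi)|^2\,d\xi\ \leq\ c\,\|f\|_{L^2(d\mu)}^2 .
\eee

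To prove the $p=2$ bound, first reduce to $f$ bounded with compact support, so that $fd\mu$ is a finite compactly supported measure and $\wh{fd\mu}$ is bounded and continuous; the general case follows by density, since the estimate forces $\{\wh{f_kd\mu}\}$ to be Cauchy in $L^2$ on every ball whenever $f_k\to f$ in $L^2(d\mu)$, so (as $\wh{f_kd\mu}\to\wh{fd\mu}$ in $\mathcal S'(\R^n)$ as well) $\wh{fd\mu}$ is the $L^2_{loc}$ limit and inherits the bound. Fix a Schwartz function $\Phi\geq 0$ on $\R^n$ with $\Phi\geq 1$ on the unit ball and $\wh\Phi$ supported in the unit ball (such a $\Phi$ exists; one may take a suitable multiple of $|\wh\eta|^2$ for a smooth bump $\eta$ of sufficiently small support). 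Then $\Phi\big((\xi-x_0)/L\big)\geq\mathbf 1_{B_L(x_0)}(\xi)$, so expanding $|\wh{fd\mu}(\xi)|^2$ as a double $\mu$-integral and integrating in $\xi$ first,
\bee
 \int_{B_L(x_0)}|\wh{fd\mu}(\xi)|^2\,d\xi\ \leq\ C L^n\int\!\!\int f(x)\overline{f(y)}\,e^{-i(x-y)\cdot x_0}\,\wh\Phi\big(L(x-y)\big)\,d\mu(x)\,d\mu(y).
\eee
Since $\wh\Phi$ is supported in the unit ball the integrand vanishes unless $|x-y|\leq 1/L\leq 1$; estimating $|f(x)\overline{f(y)}|\leq\tfrac12(|f(x)|^2+|f(y)|^2)$ and using the symmetry of the kernel gives
\bee
 \int_{B_L(x_0)}|\wh{fd\mu}(\xi)|^2\,d\xi\ \leq\ C\|\wh\Phi\|_\infty L^n\int|f(x)|^2\,\mu\big(B_{1/L}(x)\big)\,d\mu(x)\ \leq\ C\|\wh\Phi\|_\infty\lambda\,L^{n-\alpha}\,\|f\|_{L^2(d\mu)}^2 ,
\eee
where the last step uses $L\geq 1$ together with $\mu(B_{1/L}(x))\leq\lambda L^{-\alpha}$ from local uniform $\alpha$-dimensionality. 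Dividing by $L^{n-\alpha}$ gives the $p=2$ estimate, uniformly in $x_0$ and $L\geq 1$.

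For $2<p<\infty$, fix $x_0$ and $L\geq 1$ and apply the Riesz--Thorin interpolation theorem with change of density on the target to the linear map $f\mapsto\mathbf 1_{B_L(x_0)}\wh{fd\mu}$, using the bounds $L^2(d\mu)\to L^2\big(L^{-(n-\alpha)}d\xi|_{B_L(x_0)}\big)$ just proved and $L^1(d\mu)\to L^\infty(d\xi|_{B_L(x_0)})$ from the trivial estimate; the endpoint constants are independent of $x_0$ and $L$. Taking $\theta=2/p$, the source exponent becomes $p'$ (with $1/p+1/p'=1$) and the interpolated target density is again $L^{-(n-\alpha)}$, which yields $\frac{1}{L^{n-\alpha}}\int_{B_L(x_0)}|\wh{fd\mu}(\xi)|^p\,d\xi\leq c\,\|f\|_{L^{p'}(d\mu)}^p$ with $c$ independent of $x_0,L$; a standard density argument (starting from simple functions with finite $\mu$-measure support) extends this to all $f\in L^{p'}(d\mu)$, and taking the supremum over $x_0$ and $L\geq 1$ finishes the proof. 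The step I expect to be the main obstacle is the $p=2$ estimate: producing the auxiliary function $\Phi$ with all three properties, justifying the Fubini interchange (handled by the reduction to nice $f$), and --- the genuine point --- observing that the compact support of $\wh\Phi$ localizes the double integral to the diagonal strip $|x-y|\leq 1/L$, exactly the scale at which the hypothesis $\mu(B_{1/L}(x))\lesssim L^{-\alpha}$ can be invoked; after that, the passage to $2<p<\infty$ is routine interpolation and $p=\infty$ is immediate.
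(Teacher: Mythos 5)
Your proof is correct. Note that the thesis does not prove this statement at all: it is quoted verbatim from Strichartz \cite{Strichartz} as background, so there is no in-paper argument to compare against. Your route --- the trivial $L^1(d\mu)\to L^\infty$ bound at $p=\infty$, the $p=2$ estimate obtained by majorizing $\mathbf 1_{B_L(x_0)}$ by a nonnegative Schwartz function whose Fourier transform is supported in the unit ball so that the double $\mu$-integral localizes to $|x-y|\le 1/L$ where $\mu(B_{1/L}(x))\le\lambda L^{-\alpha}$ applies, and then Stein--Weiss interpolation with the weight $L^{-(n-\alpha)/p}$ --- is essentially Strichartz's original argument, and all the exponents and the interpolated weight check out.
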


Applying Holder's inequality in Theorem \ref{corStr}, we obtain
the following:

\begin{cor}\label{Cor-Obs-frm-Str}
Let $d\mu=\phi d\CH_{\alpha}+\nu$ (as in the Remark \ref{locuniabshausdorff}) be a locally uniformly
$\alpha$-dimensional measure on $\R^n$. For any $f\in L^{q}(d\mu)$
($1\leq p\leq q'\leq2\leq q,$) supported in a finite
$\mu$-measurable set $E$, we have for a fixed $y$ and a constant
$c$ independent of $y$,
      \bee
            \underset{L\rightarrow\infty}{\limsup}\ \frac{1}{L^{n - \alpha p/q}}\int_{B_L(y)}|\wh{(fd\mu)}|^p\leq c\ \ \Big(\int_E|f(x)|^q\phi(x)d\CH_{\alpha}(x)\Big)^{p/q}.
      \eee
\end{cor}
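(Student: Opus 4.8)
The inequality is a rescaled consequence of Strichartz's Hausdorff--Young type estimate, and the plan is to deduce it from Theorem~\ref{corStr} in two mechanical steps: first apply that theorem with its exponent taken equal to $q$, then lower the exponent from $q$ to $p$ by a single application of Holder's inequality on the ball $B_L(y)$. The $n$-dimensional volume $|B_L(y)|\asymp L^{n}$ produced by that Holder step is exactly what reconciles the two normalizations $L^{n-\alpha}$ and $L^{n-\alpha p/q}$.

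First I would check the hypotheses of Theorem~\ref{corStr}. Since $f$ is supported in a set $E$ with $\mu(E)<\infty$ and $q\ge q'$, one has $f\in L^{q'}(d\mu)$, so Theorem~\ref{corStr}, applied with $q$ in the role of its exponent $p\ (\ge 2)$ so that $f$ is taken in $L^{q'}(d\mu)$, gives, uniformly in $y$,
\[
\sup_{L\ge1}\ \frac{1}{L^{\,n-\alpha}}\int_{B_L(y)}|\wh{fd\mu}(\xi)|^{q}\,d\xi\ \le\ c_1\,\Big(\int_E|f|^{q'}\phi\,d\CH_\alpha\Big)^{q/q'},
\]
where on the right I use the $\phi$-weighted form of Strichartz's bound, which sees only the absolutely continuous part of the decomposition $d\mu=\phi\,d\CH_\alpha+d\nu$ of Remark~\ref{locuniabshausdorff} (compare the appearance of $\phi$ in Theorem~\ref{thmStrMain}(1)). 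Because $\phi\,d\CH_\alpha$ restricted to $E$ is a finite measure of mass $\le\mu(E)$, Holder's inequality with exponents $q/q'\ge1$ and its conjugate converts the right-hand side into $c_1\,\mu(E)^{\,q/q'-1}\int_E|f|^{q}\phi\,d\CH_\alpha$.

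Next I would carry out the exponent reduction. Writing $g:=\wh{fd\mu}$ and using $p\le 2\le q$, Holder's inequality on $B_L(y)$ gives $\int_{B_L(y)}|g|^{p}\le\big(\int_{B_L(y)}|g|^{q}\big)^{p/q}|B_L(y)|^{\,1-p/q}$, and a short computation shows the powers of $L$ combine to exactly $n-\alpha p/q$, the factor $|B_L(y)|^{\,1-p/q}\asymp L^{\,n(1-p/q)}$ absorbing the gap between $L^{n-\alpha}$ and $L^{n-\alpha p/q}$. Feeding in the bound of the previous step then yields
\[
\frac{1}{L^{\,n-\alpha p/q}}\int_{B_L(y)}|g|^{p}\ \le\ c\,\Big(\int_E|f|^{q}\phi\,d\CH_\alpha\Big)^{p/q}
\]
for every $L\ge1$, with $c$ depending only on $n,\alpha,p,q$ and $\mu(E)$, in particular independent of $y$ (and of $f$); taking $\underset{L\rightarrow\infty}{\limsup}$ gives the stated inequality. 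The endpoint $q=\infty$ runs the same way, using $\|\wh{fd\mu}\|_{\infty}\le c\|f\|_{L^1(d\mu)}$ from Theorem~\ref{corStr} in place of the intermediate bound.

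The Holder bookkeeping is routine; the one step I expect to need genuine care is the first display above, namely that the right-hand side of Theorem~\ref{corStr} may be written against $\phi\,d\CH_\alpha$ rather than against all of $d\mu$. This is the crux, and it should be handled exactly as in the proof of Theorem~\ref{thmStrMain}(1): the singular part $\nu$ contributes nothing to the relevant estimate once one uses that $E$ has finite $\mu$-measure, so only the density $\phi$ survives on the right.
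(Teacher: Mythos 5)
Your Holder bookkeeping in the exponent-reduction step is fine (indeed it balances more cleanly than the chain displayed in the paper's own two-line proof), but the step you yourself flag as the crux is a genuine gap, and it does not close the way you suggest. The ``$\phi$-weighted form'' of Theorem~\ref{corStr} in your first display is false as written: with $\sup_{L\ge 1}$ on the left, take $\mu$ purely singular with respect to $\CH_{\alpha}$ (e.g.\ Lebesgue measure on the unit ball is locally uniformly $\alpha$-dimensional for $\alpha<n$ and has $\phi\equiv 0$ in the decomposition of Remark~\ref{locuniabshausdorff}) and $f=\chi_{B_1}$; then the right-hand side vanishes while $\wh{fd\mu}$ is a nonzero continuous function, so the left-hand side is positive. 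Only a $\limsup_{L\to\infty}$ version could hope to be true, and that version does not follow ``exactly as in the proof of Theorem~\ref{thmStrMain}(1)'': Theorem~\ref{corStr} is obtained by interpolating the $L^2$ estimate with the endpoint $\|\wh{fd\mu}\|_{\infty}\le c\|f\|_{L^1(d\mu)}$, and that endpoint sees all of $\mu$, singular part included, so the interpolated bound cannot simply shed $\nu$. As it stands, your argument proves the corollary with $\big(\int_E|f|^q\,d\mu\big)^{p/q}$ on the right, which does not imply the stated bound against $\phi\,d\CH_{\alpha}$ alone.

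The repair is to run the exponent reduction toward $2$ rather than toward $q$, so that the only Strichartz input is the genuine $L^2$ limsup estimate of Theorem~\ref{thmStrMain}(1), which is the one place where $\phi\,d\CH_{\alpha}$ actually appears. Since $p\le q'\le 2$, Holder on $B_L(y)$ gives $\int_{B_L(y)}|\wh{fd\mu}|^p\le C L^{n(1-p/2)}\big(\int_{B_L(y)}|\wh{fd\mu}|^2\big)^{p/2}$, hence $L^{-(n-\alpha p/2)}\int_{B_L(y)}|\wh{fd\mu}|^p\le C\big(L^{-(n-\alpha)}\int_{B_L(y)}|\wh{fd\mu}|^2\big)^{p/2}$; since $q\ge 2$ one has $n-\alpha p/q\ge n-\alpha p/2$, so for $L\ge 1$ the quantity in the corollary is dominated by this one. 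Taking $\limsup$ and applying Theorem~\ref{thmStrMain}(1) bounds it by $c\big(\int_E|f|^2\phi\,d\CH_{\alpha}\big)^{p/2}$, and a final Holder on $E$ with respect to the finite measure $\phi\,d\CH_{\alpha}|_E$ (total mass at most $\mu(E)$) raises the exponent from $2$ to $q$, yielding $c\,\mu(E)^{(1/2-1/q)p}\big(\int_E|f|^q\phi\,d\CH_{\alpha}\big)^{p/q}$. This, modulo some garbled intermediate exponents, is what the paper's proof is doing; a route through Theorem~\ref{corStr} at exponent $q$ cannot by itself produce the $\phi\,d\CH_{\alpha}$ on the right.
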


\begin{proof}
    \Bea
            &&\underset{L\rightarrow\infty}{\limsup}\ L^{\frac{\alpha p}{q}-n}\int_{B_L(y)}|\wh{(fd\mu)}|^p\\
           &\leq&\underset{L\rightarrow\infty}{\limsup}\ (L^{\alpha -n}\int_{B_r(y)}|\wh{(fd\mu)}|^{q'})^{p/q'}\\
           &\leq& c\Big(\int_E|f(x)|^q\phi(x)d\CH_{\alpha}(x)\Big)^{p/q}.
    \Eea
\end{proof}

In a different direction, the authors in \cite{Hudson} proved
generalized Hardy inequality for discrete measures:
\begin{thm}\label{thmHudson3zerop2}\cite{Hudson}
Let $c_k$ be a sequence of complex numbers, $a_k$ be a sequence of
real numbers and $fd\mu_0$ denote the zero dimensional measure
$f(x) = \sum_1^{\infty}c_k\delta(x-a_k)$ where $\delta$ is the
usual Dirac measure at zero.
\begin{enumerate}
\item Let $a_1<a_2<...$ and  assume $\wh{fd\mu_0}=\sum
c_ke^{ia_kx}$ belongs to the class of almost periodic functions.
Then,
 \bee
  \sum_1^{\infty}\frac{|c_k|}{k}\leq C\ \underset{L\rightarrow\infty}{\lim} L^{-1}\int_{-L}^L
  |\wh{fd\mu_0}(x)|dx.
 \eee
\item  Let $a_k$ be a sequence of real numbers, not necessarily
increasing and $1<p\leq 2$. Assume that $u(x)=\wh{fd\mu_0(x)}$
converges to $\sum_1^{\infty}c_ke^{ia_kx}$ in the class of almost
periodic functions. Then
 \bee
  \sum_1^{\infty}\frac{|c_k|^p}{k^{2-p}}\leq \sum_1^{\infty} \frac{|c_k'|^p}{k^{2-p}}\leq C\ \lim L^{-1}\int_{-L}^L|u(x)|^pdx,
 \eee
where $c_k'$ is the nonincreasing rearrangement of the sequence
$|c_k|$.
\end{enumerate}
\end{thm}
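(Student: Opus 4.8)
The plan is to realize the almost periodic function $u(x)=\sum_k c_ke^{ia_kx}$ as a genuine $L^p$ function on a compact group and then invoke classical Fourier inequalities there. Throughout, write $M(g)=\lim_{L\to\infty}\frac1{2L}\int_{-L}^{L}g$ for the mean, so that with the paper's normalization $\|u\|_{B^p}^p=2M(|u|^p)$ and constants may be adjusted by harmless factors; the coefficients are $c_k=M\big(u\,e^{-ia_kx}\big)$. First I would pass to the Bohr compactification $\G=b\R$, a compact abelian group with normalized Haar measure $m$ and discrete dual $\R_d$: every Besicovitch $B^p$ function corresponds isometrically to some $\wt u\in L^p(\G,m)$ with $M(|u|^p)=\int_\G|\wt u|^p\,dm$, and the $c_k$ become the ordinary Fourier coefficients $\wh{\wt u}(a_k)$ on $\G$. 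The transform $\mathcal F:\wt u\mapsto\big(\wh{\wt u}(\gamma)\big)_\gamma$ then satisfies the two endpoint bounds $\|\mathcal F\wt u\|_{\ell^\infty}\le\|\wt u\|_{L^1(\G)}$ (trivial) and $\|\mathcal F\wt u\|_{\ell^2}=\|\wt u\|_{L^2(\G)}$ (Plancherel on $\G$).

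For part (2), since the target exponents $\infty$ and $2$ differ, the off-diagonal Marcinkiewicz interpolation theorem applied to $\mathcal F$ between $(L^1,\ell^\infty)$ and $(L^2,\ell^2)$ yields, for $1<p<2$ (and trivially for $p=2$), a bounded map $\mathcal F:L^p(\G)\to\ell^{p',p}$ with $\frac1p+\frac1{p'}=1$. Next I would identify the Lorentz norm: for a sequence with nonincreasing rearrangement $(c_k')$,
\[
\|(c_k)\|_{\ell^{p',p}}^{p}\ \asymp\ \sum_{k\ge1}(c_k')^p\,k^{p/p'-1}\ =\ \sum_{k\ge1}\frac{(c_k')^p}{k^{2-p}},
\]
since $p/p'-1=(p-1)-1=-(2-p)$. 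Combining the three displays gives $\sum_k (c_k')^p k^{-(2-p)}\le C\,M(|u|^p)\le C\,\|u\|_{B^p}^p$, which is the rightmost inequality. The leftmost inequality, $\sum_k|c_k|^pk^{-(2-p)}\le\sum_k(c_k')^pk^{-(2-p)}$, is the Hardy--Littlewood--P\'olya rearrangement inequality against the nonincreasing weight sequence $\big(k^{-(2-p)}\big)_k$: among all rearrangements of the nonnegative sequence $(|c_k|^p)$, its pairing with a fixed nonincreasing sequence is largest when $(|c_k|^p)$ is itself arranged nonincreasingly, i.e.\ equals $\sum_k(c_k')^pk^{-(2-p)}$.

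Part (1) is the endpoint $p=1$, where interpolation is unavailable — indeed $\mathcal F$ does not map $L^1(\G)$ into $\ell^{\infty,1}$, a Fej\'er kernel already giving $\sum_k(\wh{F_N})_k'/k\asymp\log N$ while $\|F_N\|_1=1$ — and this is exactly why one must assume $(a_k)$ increasing. Here I would first replace $u(x)$ by $e^{-ia_1x}u(x)$: this leaves $\|u\|_{B^1}$ and every $|c_k|$ unchanged and moves the spectrum into $[0,\infty)$, so $u$ becomes an analytic Besicovitch almost periodic function. The desired bound $\sum_k|c_k|/k\le C\|u\|_{B^1}$ is then the almost periodic analogue of the classical Hardy inequality $\sum_{n\ge0}|\wh F(n)|/(n+1)\le\pi\|F\|_{H^1}$, but with the weight attached to the \emph{position} index $k$ rather than to the frequency; I would prove it by transplanting the $H^1$ argument to $\G$ (pairing $u$ against an explicit multiplier built from the ordered frequencies, $c_k$ being first replaced by $|c_k|$ through a unimodular factor), or else reproduce verbatim the direct kernel construction of \cite{Hudson}.

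The hard part will be part (1): being a genuine endpoint, it forces the monotonicity of $(a_k)$ to enter in an essential way, and the position-weighted form of Hardy's inequality for Besicovitch almost periodic functions must be obtained either through an $H^1$--$\mathrm{BMO}$ type duality adapted to $\G$ or by the hands-on construction. A secondary point needing care, underlying part (2), is checking that the mean $M$, the completions $B^p$, and the coefficient functionals transport correctly under the Bohr compactification, so that Marcinkiewicz interpolation is genuinely applicable in this setting.
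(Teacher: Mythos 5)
The paper does not prove Theorem \ref{thmHudson3zerop2}: it is quoted verbatim from \cite{Hudson}, and the only proof machinery the thesis actually supplies is the continuous analogue it builds for its own generalization, namely the explicit multiplier construction of Lemma \ref{lemHud1FT} (the recursively defined $F_i$ with $\|F\|_\infty\leq 1/4$ satisfying $\delta^{-n}|g_L|\leq 40\,\mathrm{Re}(\wh{F}g_L)$), which settles the $p=1$ endpoint by duality, followed by Plancherel at $p=2$ and Stein's weighted interpolation (Theorem \ref{thmSteinInterpolation}) for $1<p<2$. Measured against that template, your part (2) is a correct but genuinely different route: transporting $u$ to $L^p(b\R)$ via the Bohr compactification, interpolating the Fourier transform between $L^1\to\ell^\infty$ and $L^2\to\ell^2$ by Marcinkiewicz to land in $\ell^{p',p}$, identifying the Lorentz norm with $\sum_k (c_k')^p k^{-(2-p)}$, and finishing with the Hardy--Littlewood--P\'olya rearrangement inequality (valid here because $k\mapsto k^{-(2-p)}$ is nonincreasing for $p\leq2$). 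This is the classical Paley/Hardy--Littlewood inequality on a compact abelian group and is cleaner than the endpoint-plus-interpolation scheme for the range $1<p\leq2$; what it buys is a proof of part (2) that never needs the ordering of the $a_k$, which is consistent with the statement. The transfer of $M$, $B^p$ and the coefficient functionals to $(b\R,m)$ that you flag as needing care is standard for $B^p$-limits of trigonometric polynomials, which is what the hypothesis supplies.

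Part (1), however, is a genuine gap, and you have only named it rather than closed it. You correctly observe that the weight $1/k$ is attached to the \emph{position} of $a_k$ in the increasing enumeration, not to the frequency, so the classical $H^1$ Hardy inequality $\sum|\wh{F}(n)|/(n+1)\leq\pi\|F\|_{H^1}$ does not transplant: after moving the spectrum into $[0,\infty)$ the frequencies $a_k$ can cluster arbitrarily, and no change of variable converts $1/(a_k+1)$ into $1/k$. The entire content of the endpoint is the construction of a single bounded function, adapted to the ordered frequencies and to unimodular factors absorbing the phases of the $c_k$, whose pairing with $u$ dominates $\sum_k|c_k|/k$ --- precisely what the recursive scheme in \cite{Hudson} (and its continuous form in Lemma \ref{lemHud1FT}) produces, with the damping factors $\exp(-|f_l|/4s^2)$ keeping the sup norm under control while the ordering guarantees the coefficient of the $k$-th term is comparable to $1/k$. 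Saying one would ``reproduce verbatim the direct kernel construction'' or invoke an unspecified $H^1$--$\mathrm{BMO}$ duality on $b\R$ leaves the decisive step unproved; as it stands your argument establishes part (2) but not part (1).
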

The authors also proved generalized Hardy inequality for fractal measures $fd\mu$ on $\R^1$ of dimension $\alpha$ $(0<\alpha<1)$ in \cite{Hudson} by generalizing part (1) of the above theorem with additional hypothesis on $\mu$. To prove the same, they introduced $\alpha$-coherent sets in $\R$ ($0<\alpha<1$).
Given $x\in\R$ and a set $E\subset\R$, let $E_x=E\cap(-\infty,x]$.
Let $s=sup\{x:\CH_{\alpha}(E_x)<\infty\}$, $E^0=(E_s)^*$ where,
for a set $E$,
    $$E^*=\{x\in E: 2^{-\alpha}\leq\ol{D^{\alpha}}(\CH_{\alpha}|_E,x)\leq1\}.$$
The set $E\subset\R$ is $\alpha$-coherent ($0<\alpha<1$), if
there is a constant $C$ such that for all $x\leq s$,
    \bee
           \underset{\delta\rightarrow 0}{\limsup}\ |E_x^{0}(\delta)|\delta^{\alpha-1} \leq C\CH_{\alpha}(E_{x}^0),
    \eee
where $|E_x^{0}(\delta)|$ denotes the one dimensional Lebesgue
measure of the $\delta$-distance set $E_x^0(\delta)$ of $E_x^0$. The following was proved in \cite{Hudson}.

\begin{thm}\label{thmHudson1cohequasi}\cite{Hudson}
Suppose $0<\alpha<1$, $f\in L^{1}(d\CH_{\alpha})$ and
$\mu=\CH_{\alpha}|_E$ where $E$ is either $\alpha$-coherent or
quasi $\alpha$-regular. Then, there exists a non-zero finite
constant independent of $f$ such that
 \bee
  \int_E \frac{|f(x)|d\mu(x)}{\CH_{\alpha}(E_x^0)} \leq\ C\ \underset{L\rightarrow\infty}{\liminf} L^{\alpha-1} \int_{-L}^L |\wh{fd\mu}(x)|dx.
 \eee
\end{thm}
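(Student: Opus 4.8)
The plan is to carry out the one-dimensional, $p=1$ instance of the scheme used below for Theorem~D, with $\CH_\alpha$ in place of $\CP^\alpha$; as there I would assume $f\ge 0$. First, writing $h(x)=\CH_\alpha(E_x^0)$, I would record that the set $E^*$ entering the definition of $E_x^0$ has full $\CH_\alpha$-measure in $E$ (Lemma~\ref{lemPM2}(1)), so that $h(x)=\mu\big(E\cap(-\infty,x]\big)$ is continuous, non-decreasing, and pushes $\mu$ forward to Lebesgue measure on $[0,\mu(E)]$; in particular $\mu$ is Radon. Then the dyadic level sets $D_j=\{x\in E:\ 2^{-j-1}\mu(E)<h(x)\le 2^{-j}\mu(E)\}$ ($j\ge 0$) have $\mu(D_j)=2^{-j-1}\mu(E)$ and $h\asymp 2^{-j}\mu(E)$ on $D_j$, whence $\int_E h^{-1}f\,d\mu\asymp\sum_{j\ge 0}2^{j}\mu(E)^{-1}\int_{D_j}f\,d\mu$. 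Since the summands are nonnegative, it will be enough to bound $\sum_{j\le J(L)}$ by the right side of the theorem for a suitable $J(L)\to\infty$ and pass to the $\underset{L\to\infty}{\liminf}$; the truncated sums increase to the left side, so this will also cover the case where the left side is infinite.

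Next I would introduce the approximants: fix an even Schwartz $\phi\ge 0$ on $\R$ with $\wh\phi\ge 0$ supported in $[-1,1]$ and $\wh\phi(0)=1$ (e.g.\ $\phi=|g|^{2}$ with $\wh g$ supported in $[-\tfrac12,\tfrac12]$), set $\phi_L(x)=\phi(Lx)$, so $\wh{\phi_L}(\xi)=L^{-1}\wh\phi(\xi/L)$ is supported in $[-L,L]$ and $\int\phi_L=L^{-1}$. Then $L\,\phi_L*(f\,d\mu)\,dx$ converges weakly to $f\,d\mu$, and Parseval gives $\langle f\,d\mu,\psi\rangle=\lim_{L}\int_{|\xi|\le L}\wh\phi(\xi/L)\,\wh{f\,d\mu}(\xi)\,\overline{\wh\psi(\xi)}\,d\xi$ for suitable $\psi$. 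Applying this with $\psi=\mathbf 1_{D_j(1/L)}$ (the $1/L$-neighbourhood of $D_j$) and using that $\phi_L$ concentrates at scale $1/L$, one gets $\int_{D_j}f\,d\mu=L\int_{D_j(1/L)}\phi_L*(f\,d\mu)\,dx+\mathrm{err}_j$, where the errors come only from the Schwartz tails of $\phi$ near $\partial D_j$ and from the small $\mu$-mass of thin boundary layers and can be arranged so that $\sum_{j\le J(L)}2^{j}\mu(E)^{-1}\mathrm{err}_j=o_L(1)$. Summing yields
\[
  \sum_{j\le J(L)}\frac{2^{j}}{\mu(E)}\int_{D_j}f\,d\mu
   =\int_{|\xi|\le L}\wh\phi(\xi/L)\,\wh{f\,d\mu}(\xi)\,\overline{\wh{W_L}(\xi)}\,d\xi+o_L(1),\qquad
   W_L:=\sum_{j\le J(L)}\frac{2^{j}}{\mu(E)}\,\mathbf 1_{D_j(1/L)},
\]
and $W_L$ is, up to a bounded factor, the function $1/h$ cut off to $E(1/L)$. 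The only fractal-geometric input so far is the content bound $|D_j(1/L)|\le C\,L^{\alpha-1}\mu(D_j)$: for quasi $\alpha$-regular $E$ this is Lemma~\ref{lemMY1} via Remark~\ref{remquasi-pack}, and for $\alpha$-coherent $E$ it follows, after the reorganization below, from the defining inequality $\underset{\delta\to0}{\limsup}|E_x^0(\delta)|\,\delta^{\alpha-1}\le C\,\CH_\alpha(E_x^0)$.

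The main obstacle I foresee is the estimate $\|\wh{W_L}\|_{L^\infty(\R)}\le C\,L^{\alpha-1}$ with $C$ independent of $L$; granting it, $\|\wh\phi\|_\infty\le 1$ and the last display give $\sum_{j\le J(L)}2^{j}\mu(E)^{-1}\int_{D_j}f\,d\mu\le C\,L^{\alpha-1}\int_{-L}^{L}|\wh{f\,d\mu}(\xi)|\,d\xi+o_L(1)$, and taking the $\underset{L\to\infty}{\liminf}$ completes the proof. The trivial bound $\|\wh{W_L}\|_\infty\le\|W_L\|_1=\int_{E(1/L)}dx/h(x)$ is of size $L^{\alpha-1}\log(\mu(E)/\min_{j\le J(L)}h)$ and loses a logarithm, so genuine cancellation in $\wh{W_L}$ is needed. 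I would extract it by Abel summation: with $A_k:=\bigcup_{j\le k}D_j(1/L)$, the $1/L$-neighbourhood of the initial segment $\{x\in E:h(x)\le 2^{-k}\mu(E)\}$, one has $\mathbf 1_{D_k(1/L)}=\mathbf 1_{A_k}-\mathbf 1_{A_{k-1}}$, so $W_L$ is a telescoping combination of the $\mathbf 1_{A_k}$ alone. This both reduces all needed content estimates to initial segments $E_x^0$ — so that $\alpha$-coherence does for them exactly what quasi $\alpha$-regularity does — and sets up the cancellation: what one must prove is that the alternating sum of the Fourier transforms of the $1/L$-neighbourhoods of the nested initial segments of the $\alpha$-set $E$ is $O(L^{\alpha-1})$ uniformly, not merely the sum of their Lebesgue measures. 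Establishing this is an $\alpha$-dimensional analogue of the $H^1$-BMO duality behind the classical Hardy inequality, and is exactly the point at which the fine geometry of $E$ — quasi $\alpha$-regularity, or $\alpha$-coherence — has to be used.
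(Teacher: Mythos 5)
Your architecture — dyadic decomposition by the level of the cumulative mass $h(x)=\CH_{\alpha}(E_x^0)$, approximation of $fd\mu$ by $\phi_L*(fd\mu)$ on a $1/L$-neighbourhood, and reduction to a Fourier-side bound — is the right one, and it matches the scheme the thesis uses for the packing-measure analogue (Theorem~\ref{ThmHud1p2}). But the proof has a genuine gap at exactly the point you flag as "the main obstacle": the estimate $\|\wh{W_L}\|_{L^{\infty}(\R)}\leq C\,L^{\alpha-1}$ for $W_L=\sum_{j\leq J(L)}2^{j}\mu(E)^{-1}\mathbf 1_{D_j(1/L)}$ is never proved. You correctly observe that the trivial bound $\|\wh{W_L}\|_{\infty}\leq\|W_L\|_{1}$ loses a logarithm, reformulate the needed cancellation via Abel summation over nested initial segments, and then state that establishing it "is exactly the point at which the fine geometry of $E$ has to be used." That sentence is where the proof has to happen; as written, the crucial analytic input is an acknowledged conjecture, not an argument. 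It is also not clear that a uniform $L^{\infty}$ bound on the Fourier transform of the \emph{fixed} weight $W_L$ is even true under only quasi $\alpha$-regularity or $\alpha$-coherence: these hypotheses control Lebesgue measures of $\delta$-neighbourhoods of initial segments, which gives $\|W_L\|_1\lesssim L^{\alpha-1}\log$, but they say nothing directly about oscillation of $\wh{\mathbf 1_{A_k}}$.

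The way the thesis closes this gap (in Lemma~\ref{lemHud1FT}, the key ingredient of Theorem~\ref{ThmHud1p2}) is instructive and different from what you propose: instead of bounding the Fourier transform of a fixed weight, it constructs, for each $g_L=\phi_L*(fd\mu)$ and each dyadic level set $S_j$ (a finite union of cubes of side $\leq\delta$), functions $f_i$ with $|\wh{f_i}|=\delta^{-n}/P_{\delta}$ on the $i$-th cube and the sign condition $\wh{f_i}\,g_L\geq0$, and then an iterated convex combination $F$ with $\|F\|_{\infty}\leq 1/4$ satisfying $\delta^{-n}|g_L(x)|\leq 40\,\mathrm{Re}(\wh{F}(x)g_L(x))$ on $S_j$. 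Because $F$ is adapted to $g_L$ (via the sign condition), no uniform multiplier bound is needed; and because the lemma is applied separately on each level set $S_j$ with the weight contributing a factor $2^{-j}$ at level $j$ (for $p=1$), the sum over $j$ is a convergent geometric series and the logarithmic loss never appears. To complete your proof you would either have to prove your $\|\wh{W_L}\|_{\infty}\lesssim L^{\alpha-1}$ claim from coherence/quasi-regularity — which I do not see how to do — or restructure as above, replacing the single weight $W_L$ by a level-by-level dual construction.
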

\begin{rem}
Examples in \cite{Hudson} show that there are quasi regular sets in $\R$ which are not $\alpha$-coherent and there are $\alpha$-coherent sets which are not quasi regular, for given $0<\alpha<1$.
\end{rem}
In this section, using the packing measure and finding a
continuous analogue of the arguments used in the proof of the Theorem \ref{thmHudson1cohequasi}, we prove an analogue version of part(2) of the Theorem \ref{thmHudson3zerop2} for $0<\alpha<n$, $n\geq 1$ and $1\leq p\leq 2$ with a slight modification in the hypothesis:\\

\begin{thm}\label{ThmHud1p2}
Let $E\subset\R^n$ be a compact set of finite $\alpha$-dimensional
packing measure and $\mu=\CP^{\alpha}|_E$. Let $f\in L^p(d\mu)$ be a positive function, for $1\leq p\leq 2$. Then there exists a constant $C$ independent
of $f$ such that
  \be
     \int_E\frac{|f(x)|^p}{[\mu(E_x)]^{2-p}}d\mu(x) \leq\ C\ \underset{L\rightarrow\infty}{\liminf}
     \frac{1}{L^{n-\alpha}} \int_{|\xi|\leq L}|\wh{fd\mu}(\xi)|^pd\xi, \label{eqthmhud1p2}
  \ee
where $E_x=E\cap[(-\infty,x_1]\times...\times(-\infty,x_n]]$ for
$x=(x_1,...x_n)\in\R^n$.
\end{thm}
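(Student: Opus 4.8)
The plan is to prove the inequality at the two endpoints $p=1$ and $p=2$ and then recover the range $1<p<2$ by interpolation, the common tool being the $\phi_L$-approximation from the introduction. Fix a positive radial Schwartz function $\phi$ with $\widehat\phi$ supported in the unit ball and $\widehat\phi(0)=1$, and put $\Phi_L=L^n\phi(L\,\cdot)$, so that $\widehat{\Phi_L}(\xi)=\widehat\phi(\xi/L)$ is supported in $\{|\xi|\le L\}$ and $\{\Phi_L\}$ is a positive approximate identity. Writing $\Psi_L:=\Phi_L*(fd\mu)\ge0$, one has $\widehat{\Psi_L}=\widehat\phi(\,\cdot/L)\,\widehat{fd\mu}$ and hence $\|\Psi_L\|_\infty\le C\int_{|\xi|\le L}|\widehat{fd\mu}(\xi)|\,d\xi$, while a direct computation with $\Phi_L$ (as around (\ref{eqthmp2pack0}) in the proof of Theorem~\ref{thm-cohe-str}, using $f\ge0$, $\phi\ge0$ and $B_{r/L}(y)\subseteq S(r/L)$ for $y\in S$) gives, for every Borel $S\subseteq E$,
\[
\int_S f\,d\mu\ \le\ C\int_{S(r/L)}\Psi_L\,dx ,
\]
with $r=r(n,\phi)$; combined with $|S(r/L)|\le C_n\,\CP^{\alpha}(S)(r/L)^{n-\alpha}$ for $L$ large (Lemma~\ref{lemMY1}), this is the mechanism turning control of $\widehat{fd\mu}$ into control of $fd\mu$ over small pieces. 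Since $\alpha>0$, $\CP^{\alpha}$ has no atoms, so $x\mapsto\mu(E_x)$ is monotone in the coordinatewise order and continuous off a $\mu$-null set; I restrict to the case $\mu(E_x)>0$ for $\mu$-a.e.\ $x$, the only one with content (the left side being $+\infty$ otherwise).

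\textbf{The endpoint $p=1$} is the heart of the matter and is the $\R^n$, packing-measure form of the fractal Hardy inequality of \cite{Hudson}. For large $L$ I would decompose the neighbourhood $E(1/L)$ into essentially disjoint pieces $S_1,\dots,S_N$ of diameter $\lesssim 1/L$, each meeting $E$ in $\mu$-mass $\sim L^{-\alpha}$ (so $N\sim L^\alpha$), indexed by a linear extension of the coordinatewise partial order, and set $M_k:=\mu\big(E\cap\bigcup_{j\le k}\overline{S_j}\big)$. In dimension one the order is total and $M_k=\mu(E_x)+O(L^{-\alpha})$ for $x$ in the $k$-th chunk; in $\R^n$ one has $E_x\subseteq\bigcup_{j\le k}\overline{S_j}$ for $x\in S_k$, but bounding $\mu(E_x)$ from below by a multiple of $M_k$ requires care and is where the order-geometry of $E$ enters. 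Granting this, the box estimate gives
\[
\int_E\frac{f(x)}{\mu(E_x)}\,d\mu(x)\ \lesssim\ \sum_{k}\frac{1}{M_k}\int_{S_k\cap E}f\,d\mu\ +\ o_L(1)\ \lesssim\ \sum_{k}\frac{1}{M_k}\int_{S_k(r/L)}\Psi_L\,dx\ +\ o_L(1),
\]
where $o_L(1)$ is the tail $\int_{\{\mu(E_x)<L^{-\alpha}\}}f/\mu(E_x)\,d\mu$ of the convergent integral and vanishes after the $\liminf$. It then remains to bound $\sum_k M_k^{-1}\int_{S_k(r/L)}\Psi_L$ by $C\,L^{\alpha-n}\int_{|\xi|\le L}|\widehat{fd\mu}|$. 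The crude bound $\int_{S_k(r/L)}\Psi_L\le\|\Psi_L\|_\infty|S_k(r/L)|\lesssim L^{-n}\int_{|\xi|\le L}|\widehat{fd\mu}|$, together with $M_k\sim kL^{-\alpha}$, loses a factor $\sum_{k\le N}k^{-1}\sim\log L$; the whole point of the \cite{Hudson} scheme, which I would reproduce in continuous $n$-dimensional form, is to group the $S_k$ dyadically and exploit that $\Psi_L$ is band-limited to $\{|\xi|\le L\}$ — hence carries no oscillation below the scale $1/L$ of the $S_k$ — to run a square-function/Hardy argument that removes this logarithm. Passing to a sequence $L\to\infty$ realising the $\liminf$ and using monotone convergence of the non-negative partial sums then yields the $p=1$ inequality.

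\textbf{The endpoint $p=2$} is immediate: since $[\mu(E_x)]^0=1$ the left-hand side is $\|f\|_{L^2(d\mu)}^2$, and this is exactly Theorem~\ref{thm-cohe-str} with $p=2$ (valid for $\mu=\CP^{\alpha}|_E$, $E$ compact of finite $\alpha$-packing measure, $f\ge0$); alternatively repeat the $\phi_L$-argument with squares and Plancherel. Both endpoint proofs in fact produce, for each large $L$, the fixed-scale inequalities $\int_E f/\mu(E_x)\,d\mu\le C L^{\alpha-n}\int_{|\xi|\le L}|\widehat{fd\mu}|+\varepsilon_L$ and $\int_E|f|^2\,d\mu\le C L^{\alpha-n}\int_{|\xi|\le L}|\widehat{fd\mu}|^2+\varepsilon_L$, with $\varepsilon_L=\varepsilon_L(f)\to0$. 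Reading these, at a fixed large $L$, as bounds for the linear map $T_L:\widehat{fd\mu}|_{B_L}\mapsto f$ — of norm $\lesssim L^{\alpha-n}$ from $L^1(B_L,dx)$ to $L^1(E,\mu(E_x)^{-1}d\mu)$ and of norm $\lesssim L^{(\alpha-n)/2}$ from $L^2(B_L,dx)$ to $L^2(E,d\mu)$ — complex interpolation with change of weights (Stein--Weiss), with $1/p=1-\theta/2$, sends the weight to $(\mu(E_x)^{-1})^{p(1-\theta)}=\mu(E_x)^{-(2-p)}$ and the norm to $L^{(\alpha-n)/p}$, giving $\int_E|f|^p\mu(E_x)^{-(2-p)}d\mu\le C L^{\alpha-n}\int_{|\xi|\le L}|\widehat{fd\mu}|^p+\varepsilon_L'$. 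Letting $L\to\infty$ along the $\liminf$ sequence completes the proof.

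The only genuinely new difficulty beyond Theorem~\ref{thm-cohe-str} and the one-dimensional theorem of \cite{Hudson} lies in the $p=1$ case — specifically in (i) arranging the ordered decomposition $\{S_k\}$ of $E(1/L)$ so that the cumulative masses $M_k$ faithfully reflect $\mu(E_x)$ despite the coordinatewise order being only partial, and (ii) extracting from the band-limiting of $\Psi_L$ the cancellation that eliminates the harmonic-series logarithm. These two points are where the "continuous analogue of the arguments in \cite{Hudson}" must be found, and I expect (ii) to be the main obstacle.
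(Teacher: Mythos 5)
Your overall architecture --- approximate $fd\mu$ by $\phi_L*(fd\mu)$ on a fine decomposition, prove the endpoints $p=1$ and $p=2$, and interpolate with change of weights --- is exactly the strategy the paper announces and follows, and your $p=2$ endpoint and your identification of the two genuine difficulties are correct. But the proposal has a real gap precisely at the point you yourself flag as ``the main obstacle'': you never supply the mechanism that removes the $\log L$ from the $p=1$ endpoint, you only assert that a ``square-function/Hardy argument'' exploiting the band-limitation of $\Psi_L$ should exist. In the paper this is Lemma~\ref{lemHud1FT}, which is the actual content of the $p=1$ case: for a union $S_\delta=\cup_i\Delta_i^\delta$ of small disjoint cubes one constructs unimodular-phase bump functions $f_i$ with $|\wh{f_i}|=\delta^{-n}/P_\delta$ on $\Delta_i^\delta$ and $\wh{f_i}\,g_L\ge 0$, and then an iterated exponentially damped sum $F$ with $\|F\|_\infty\le 1/4$ and $\delta^{-n}|g_L|\le 40\,\mathrm{Re}(\wh F g_L)$ pointwise, yielding $\frac{\delta^{-n}}{P_\delta}\int_{S_\delta}|g_L|\le C\int|\wh{g_L}|$ with a constant independent of the number of cubes. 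Without this (or an equivalent) the crude bound loses the harmonic-series logarithm, as you note, and the theorem is not proved. Band-limitation of $\Psi_L$ alone is not the relevant cancellation; the lemma works for any $g_L\in L^1$.

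Two secondary points where the paper's route differs from (and repairs) yours. First, your worry about the partial order and about $\mu(E_x)=0$ is resolved in the paper not by a linear extension of the order but by (a) using the lower-left corner $x_k$ of each cube $Q_k$, so that $\mu(E'_{x_k})\le\mu(E'_x)$ for $x\in Q_k$ automatically, and (b) adjoining an auxiliary self-similar set $C_\epsilon$ with $\CP^{\alpha}(C_\epsilon)\sim\epsilon^{\alpha}$ placed below $E$, so that every denominator $\mu(E'_x)$ is bounded below, every $|E'_k(\delta)|$ is at least $|C_\epsilon(\delta)|\gtrsim P_\delta\delta^n$, the dyadic grouping $2^jP_\delta\delta^n\le|E'_k(\delta)|<2^{j+1}P_\delta\delta^n$ starts at $j=0$, and the sum $\sum_{j\ge0}2^{j(p-2)}$ converges; one lets $\epsilon\to0$ only at the very end by monotone convergence. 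Your dismissal of the case $\mu(E_x)=0$ as ``the only one with content'' is backwards: if that set has positive $\mu$-measure the left side is $+\infty$ and the inequality still has to be established, which the $C_\epsilon$-regularization does. Second, your interpolation is phrased for a map $T_L:\wh{fd\mu}|_{B_L}\mapsto f$, which is not a well-defined linear operator, and Stein--Weiss does not interpolate inequalities carrying additive errors $\varepsilon_L$; the paper instead applies Theorem~\ref{thmSteinInterpolation} to the genuine linear operator $T\psi=\check\psi$ acting on $\wh{g_L}$, with weights $v_0=\frac{\delta^{-n}}{P_\delta}\chi_{S_j}$ and $v_1=1$, on each dyadic piece $S_j$ separately, and then sums the geometric series in $j$. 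These repairs are implementable, but as written the proposal does not constitute a proof.
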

First we prove the following lemma:

\begin{lem}\label{lemHud1FT} Suppose $L>1$ and $0<\delta=r/L<1$ are given constants. Let $g_L\in
L^1(\R^n)$ and $S_{\delta}=\cup_{i=1}^{s} \Delta_i^{\delta}$ be
the union of disjoint cubes such that $0<|\Delta_i^{\delta}| <
\delta^n$. Then, there exists a non-zero finite constant $C_2$
independent of $g_L$, $s$, $\delta$ and $L$ such that
     \be
       \frac{\delta^{-n}}{P_{\delta}}\int_{S_{\delta}}|g_L(x)|dx \leq\ C_2\ \int_{\R^n}|\wh{g_L}(\xi)|d\xi, \label{eqlemHud1FT}
     \ee
where $P_{\delta}>1$ is a constant dependent on $\delta$.
\end{lem}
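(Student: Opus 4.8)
The plan is to obtain (\ref{eqlemHud1FT}) from the endpoint bound $\|g_L\|_{L^\infty(\R^n)}\le c_n\|\wh{g_L}\|_{L^1(\R^n)}$ combined with a crude volume estimate for $S_\delta$. First, if $\wh{g_L}\notin L^1(\R^n)$ the right-hand side of (\ref{eqlemHud1FT}) is infinite and there is nothing to prove; so assume $\wh{g_L}\in L^1(\R^n)$. Since $g_L\in L^1(\R^n)$ as well, the Fourier inversion formula applies and $g_L$ agrees almost everywhere with the bounded continuous function $x\mapsto\int_{\R^n}\wh{g_L}(\xi)e^{2\pi i x\cdot\xi}\,d\xi$; hence
\[
  \|g_L\|_{L^\infty(\R^n)}\ \le\ c_n\int_{\R^n}|\wh{g_L}(\xi)|\,d\xi ,
\]
with $c_n$ depending only on $n$ (and the Fourier normalisation fixed in the paper).

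Next I would estimate the integral over $S_\delta$ trivially, $\int_{S_\delta}|g_L(x)|\,dx\le|S_\delta|\,\|g_L\|_{L^\infty(\R^n)}$, so that by the previous display it suffices to bound $\delta^{-n}|S_\delta|$ by a constant $P_\delta>1$ depending only on $\delta$. By pairwise disjointness of the cubes and the hypothesis $|\Delta_i^\delta|<\delta^n$,
\[
  |S_\delta|\ =\ \sum_{i=1}^s|\Delta_i^\delta|\ <\ s\,\delta^n .
\]
In the setting where this lemma is applied the cubes $\Delta_i^\delta$ are the cells, at scale $\delta=r/L$, of a decomposition of (a subset of) the fixed bounded cube $M$ carrying the compact support set, so their number $s$ is controlled by a quantity $P_\delta$ depending only on $\delta$ and $M$: one may simply take $P_\delta=\delta^{-n}|M|$, since $S_\delta\subset M$ already forces $|S_\delta|\le|M|=\delta^n P_\delta$; and in the application to Theorem~\ref{ThmHud1p2}, where $S_\delta$ is a $\delta$-neighbourhood of a bounded subset of $E$, Lemma~\ref{lemMY1} furnishes the sharper choice $P_\delta\sim\delta^{-\alpha}$. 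In all cases $\delta^{-n}|S_\delta|\le P_\delta$ and $P_\delta>1$ for $\delta$ small.

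Combining the two steps gives
\[
  \frac{\delta^{-n}}{P_\delta}\int_{S_\delta}|g_L(x)|\,dx\ \le\ c_n\,\frac{\delta^{-n}|S_\delta|}{P_\delta}\int_{\R^n}|\wh{g_L}(\xi)|\,d\xi\ \le\ c_n\int_{\R^n}|\wh{g_L}(\xi)|\,d\xi ,
\]
which is (\ref{eqlemHud1FT}) with $C_2=c_n$, plainly independent of $g_L$, $s$, $\delta$ and $L$. The one point that needs care is exactly this uniformity in $s$: the bound $\int_{S_\delta}|g_L|\le|S_\delta|\,\|g_L\|_{L^\infty}$ has a right side that grows with the number of cubes, and it is the normalising factor $1/P_\delta$ that absorbs this growth — so it is essential that $P_\delta$ be a genuine scale-$\delta$ covering/volume count of the fixed ambient region rather than an unrelated constant. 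The Fourier-inversion step itself is routine.
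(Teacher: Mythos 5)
Your first two steps are correct, but the argument proves a strictly weaker statement than the one the paper needs, and the weakening is fatal in the only place the lemma is used. All the content of the lemma is hidden in the unspecified constant $P_\delta$: your chain $\int_{S_\delta}|g_L|\le |S_\delta|\,\|g_L\|_{\infty}\le c_n|S_\delta|\,\|\wh{g_L}\|_1$ yields (\ref{eqlemHud1FT}) only under the extra hypothesis $\delta^{-n}|S_\delta|\lesssim P_\delta$. In the application (inequality (\ref{eqthmpackfin3}) inside the proof of Theorem~\ref{ThmHud1p2}), $P_\delta$ is not free to be chosen large: it is forced to equal $P(C_{\epsilon},\delta)$, the $\delta$-packing number of the auxiliary Cantor set $C_{\epsilon}$, because the dyadic classes $\CS_j$ are defined through the lower bound $|E'_k(\delta)|\ge 2^jP_\delta\delta^n$, which comes from $C_{\epsilon}\subset E'_k$ and $|C_{\epsilon}(\delta)|\ge \Omega_n P(C_{\epsilon},\delta)\delta^n$; since the exponent $p-2\le 0$ there, $P_\delta$ cannot be enlarged afterwards. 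Now $C_{\epsilon}$ is $\alpha$-regular with $\CH_{\alpha}(C_{\epsilon})=\epsilon^{\alpha}$, so $P(C_{\epsilon},\delta)\sim(\epsilon/\delta)^{\alpha}$, whereas $\delta^{-n}|S_j|$ can be as large as a constant times $\CP^{\alpha}(E)\,\delta^{-\alpha}$ by Lemma~\ref{lemMY1}. The ratio $\delta^{-n}|S_j|/P_\delta$ is therefore of order $\CP^{\alpha}(E)\,\epsilon^{-\alpha}$, which is unbounded as $\epsilon\to0$ --- and $\epsilon\to0$ is precisely the last limit taken in that proof. So your $C_2$ would secretly depend on $\epsilon$ and the application collapses. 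Your fallback $P_\delta=\delta^{-n}|M|$ makes (\ref{eqlemHud1FT}) true but useless for the same reason: it destroys the lower bound on $|E'_k(\delta)|$ that the decomposition into $\CS_j$ requires.

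The point you must deliver, and which the trivial estimate $\int_{S_\delta}|g_L|\le|S_\delta|\,\|g_L\|_{\infty}$ cannot, is that the constant does not degrade as the number $s$ of cubes (equivalently $|S_\delta|$) grows while $P_\delta$ stays fixed. The paper achieves this by a duality construction in the spirit of Hudson and Leckband: it builds phase-aligned functions $f_i$ with $|\wh{f_i}|=\delta^{-n}P_\delta^{-1}$ on $\Delta_i^{\delta}$, vanishing elsewhere, and $\wh{f_i}\,g_L\ge0$; it combines them through the recursive damped sum $F=F_s$ so that $\|F\|_{\infty}\le 1/4$ uniformly in $s$ while $\mathrm{Re}(\wh{F}g_L)\gtrsim\delta^{-n}P_\delta^{-1}|g_L|$ on $S_\delta$; and it then applies Parseval, $\int\wh{F}g_L=\int F\,\wh{g_L}\le\|F\|_{\infty}\|\wh{g_L}\|_1$. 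Paying only $\|F\|_{\infty}$, rather than a volume factor times $\|g_L\|_{\infty}$, is exactly what makes the bound independent of $s$ and of $|S_\delta|$; that mechanism has no counterpart in your argument, so there is a genuine gap.
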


\begin{proof}
For all $i=1,...s$, construct $f_{i}\in L^2(\R^n)$ such that
  \Bea
    |\wh{f_{i}}(x)| &=&  \frac{\delta^{-n}}{P_{\delta}}\ \text{for}\ x\in \Delta_i^{\delta}\\
                     &=& 0\ \text{for}\ x\notin \Delta_i^{\delta}\\
    \wh{f_{i}}(x)g_L(x) &\geq& 0.
  \Eea
Since $|\Delta_i^{\delta}|\leq\delta^n$ and $P_{\delta}>1$, $\|\wh{f_{i}}\|_1\leq 1$ and hence for all $\xi$, $|f_{i}(\xi)|\leq 1$. Denote
$F_{0}\equiv0$. For all $i=1,...,s$, let
$$F_{i}(\xi) = \frac{4}{5}F_{i-1}(\xi)\exp(\frac{-1}{4s^2}|f_{i}(\xi)|) + \frac{f_{i}(\xi)}{20}$$
and denote $F\equiv F_{s}$. Since $|f_{i}(\xi)|\leq 1 $ for all
$i$, we have $|F_{1}(\xi)|\leq 1/4$. Note that for all $0\leq
t\leq 1$ and $s\geq 1$,
  \Bea
  \frac{4}{5}exp(\frac{-t}{4s^2})&\leq& 1-\frac{t}{5}\\
  \frac{1}{5}exp(\frac{-t}{4s^2})+\frac{t}{20}&\leq& \frac{1}{4}.
  \Eea
Since for all $\xi$, $|f_2(\xi)|\leq 1$, we have
   \bee
    |F_{2}(\xi)|\leq \frac{1}{5}exp(\frac{-|f_{2}(\xi)|}{4s^2}) + \frac{|f_{2}(\xi)|}{20} \leq \frac{1}{4}.
   \eee
Then by induction $\|F\|_{\infty}\leq 1/4$. By
construction, we have
   \be
   F(\xi)= \sum_{k=1}^{s-1} \bigg[ \frac{4^{s-k}f_{k}(\xi)}{5^{s-k}20}\exp\big(\frac{-1}{4s^2} \sum_{l=k+1}^{s}|f_{l}(\xi)|\big)\bigg] + \frac{f_{s}(\xi)}{20}.\label{eqlemFTredFj}
   \ee
Now consider $\wh{F}$,
   \Bea
    \wh{F}(x) &=& \sum_{k=1}^{s-1} \bigg[\frac{4^{s-k}f_{k}(\xi)}{5^{s-k}20} \exp\big(\frac{-1}{4s^2} \sum_{l=k+1}^{s}|f_{l}(\xi)|\big)\bigg]\wh{\ }(x) +
    \frac{\wh{f_{s}}(x)}{20}\\
    &=&\sum_{k=1}^{s-1} \bigg[\frac{4^{s-k}f_{k}(\xi)}{5^{s-k}20} \big(\exp(\frac{-1}{4s^2} \sum_{l=k+1}^{s}|f_{l}(\xi)|)-1\big)\bigg]\wh{\ }(x)\\
     &&\ + \sum_{k=1}^{s} \frac{4^{s-k}\wh{f_{k}}(x)}{5^{s-k}20}.
   \Eea
By the construction of $f_{i_0}'s$, for all $x\in
\Delta_{i_0}^{\delta}$, $|\wh{f_{i}}(x)|=0$ for all $i\neq i_0$ and
$\wh{f_{i}}(x)g_L(x)\geq 0$. Hence
   \Bea
   &&Re(\wh{F}(x)g_L(x))\\
   &&\leq \sum_{k=1}^{s-1} \bigg|\bigg[\frac{4^{s-k}f_{k}(\xi)}{5^{s-k}20} \big(\exp(\frac{-1}{4s^2} \sum_{l=k+1}^{s}|f_{l}(\xi)|)-1\big)\bigg]\wh{\ }(x)\bigg||g_L(x)|\\
   &&\ \ +\ \frac{4^{s-i_0}}{5^{s-i_0}20} \wh{f_{i_0}}(x)g_L(x) \\
   &&\leq \sum_{k=1}^{s-1} \frac{\|f_{k}\|_2}{20} \bigg{\|}\big(\exp(\frac{-1}{4s^2} \sum_{l=k+1}^{s}|f_{l}(\xi)|)-1\big)\wh{\ }\bigg{\|}_2|g_L(x)|\\
   &&\ \ +\ \frac{1}{20} \wh{f_{i_0}}(x)g_L(x).
   \Eea
That is, for $x\in \Delta_{i_0}^{\delta}$,
   \bea
   \nonumber &&Re(20\wh{F}(x)g_L(x))- \wh{f_{i_0}}(x)g_L(x)\\
    &&\ \ \leq  \sum_{k=1}^{s-1} \|f_{k}\|_2 \bigg{\|}\big(\exp(\frac{-1}{4s^2} \sum_{l=k+1}^{s}|f_{l}(\xi)|)-1\big)\wh{\ }\bigg{\|}_2|g_L(x)|. \label{eqlemFTredFjRe}
   \eea
Since for all $a>0$, $\bigg|\frac{\exp(-a)+1}{a}\bigg|\leq 1$ and for all $i$, $\|f_{i}\|_2\leq\delta^{-n/2}$ we have
   \Bea
   \sum_{k=1}^{s-1} \|f_{k}\|_2 \bigg{\|}\big(\exp(\frac{-1}{4s^2} \sum_{l=k+1}^{s}|f_{l}(\xi)|)-1\big)\wh{\ }\bigg{\|}_2
   &\leq& \sum_{k=1}^{s-1} \|f_{k}\|_2 \bigg(\sum_{l=k+1}^{s}\frac{\|f_{l}\|_2}{4s^2}\bigg)\\
   &\leq& \frac{\delta^{-n}}{8}.
   \Eea
Thus from (\ref{eqlemFTredFjRe}), for $x\in \Delta_{i_0}^{\delta}$
   \Bea
    &&\delta^{-n}|g_L(x)|=|\wh{f_{i_0}}(x)g_L(x)|\\
    &\leq& |\wh{f_{i_0}}(x)g_L(x) - Re(20\wh{F}(x)g_L(x))| + Re(20\wh{F}(x)g_L(x))\\
    &\leq& \frac{\delta^{-n}}{8}|g_L(x)| + Re(20\wh{F}(x)g_L(x)).
    \Eea
Thus for all $i$ and $x\in \Delta_{i_0}^{\delta}$, $0\leq\delta^{-n}|g_L(x)|\leq 40
Re(\wh{F}(x)g_L(x))$. Hence, for all $x$, $0\leq\delta^{-n}|g_L(x)|\leq 40
Re(\wh{F}(x)g_L(x))$ and
    \bea
   \nonumber \int_{S_{\delta}} \delta^{-n}|g_L(x)| dx &\leq&
   40 Re\bigg(\int_{\R^n}\wh{F}(x)g_L(x)dx\bigg)\\
   \nonumber &\leq& 40\int_{\R^n}|F(\xi)||\wh{g_L}(\xi)| d\xi,
  \eea
Also we have $\|F\|_{\infty}\leq 1/4$. Then,
  \bee
    \int_{S_{\delta}} \delta^{-n}|g_L(x)| dx \leq C_2 \int_{\R^n}|\wh{g_L}(\xi)|
    d\xi.
  \eee
Hence the proof.
\end{proof}

\textbf{Proof of Theorem \ref{ThmHud1p2}:}\\

Since $E$ is a bounded set, without loss of generality we assume
that $\tilde{m}>1$ is the smallest integer such that for all
$x=(x_1,...x_n)\in E$, $1\leq x_j\leq \tilde{m}$, $j=1,...n$. Fix
$0<\epsilon<1$ and $m=\tilde{m}+1$. Then $E(\epsilon)$, the
$\epsilon$-distance set of $E$ is contained in
$M=(0,m)\times...(0,m)$.\\

As in Theorem \ref{thm-cohe-str}, we approximate $fd\mu$ with a Schwartz function on a fine decomposition of $E(r/L)$, $r/L$-distance set of $E$ for very small $r/L$ depending on $\epsilon$. First, we construct a set $C_{\epsilon}$ as in the proof of Theorem \ref{thmHudson1cohequasi} in \cite{Hudson} such that $C_{\epsilon}$ has small $\alpha$-packing measure.
$$$$
Construct a self-similar Cantor-type set $C$ in
$[-2/\epsilon,-1/\epsilon]\times...[-2/\epsilon,-1/\epsilon]\subset\R^n$
satisfying open set condition with dilation factor $0<\eta<1$ such
that $N\eta^{\alpha}=1$ and $\CH_{\alpha}(C)=1$. (See Definition
\ref{defnSelfsimilar} in Chapter 1.) Let $C_{\epsilon}$ denote the
$\epsilon$-dilated $C$ such that $C_\epsilon\subset
[-2,-1]\times..\times[-2,-1]=M_1$ and $\CH_{\alpha}(C_{\epsilon})
= \epsilon^{\alpha}\CH_{\alpha}(C) = \epsilon^{\alpha}$. By
Theorem \ref{self-similar--alpha-regular}, $C_{\epsilon}$ is
$\alpha$-regular and $\eta^{\alpha} \leq
\frac{\CH_{\alpha}(C_{\epsilon}\cap B_r(x))}{r^{\alpha}}$ for all
$0<r\leq 1$. Then, by the definition of packing measure and part(4)
in Lemma \ref{lemPM3}, $\epsilon^{\alpha} =
\CH_{\alpha}(C_{\epsilon}) < \CP^{\alpha}(C_{\epsilon}) \leq
\eta^{-\alpha}\CH_{\alpha}(C_{\epsilon}) =
(\eta^{-1}\epsilon)^{\alpha}$. Denote $E'=E\cup C_{\epsilon}$.
Thus for all $x\in E$,
$\mu(E'_x)=\mu(E_x)+\CP^{\alpha}(C_\epsilon)$. Hence
  \bea
   \nonumber \int_E\frac{|f(x)|^p}{\mu(E_x)^{2-p}}d\mu(x) &=&
   \underset{\epsilon\rightarrow0}{\lim}
   \int_E\frac{|f(x)|^p}{(\mu(E_x) + (\eta^{-1}\epsilon)^{\alpha} + \epsilon)^{2-p}}d\mu(x)\\
   &\leq& \underset{\epsilon\rightarrow0}{\lim}
   \int_E\frac{|f(x)|^p}{(\mu(E'_x) + \epsilon)^{2-p}}d\mu(x) \label{eqlemAuxReductionepsilon}
  \eea
Now to approximate $fd\mu$ with a Schwartz function, we proceed as in the Theorem \ref{thm-cohe-str}.\\

Fix $\epsilon_1<\epsilon/2$. For each $k=(k_1,...k_n)$, ($0<k_j\in\Z$) denote $Q_k=\{x=(x_1,...x_n)\in M: (k_j-1)\epsilon_1<x_j \leq k_j\epsilon_1\}$. Let $\CQ_0$ denote the
finite collection of all such cubes whose intersection with $E$ that has non zero measure, that is, $\mu(Q_k)\neq0$. For every
$k=(k_1,...k_n)$, denote $x_k=((k_1-1)\epsilon_1,...(k_n-1)\epsilon_1)$, $E_{k}=E_{x_k}= E\cap \prod_{j=1}^n
(-\infty,(k_j-1)\epsilon_1]$ and $E'_{k}= E'_{x_k} = E'\cap \prod_{j=1}^n (-\infty,(k_j-1)\epsilon_1]$. Then for all $Q_k\in\CQ_0$ and $x\in Q_k$, $\mu(E'_k)\leq \mu(E'_x)$. Also for all $k$,
$\mu(E'_k)=\mu(E_k)+\CP^{\alpha}(C_{\epsilon})>0$. Since $E$ is
compact, $\CQ_0$ has finite disjoint collection of half open cubes.
Hence
       \bea
          \nonumber \int_E\frac{|f(x)|^p}{(\mu(E'_x)+ \epsilon)^{2-p}}d\mu(x) &=& \sum_{Q_k\in\CQ_0}\int_{Q_k}
          \frac{|f(x)|^p}{(\mu(E'_x)+ \epsilon)^{2-p}}d\mu(x)\\
          \nonumber &\leq& \sum_{Q_k\in\CQ_0}\int_{Q_k}
          \frac{|f(x)|^p}{(\mu(E'_k)+ \epsilon)^{2-p}}d\mu(x)\\
          \nonumber &\leq& \frac{C_p}{(\epsilon)^{2-p}} \sum_{Q_k\in\CQ_0} \int_{Q_k}\Bigg|f(x)- \frac{1}{(\mu(Q_k))^{p}} \int_{Q_k} f(y) d\mu(y)\bigg|^p d\mu(x)\\
          && + \sum_{Q_k\in\CQ_0} \frac{\mu(Q_k)^{1-p}}{(\mu(E'_k)+ \epsilon)^{2-p}}\bigg|\int_{Q_k}
          f(y) d\mu(y)\bigg|^p.\label{thmpRed1}
          \eea
Let $i_{\epsilon_1}=\inf_{Q\in\CQ_0}\mu(Q)$. Since infimum is taken
over cubes in $\CQ_0$, which is a finite collection and $\mu(Q)\neq
0$, we have $i_{\epsilon_1}>0$. Now by Lemma \ref{lemMY1}, for
each $k$, there exists $\delta_k$ such that
   \bea
    \nonumber |(Q_k\cap E)(\delta)|\delta^{\alpha-n} &\leq& C_n\mu(Q_k\cap E) + C_ni_{\epsilon_1}\epsilon,\\
    &\leq& 2C_n\mu(Q_k\cap E)\ (\text{since}\ \epsilon<1),\label{thmp1}\\
   |E'_k(\delta)|\delta^{\alpha-n}&\leq& C_n\mu(E'_k) + C_n\epsilon, \label{thmp1coh}
    \eea
for all $\delta \leq\delta_k$. Let $\tilde{\delta}_1\leq \min_k\{\delta_k\}$.\\

Let $\phi$ be a positive Schwartz function such that
$\wh{\phi}(0)=1$, support of $\wh{\phi}$ is supported in the unit
ball and there exists $r_1>0$ such that
\be
\int_{A_{r_1}(0)}
\phi(x) dx = 1/2^{n+1},\label{thmp2}
\ee
where $A_{r_1}(0)=\{x=(x_1,...x_n)\in\R^n: -r_1<x_j\leq 0,\ \forall j\}$. Denote $\phi_L(x)=\phi(Lx)$ for all $L>0$. Fix $\delta_0\leq
\min\{\epsilon, \tilde{\delta}_1\}$, $r=n^{\frac{1}{2}}r_1$ and $L$ large such that
$r/L\leq \delta_0$. Then we have,
      \Bea
      \bigg|\int_{Q_k} f(y) d\mu(y)\bigg|^p &=& 2^{p(n+1)} \bigg|\int_{Q_k} \int_{A_{r_1}(0)} \phi(x) dx f(y) d\mu(y)\bigg|^p\\
      &=& 2^{p(n+1)}L^{np} \bigg|\int_{Q_k} \int_{A_{r_1/L}(y)} \phi_L(x-y) dx f(y) d\mu(y)\bigg|^p\\
      &=& 2^{p(n+1)}L^{np} \bigg|\int_{Q_kE_L} \int_{Q_k} \phi_L(x-y)f(y)
      d\mu(y)dx\bigg|^p.
      \Eea
where $Q_kE_L=\{x=(x_1,...x_n)\in M:\ \exists y=(y_1,...y_n)\in E,\ \text{such} \ \text{that}\ y_j-r_1/L< x_j\leq y_j\ \forall\ j\}$. Note that $|Q_kE_L|\leq |(Q_k\cap E)(r/L)|$, where $(Q_k\cap E)(r/L)$ denotes the $r/L$-distance set of $Q_k\cap E$ (since $r=n^{1/2}r_1$). Since $\phi$ and $f$ are positive, $\int_{Q_kE_L} \int_{Q_k} \phi_L(x-y)f(y)d\mu(y)dx \leq \int_{Q_kE_L}\phi_L*fd\mu(x)dx$. Thus
      \Bea
      &&\bigg|\int_{Q_k} f(y) d\mu(y)\bigg|^p\\
      &&\ \leq 2^{p(n+1)} L^{np} \bigg|\int_{Q_kE_L} \phi_L*fd\mu(x)dx\bigg|^p\\
      &&\ \leq 2^{p(n+1)}L^{np}(|Q_kE_L|)^{p-1} \int_{Q_kE_L} |\phi_L*fd\mu(x)|^pdx\\
      &&\ \leq 2^{p(n+1)}r^{(n-\alpha)(p-1)} L^{n+\alpha(p-1)}(|(Q_k\cap E)(r/L)|(r/L)^{(\alpha-n)(p-1)} \int_{Q_kE_L}
      |\phi_L*fd\mu(x)|^pdx.
      \Eea
By (\ref{thmp1}), there exists a constant $\tilde{C}$ independent of $f$, $\epsilon$, and $L$ such that
      \be
       \frac{1}{\mu(Q_k)^{p-1}}\bigg|\int_{Q_k} f(y) d\mu(y)\bigg|^p \leq \tilde{C}L^{n+\alpha(p-1)} \int_{Q_kE_L}
       |\phi_L*fd\mu(x)|^p dx. \label{thmpRed3}
      \ee
Let
\be
e_{\epsilon_1}= \sum_{Q_k\in\CQ_0}e_k = \sum_{Q_k\in\CQ_0} \int_{Q_k}\Bigg|f(x)- \frac{1}{(\mu(Q_k))^{p}} \int_{Q_k} f(y) d\mu(y)\bigg|^p d\mu(x).\label{eqerror}
\ee
Then from (\ref{thmpRed1}), (\ref{thmp1coh}) and (\ref{thmpRed3}), there exists a constant $\tilde{C_1}$ independent of $f$, $\epsilon$ and $L$ such that
    \bea
     \nonumber &&\int_E\frac{|f(x)|^p}{(\mu(E'_x)+\epsilon)^{2-p}} d\mu(x)\\
     \nonumber &&\ \ \leq C_p\epsilon^{p-2}e_{\epsilon_1} +  C_p\tilde{C}L^{n+\alpha(p-1)} \sum_{Q_k\in\CQ_0} \int_{Q_kE_L}
       \frac{|\phi_L*fd\mu(x)|^p}{(\mu(E'_k)+ \epsilon)^{2-p}}
       dx\\
     &&\ \ \leq C_p\epsilon^{p-2}e_{\epsilon_1} +  \tilde{C_1} L^{n(p-1)+\alpha} \sum_{Q_k\in\CQ_0} \int_{Q_kE_L}
       \frac{|\phi_L*fd\mu(x)|^p}{(|E'_k(r/L)|)^{2-p}}
       dx,\label{thmpRed2}
     \eea

For given $\epsilon_1$, let $g\in C_c^{\infty}(d\mu)$ be such that $\|f-g\|^p_{L^p(d\mu)}<\epsilon_1$. Then, as in the proof of Theorem \ref{thm-cohe-str}, we have
\Bea
 e_{\epsilon_1} &\leq& 2C_p^2\sum_{k}\int_{Q_k}|f(x)-g(x)|^pd\mu(x)\\
&&+C_p\sum_{k}\int_{Q_k}\bigg|g(x)-\frac{1}{\mu(Q_k)}\int_{Q_k} g(y)d\mu(y)\bigg|^pd\mu(x).
\Eea
Since $E=\cup_k(Q_k\cap E)$ and $\mu=\CP^{\alpha}|_E$,
\bea
\nonumber e_{\epsilon_1}&\leq& 2C_p^2\|f-g\|^p_{L^p(d\mu)} +C_p\sum_{k}\int_{Q_k}\bigg|g(x)-\frac{1}{\mu(Q_k)}\int_{Q_k} g(y)d\mu(y)\bigg|^pd\mu(x)\\
&\leq& 2C_p\epsilon_1 +2\sum_{k}\int_{Q_k}\bigg|g(x)-\frac{1}{\mu(Q_k)}\int_{Q_k} g(y)d\mu(y)\bigg|^2d\mu(x).\label{Maxop1}
\eea
Since $g$ is compactly supported continuous function, $g$ is uniformly continuous and $$|g(x)-\frac{1}{\mu(Q_k)}\int_{Q_k} g(y)d\mu(y)|\rightarrow
0$$ uniformly in $x$ and $Q_k$ as $\mu(Q_k)\rightarrow 0$. As $\epsilon_1\rightarrow0$, we have $\mu(Q_k)\rightarrow 0$ for all $k$. Hence
\Bea
&&\sum_{k}\int_{Q_k}\bigg|g(x)-\frac{1}{\mu(Q_k)}\int_{Q_k} g(y)d\mu(y)\bigg|^pd\mu(x)\\
&\leq& \sum_{k}\mu(Q_k)\underset{Q_k\in\CQ_0}{\sup}\underset{x\in Q_k}{\sup}\bigg|g(x)-\frac{1}{\mu(Q_k)}\int_{Q_k} g(y)d\mu(y)\bigg|^p\\
&=& \mu(E)\underset{Q_k\in\CQ_0}{\sup}\underset{x\in Q_k}{\sup}\bigg|g(x)-\frac{1}{\mu(Q_k)}\int_{Q_k} g(y)d\mu(y)\bigg|^p,
\Eea
which goes to zero as $\epsilon_1$ goes to zero. Therefore,  from (\ref{Maxop1}), $e_{\epsilon_1}$ goes to zero as $\epsilon_1$ goes to zero.
$$$$
Since $r/L<\epsilon$, for each $k=(k_1,...k_n)$, $Q_kE_L$
intersects with at most $2^n-1$ other cubes $Q_{m}\cap E(r/L)$,
where $m=(m_1,...,m_n)$, $k_j-1\leq m_j\leq k_j$. Hence for each
$k$, $Q_kE_L$ is the union of $Q_k\cap E(r/L)$ and at most $2^n-1$
other sets $Q_m\cap E(r/L)$. Then for all such $m$,
$|E'_m(r/L)|\leq |E'_k(r/L)|$. Thus for each $k$, $$\int_{Q_k\cap
E(r/L)}\frac{|\phi_L*fd\mu(x)|^p}{|E'_k(r/L)|^{2-p}}dx$$ repeats
at most $2^n$ times. Let $\tilde{\CQ_0}$ denote the collection of
all $Q_k=\{x=(x_1,...x_n)\in M: (k_j-1)\epsilon_1<x_j \leq
k_j\epsilon_1\}$ where $k=(k_1,...k_n)$, ($0<k_j\in\Z$) such that
$|Q_k\cap E(r/L)|\neq 0$. Thus from (\ref{thmpRed2}) and
(\ref{thmpRed1}), there exists a constant $C_0$ independent of
$f,\epsilon$ and $L$ such that for all $r/L\leq \delta_0$,
  \bea
 \nonumber &&\int_E\frac{|f(x)|^p}{(\mu(E'_x)+ \epsilon)^{2-p}}d\mu(x)\\
 && \leq C_p\epsilon^{p-2}e_{\epsilon_1} + C_0L^{n(p-1)+\alpha} \sum_{Q_k\in\tilde{\CQ_0}} \int_{Q_k\cap E(r/L)}      \frac{|\phi_L*fd\mu(x)|^p}{|E'_k(r/L)|^{2-p}} dx,\label{eqthmpackfin1}
 \eea
where $e_{\epsilon_1}$ goes to zero as $\epsilon_1$ goes to zero.\\

Denote $\delta=r/L$. By the construction of $C_{\epsilon}$, for all $k$, $|C_{\epsilon}(\delta)|< |E'_k(\delta)|$. Also, by Lemma \ref{lemPM1}, $|C_{\epsilon}(\delta)|\geq C_n P(C_{\epsilon},\delta)\delta^n$. Denote $P_{\delta}= P(C_{\epsilon},\delta)>1$, the $\delta$-packing number of $C_{\epsilon}$. For $j=0,1,...J$, let
$\CS_j$ be the sub-collection of all $Q_k\in\tilde{\CQ_0}$ such that
$2^jP_{\delta}\delta^n\leq |E'_k(\delta)|<2^{j+1}P_{\delta}\delta^n$. We consider only
nonempty collections. Denote $g_L(x)=\phi_L*fd\mu(x)$. Then \Bea
 \sum_{Q_k\in\tilde{\CQ_0}} \int_{Q_k\cap E(\delta)}
       \frac{|g_L(x)|^p}{|E'_k(\delta)|^{2-p}} &=& \sum_j \sum_{Q_k\in\CS_j} \int_{Q_k\cap E(\delta)}
       \frac{|g_L(x)|^p}{|E'_k(\delta)|^{2-p}}\\
       &\leq& \sum_j (2^{j}P_{\delta}\delta^{n})^{p-2}\sum_{Q_k\in\CS_j} \int_{Q_k\cap E(\delta)}
       |g_L(x)|^pdx.
 \Eea
For each $j$, we can write $\cup_{Q_k\in\CS_j}Q_k\cap E(\delta) = S_j =
\cup_{i=1}^{s_j}\Delta_i^{\delta}$ as the finite disjoint union of
non-empty sets intersected with cubes of volume $\delta^n$, that
is, $0<|\Delta_i^{\delta}|\leq \delta^n$. Then \be
 \sum_{Q_k\in\tilde{CQ_0}} \int_{Q_k\cap E(\delta)}
       \frac{|g_L(x)|^p}{|E'_k(\delta)|^{2-p}} \leq \sum_j (2^{j}P_{\delta}\delta^{n})^{p-2}\int_{S_j}
       |g_L(x)|^pdx \label{eqthmpackfin2}
 \ee

 For every $j$, applying Lemma \ref{lemHud1FT}, we have \be
\frac{\delta^{-n}}{P_{\delta}} \int_{S_j}
       |g_L(x)|dx \leq C \int_{\R^n}
       |\widehat{g_L}(\xi)|d\xi. \label{eqthmpackfin3}
 \ee

We recall the following interpolation theorem due to Stein (See page 213 in \cite{BennettSharpley} for the proof):
\begin{thm}\label{thmSteinInterpolation} Let $(\CR,\mu)$ and
$(\CS, \nu)$ be totally $\sigma$-finite measure spaces and let $T$
be a linear operator defined on the $\mu$-simple functions on
$\CR$ taking values in the $\nu$-measurable functions on $\CS$.
Suppose that $u_i, v_i$ are positive weights on $\CR$ and $\CS$
respectively, and that $1\leq p_i,q_i\leq\infty$, $(i=0,1)$.
Suppose
  \bee
    \|(Tf)v_i\|_{q_i}\leq M_i\|fu_i\|_{p_i},\ \ (i=0,1)
  \eee
for all $\mu$-simple functions $f$. Let $0\leq\theta\leq1$ and
define
  \bee
  \frac{1}{p}=\frac{1-\theta}{p_0}+\frac{\theta}{p_1},\  \
  \frac{1}{q}=\frac{1-\theta}{q_0}+\frac{\theta}{q_1}
  \eee
and
  \bee
    u=u_0^{1-\theta}u_1^{\theta},\ \ v=v_0^{1-\theta}v_1^{\theta}.
  \eee
Then, if $p<\infty$, the operator $T$ has a unique extension to a
bound linear operator from $L^p_u$ into $L^q_v$ which satisfies
  \bee
    \|(Tf)v\|_q\leq\ M_0^{1-\theta}M_1^{\theta}\|fu\|_p,
  \eee
for all $f\in L^p_u$
\end{thm}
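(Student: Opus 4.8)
The plan is to derive this weighted form of Stein's interpolation theorem from Hadamard's three-lines lemma by the standard complex-analytic device, with the weights absorbed into analytically varying power families. First I would normalize: by homogeneity it suffices to treat $f$ with $\|fu\|_p=1$, and by duality (using $p<\infty$, and the conjugate exponent $q'$ with $1/q+1/q'=1$) it suffices to bound $\big|\int_{\CS}(Tf)\,v\,\overline{h}\,d\nu\big|$ by $M_0^{1-\theta}M_1^{\theta}$ for all $\mu$-simple $f$ with $\|fu\|_p=1$ and all $\nu$-simple $h$ with $\|h\|_{q'}=1$; taking the supremum over such $h$ then recovers $\|(Tf)v\|_q$ (the totally $\sigma$-finite hypothesis makes this duality valid, including when $q=\infty$).

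For the analytic families I would introduce, on the closed strip $0\le\Re z\le1$, the exponent functions $1/p(z)=(1-z)/p_0+z/p_1$ and $1/q(z)=(1-z)/q_0+z/q_1$ (so $1/q'(z)=1-1/q(z)$), together with the weight-power families $u(z)=u_0^{1-z}u_1^{z}$ and $v(z)=v_0^{1-z}v_1^{z}$; note $p(\theta)=p$, $q(\theta)=q$, $u(\theta)=u$, $v(\theta)=v$. Then set $f_z=(\operatorname{sgn} f)\,|fu|^{p/p(z)}\,u(z)^{-1}$ and $h_z=(\operatorname{sgn} h)\,|h|^{q'/q'(z)}$, so that $f_\theta=f$ and $h_\theta=h$. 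A direct computation on the vertical line $\Re z=0$ gives $|u(z)|=u_0$ and $\Re(p/p(z))=p/p_0$, hence $|f_{z}u_0|=|fu|^{p/p_0}$ and $\|f_{z}u_0\|_{p_0}=\|fu\|_p^{p/p_0}=1$; likewise $|v(z)|=v_0$ and $\|h_z\|_{q_0'}=1$ there, and symmetrically on $\Re z=1$ with the index-$1$ data.

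Now define $F(z)=\int_{\CS}(Tf_z)\,v(z)\,\overline{h_z}\,d\nu$. At $z=\theta$ this is exactly the bilinear form I want to bound, so the goal reduces to $|F(\theta)|\le M_0^{1-\theta}M_1^{\theta}$. On $\Re z=0$, Hölder's inequality and the endpoint hypothesis give $|F(it)|\le\|(Tf_{it})v_0\|_{q_0}\,\|h_{it}\|_{q_0'}\le M_0\|f_{it}u_0\|_{p_0}=M_0$, and similarly $|F(1+it)|\le M_1$ on $\Re z=1$. Provided $F$ is analytic in the open strip and continuous and bounded on its closure, the three-lines lemma yields $|F(\theta)|\le M_0^{1-\theta}M_1^{\theta}$, which is the claim on simple functions; a final density argument (simple functions are dense in $L^p_u$ because $p<\infty$) extends $T$ uniquely to a bounded operator $L^p_u\to L^q_v$ with the asserted norm bound, uniqueness being automatic since two bounded extensions agreeing on a dense subspace coincide.

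The hard part will be justifying that $F$ is genuinely analytic and uniformly bounded on the strip, so that the three-lines lemma applies, because the weights $u_i,v_i$ are arbitrary positive functions and thus $f_z,h_z$ are not simple — so $Tf_z$ need not even be defined by the hypotheses, which only control $T$ on $\mu$-simple functions. I would overcome this by first proving the inequality under the extra assumption that $u_0,u_1,v_0,v_1$ take only finitely many values: refining the level-set partitions of $f,h,u_i,v_i$, the families $f_z,h_z$ become $\mu$-simple for every $z$, with scalar coefficients depending analytically and boundedly on $z$, so $F(z)$ is a finite sum of bounded analytic scalar functions times fixed numbers of the form $\int_{\CS}(T\mathbf 1_{A})\,\mathbf 1_{B}\,d\nu$, hence analytic and bounded on the closed strip as the three-lines lemma requires. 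The general case then follows by approximating each positive weight by an increasing sequence of simple weights and passing to the limit via Fatou's lemma and dominated convergence, the uniform bound $M_0^{1-\theta}M_1^{\theta}$ being preserved at each stage.
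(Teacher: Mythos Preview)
The paper does not actually prove this theorem: immediately before the statement it writes ``We recall the following interpolation theorem due to Stein (See page 213 in \cite{BennettSharpley} for the proof)'' and then simply invokes it as a black box in the proof of Theorem~\ref{ThmHud1p2}. So there is no in-paper argument to compare against.

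Your proposal is the standard Stein--Weiss proof via the three-lines lemma, and it is essentially what one finds in Bennett--Sharpley: absorb the weights into analytic power families $u(z)=u_0^{1-z}u_1^{z}$, $v(z)=v_0^{1-z}v_1^{z}$, deform the simple data $f,h$ into $f_z,h_z$ so that the endpoint estimates become the boundary bounds for $F(z)$, and apply Hadamard. Your identification of the genuine technical point---that $f_z$ need not be $\mu$-simple when the weights are arbitrary positive measurable functions, so one first proves the inequality for simple weights (where $f_z$ stays simple and $F$ is a finite sum of bounded entire functions) and then passes to the limit by monotone/Fatou arguments---is exactly the right thing to flag and the right way to handle it. The sketch is correct; nothing is missing beyond routine bookkeeping (e.g., the usual conventions when some $p_i$ or $q_i'$ equals $\infty$, and checking that the density of simple functions in $L^p_u$ uses $p<\infty$ together with $\sigma$-finiteness).
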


Let $v_0= \frac{\delta^{-n}}{P_{\delta}}\chi_{S_j}(x)$ and $v_1 = u_0 = u_1 = 1$, where $\chi_{S_j}$ denotes the characteristic function on $S_j$. Let $T$ be defined as $T(\psi)=\check{\psi}$, the inverse Fourier
transform of $\psi$. By (\ref{eqthmpackfin3}), we have for each
$j$ and $L$,
$$\|(Tg_L)v_0\|_1\leq C\|\widehat{g_L}\|_1$$
By Plancherel theorem, we have
$$\|(Tg_L)v_1\|_2\leq \|\widehat{g_L}\|_2$$
 Then applying the Theorem
\ref{thmSteinInterpolation}, for $1<p<2$, we have
   \be
     (\delta^{n}P_{\delta})^{p-2}\int_{S_j}|\phi_L*fd\mu(x)|^p dx\leq\ C' \int_{\R^n}|\wh{\phi_L*fd\mu}(\xi)|^pd\xi.\label{eqinterpolateLPRHS}
   \ee
where $C'$ is a non-zero finite constant independent of $f$. Using
(\ref{eqthmpackfin2}), (\ref{eqthmpackfin1}) and
(\ref{eqinterpolateLPRHS}), there exists a constant $C$
independent of $f$, $\epsilon$ and $L$ such that for very large
$L$
   \Bea
        &&\int_E \frac{|f(x)|^p}{(\mu(E'_x)+2\epsilon)^{2-p}} d\mu(x)\\
        &&\ \ \ \leq\ e_{\epsilon_1}\epsilon^{p-2} + C L^{n(p-1)+\alpha} \int_{\R^n}|\wh{\phi_L*fd\mu}(\xi)|^pd\xi.
   \Eea
Since $\phi$ is a Schwartz function such that $\wh{\phi}$ is
supported in unit ball, $\|\wh{\phi}\|_{\infty}\leq 1$ and
$\wh{\phi_L}(\xi)=L^{-n}\wh{\phi}(L^{-1}\xi)$,
   \bee
     \int_E\frac{|f(x)|^p}{(\mu(E'_x)+2\epsilon)^{2-p}}d\mu(x)\ \leq\ e_{\epsilon_1}\epsilon^{p-2} + C\ L^{\alpha+n(p-1)}
     \int_{|\xi|\leq L}\frac{|\wh{fd\mu}(\xi)|^p}{L^{np}}d\xi,
   \eee
for all $r/L\leq \delta_0$, where $\delta_0$ goes to zero as $\epsilon_1<\epsilon/2\rightarrow0$.
Hence letting $\epsilon_1$ to zero, we have
\bee
     \int_E\frac{|f(x)|^p}{[\mu(E'_x)+2\epsilon]^{2-p}}d\mu(x) \leq\ C\ \underset{L\rightarrow\infty}{\liminf} \frac{1}{L^{n-\alpha}} \int_{|\xi|\leq L}|\wh{fd\mu}(\xi)|^pd\xi.
  \eee
Letting $\epsilon$ go to zero,  using
(\ref{eqlemAuxReductionepsilon}), we have
   \bee
     \int_E\frac{|f(x)|^p}{[\mu(E_x)]^{2-p}}d\mu(x) \leq\ C\ \underset{L\rightarrow\infty}{\liminf} \frac{1}{L^{n-\alpha}} \int_{|\xi|\leq L}|\wh{fd\mu}(\xi)|^pd\xi.
  \eee
Hence the proof.\\

\begin{thm}\label{ThmHud1p2quasi}
Let $E\subset\R^n$ be a compact quasi $\alpha$-regular set of
non-zero finite $\alpha$-dimensional Hausdorff measure and
$\mu=\CH_{\alpha}|_E$. Let $f\in L^p(d\mu)$ for $1\leq p\leq 2$.
Then there exists a constant $C$ independent of $f$ such that
  \be
     \int_E\frac{|f(x)|^p}{[\mu(E_x)]^{2-p}}d\mu(x) \leq\ C\ \underset{L\rightarrow\infty}{\liminf} \frac{1}{L^{n-\alpha}} \int_{|\xi|\leq L}|\wh{fd\mu}(\xi)|^pd\xi, \label{eqthmhud1p2}
  \ee
where $E_x=E\cap[(-\infty,x_1]\times...\times(-\infty,x_n]]$ for
all $x=(x_1,...x_n)\in\R^n$.
\end{thm}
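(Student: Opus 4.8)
The plan is to run the proof of Theorem~\ref{ThmHud1p2} essentially verbatim, replacing $\mu=\CP^{\alpha}|_E$ by $\mu=\CH_{\alpha}|_E$ throughout and replacing Lemma~\ref{lemMY1} by Lemma~\ref{lemMY} (and, as in Theorem~\ref{ThmHud1p2}, taking $f\ge0$). In that proof the hypothesis $\CP^{\alpha}(E)<\infty$ is used \emph{only} through distance-set estimates of the form
\[
   \limsup_{\delta\to0}\,|S(\delta)|\,\delta^{\alpha-n}\ \le\ C_n\,\CP^{\alpha}(S),\qquad S\subseteq E\ \text{bounded}
\]
(Lemma~\ref{lemMY1}). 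Since $E$ is quasi $\alpha$-regular with respect to $\mu_{\alpha}=\CH_{\alpha}|_E$, there is a non-zero constant $a$ with $a\le\ul{D^{\alpha}}(\mu_{\alpha},x)$ for $\mu_{\alpha}$-a.e.\ $x\in E$, and Lemma~\ref{lemMY} supplies precisely the same kind of estimate, $\limsup_{\delta\to0}|S(\delta)|\delta^{\alpha-n}\le C_n a^{-1}\mu_{\alpha}(S)$ for bounded $S\subseteq E$. Hence every application of Lemma~\ref{lemMY1} in the proof of Theorem~\ref{ThmHud1p2} is simply swapped for Lemma~\ref{lemMY}, the extra factor $a^{-1}$ --- independent of $f$, $L$ and $\epsilon$ --- being absorbed into the constants.

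Concretely I would proceed step by step as in Theorem~\ref{ThmHud1p2}. First, translate $E$ into a positive cube $[1,\tilde m]^n$ and fix $0<\epsilon<1$; introduce, exactly as there, the auxiliary set $C_{\epsilon}\subset[-2,-1]^n$ (the $\epsilon$-dilate of a self-similar Cantor-type set with $N\eta^{\alpha}=1$), which by Theorem~\ref{self-similar--alpha-regular} is $\alpha$-regular with regularity constant $\eta^{\alpha}$ independent of $\epsilon$, satisfies $\CH_{\alpha}(C_{\epsilon})=\epsilon^{\alpha}$, and obeys $|C_{\epsilon}(\delta)|\ge C_n P(C_{\epsilon},\delta)\delta^n$ by Lemma~\ref{lemPM1}; set $E'=E\cup C_{\epsilon}$, so the union is disjoint, $\mu(E'_x)=\mu(E_x)+\epsilon^{\alpha}>0$ for $x\in E$, and $\int_E\frac{|f(x)|^p}{\mu(E_x)^{2-p}}\,d\mu(x)=\lim_{\epsilon\to0}\int_E\frac{|f(x)|^p}{(\mu(E'_x)+\epsilon)^{2-p}}\,d\mu(x)$ by monotone convergence. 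Next, partition the ambient cube into half-open cubes $Q_k$ of side $\epsilon_1<\epsilon/2$, keep the finitely many with $\mu(Q_k)\neq0$, and split $\int_E\frac{|f(x)|^p}{(\mu(E'_x)+\epsilon)^{2-p}}\,d\mu(x)$ into an oscillation term $C_p\epsilon^{p-2}e_{\epsilon_1}$ --- which tends to $0$ as $\epsilon_1\to0$ by approximating $f$ in $L^p(d\mu)$ by a continuous function, exactly as in the proof of Theorem~\ref{thm-cohe-str} --- plus a sum of local averages $\sum_k\frac{\mu(Q_k)^{1-p}}{(\mu(E'_k)+\epsilon)^{2-p}}\big|\int_{Q_k}f\,d\mu\big|^p$. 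For the latter, choose a positive Schwartz $\phi$ with $\wh{\phi}(0)=1$, $\mathrm{supp}\,\wh{\phi}$ in the unit ball and $\int_{A_{r_1}(0)}\phi=2^{-(n+1)}$; then, using positivity of $f$ and $\phi$ together with Lemma~\ref{lemMY} applied to $Q_k\cap E$ and to the constituent pieces $E_k$, $C_{\epsilon}$ of $E'_k$, bound $\frac{1}{\mu(Q_k)^{p-1}}\big|\int_{Q_k}f\,d\mu\big|^p$ by a constant times $L^{n+\alpha(p-1)}\int_{Q_kE_L}|\phi_L\ast fd\mu|^p$ and bound $|E'_k(r/L)|\,(r/L)^{\alpha-n}$ by $C_n a^{-1}\mu(E'_k)$ plus a vanishing error. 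Finally, grouping the $Q_k$ into dyadic families $\CS_j$ according to the size of $|E'_k(\delta)|$ ($\delta=r/L$), using $|E'_k(\delta)|\ge|C_{\epsilon}(\delta)|\ge C_n P(C_{\epsilon},\delta)\delta^n$, invoking Lemma~\ref{lemHud1FT} (whose hypotheses concern $C_{\epsilon}$ only, hence are unchanged) and then Stein's interpolation Theorem~\ref{thmSteinInterpolation} between the resulting $L^1$-bound and the Plancherel $L^2$-bound, one reaches
\[
   \int_E\frac{|f(x)|^p}{(\mu(E'_x)+2\epsilon)^{2-p}}\,d\mu(x)\ \le\ C_p\epsilon^{p-2}e_{\epsilon_1}+C\,L^{n(p-1)+\alpha}\int_{\R^n}|\wh{\phi_L\ast fd\mu}(\xi)|^p\,d\xi ;
\]
since $\wh{\phi_L}(\xi)=L^{-n}\wh{\phi}(\xi/L)$ with $\mathrm{supp}\,\wh{\phi}$ in the unit ball, letting $L\to\infty$, then $\epsilon_1\to0$, then $\epsilon\to0$ gives the claimed inequality.

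The hard part here is purely bookkeeping: one must check that the constants appearing stay uniform through the three limits taken after $L\to\infty$, namely $\epsilon_1\to0$ and $\epsilon\to0$. The factor $a^{-1}$ produced by Lemma~\ref{lemMY} is intrinsic to $E$ and independent of $f$, $L$, $\epsilon_1$, $\epsilon$, so it is harmless; the auxiliary set $C_{\epsilon}$ is exactly the one used in Theorem~\ref{ThmHud1p2}, with $\alpha$-regularity constant $\eta^{\alpha}$ independent of $\epsilon$ and with $\CH_{\alpha}(C_{\epsilon})=\epsilon^{\alpha}\to0$, so the $C_{\epsilon}$-contributions to the distance-set estimates are $O(\epsilon^{\alpha})$ and are absorbed into the vanishing errors already present. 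Everything else --- the oscillation estimate via continuous approximation, Lemma~\ref{lemHud1FT}, the dyadic sorting, the interpolation, and the final manipulation of $\wh{\phi_L}$ --- transfers word for word, since each of these concerns either the fixed set $C_{\epsilon}$ or the purely formal structure of the argument rather than $E$ itself. The same substitution ($\CP^{\alpha}|_E$ by $\CH_{\alpha}|_E$, Lemma~\ref{lemMY1} by Lemma~\ref{lemMY}) likewise yields the quasi $\alpha$-regular analogue of Theorem~\ref{thm-alpha-density}.
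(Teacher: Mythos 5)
Your proposal is correct and is exactly the paper's own argument: the paper proves this theorem by observing that the hypothesis of finite $\alpha$-packing measure enters the proof of Theorem~\ref{ThmHud1p2} only through Lemma~\ref{lemMY1}, which under quasi $\alpha$-regularity can be replaced by Lemma~\ref{lemMY}. Your more detailed walk-through of the substitution (including the bookkeeping of the constant $a^{-1}$ and the $C_{\epsilon}$ contributions) is consistent with, and fills in, what the paper leaves implicit.
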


\begin{proof}
The proof follows as in the Theorem \ref{ThmHud1p2}. The
hypothesis $E$ has finite $\alpha$-packing measure was used only
when we invoked Lemma \ref{lemMY1}. In the present case, we can
use Lemma \ref{lemMY}.
\end{proof}

\newpage
\chapter{Applications to Wiener Tauberian type theorems}
A classical result of Wiener\cite{Wiener} states that the translates
of a function $f \in L^1(\R^n)$ spans a dense subset of
$L^1(\R^n)$ if and only if the Fourier transform of $f$, $\wh{f}(t)\neq0\ \forall\
t\in\R^n$. In fact, if $\ ^{x}f(y)=f(y-x)$ and $g\in L^{\infty}(\R^n)$ is such that
$\int_{\R^n}\ ^xf(y)g(y)dy=0\ \forall\ x\in\R^n$, we get
$\wt{f}*g=0$ where $\wt{f}(t)=f(-t)$. Distribution theory tells us
that $supp\ \wh{\wt{g}}\subseteq\{x\in\R^n:\wh{f}(x)=0\}$ (which
is Wiener Tauberian theorem in disguise. See \cite{Rudin}). If $\wh{f}$ is nowhere vanishing then it follows that $g\equiv0$. This crucial step in
the proof of Wiener's theorem leads us to the study of functions
$f$ in $L^p(\R^n)$ with $supp\ \wh{f}$ in a thin set. Thus Theorem \ref{corthmMYP} can be used to prove Wiener-Tauberian type theorems. In this chapter, we apply our results to prove Wiener Tauberian type theorems on $\R^n$ and $M(2)$.\\

\section{ $L^p$ Wiener Tauberian Theorems on $\R^n$}
\setcounter{equation}{0}

    In this section, we improve the results on $L^p$ versions of
Wiener Tauberian type theorems on $\R^n$ obtained in
\cite{RawatSitaram}. Consider the motion group $M(n)=\R^n\rtimes
SO(n)$ with the group law
$$(x_1,k_1)(x_2,k_2)=(x_1+k_1x_2,k_1k_2).$$

%===================================================================
%=====================================================================================

For a function $h$ on $\R^n$ and an arbitrary $g=(y,k)\in M(n)$,
let $^gh$ be the function $^gh(x)=h(kx+y),\ x\in\R^n.$ Let
$\wh{h}$ denote the Euclidean Fourier transform of the function
$h$. For $h\in L^1\cap L^p(\R^n),\ 1\leq p\leq\infty$, let
$S=\{r>0:\wh{h}\equiv0$ on $C_r\}$, where $C_r$ is the sphere of
radius $r>0$ centered at origin in $\R^n$. Let $Y\ =\
Span\{^gh:g\in M(n)\}$. Then the main result from
\cite{RawatSitaram} is

\begin{thm}\label{thmRS}
\begin{enumerate}
\item If $p=1$, then $Y$ is dense in $L^1(\R^n)$ if and only if
$S$ is empty and $\wh{h}(0)\neq0.$ \item If $1<p<\frac{2n}{n+1}$,
then Y is dense in $L^p(\R^n)$ if and only if $S$ is empty. \item
If $\frac{2n}{n+1}\leq p<2$, and every point of $S$ is an isolated
point, then $Y$ is dense in $L^p(\R^n)$. \item If $2\leq
p\leq\frac{2n}{n-1}$, and $S$ is of zero measure in $\R^+$, then
$Y$ is dense in $L^p(\R^n)$. \item If $\frac{2n}{n-1}<p<\infty$,
then $Y$ is dense in $L^p(\R^n)$ if and only if $S$ is nowhere
dense.
\end{enumerate}
\end{thm}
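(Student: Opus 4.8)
The plan is to dualize the density question. By Hahn--Banach, for $1\le p<\infty$ the span $Y$ is dense in $L^p(\R^n)$ exactly when the only $\phi$ in the dual space ($L^{p'}(\R^n)$, or $L^\infty(\R^n)$ when $p=1$) with $\langle {}^{g}h,\phi\rangle=0$ for all $g\in M(n)$ is $\phi\equiv 0$. Given such a $\phi$, writing $\int_{\R^n}h(kx+y)\phi(x)\,dx=0$ and changing variables shows that for each fixed $k\in SO(n)$ the convolution $h*\tilde\psi_k$ vanishes identically, where $\tilde\psi_k(z)=\phi(-k^{-1}z)$; since $h\in L^1(\R^n)$ makes $\wh h$ continuous, a spectral-analysis argument then forces, on passing to Fourier transforms and letting $k$ range over $SO(n)$,
\[
  \operatorname{supp}\wh\phi\ \subseteq\ E\ :=\ \{\xi\in\R^n:\ \wh h\equiv 0\ \text{on the sphere}\ C_{|\xi|}\}.
\]
So $E$ is the union of the spheres $C_r$, $r\in S$, together with $\{0\}$ when $\wh h(0)=0$, and $E$ is closed because $\wh h$ is continuous. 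The whole problem now reduces to: for which $S$ does $\operatorname{supp}\wh\phi\subseteq E$, together with membership of $\phi$ in the relevant $L^{p'}$, force $\phi\equiv 0$? For the ``only if'' assertions one conversely builds a nonzero annihilator once $E$ is big enough.

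\emph{Cases (1) and (2).} If $E=\emptyset$, i.e.\ $S=\emptyset$ and $\wh h(0)\ne0$, then $\wh\phi=0$; and if $S=\emptyset$ but $\wh h(0)=0$, then $\operatorname{supp}\wh\phi\subseteq\{0\}$, so $\phi$ is a polynomial, hence $0$ in any $L^{p'}$ with $p'<\infty$. This gives the sufficiency in (1) --- which is Wiener's $L^1$ Tauberian theorem applied to the common zero set of $\{{}^{k}h:k\in SO(n)\}$ --- and in (2). For necessity, if $r\in S$ then $\phi=\wh{\sigma_{C_r}}$, the Fourier transform of surface measure on $C_r$, annihilates $Y$ because $\sigma_{C_r}$ is rotation invariant and supported where $\wh h$ vanishes; it is bounded and nonzero, giving the required $L^\infty$ annihilator in (1), and for $n\ge2$ the decay $|\wh{\sigma_{C_r}}(x)|\sim|x|^{-(n-1)/2}$ places it in $L^{p'}(\R^n)$ exactly when $p'>\tfrac{2n}{n-1}$, which is the range in (2); the constant function handles the remaining $\wh h(0)=0$ subcase of (1). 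Hence $Y$ is dense iff $S=\emptyset$ (and $\wh h(0)\ne0$ when $p=1$).

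\emph{Cases (3) and (4).} In (3), $\tfrac{2n}{n+1}\le p<2$, so $2<p'\le\tfrac{2n}{n-1}$, and every point of $S$ is isolated. Let $\xi_0\in\operatorname{supp}\wh\phi\setminus\{0\}$; the sphere $C_{r_0}$ through $\xi_0$ is isolated, so there is an open annulus $A\supseteq C_{r_0}$ with $A\cap E=C_{r_0}$. Picking $\chi\in C_c^\infty(A)$ with $\chi\equiv1$ near $C_{r_0}$, the function $g=\check\chi*\phi\in L^{p'}(\R^n)$ satisfies $\wh g=\chi\wh\phi$, supported on the single $C^1$ hypersurface $C_{r_0}$, so $g\equiv0$ by Theorem~\ref{thmNaru} (with $d=n-1$ and $p'\le 2n/(n-1)=2n/d$). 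Hence $\wh\phi$ vanishes near $\xi_0$ --- a contradiction --- and the origin is excluded by the polynomial argument, so $\phi\equiv0$ and $Y$ is dense. In (4), $2\le p\le\tfrac{2n}{n-1}$ gives $1\le p'\le2$, so by Hausdorff--Young $\wh\phi$ is a genuine $L^p(\R^n)$ function; when $S$ has measure zero so does $E\subset\R^n$, whence $\wh\phi=0$ a.e.\ and $\phi\equiv0$.

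\emph{Case (5).} Here $p>\tfrac{2n}{n-1}$, so $1<p'<\tfrac{2n}{n+1}<2$ and $E$ is a closed nowhere-dense subset of $\R^n$. Necessity is easy: if $S$ fails to be nowhere dense it contains an interval $(a,b)$ with $a>0$, and any nonzero $\psi\in C_c^\infty(\{a<|\xi|<b\})$ gives $\phi=\check\psi\in\mathcal S\subseteq L^{p'}(\R^n)$ annihilating $Y$. Sufficiency is the main obstacle: one must prove that an $L^{p'}$ function with $p'<\tfrac{2n}{n+1}$ whose Fourier transform is supported on a closed nowhere-dense set is zero. This cannot be deduced from the thin-support theorems of this thesis --- a nowhere-dense $E$ may have positive Lebesgue measure, and already one $(n-1)$-sphere in $E$ has packing dimension $n-1>2n/p$ in this range, so Theorem~\ref{corthmMYP} does not apply --- and must instead be read off from the classical $L^p$-Wiener--Tauberian theory for $p>2$ (Herz, Beurling, and the further references in the introduction); the underlying mechanism is that the Fourier transform of an $L^{p'}$ function with $p'<\tfrac{2n}{n+1}$ has enough fractional smoothness to be incompatible with a nowhere-dense support unless it vanishes. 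Granting this, $\phi\equiv0$, so $Y$ is dense, completing all five cases. It is precisely this last, most delicate regime that the thesis revisits later, recasting ``nowhere dense'' in terms of finite packing measure of the zero set.
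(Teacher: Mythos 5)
The thesis does not actually prove this theorem: it is quoted verbatim as the main result of \cite{RawatSitaram}, and only part (3) is later reproved and sharpened (Theorem \ref{thmMYRr} together with Remark \ref{rem-0-dim-isolatedset}). So your argument can only be measured against the standard one in the literature. Your duality framework and the identification $\mathrm{supp}\,\wh{\phi}\subseteq E=\bigcup_{r\in S}C_r$ (plus the origin when $\wh{h}(0)=0$) are correct, and parts (1), (2) and (4) are handled properly: Wiener's theorem, the polynomial argument at the origin, the fact that $\wh{\sigma_{C_r}}\in L^{p'}(\R^n)$ exactly when $p'>\tfrac{2n}{n-1}$, and Hausdorff--Young plus polar coordinates. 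Part (3) by localizing to an isolated sphere and invoking Theorem \ref{thmNaru} is also sound, and is in the same spirit as the route the thesis itself indicates in Remark \ref{rem-0-dim-isolatedset}.

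The genuine gap is the sufficiency in part (5), the only analytically hard direction, which you explicitly leave unproved. The appeal to ``fractional smoothness'' of Fourier transforms of general $L^{p'}$ functions with $p'<\tfrac{2n}{n+1}$ is not a theorem and is not the mechanism. Two ideas are missing. First, since $Y$ is invariant under all of $M(n)$, the annihilator $\phi$ may be replaced by its average over $SO(n)$ and hence assumed radial (this is exactly the reduction the thesis alludes to with ``we can assume $h$ to be smooth, bounded and radial''). Second, for a radial $\phi\in L^{q}(\R^n)$ with $q<\tfrac{2n}{n+1}$, the value of $\wh{\phi}$ on the sphere of radius $\rho>0$ is $\sigma(C_\rho)^{-1}\int_{\R^n}\phi\,\wh{\sigma_{C_\rho}}$, an absolutely convergent integral precisely because $\wh{\sigma_{C_\rho}}\in L^{q'}$ for $q'>\tfrac{2n}{n-1}$; this exhibits $\wh{\phi}$ as a continuous function on $\R^n\setminus\{0\}$, and a continuous function whose support is nowhere dense vanishes, after which the polynomial argument at the origin finishes. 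Without radialization the statement you implicitly invoke is simply unavailable. Note also that your stated reason for why Theorem \ref{corthmMYP} cannot be used is off: for a single sphere one has $\alpha=n-1$ and $p'<\tfrac{2n}{n+1}<\tfrac{2n}{n-1}=\tfrac{2n}{\alpha}$, so that theorem \emph{would} apply; the true obstruction is that a nowhere dense $E$ may have positive Lebesgue measure and therefore fails to have finite $\alpha$-packing measure for any $\alpha<n$.
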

We show that the part (3) of the above theorem can be improved:
\begin{thm}\label{thmMYRr}\cite{Raani}
Let $f\in L^1(\mathbb{R}^n)\cap L^p(\mathbb{R}^n)$ and let
$S=\{r>0:\wh{f}\equiv0$ on $C_r\}$ be such that
$\CP^{\beta}(S)<\infty,$ for some $0\leq\beta<1$. If
$\frac{2n}{n+1-\beta}\leq p\leq 2$, then $Y\ =\ Span\{^gf:g\in
M(n)\}$ is dense in $L^p(\mathbb{R}^n)$.
\end{thm}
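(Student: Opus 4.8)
The plan is to establish the contrapositive by Hahn--Banach duality and then invoke \thmref{corthmMYP}. Since the hypothesis gives $\tfrac{2n}{n+1-\beta}\le p\le 2$ and $\tfrac{2n}{n+1-\beta}=1$ only in the degenerate corner $n=1,\ \beta=0$, I may assume $1<p\le 2$, so that $p'=p/(p-1)\in[2,\infty)$; the case $p=1$ (which forces $n=1,\ \beta=0$, hence $S$ finite) is the classical Wiener situation and is set aside. If $Y$ is not dense in $L^p(\R^n)$, there is a nonzero $h\in L^{p'}(\R^n)$ with $\int_{\R^n}{}^{g}f(x)\,h(x)\,dx=0$ for all $g=(y,k)\in M(n)$. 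Fixing $k\in SO(n)$ and putting $f_k(x)=f(kx)$ (still in $L^1$), a change of variables turns this into $(f_k*\check h)(z)=0$ for all $z$, where $\check h(x)=h(-x)$; since $f_k\in L^1$ and $h\in L^{p'}\subset\mathcal S'$, I may take Fourier transforms to get $\wh{f_k}\cdot\wh{\check h}=0$ in $\mathcal S'$. As $\wh{f_k}$ is continuous with $\wh{f_k}(\xi)=\wh f(k\xi)$, this forces $\operatorname{supp}\wh{\check h}\subset\{\xi:\wh f(k\xi)=0\}=k^{-1}Z(\wh f)$, where $Z(\wh f)=\{\wh f=0\}$.

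Intersecting over $k\in SO(n)$, a point $\xi$ survives exactly when its whole $SO(n)$-orbit $C_{|\xi|}$ lies in $Z(\wh f)$, i.e. when $|\xi|\in S$; hence
\[
  \operatorname{supp}\wh{\check h}\ \subset\ \bigcap_{k\in SO(n)}k\,Z(\wh f)\ =\ \{\xi\in\R^n:\ |\xi|\in S\}\ =:\ E .
\]
The geometric heart of the proof is the claim that, for every $R>0$, the bounded set $E\cap B_R(0)$ has finite $(n-1+\beta)$-dimensional packing measure. To see this I would decompose $S\cap[0,R]$ into the dyadic shells $S^{(j)}=S\cap[2^{-j-1},2^{-j})$, $j\ge j_0$. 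On $\{2^{-j-1}\le|\xi|<2^{-j}\}$ the polar map $(r,\omega)\mapsto r\omega$ is, after rescaling by $2^{j}$, a smooth diffeomorphism of $[\tfrac12,1)\times S^{n-1}$ onto its image, hence bi-Lipschitz with an absolute constant, so $\{|\xi|\in S^{(j)}\}$ is bi-Lipschitz equivalent to $S^{(j)}\times S^{n-1}$ up to the scale factor $2^{-j}$. Using bi-Lipschitz quasi-invariance and the homogeneity $\CP^{s}(tA)=t^{s}\CP^{s}(A)$ of packing measure, the standard Cartesian-product estimate $\CP^{s+t}(A\times B)\le c(s,t)\,\CP^{s}(A)\,P_0^{t}(B)$ (with $P_0^{t}$ the packing pre-measure; cf. the product bounds \eqref{eqrem-cart-HPdim} at the level of dimensions) applied with $A=S^{(j)}$, $B=S^{n-1}$, $s=\beta$, $t=n-1$, together with $P_0^{n-1}(S^{n-1})=\CH_{n-1}(S^{n-1})<\infty$ for the smooth compact hypersurface $S^{n-1}$ (Remark~\ref{PHLrelation}, Lemma~\ref{lemPHmsr}), gives $\CP^{n-1+\beta}\big(\{|\xi|\in S^{(j)}\}\big)\le C\,2^{-j(n-1)}\CP^{\beta}(S^{(j)})\le C\,\CP^{\beta}(S^{(j)})$. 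Since the $S^{(j)}$ are disjoint Borel sets and $\CP^{\beta}$ is a measure, summing yields $\CP^{n-1+\beta}(E\cap B_R(0))\le C\,\CP^{\beta}(S)<\infty$ (the point $0$, if it lies in $S$, contributes nothing).

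With the claim in hand I finish as in \thmref{corthmMYP}. Given $\psi\in C_c^\infty(\R^n)$, pick $R$ with $\operatorname{supp}\psi\subset B_R(0)$; then $\operatorname{supp}\wh{\check h}\cap\operatorname{supp}\psi$ is a bounded subset of $E\cap B_R(0)$, a set of finite $(n-1+\beta)$-packing measure, and \lemref{lemMY1} applies to it. Since $p\ge\tfrac{2n}{n+1-\beta}$ is precisely $p'\le\tfrac{2n}{\,n-1+\beta\,}$ and $p'\ge2$, the argument of \thmref{corthmMYP} (which is local in $\psi$ and uses the support only through \lemref{lemMY1} and \lemref{lemMYhorm}) gives $\langle\wh{\check h},\psi\rangle=0$ for every $\psi$, hence $\check h\equiv0$ and $h\equiv0$, a contradiction. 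Therefore $Y$ is dense in $L^p(\R^n)$. The step I expect to be the main obstacle is the packing-measure claim: finiteness of $\CP^{\beta}(S)$ does \emph{not} imply finiteness of the Minkowski content of $S$, so one must work with covers realising the packing pre-measure and invoke a genuine product estimate for packing (pre-)measures; moreover the estimate must be kept local on a fixed ball $B_R$, because for unbounded $S$ the set $E$ itself may fail to have globally finite $(n-1+\beta)$-packing measure.
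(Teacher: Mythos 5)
Your overall strategy --- Hahn--Banach duality, locating $\operatorname{supp}\wh{h}$ inside $E=\{\xi:|\xi|\in S\}$, and then running the argument of \thmref{corthmMYP} with the effective dimension $n-1+\beta$ --- is the same as the paper's, and your exponent arithmetic ($p\geq 2n/(n+1-\beta)$ iff $p'\leq 2n/(n-1+\beta)$) is correct. The gap is in the geometric step. You route everything through the claim $\CP^{n-1+\beta}(E\cap B_R(0))<\infty$, and to get it you invoke a ``standard Cartesian-product estimate'' $\CP^{s+t}(A\times B)\leq c(s,t)\,\CP^{s}(A)\,P_0^{t}(B)$. No such estimate appears in the paper (the product inequalities recorded in (\ref{eqrem-cart-HPdim}) are only at the level of dimensions), and it is not a safe black box: packing pre-measure is a supremum over packings by balls of \emph{varying} radii, so the obvious attempt --- group the balls of a packing of $A\times S^{n-1}$ by dyadic radius $2^{-m}$, bound the angular multiplicity at that scale, and control the radial projections by $P(A,2^{-m})$ --- pays a contribution of order $P^{\beta}_{2^{-m}}(A)$ at \emph{every} scale $m$, and the sum over $m$ diverges. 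You correctly flag this as the main obstacle, but the proposal does not overcome it, and this is exactly where the substance of the proof lies.

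The paper sidesteps the product measure entirely by noting that the proof of \thmref{corthmMYP} uses the packing hypothesis only through the conclusion of \lemref{lemMY1}, i.e.\ the Minkowski-content bound $\limsup_{\epsilon\to0}\epsilon^{\alpha-n}\int_{M(\epsilon)}|\psi|^2<\infty$ for the bounded set $M=\operatorname{supp}\wh{h}\cap\operatorname{supp}\psi$. With $\alpha=n-1+\beta$ this follows directly in polar coordinates: $\int_{M(\epsilon)}|\psi|^2\leq C_{R_\psi}\|\psi\|_\infty^2\,\big|S_{\psi}(\epsilon)\big|_{1}$, where $S_\psi=S\cap[0,R_\psi]$, and the one-dimensional Lebesgue measure of the $\epsilon$-neighbourhood of $S_\psi\subset\R$ is controlled by $N(S^i_\psi,\epsilon)\,\epsilon\leq P(S^i_\psi,\epsilon/2)\,\epsilon\leq\epsilon^{1-\beta}P^{\beta}_{\epsilon}(S^i_\psi)$ via \lemref{lemPM1}, summed over a cover $\{A_i\}$ exactly as in the proof of \lemref{lemMY1}. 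Lebesgue measure of $\epsilon$-neighbourhoods behaves trivially under the polar product, which is precisely what packing measure does not do. If you replace your packing-measure claim by this direct estimate of $\epsilon^{\beta-1}\int_{M(\epsilon)}|\psi|^2$, the remainder of your argument goes through unchanged.
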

\begin{proof}
    Fix $\epsilon<1$. Suppose $Y$ is not dense in $L^p(\mathbb{R}^n)$. Let $h\in
L^q(\mathbb{R}^n)$ annihilate all the elements in $Y$, where
$\frac{1}{p}+\frac{1}{q}=1$. We can assume $h$ to be smooth,
bounded and radial (See the arguments in \cite{RawatSitaram}). It
follows that $h*f\equiv0$. Then $supp$ $\wh{h}$ is contained in
the zero set of $\wh{f}$. Let $\alpha$ and $q$ be such that $2\leq
q=\frac{2n}{\alpha}\leq\frac{2n}{n-1+\beta}$. Choose an even
function $\chi \in C_{c}^{\infty}(\R^{n}) $ with support in the
unit ball and $\int_{\mathbb{R}^{n}}\chi(x) dx = 1$. Let
$\chi_{\epsilon}(x) = \epsilon^{-n}\chi(x/\epsilon)$ and
$u_{\epsilon}=u\ast\chi_{\epsilon}$ where $u=\wh{h}$. Since $2\leq
q$, as in Lemma \ref{lemMYhorm},
  \begin{eqnarray*}
  % \nonumber to remove numbering (before each equation)
    \|u_{\epsilon}\|^{2} &\leqslant& C\epsilon^{\alpha-n}\sum_{j=-\infty}^{\infty}
    a_{j}b_{j}^{\epsilon},\\
    \text{where}\ a_{j} &=& 2^{j(n-\alpha)} \sup_{2^{j}\leqslant|x|\leqslant2^{j+1}}
    |\wh{\chi}(x)|^{2}\\
    \text{and}\ b_{j}^{\epsilon} &=& (2^{-j}\epsilon)^{n-\alpha}\int_{2^{j}\leqslant|\epsilon x|\leqslant2^{j+1}}|h(x)|^{2}dx.
  \end{eqnarray*}
  and $\sum\limits_{j=-\infty}^{\infty}a_{j}b_{j}^{\epsilon} \rightarrow 0$ as $ \epsilon \rightarrow 0$.\\

  Let $\psi\in C_c^{\infty}(\mathbb{R}^n)$. Let $M=\ supp\ \wh{h}\ \cap\ supp\ \psi$ and let $R_{\psi}>0$ be such that $M$ is contained in a ball of radius $R_{\psi}$. For $x\in M$, $\|x\|\in S$ and $\|x\|\leq R_{\psi}$. Let $S_{\psi}=\{r\in S: r\leq R_{\psi}\}$.
  Then $S_{\psi}$ is a bounded subset of $S$. We claim that
  \be
  \underset{\epsilon\rightarrow0}{\lim}\ \epsilon^{\beta-1}\int_{M_{\epsilon}}|\psi(x)|^2dx<\infty. \label{eqnRT1}
  \ee
Then,
  \begin{eqnarray*}
  % \nonumber to remove numbering (before each equation)
    |<u,\psi>|^2 &=& \underset{\epsilon\rightarrow0}{\lim}\ |<u_{\epsilon},\psi>|^2 \\
      &\leq& \underset{\epsilon\rightarrow0}{\lim}\ \|u_{\epsilon}\|_2^2\ \int_{M_{\epsilon}}|\psi|^2 \\
      &\leq& C\underset{\epsilon\rightarrow0}{\lim}\ \epsilon^{\alpha-n}\sum_{j=-\infty}^{\infty} a_{j}b_{j}^{\epsilon}\int_{M_{\epsilon}}|\psi(x)|^2dx \\
      &\leq& C\underset{\epsilon\rightarrow0}{\lim}\ \epsilon^{\alpha-n-\beta+1}\epsilon^{\beta-1}\sum_{j=-\infty}^{\infty} a_{j}b_{j}^{\epsilon}\int_{M_{\epsilon}}|\psi(x)|^2dx,
  \end{eqnarray*}
  since $2\leq\frac{2n}{\alpha}\leq\frac{2n}{n-1+\beta}$, that is $0\leq\alpha-n-\beta+1$ and $\underset{\epsilon\rightarrow0}{lim}\sum_{j}a_jb_j^{\epsilon}=0$ we get $u\equiv0$ and hence $h\equiv0$.\\

\emph{Proof of (\ref{eqnRT1}):} The proof is similar to that of Lemma \ref{lemMY1}. Since $\CP^{\beta}(S_{\psi})\leq\CP^{\beta}(S)<\infty$, let $\{A_i\}$ be a cover of $S_{\psi}$ such that $\sum_iP_0^{\alpha}(A_i)<\infty$. Then $P_0^{\alpha}(A_i\cap S_{\psi})<\infty$. For $S^i_{\psi}=A_i\cap S_{\psi}$, let $P(S^i_{\psi},\epsilon)$ be the maximum number of disjoint balls with centers $\{r_j\}$ in $S^i_{\psi}$, of radius $\epsilon$ and $N(S^i_{\psi},\epsilon)$ be the $\epsilon$-covering number of $S^i_{\epsilon}$. Then
\Bea
S^i_{\psi}&\subseteq&\cup_{j=1}^{N(S^i_{\psi},\epsilon)}(r_j-\epsilon/2,r_j+\epsilon/2)\ \ \
  \text{and}\\
S_{\psi}(\epsilon)&\subset&\cup_iS^i_{\psi}(\epsilon) \subseteq \cup_i\cup_{j=1}^{N(S^i_{\psi},\epsilon)}(r_j-\epsilon,r_j+\epsilon).
\Eea
  If $x\in M(\epsilon)$, then $\|x\|\in S_{\psi}(\epsilon)$. We have,
  \begin{eqnarray*}
  % \nonumber to remove numbering (before each equation)
    \int_{M_{\epsilon}}|\psi(x)|^2dx &\leq & \int_{r\in S_{\psi}(\epsilon)}\int|\psi(r\omega)|^2d\omega r^{n-1} dr \\
      &\leq & (R_{\psi}+\epsilon)^{n-1} \int_{r\in S_{\psi}(\epsilon)}\int|\psi(r\omega)|^2d\omega dr \\
      &\leq & (R_{\psi}+1)^{n-1} \|\psi\|^2_{\infty}\Omega_n \sum_i\sum_{j=1}^{N(S^i_{\psi},\epsilon)}\int_{r_j-\epsilon}^{r_j+\epsilon} dr  \\
      &=& C_1\sum_iN(S^i_{\psi},\epsilon)(2\epsilon) \\
      &\leq & 2C_1 \epsilon \sum_iP(S^i_{\psi},\epsilon/2)\ \ \ \text{(by Lemma
      \ref{lemPM1})}
  \end{eqnarray*}
  where $C_1=(R_{\psi}+1)^{n-1} \|\psi\|^2_{\infty}\Omega_n $ is a constant independent of $\epsilon$ and $\Omega_n$ is the volume
  of the unit sphere in $\mathbb{R}^n$.
\end{proof}

\begin{rem}\label{rem-0-dim-isolatedset}
Suppose $S$ is isolated. Convolving $f$ with an arbitrary Schwartz
class function whose Fourier transform is compactly supported, we
may reduce to the case where $S$ is finite. The case $\beta=0$ in the above
theorem then implies part (3) of Theorem \ref{thmRS}.
\end{rem}

Now let $f$ be a function in $L^1\cap L^p(\R)$ and let
$F$ denote the closed set where the Fourier transform of $f$
vanishes. In \cite{Beurling}, A. Beurling proved that if for some
$p$ in $(1,2)$, the space of finite linear combinations of
translates of $f$ is not dense in $L^p(\R)$, then the Hausdorff
dimension of $F$ is at least $2-(2/p)$ (See also page 312 in
\cite{Donoghue}). In other words, if the Hausdorff dimension of
$F$ is $\alpha$, for $0\leq\alpha\leq1$, then the space of finite
linear combinations of translates of $f$ is dense in $L^p(\R)$ for
$2/(2-\alpha)<p<\infty$. Now using Theorem \ref{corthmMYP}, we prove a
similar result (including the end points for the range) on $\R^n$
where Hausdorff dimension is replaced with the packing dimension.

\begin{thm}\label{thmMYRB}
Let $f\in L^1(\mathbb{R}^n)\cap L^p(\mathbb{R}^n)$ for
$\frac{2n}{2n-\alpha}\leq p<\infty$ and let the zero set of
$\wh{f}\subseteq E,$ where $\CP^{\alpha}(E)<\infty$ for some
$0\leq\alpha<n$. Then $X=span\{ ^xf: x\in\mathbb{R}^n\}$ is dense
in $L^p(\mathbb{R}^n)$.
\end{thm}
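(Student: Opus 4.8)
The plan is to combine the Hahn--Banach duality underlying Wiener's Tauberian theorem with Theorem~\ref{corthmMYP}. Assume, towards a contradiction, that $X=\mathrm{span}\{{}^xf:x\in\R^n\}$ is not dense in $L^p(\R^n)$. Since $\tfrac{2n}{2n-\alpha}\le p<\infty$ we may identify $(L^p(\R^n))^{*}$ with $L^q(\R^n)$, $\tfrac1p+\tfrac1q=1$; by Hahn--Banach there is a nonzero $g\in L^q(\R^n)$ annihilating $X$, that is $\int_{\R^n}f(y-x)g(y)\,dy=0$ for every $x\in\R^n$, which is precisely the identity $\check f\ast g\equiv 0$ with $\check f(t)=f(-t)$.

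Next I would pass to the Fourier side. Since $f\in L^1(\R^n)$ the function $\widehat f$ is continuous, so $Z(\widehat{\check f})=\{\xi:\widehat f(-\xi)=0\}=-Z(\widehat f)$ is a closed set, and $\check f\ast g\equiv0$ forces the tempered distribution $\widehat g$ to be supported in $Z(\widehat{\check f})$ --- this is exactly the distributional form of Wiener's Tauberian theorem recalled at the start of this chapter (see \cite{Rudin}). By hypothesis $Z(\widehat f)\subseteq E$, hence $\mathrm{supp}\,\widehat g\subseteq -E$, and since the $\alpha$-packing measure is monotone and invariant under the reflection $x\mapsto-x$ we get $\CP^{\alpha}(\mathrm{supp}\,\widehat g)\le\CP^{\alpha}(E)<\infty$. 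Observe that, unlike in the proof of Theorem~\ref{thmMYRr}, there is no need to replace $g$ by a smooth bounded radial function, because Theorem~\ref{corthmMYP} applies to an arbitrary $L^q$ function.

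Finally I would invoke Theorem~\ref{corthmMYP}. The hypothesis $p\ge\tfrac{2n}{2n-\alpha}$ is equivalent to $\tfrac1q=1-\tfrac1p\ge\tfrac{\alpha}{2n}$, that is $q\le\tfrac{2n}{\alpha}$; thus $g\in L^q(\R^n)$ has Fourier transform supported in a set of finite $\alpha$-packing measure with $q\le 2n/\alpha$, so Theorem~\ref{corthmMYP} gives $g\equiv0$, contradicting the choice of $g$. Hence $X$ is dense in $L^p(\R^n)$.

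I do not expect a genuinely hard step: once Theorem~\ref{corthmMYP} is in hand the result is essentially a corollary of it, and the only points that need care are the identification of the dual space (which is where $p<\infty$ enters) and the admissibility of the endpoint $p=\tfrac{2n}{2n-\alpha}$, i.e. $q=\tfrac{2n}{\alpha}$ --- allowed precisely because Theorem~\ref{corthmMYP} itself includes its endpoint $p=2n/\alpha$. This is what lets the conclusion cover the full closed range, strengthening and extending to $\R^n$ (with packing in place of Hausdorff dimension) the open-range theorem of Beurling on $\R$ quoted above.
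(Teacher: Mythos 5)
Your proposal is correct and follows essentially the same route as the paper: Hahn--Banach duality produces a nonzero annihilator $g\in L^q$, the convolution identity places $\mathrm{supp}\,\widehat g$ inside the zero set of $\widehat f$ (hence in a set of finite $\alpha$-packing measure), and the arithmetic $p\ge\tfrac{2n}{2n-\alpha}\iff q\le\tfrac{2n}{\alpha}$ lets Theorem~\ref{corthmMYP} kill $g$. The only (immaterial) differences are that you track the reflection $-E$ explicitly and skip the smoothing/radialization of the annihilator, which is indeed unnecessary here since Theorem~\ref{corthmMYP} already handles a general $L^q$ function.
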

\begin{proof}
Suppose $X$ is not dense in $L^p(\R^n)$. Then as above, there exists a non
trivial, smooth and radial $h\in L^q(\mathbb{R}^n)$ such that $h *
f_1 \equiv 0$ for all $f_1\in X$. Clearly the zero set of $X(\subset
L^1(\mathbb{R}^n))$, $\underset{u\in
X}{\cap}\{s\in\mathbb{R}^n:\wh{u}(s)=0\}$ is equal to the zero set
of $\wh{f}$, $Z(\wh{f})$. Hence $supp\ \wh{h}\subseteq Z(\wh{f})$.
Since $\frac{2n}{2n-\alpha}\leq p<\infty$, we have $1< q\leq
\frac{2n}{\alpha}$.
By Theorem \ref{corthmMYP}, $h=0$. Thus $X$ is dense in $L^p(\mathbb{R}^n)$.\\
\end{proof}

In \cite{Herz}, C. S Herz studied versions of $L^p$- Wiener
Tauberian theorems. From Theorem 1 and Theorem 4 of \cite{Herz},
we note that for $f\in L^1\cap L^p(\R^n),\ p<\infty$ the
alternative sufficient conditions for the translates of $f$ to
span $L^p$ are,
\begin{enumerate}
\item $|K(\epsilon)|=o(\epsilon^{n(1-2/q)})$ for each compact
subset $K$ of $E$. \item $\dim_H(E)=\alpha<2n/q$, with the proviso, if
$n>2$, that $q\leq 2n/(n-2)$.
\end{enumerate} where $E$ denotes the zero set of $\wh{f}$ and $\frac{1}{p}+\frac{1}{q}=1$.
With an additional hypothesis on $E$, using Theorem \ref{thmMYRB},
we can improve the result in \cite{Herz}:

\begin{prop}\label{prop-Herz} For $f\in
L^1\cap L^p(\R^n),\ 1\leq p<\infty$ a sufficient condition that
the translates of $f$ span $L^p$ is : the zero set of $\wh{f}$ has
finite packing $\alpha$- measure for $\alpha\leq 2n/q$ where
$\frac{1}{p}+\frac{1}{q}=1$.
\end{prop}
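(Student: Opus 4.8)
The plan is to deduce the proposition from the vanishing result of Theorem \ref{corthmMYP} by the standard Wiener-type duality argument, exactly as in the proof of Theorem \ref{thmMYRB}. The first observation is purely arithmetic: with $\tfrac1p+\tfrac1q=1$, the hypothesis $\alpha\le 2n/q$ is the same as $\tfrac{2n}{2n-\alpha}\le p$, so for $1<p<\infty$ the assertion is literally a restatement of Theorem \ref{thmMYRB}, and one only has to recall its proof. Assume then $1<p<\infty$ and suppose, for contradiction, that $X=\mathrm{span}\{{}^xf:x\in\R^n\}$ is not dense in $L^p(\R^n)$. By the Hahn--Banach theorem there is a nonzero $h\in L^q(\R^n)$ killing every translate of $f$; writing out the pairing and using $f\in L^1$, $h\in L^q$ (so that $\tilde f\ast h\in L^q$ by Young's inequality, where $\tilde f(t):=f(-t)$), this reads $\tilde f\ast h\equiv 0$.

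The next step is the classical Wiener Tauberian theorem, in the form used for Theorem \ref{thmMYRB} (see the discussion in the Introduction and \cite{Rudin}, \cite{Wiener}, \cite{RawatSitaram}): from $\tilde f\ast h\equiv 0$ the distributional support of $\wh h$ is contained in the zero set $E:=Z(\wh f)$ of $\wh f$ (up to the reflection $\xi\mapsto-\xi$, which does not affect the $\alpha$-packing measure). By hypothesis $\CP^{\alpha}(E)<\infty$, and $\alpha\le 2n/q$ gives $q\le 2n/\alpha$ when $\alpha>0$. Hence Theorem \ref{corthmMYP}, applied with $h\in L^q(\R^n)$ in the role of the function there, forces $h\equiv 0$ --- contradicting $h\ne 0$. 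Thus $X$ is dense in $L^p(\R^n)$ for every $1<p<\infty$. If $\alpha=0$ the set $E$ is finite and the same conclusion follows a fortiori (Theorem \ref{thmNaru}, the case $d=0$). The endpoint $p=1$, where $q=\infty$ and necessarily $\alpha=0$, is the classical Wiener Tauberian theorem for $L^1(\R^n)$, and I would dispose of it separately, since Theorem \ref{corthmMYP} needs the exponent attached to $h$ to be finite.

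The argument is short precisely because all the harmonic analysis has been isolated in Theorem \ref{corthmMYP}; the one point deserving care is the passage $\mathrm{supp}\,\wh h\subseteq Z(\wh f)$, which is the Wiener Tauberian theorem itself, transparent once one knows that the smallest closed translation-invariant subspace of $L^1$ containing $f$ has hull equal to $Z(\wh f)$. If one prefers every manipulation to be literal, one may first replace $h$ by $h\ast\rho$ with $\rho\in C_c^\infty(\R^n)$ an approximate identity: this remains in $L^q(\R^n)$, still annihilates all translates of $f$ (because $\tilde f\ast(h\ast\rho)=(\tilde f\ast h)\ast\rho=0$), and has $\mathrm{supp}\,\wh{h\ast\rho}\subseteq\mathrm{supp}\,\wh h$, so that running $\rho$ through the approximate identity recovers $h$. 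Beyond this I anticipate no obstacle: the Hahn--Banach duality, Young's inequality, and the exponent bookkeeping $\tfrac{2n}{2n-\alpha}\le p\iff\alpha q\le 2n$ are all routine.
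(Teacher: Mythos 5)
Your argument for $1<p<\infty$ is correct and is essentially the paper's own: the paper offers no separate proof of this proposition, deriving it directly from Theorem \ref{thmMYRB} via the same arithmetic you record ($\alpha\le 2n/q\iff p\ge \tfrac{2n}{2n-\alpha}$), and the proof of Theorem \ref{thmMYRB} is exactly the Hahn--Banach/Wiener duality you reproduce, ending with an appeal to Theorem \ref{corthmMYP} applied to the annihilating $h\in L^q$ with $q\le 2n/\alpha$.

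The one point where your proposal does not close is the endpoint $p=1$, which you rightly isolate (Theorem \ref{corthmMYP} needs a finite exponent on $h$) but then dismiss too quickly. At $p=1$ one has $q=\infty$ and $\alpha=0$, and finite $\CP^{0}$-measure of $Z(\wh f)$ means only that $Z(\wh f)$ is a \emph{finite} set, not an empty one; the classical Wiener theorem in $L^1$ requires $\wh f$ to be nowhere vanishing, and indeed if $\xi_0\in Z(\wh f)$ the bounded functional $g\mapsto\int g(x)e^{-ix\cdot\xi_0}\,dx$ annihilates every translate of $f$, so the translates cannot be dense in $L^1$. Thus the $p=1$ case cannot be ``disposed of separately'' as you suggest --- the statement itself fails there unless the zero set is empty. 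This is a defect of the proposition as formulated (the paper's implicit derivation from Theorem \ref{thmMYRB} has the same blind spot, since the duality argument breaks down when $q=\infty$), so your proof should either restrict to $1<p<\infty$ or add the hypothesis $Z(\wh f)=\emptyset$ when $p=1$.
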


\section{ $L^p$ Wiener Tauberian Theorem on $M(2)$}
\setcounter{equation}{0}

In this section, we look at one sided and two sided analogues of
Wiener Tauberian Theorems on $M(2)$ and improve a few
results from \cite{NaruRawat}.\\

The group $M(2)$ is the semi-direct product of $\R^2$ with the
special orthogonal group $K=SO(2)$. The group law in $G=M(2)$ is
given by \bee (z,e^{i\alpha})(w,e^{i\beta})=(z+e^{i\alpha}w,
e^{i(\alpha+\beta)}). \eee The Haar measure on $G$ is given by
$dg=dzd\alpha$ where $dz$ is the Lebesgue measure on $\C$ and
$d\alpha$ is the normalized Haar measure on $S^1$. For each
$\lambda>0$, we have a unitary irreducible representation of $G$
realized on $H=L^2(K)=L^2([0,2\pi],dt)$, given by
$$[\pi_{\lambda}(z,e^{it})u](s)=e^{i\lambda<z,e^{is}>}u(s-t),$$
for $(z,e^{it})\in G$ and $u\in H$. Here $<z,w>=$Re$(z.\bar{w})$.
It is known that these are all the infinite dimensional, non
equivalent unitary irreducible representations of $G$. Apart from
the above family, we have another family $\{\chi_n,n\in \Z\}$
($\Z$ is the set of integers) of one dimensional unitary
representations of $G$, given by
$\chi_n(z,e^{i\alpha})=e^{in\alpha}$. Then the unitary dual
$\wh{G}$, of $G$ is the collection
$\{\pi_{\lambda},\lambda>0\}\cup\{\chi_n:n\in\Z\}$ (See page 165,
\cite{Sugiura}). For $f\in L^1(G)$, define the \textbf{"group theoretic" Fourier
transform} of $f$ as follows:
$$\pi_{\lambda}(f)=\int_Gf(g)\pi_{\lambda}(g)dg,\ \lambda>0\ \text{and}$$
$$\chi_n(f)=\int_Gf(z,e^{i\alpha})e^{-in\alpha}dzd\alpha,\
n\in\Z.$$From the Plancherel theorem for $G$ (see page 183,
\cite{Sugiura}) we have for $f\in L^2(G)$,
$$\|f\|^2_2=\int_0^{\infty}\|\pi_{\lambda}(f)\|^2_{HS}\lambda
d\lambda,$$ where $\|.\|_{HS}$ denotes the Hilbert-Schmidt norm.\\

For $g_1,g_2\in G$, the two sided translate, $^{g_1}f^{g_2}$ of
$f$ is the function defined by $^{g_1}f^{g_2}(g)=f(g_1^{-1}gg_2).$
For $f\in L^1(G)\cap L^p(G)$, let $S=\{a>0:\pi_a(f)=0\},\ X=Span\
\{^{g_1}f^{g_2}:g_1,g_2\in G\},\ S'=\{\lambda>0:$ Range of
$\pi_{\lambda}(f)$ is not dense$\}$ and $V_f$ be the closed
subspace spanned by the right translates of $f$ in $L^p(G)$.
\begin{thm}\label{thm-WT-M2}
   Let $f\in L^1(G)\cap L^p(G)$.
      \begin{enumerate}
         \item For $\frac{4}{3-\alpha}\leq p<2$, if $S=\{a>0: \pi_a(f)=0\}$ is such that $\CP^{\alpha}(S)<\infty$ for $0\leq\alpha<1$, then $X=span\{ ^{g_1}f^{g_2}: g_1, g_2\in M(2)\}$ is dense in $L^p(M(2))$.
         \item If $f$ is radial in the $\mathbb{R}^2$ variable and $\CP^{\alpha}(S')<\infty$ for some $0\leq\alpha<1$, then $V_f=L^p(M(2))$ provided $\frac{4}{3-\alpha}\leq p\leq 2$.
      \end{enumerate}
\end{thm}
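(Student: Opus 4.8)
The plan is to run the standard duality argument for Wiener--Tauberian theorems and then, via the representation theory of $M(2)$, reduce both assertions to the Euclidean situation already settled in the proof of Theorem~\ref{thmMYRr} with $n=2$; indeed parts (1) and (2) are exactly the $M(2)$-on-$M(2)$ analogues of that theorem. For part (1) I would suppose $X$ is not dense in $L^p(M(2))$ and take a nonzero $h\in L^q(M(2))$, $\tfrac1p+\tfrac1q=1$, annihilating every two-sided translate ${}^{g_1}f^{g_2}$. Following the reductions of \cite{NaruRawat} (convolving with approximate identities on the left and right and averaging over $K=SO(2)$ on both sides) one may take $h$ smooth and $K$-bi-invariant, and the annihilation condition forces the group Fourier transform of $h$ to vanish outside $\{\pi_a:a\in S\}$, the one-dimensional representations $\chi_n$ being disposed of separately. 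Expanding $h(z,e^{i\theta})=\sum_j h_j(z)e^{ij\theta}$ and using the intertwining of $\pi_\lambda$ with restriction of the Euclidean Fourier transform to the circle $C_\lambda=\{\xi\in\R^2:|\xi|=\lambda\}$, this means each component $h_j\in L^1(\R^2)\cap L^q(\R^2)$ has $\wh{h_j}$ supported in $\Sigma_S:=\bigcup_{a\in S}C_a$; since $h\neq0$, at least one $H:=h_j$ is nonzero.

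I would then finish exactly as in the proof of Theorem~\ref{thmMYRr}. The set $\Sigma_S$ has dimension $\alpha+1\in[1,2)$ --- indeed a polar-coordinate packing count using $\CP^\alpha(S)<\infty$ and Lemma~\ref{lemPM1} (an $\epsilon$-annulus around $C_a\cap B_R(0)$ meets at most $\sim R/\epsilon$ disjoint $\epsilon$-balls) shows $\CP^{\alpha+1}(\Sigma_S\cap B_R(0))<\infty$ for every $R$, which is all that is needed since test functions are compactly supported. For $\psi\in C_c^\infty(\R^2)$ set $M=supp\ \wh H\cap supp\ \psi$ (a bounded subset of $\Sigma_S$) and mollify $u=\wh H$ by $u_\epsilon=u\ast\chi_\epsilon$ with $\chi$ an even $C_c^\infty$ bump of integral $1$. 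Since $\tfrac{4}{3-\alpha}\le p<2$ forces $2<q\le\tfrac{4}{\alpha+1}=\tfrac{2\cdot2}{\alpha+1}$, Lemma~\ref{lemMYhorm} (applied with $n=2$, exponent $q$, dimension $\alpha+1$) gives $\|u_\epsilon\|_2^2\le C\epsilon^{\alpha-1}\rho_\epsilon$ with $\rho_\epsilon\to0$, while the polar estimate for $M$ yields $\limsup_{\epsilon\to0}\epsilon^{\alpha-1}\int_{M_\epsilon}|\psi|^2<\infty$ exactly as in (\ref{eqnRT1}). Hence $|\langle u_\epsilon,\psi\rangle|^2\le\|u_\epsilon\|_2^2\int_{M_\epsilon}|\psi|^2\to0$, so $\langle u,\psi\rangle=0$ for every $\psi$, $H\equiv0$, and therefore $h\equiv0$, a contradiction; thus $X$ is dense.

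For part (2) the only change is in the reduction. With $f$ radial in the $\R^2$-variable each $\pi_\lambda(f)$ is diagonal in the standard basis of $L^2(S^1)$, so $S'$ is precisely the set of $\lambda$ at which a diagonal entry vanishes. If $h\in L^q(M(2))$ annihilates all right translates $f^g$, then (passing to $\widetilde h(x)=\overline{h(x^{-1})}$) one gets $\pi_\lambda(h)\pi_\lambda(f)=0$ for every $\lambda$, hence the range of $\pi_\lambda(f)$ lies in $\ker\pi_\lambda(h)$; for $\lambda\notin S'$ this range is dense, forcing $\pi_\lambda(h)=0$. After the same $K$-averaging one again obtains a nonzero $H\in L^1(\R^2)\cap L^q(\R^2)$ with $\wh H$ supported in $\Sigma_{S'}=\bigcup_{a\in S'}C_a$ and $\CP^\alpha(S')<\infty$, and the argument of part (1) applies for the full range $\tfrac{4}{3-\alpha}\le p\le2$, the endpoint $p=2$ ($q=2\le\tfrac{4}{\alpha+1}$) now being admissible; hence $V_f=L^p(M(2))$.

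The hard part will be the representation-theoretic reduction: faithfully turning the density statement on the nonabelian group $M(2)$ into the assertion that a single radial function on $\R^2$ has Euclidean Fourier transform supported on a union of circles indexed by $S$ (resp.\ $S'$). This requires the separate treatment of the characters $\chi_n$, a check that regularizing and $K$-averaging $h$ preserve the spectral-support condition, and --- for part (2) --- using the radiality of $f$ to pin down $S'$. The remaining geometric input, that $\CP^\alpha(S)<\infty$ bounds the $(\alpha+1)$-packing size of $\Sigma_S$ (equivalently, the polar estimate (\ref{eqnRT1})), is routine from Lemma~\ref{lemPM1} and is already carried out in the proof of Theorem~\ref{thmMYRr}.
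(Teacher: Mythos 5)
Your strategy coincides with the paper's: a duality argument, a representation-theoretic reduction to Euclidean functions on $\R^2$ whose Fourier transforms are supported in $\{x\in\R^2:\|x\|\in S\}$, and then the mollification/packing-count argument already carried out in the proof of Theorem~\ref{thmMYRr} with $n=2$ (your exponent bookkeeping $q\leq 4/(1+\alpha)$, the estimate (\ref{eqnRT1}), and the treatment of the endpoint $p=2$ in part (2) all match the paper). The analytic core of your plan is therefore correct and identical to the paper's.

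The gap is in the reduction, which you defer as ``the hard part'' and sketch inaccurately. You cannot take the annihilator $h$ to be $K$-bi-invariant: bi-invariance would force $h(z,e^{i\theta})=h_0(z)$ with $h_0$ radial, which contradicts your subsequent expansion into modes $e^{ij\theta}$, and projecting $h$ onto its bi-invariant component may annihilate it. The correct reduction, as in the paper, is to a single right-$K$-isotypic component $\Phi(z,e^{is})=\phi(z)e^{ims}$, from which one extracts $\phi_n(z)=\int_0^{2\pi}\phi(e^{is}z)e^{i(m+n)s}\,ds$. Moreover, the claim that the annihilation condition ``forces the group Fourier transform of $h$ to vanish outside $\{\pi_a:a\in S\}$'' is not directly meaningful for $h\in L^q$ with $q>2$, and it is not how the Fourier support condition is obtained. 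The paper first proves $L^p(G/K)\subseteq\ol{X}$: for each $a\notin S$, irreducibility of $\pi_a$ yields a finite combination $F_a=\sum_j c_jf^{x_j^{-1}}$ whose $K$-average $F_a^{\#}$ is a right-$K$-invariant function, hence a function in $L^1\cap L^p(\R^2)$ with $\wh{F_a^{\#}}\not\equiv0$ on $C_a$, and Theorem~\ref{thmMYRr} applied to this family gives $L^p(G/K)\subseteq\ol{X}$. Only with this in hand does one have the test functions $\psi_w(z,e^{is})=h(z+e^{is}w)\in\ol{X}$ needed to derive $h*_{\R^2}\phi_n\equiv0$ and hence $supp\ \wh{\phi_n}\subseteq\{x:\|x\|\in S\}$. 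This intermediate step is genuinely needed and is missing from your outline; your sketch of part (2) via $\pi_\lambda(h)\pi_\lambda(f)=0$ and density of the range off $S'$ is, by contrast, essentially the paper's. Once the reduction is supplied, your finish is exactly the paper's argument.
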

\begin{proof}
To prove part (1), we proceed as in the proof of Theorem 2.1 in
\cite{NaruRawat}.
First we prove $L^p(G/K)\subseteq\ol{X}$.\\

For $f\in L^1(G)$, the operator $\pi_a(f)$ is well defined for
each $a>0$.
Suppose $\pi_a(f)\neq0$, then there exists $w\in H=L^2(K)=L^2([0,2\pi],dt)$ such that $\pi_a(f)(w)\neq0$.
$$$$
For given $a,\epsilon>0$, since $\pi_a$ is irreducible, there
exists constants $c_1,c_2,...,c_m$ and elements
$x_1,$ $x_2,$ $...$ $x_m$ $\in\ G$, such that
$\|\sum_{j=1}^{m}c_j\pi_a(x_j)v_0-w\|<\epsilon$, where $v_0$ is
$K$-fixed vector $v_o\equiv1\in H$. Therefore we have,
$$\Big{||}\pi_a(f)\sum_{j=1}^{m}c_j\pi_a(x_j)v_0-\pi_a(f)w\Big{||}<\epsilon||\pi_a(f)||$$
Define $F_a=\sum_{j=1}^mc_jf^{x_j^{-1}}$. Then
$||\pi_a(F_a)v_0-\pi_a(f)w||<\epsilon||\pi_a(f)||$ and
$\pi_a(F_a)v_0\neq0$ for small enough $\epsilon$. Let
$$F_a^{\#}(x)=\int_KF_a(xk)dk,\ x\in G.$$
Then $F_a^{\#}$ is a right $K$-invariant function on $G$.
$\pi_a(F_a^{\#})v_0=\pi_a(F_a)v_0\neq0$ and
$[\pi_a(F_a^{\#})(v_0)](s)=\hat{F_a^{\#}}(ae^{is})$ implies
$\hat{F_a^{\#}}$ is non identically zero on the sphere
$\{x\in\R^2:||x||=a\}$. Thus whenever $\pi_a(f)\neq0$, we have a
right $K$-invariant function $F_a^{\#}$ which can be considered as
a function on $\R^2$, that is $F_a^{\#}\in L^1(\R^2)\cap
L^p(\R^2)$
such that its Euclidean Fourier transform is not identically zero on the sphere $C_a=\{x\in\R^2:\|x\|=a\}.$\\

Define $S_1=\cap_{a\in S^c}\{r>0:\wh{F}_a^{\#}\equiv0$ on $C_r\}$.
Then $S_1\subset S$. We have $$\overline{Span\{^gF_a^{\#}:g\in
G,a\in S_1^c\}}\subseteq\overline{Span\{^{g_1}f^{g_2}:g_1,g_2\in
G\}}.$$ Also using Theorem \ref{thmMYRr},
$$\overline{Span\{^gF_a^{\#}:g\in G,a\in S_1^c\}}=L^p(G/K).$$ Thus
$L^p(G/K)\subseteq\overline{Span\{^{g_1}f^{g_2}:g_1,g_2\in
G\}}=\ol{X}$.\\

Suppose $\ol{X}\neq L^p(G)$, then let $\Phi\in L^q(G)$ be such
that
$$\int_G\psi(g)\Phi(g)dg=0\ \forall\psi\in \ol{X},$$
where $\frac{1}{p}+\frac{1}{q}=1$. Convolving $\Phi$ with an
approximate identity we can assume $\Phi\in L^q\cap
L^{\infty}(G)$. Since $\ol{X}$ is invariant under right
translations by $G$, there exists an integer $m$ and a non trivial
$\phi\in L^q\cap L^{\infty}(\R^2)$ such that
$$\Phi(g)=\Phi(z,e^{is})=\phi(z)e^{ims}$$ and
$$\int_G\psi(z,e^{is})\phi(z)e^{ims}dzds=0\ \forall\psi\in \ol{X}.$$
Since $L^p(G/K)\subseteq\ol{X}$, choose a rapidly decaying
function on $\R^2$ of the form
$h(z)=h(re^{i\theta})=h_1(r)e^{in\theta}$. Then for
$\psi_w(z,e^{is})=h(z+e^{is}w)\in\ol{X}$ for all $w\in\R^2$ and
hence
$$\int_Gh(z+e^{is}w)\phi(z)e^{ims}dzds=0.$$
Since $h(e^{is}z)=e^{ins}h(z)$, we deduce from the above equality
that $h*_{\R^2}\phi_n\equiv0$ for $\phi_n\in L^q\cap
L^{\infty}(\R^2)$ such that
$$\phi_n(z)=\int_0^{2\pi}\phi(e^{is}z)e^{i(m+n)s}ds.$$
Since $\psi_w(z,e^{is})=h(z+e^{is}w)\in\ol{X}$ and $h$ is radial,
the zeroes of $\hat{h}$ is contained in $\{x\in\R^2:||x||\in S\}$.
Hence $supp\ \hat{\phi_n}$ is contained in $\{x\in\R^2:||x||\in
S\}$. By the assumptions on $p$ and $q$, using Theorem
\ref{thmMYRr} $\phi_n\equiv0$ for all $n$.
This contradicts the assumption that $\phi$ is non trivial. Hence $\ol{X}=L^p(G)$.\\

To prove part(2), we proceed as in the proof of (c) of Theorem 3.2
in \cite{NaruRawat}. Let $\phi(z)e^{im_0\alpha}\in L^q\cap
L^{\infty}(M(2))$ kill all the functions in $V_f$ where
$\frac{1}{p}+\frac{1}{q}=1$. Then $f$ being radial in the
$\R^2$-variable we are led to the convolution equation
$f_m*_{\R^2}\phi_m=0$ where $\phi_m$ is defined by
\begin{equation*}
    \phi_m(z)=\int_0^{2\pi}\phi(e^{i\alpha}z)e^{i(m_0+m)\alpha}d\alpha.
\end{equation*}
and $f_m$ is defined by
\begin{equation*}
    f_m(z)=\int_{S^1}f(z,e^{i\alpha})e^{-im\alpha}d\alpha.
\end{equation*}

Taking Fourier transform we obtain that $supp\ \wh{\phi}_m$ is
contained in $\{z\in\mathbb{R}^2:\|z\|\in S\}$. Proceeding as in
the proof of Theorem \ref{thmMYRr},
we have $<\phi_m,\psi>=0$ for all $\psi\in C_c^{\infty}(\R^2)$ and $m$. Thus $\phi_m\equiv0$ for all $m$.\\
\end{proof}

\newpage
\addcontentsline{toc}{chapter}{Further Problems}
\chapter*{Further Questions}
\setcounter{equation}{0}
In this chapter, we will briefly describe some problems which are related to the results discussed in this thesis.\\

(I) Recall that for a positive Radon measure $\mu$ with compact support $E\subset\R^n$, the $\alpha$-energy, $I_{\alpha}(\mu)$ is given by
   \Bea
    I_{\alpha}(\mu) &=& \int_E\int_E|x-y|^{-\alpha}d\mu(x)d\mu(y)\\
                    &=& C\ \int_{\R^n}|\xi|^{\alpha-n}|\wh{\mu}(\xi)|^2 d\xi,
   \Eea
where the constant $C$ depends only on $n$ and $\alpha$. Let $\sigma_{\mu}(r)=\int_{S^{n-1}}|\wh{\mu}(r\omega)|^2d\omega$. Thus if $I_{\alpha}(\mu)<\infty$, there exists constant $C_{\mu}$ depending only on $\mu$ such that,
   \Bea
   |\widehat{\mu}(x)|^2 &\leq& C_{\mu}|x|^{-\alpha},\\
    \sigma_{\mu}(r)&\leq& C_{\mu}r^{-\alpha},
   \Eea
for most $x$ and $r$ (See Chapter 12 in \cite{Mattila}). In \cite{Mattila1}, the author proved that
   \bee
   \sigma_{\mu}(r) \leq C I_{\alpha}(\mu)r^{-\alpha}\ \text{for}\ \text{all}\ 0<r<\infty,\ 0<\alpha\leq\frac{1}{2}(n-1).
   \eee
See also \cite{Sjolin}, \cite{SjolinFernando}, \cite{Erdogan},
\cite{ErdoganOberlin} and \cite{Wolff} for similar results.\\
If the restriction exponent $p(n,\alpha,\beta)$ is defined by
   \Bea
    p(n,\alpha,\beta)&=& \inf\ \{q:(\forall\mu\ \text{with}\ \mu(B_r(x))\leq r^{\alpha}\ \text{and}\ |\wh{\mu}(\xi)|^2\leq|\xi|^{-\beta})\\
    &&(\forall f\in L^2(d\mu))(\|\wh{fd\mu}\|_q\leq\ C_{q,\mu}\|f\|_{L^2(d\mu)})\},
   \Eea
for $0<\alpha,\beta<n$, then Mitsis proved in the Proposition 3.1
in \cite{Mitsis} that $p(n,\alpha,\beta)\geq \frac{2n}{\alpha}$.
See also in \cite{Mockenhaupt} and \cite{BakSeeger}.\\

\noindent{\bf{Theorem}}\cite{Mitsis}:\emph{ Let $\mu$ be a measure
in $\R^n$ such that}
   \Bea
   \mu(B_r(x)) &\leq& C_1r^{\alpha}\ \forall\ x\in\R^n\
   \text{and}\ r>0,\\
   |\wh{\mu}(\xi)|&\leq& |\xi|^{-\beta/2},\ \forall\ \xi\in\R^n
   \Eea
\emph{for some $0<\alpha<n$. Then for every $p\geq
\frac{2(2n-2\alpha+\beta)}{\beta}$, there exists a constant
$C_{p}>0$ such that}
  \begin{center}
     $\ \ \ \ \ \ \ \ \ \ \ \ \ \ \ \ \ \ \ \ \ \ \ \ \ \ \ \ \ \ \ \ \ \ \ \ \ \ \ \ \ \ \ \ \ \ \ \ \|\wh{fd\mu}\|_p\leq\ C_{p,n,\alpha}\|f\|_{L^2(d\mu)},$\hfill (5.1)%\label{Conc1}
  \end{center}
\emph{for all $f\in L^2(d\mu)$.}\\

When $\beta=\alpha$, we have the result for $p\geq
\frac{2(2n-\alpha)}{\alpha}$. In \cite{KyleLaba}, the authors
proved the sharpness of the above theorem for $n=1$ by
constructing a probability measure $\mu$ on a set of dimension
$\alpha$ that satisfies the hypothesis of the above theorem but fails (5.1) for $p<\frac{2(2-\alpha)}{\alpha}$. Note that
$\frac{2n}{\alpha}<\frac{2(2n-\alpha)}{\alpha}$. In this thesis, we looked
at only the range $p< \frac{2n}{\alpha}$ and obtained bounds for
 \bee
    \frac{1}{L^{k}}\int_{|\xi|\leq L}|\wh{fd\CP^{\alpha}|_E}(\xi)|^pd\xi,
  \eee
where $E$ is a compact set of finite $\alpha$-packing measure.  We
would like to analyze the behaviour of $L^p$-average of
$\wh{fd\mu}$ over a ball of large radius for
$p<\frac{2(2n-\alpha)}{\alpha}$, if $E$ is a set of finite
$\alpha$-packing measure and $\mu$ is a measure supported on $E$
such that $|\wh{\mu}(\xi)|\leq |\xi|^{-\alpha/2}$.\\

(II) In Lemma \ref{lemMY1} we proved that if $E$ is a set of
finite $\alpha$-packing measure, then
$|S(\delta)|\delta^{\alpha-n}$ is bounded above by the packing
measure of $S$ for all bounded subsets $S$ of $E$ as $\delta$
approaches zero. In \cite{Hudson}, the authors called a set
$E\subset\R$ of finite $\alpha$-dimensional Hausdorff measure, an
$\alpha$-coherent set, if for every $x\in\R$,
$|E_x(\delta)|\delta^{\alpha-1}$ is bounded above by the Hausdorff
measure of $E_x$  as $\delta$ approaches zero where $E_x=E\cap
(-\infty,x]$. The author in \cite{Winter}, introduced curvature
measures for the fractals. $k^{th}$ average fractal curvature of a
set $E$, $\ol{C_k}^f(E)$ ($0\leq k\leq n$) is defined as
$$\underset{\delta\rightarrow 0}{\lim}\frac{1}{-\ln\delta} \int_{\delta}^1 \epsilon^{s_k}C_k(E(\epsilon)) \epsilon^{-1} d\epsilon$$
where $C_k(E(\epsilon))$ denotes the $k^{th}$ total curvature of
the $\epsilon$-distance set $E(\epsilon)$ of $E$, $s_k$ is given
by
$$s_k=\inf\{t: \epsilon^tC_k^{var}(E(\epsilon))\rightarrow 0\ \text{as}\ \epsilon \rightarrow 0\},$$
where $C_k^{var}(E(\epsilon))$ denotes the $k^{th}$ total
variation curvature of $\epsilon$-distance set of $E$. In
particular, when $k=n$, $C_k(S(\epsilon))=|S(\epsilon)|$.\\

We would like to investigate the relation between sets of finite
$\alpha$-packing measure, $\alpha$-coherent sets and sets for
which $n^{th}$ fractal curvature measure exists. Further, we would
like to study the behaviour of the Fourier transform of the
measures supported on these sets.

\newpage
%\mbox{}
%\thispagestyle{empty}

\newpage
\end{document}